\documentclass[reqno,10pt]{amsart}

\usepackage{amsmath}%
\usepackage{rotating}
\usepackage{epstopdf, pinlabel, multirow} 
\usepackage{amsfonts}%
\usepackage{amssymb}%
\usepackage{graphicx}
\usepackage{xy,amsthm,enumerate,xypic,array}  

\usepackage{wrapfig}

\newcommand{\ds}{\displaystyle}

\newcommand{\Z}{\mathbb{Z}}

\newcommand{\tensor}{\otimes}

\newcommand{\op}{\mathcal}
\newcommand{\fr}{\mathfrak}

\newcommand{\cdc}{,\dots,}
\newcommand{\tdt}{\tensor\dots\tensor}

\newcommand{\cupp}{\mathbin{\smile}}


\newcommand{\TT}{\text{\sffamily{T}}}
\newcommand{\TA}{\text{\sffamily{M}}}

\newcommand{\B}{\text{\sffamily{B}}}
\newcommand{\cB}{\text{\sffamily{B}}_\circlearrowright}
\newcommand{\spB}{\text{\sffamily{B}}_\dagger}

\newcommand{\ZA}{\text{\sffamily{M}}_{\circlearrowright}}
\newcommand{\ZZ}{\text{\sffamily{T}}_{\circlearrowright}}


\newcommand{\nsl}[1]{{\op{{#1}}^\ast}}
\newcommand{\syl}[1]{{\overline{\op{{#1}}}^\ast}}
\newcommand{\nsc}[1]{{\op{{#1}}_\ast}}
\newcommand{\syc}[1]{{\overline{\op{{#1}}}_\ast}}

\newcommand{\cnsl}[1]{{\op{{#1}}^\bullet}}
\newcommand{\csyl}[1]{{\overline{\op{{#1}}}^\bullet}}
\newcommand{\cnsc}[1]{{\op{{#1}}_\bullet}}
\newcommand{\csyc}[1]{{\overline{\op{{#1}}}_\bullet}}

\newcommand{\kk}{\Bbbk}

\input{xy}
\xyoption{all}

\usepackage{fullpage}


\numberwithin{equation}{section}

\newtheorem{theorem}{Theorem}[section]

\newtheorem{corollary}[theorem]{Corollary}
\newtheorem{lemma}[theorem]{Lemma}
\newtheorem{proposition}[theorem]{Proposition}

\newtheorem*{thma}{Theorem A}
\newtheorem*{cora}{Corollary}
\newtheorem*{thmb}{Theorem B}
\newtheorem*{thmc}{Theorem C}

\theoremstyle{definition}
\newtheorem{definition}[theorem]{Definition}
\newtheorem{example}[theorem]{Example}

\newtheorem{remark}[theorem]{Remark}

\newtheorem*{q1}{Question 1}
\newtheorem*{q2}{Question 2}

\allowdisplaybreaks[2]

\setcounter{tocdepth}{1}
\addtocounter{MaxMatrixCols}{2}


\begin{document}

\title{Maurer-Cartan Elements and Cyclic Operads}
\author
[Benjamin C. Ward]{Benjamin C. Ward}
\email{bward@scgp.stonybrook.edu}

\address{Simons Center for Geometry and Physics, Stony Brook, NY 11794}

\begin{abstract}
First we argue that many BV and homotopy BV structures, including both familiar and new examples, arise from a common underlying construction.  The input of this construction is a cyclic operad along with a Maurer-Cartan element in an associated Lie algebra.  Using this result we introduce and study the operad of cyclically invariant operations, with instances arising in cyclic cohomology and $S^1$ equivariant homology.  We compute the homology of the cyclically invariant operations; the result being the homology operad of $\op{M}_{0,n+1}$, the uncompactified moduli spaces of punctured Riemann spheres, which we call the gravity operad after Getzler.  Motivated by the line of inquiry of Deligne's conjecture we construct `cyclic brace operations' inducing the gravity relations up-to-homotopy on the cochain level.  Motivated by string topology, we show such a gravity-BV pair is related by a long exact sequence.  Examples and implications are discussed in course.
\end{abstract}



\maketitle

\section*{Introduction}

Gerstenhaber algebras and Batalin-Vilkovisky (BV) algebras are structures fundamental to deformation theory.  They were introduced respectively by their namesakes in \cite{G} in the setting of Hochschild cohomology operations and in \cite{BV1} in the study of gauge fixing in quantum field theory, and have proven deserving of extensive study.  Consider a construction which takes for input some algebraic or topological object and produces as output a Gerstenhaber algebra (resp. $\op{G}_\infty$-algebra).  One can and should ask the following question:

\begin{q1}  What additional structure on the input of said construction would endow the output with a compatible BV operator?
\end{q1}

The compatibility requirement is $(-1)^{|a|}\{a,b\}=\Delta(ab)-\Delta(a)b-(-1)^{|a|}a\Delta(b)$.  Here are some examples of {\bf answers} to Question 1.
\begin{itemize}
\item  Let $M$ be a smooth manifold and let $\bigwedge \Gamma (T_\bullet M)$ be its space of polyvector fields.  The Schouten bracket with the exterior product make this space a Gerstenhaber algebra.  If $M$ comes equipped with a {\bf volume form} $\omega$, this Gerstenhaber algebra has a compatible BV operator via contraction with $\omega$.
\item  More generally, the sections of the exterior bundle of a Lie algebroid $A$ form a Gerstenhaber algebra.  If $A$ is equipped with a {\bf flat connection} on its determinant bundle then the associated covariant derivative induces a compatible BV operator \cite{Xu}.  See also the work of Kowalzig and Kr{\"a}hmer \cite{KoKr}, \cite{Kow} for a substantial generalization. 
\item  Let $A$ be an associative algebra.  Then the Hochschild cohomology $HH^\ast(A,A)$ is a Gerstenhaber algebra.  If $A$ is unital and comes equipped with an invariant symmetric nondegernate {\bf inner product}, Connes' boundary operator induces a compatible BV operator \cite{Menichi},\cite{Tradler}.  More generally we may consider a (non-symmetric) Frobenius algebra with semi-simple Nakayama automorphism \cite{Lambre}.
\item  In \cite{BG} Baranovsky and Ginzburg show that for a smooth complex Poisson variety $X$ with smooth coisotropic subvarieties $Y,Z$, $\text{Tor}^{\op{O}_X}(\op{O}_Y,\op{O}_Z)$ is a Gerstenhaber algebra.  They show a {\bf first order deformation} of the structure sheaf $\op{O}_X$ to a sheaf of noncommutative algebras and of $\op{O}_Y$ and $\op{O}_Z$ to sheaves of modules over the deformed algebra gives rise to a compatible BV operator.  See also \cite{BeF}.
\item  Let $X$ be a topological space with singular cochains $S^\ast(X)$.  Then $S^\ast(X)$ is a $\op{G}_\infty$-algebra, with product the cup product and with bracket vanishing on cohomology.  A {\bf circle action} on $X$ induces a compatible BV operator.
\end{itemize}

The first objective of this paper is to answer Question 1 in a context which is sufficiently general as to include several of the examples above.  In order to describe the construction of a $\op{G}_\infty$ algebra which provides this context, let us reconsider \cite{G} with a half century of hindsight.  

One starts with an associative algebra $A$ and considers the multilinear maps on such an algebra along with the operation of insertion of functions
\begin{align*}
Hom(A^{\tensor n}, A)\tensor Hom(A^{\tensor m},A)&\stackrel{\circ_i}\longrightarrow Hom(A^{\tensor n+m-1},A) \\
f\tensor g & \mapsto \left\{\text{insert $g$ into the }  i^{th} \text{ input of }f\right\}
\end{align*}
Taking the sum of all possible insertions produces a nonassociative bilinear operation whose commutator is a Lie bracket.  To this Lie algebra one can associate the Maurer-Cartan equation whose solutions parametrize differentials on the space of all multilinear maps.  Given that the algebra is associative, there is a canonical solution, given by the multiplication $\mu_2\colon A^{\tensor 2}\to A$.  The associated cochain complex is the Hochschild complex of $A$, and its cohomology comes with a Lie bracket {\it ens causa sui}.  Finally, a suitable ternary version of the Lie bracket can be formed, call it $B$ for `brace operation', such that the bilinear operation $B(\mu_2,-,-)$ serves as a cup product, which is associative and commutative on the level of cohomology.  The Lie bracket and cup product combine to form a Gerstenhaber algebra on the cohomology, and this structure is a Gerstenhaber algebra up to homotopy (in a very precise sense) on the cochain level.

\subsection*{Summary of Results.}

The above rendering of the Gerstenhaber structure of $HH^{\ast}(A,A)$ allows for a far reaching generalization of Gerstenhaber's original constructions, as well as the chain level constructions prompted by Deligne's conjecture.  In particular we see the insertion operations as fundamental.  The notion of an operad provides an axiomatization of these operations, and we are led to the following theorem.  For an operad $\op{O}$ we construct an associated Lie algebra $\nsl{O}$ and denote the Maurer-Cartan set by $MC(\nsl{O})$ and denote the differential twisted by a MC element $\zeta$ by $\delta_\zeta$.

\begin{thma}  Let $\op{O}$ be an operad in the category of differential graded vector spaces, and let $\zeta\in MC(\nsl{O})$.  Then the complex $(\nsl{O}, \delta_\zeta)$ is an algebra over a chain model for the little disks operad.  In particular, in characteristic $0$, $(\nsl{O}, \delta_\zeta)$ is a $\op{G}_\infty$-algebra.
\end{thma}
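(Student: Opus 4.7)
The strategy is to exhibit explicit operations on $\nsl{O}$ built from the partial compositions $\circ_i$ of $\op{O}$, and to show that, after twisting the differential by $\zeta$, these operations assemble into an action of a known chain model of the little disks operad. The natural candidates are the Gerstenhaber--Voronov brace operations together with a $\zeta$-dependent cup product; the target operad would be the minimal operad of Kontsevich--Soibelman (equivalently McClure--Smith's sequence operad or Berger--Fresse's surjection operad), all of which are chain models of $\op{E}_2$.

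First I would define, for $x,y_1,\ldots,y_n\in\nsl{O}$, the brace operation $x\{y_1,\ldots,y_n\}$ as the signed sum over all ways of inserting $y_1,\ldots,y_n$ (in this order) into distinct slots of $x$ using iterated $\circ_i$. Associativity of operadic composition forces these to satisfy the standard Gerstenhaber--Voronov brace relations; this step is purely formal and uses only the operad axioms, not the MC element. The commutator of $x\{y\}$ recovers the Lie bracket on $\nsl{O}$, so the hypothesis $\zeta\in MC(\nsl{O})$ is precisely $d\zeta+\tfrac12[\zeta,\zeta]=0$, whence $\delta_\zeta:=d+[\zeta,-]$ is a differential.

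Next I would introduce the $\zeta$-cup product $x\cupp y:=(-1)^{|x|}\zeta\{x,y\}$. Because $\zeta$ is generally a sum of elements of several arities, $\cupp$ is not strictly associative but is a piece of an $\op{A}_\infty$-structure whose higher products $m_n(x_1,\ldots,x_n)=\pm\zeta\{x_1,\ldots,x_n\}$ are controlled by the arity-$n$ components of $\zeta$. The key identities to check are:
\begin{enumerate}
\item[(i)] the brace relations (already true, see above);
\item[(ii)] the compatibility
\[
\delta_\zeta(x\{y_1,\ldots,y_n\})=(\delta_\zeta x)\{y_1,\ldots,y_n\}\pm\sum_i x\{y_1,\ldots,\delta_\zeta y_i,\ldots,y_n\}+(\text{brace correction terms in }\zeta);
\]
\item[(iii)] the $\op{A}_\infty$ relations for $\{m_n\}$ and their interaction with the braces, relating $\delta_\zeta(x\cupp y)$, $\delta_\zeta$-applied to higher $m_n$, and nested expressions $\zeta\{\zeta\{\cdots\}\}$.
\end{enumerate}
Every such identity is a rearrangement of the Maurer--Cartan equation $d\zeta+\tfrac12[\zeta,\zeta]=0$ expanded via the brace relations: the left-hand sides produce boundary terms involving $d\zeta$, which the MC equation converts into the required brace-quadratic expressions in $\zeta$. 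Once (i)--(iii) are verified, $(\nsl{O},\delta_\zeta)$ is an algebra over the minimal operad of Kontsevich--Soibelman, a chain model for the little disks operad. The second assertion then follows from formality of $\op{E}_2$ in characteristic zero (Kontsevich, Tamarkin): any chain model of the little disks is quasi-isomorphic to $\op{G}_\infty$ as a dg operad, so an algebra over the chain model is in particular a $\op{G}_\infty$-algebra.

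The main obstacle, and the source of the bulk of the technical work, is item (iii): organizing signs and the combinatorics of insertion trees so that each operadic identity in the minimal operad is matched on the nose by a reassociation of the MC equation via the brace relations. The subtlety is that $\zeta$ may have components in every arity, so the cup product is only homotopy associative; one must use the version of the chain model that accommodates an $\op{A}_\infty$- (or curved) multiplication, and check that a single MC element supplies coherent homotopies for \emph{all} arities simultaneously. Once this is handled, the rest of the argument is a direct verification at the level of brace trees.
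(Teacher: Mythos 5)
Your proposal is correct and follows essentially the same route as the paper: the braces on $\nsl{O}$ together with the corolla operations $\zeta\{-,\ldots,-\}$ assemble into an action of the Kontsevich--Soibelman minimal operad $\TA$ (Theorem $\ref{actionthm1}$), with the verifications reducing to the brace relations and the Maurer--Cartan equation, and the $\op{G}_\infty$ statement then follows from formality of $\op{D}_2$ together with cofibrancy of $\op{G}_\infty$. Your remark that one must use the $A_\infty$-accommodating chain model (rather than the surjection/sequence operads, which require a strict multiplication) is exactly the point the paper makes in choosing $\TA$.
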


We will give a proof of Theorem A, but we hasten to add that such a proof is a matter of course in the wake of the vanguard who faced related problems (i.e. Deligne's conjecture) in recent decades, see below for a discussion of this.  In particular, the Maurer-Cartan equation in this context is represented by the $\op{A}_\infty$ operad, and the minimal operad of Kontsevich and Soibelman \cite{KS} provides the chain model.  Indeed, the first true aim of this paper is to answer Question 1 in light of Theorem A.  The answer which we provide says that a compatible BV operator arises when the operad is a cyclic operad and when the MC element is cyclically invariant.  In particular we prove the following theorem.

\begin{thmb}  Let $\op{O}$ be a unital cyclic operad in the category of differential graded vector spaces, and let $\zeta\in MC(\nsl{O})$ which is cyclically invariant.  Then the complex $(\nsl{O}, \delta_\zeta)$ is an algebra over a chain model for the framed little disks operad.  In particular, in characteristic $0$, $(\nsl{O}, \delta_\zeta)$ is a $\op{BV}_\infty$-algebra.
\end{thmb}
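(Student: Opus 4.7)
The plan is to upgrade the chain-model action provided by Theorem A to one of a chain model for the framed little disks operad. Since the framed little disks operad is, up to homotopy, a semidirect product of the little disks operad with a torus of framing rotations, the task reduces to constructing a compatible degree-one square-zero BV operator $\Delta$ on $(\nsl{O},\delta_\zeta)$ and showing that it combines with the brace-type operations of Theorem A into an action of a chain model for this semidirect product.

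The BV operator is built from the cyclic structure. A unital cyclic operad carries a $\Z/(n+1)$-action on $\op{O}(n)$ interchanging the inputs with the output, and the unit $1\in\op{O}(1)$ supplies the ``extra degeneracy.'' One assembles these into an operadic analog of Connes' $B$-operator: for $f\in\op{O}(n)$, $\Delta f$ is an alternating sum of cyclic rotations applied after capping $f$ suitably by the unit. The identity $\Delta^{2}=0$ mirrors the classical $B^{2}=0$ and follows from unitality together with the cyclic axiom. The hypothesis that $\zeta$ is cyclically invariant is exactly what yields $[\delta_\zeta,\Delta]=0$: the differential $\delta_\zeta$ is built from insertions of $\zeta$, and the cyclic action intertwines these insertions with rotations of the result, so the two operations commute with appropriate sign on the nose.

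For the full operadic statement one enlarges the chain model used in Theorem A (for instance the Kontsevich-Soibelman minimal operad) by adjoining framing generators whose action is given by iterated $\Delta$ on each input slot. The relations of the framed little disks beyond those of the little disks record how framings interact with composition --- a rotation of a composed output propagates through the insertion --- and on $\nsl{O}$ they translate directly into consequences of the cyclic operad composition axiom (how the cyclic action distributes over $\circ_i$). The main obstacle is precisely this combinatorial verification: a framed enhancement of the Deligne-type argument behind Theorem A, in which one must keep careful track of signs, degree shifts and coherent homotopies between the brace operations and the new framing generators. Once established, the $\op{BV}_\infty$-conclusion in characteristic zero follows from the framed analog of the step used in Theorem A, namely that in characteristic zero any chain model for the framed little disks resolves to (and is quasi-isomorphic to) the standard $\op{BV}_\infty$ operad.
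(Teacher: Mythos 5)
Your overall strategy does match the paper's: the BV operator is the operadic Connes operator built from the unit and the cyclic action (in the paper, $\Delta = N s_0$ with the extra degeneracy $s_0 = t_{n-1}s_1t_n^{-1}$), the action of Theorem A is extended by spine/framing data, and the $\op{BV}_\infty$ conclusion comes from formality of the framed little disks. The genuine gap is the middle step, which you describe as ``adjoining framing generators whose action is given by iterated $\Delta$ on each input slot'' and asserting that the framed-little-disks relations ``translate directly into consequences of the cyclic operad composition axiom.'' They do not: on the chain level the compatibility of $\Delta$ with the cup product and the braces holds only up to an infinite hierarchy of homotopies, so a semidirect-product-style enlargement of $\TA$ by framing circles acting through powers of $\Delta$ is not a chain model for $f\op{D}_2$, and one cannot verify the strict semidirect-product relations against the cyclic axioms. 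What is needed, and what the paper supplies, is a specific combinatorial operad: first the spined brace operad $\spB$ (trees with spines), a genuine family of higher operations mixing braces and spines rather than iterated $\Delta$ applied to inputs, which acts on $\nsl{O}$ for any unital cyclic operad before a MC element is chosen; and then the operad $\op{TS}_\infty$ of \cite{Ward}, generated by $\spB$ together with the spineless suboperad $\TA$, which is an $A_\infty$ blow-up of cacti and is already known to have the homotopy type of $f\op{D}_2$ with all coherences in place. Producing such a model --- both its homotopy type and the coherence of its relations --- is the main content of the theorem; announcing that suitable ``coherent homotopies'' exist is precisely the point that cannot be waved through, and your proposal does not indicate how to construct them or how to certify that the enlarged operad still models $f\op{D}_2$.

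A smaller inaccuracy: the cyclic invariance of $\zeta$ is not used only to obtain compatibility of $\Delta$ with $\delta_\zeta$. In the paper it is what makes the action of trees carrying a nontrivial $0$-spine well defined in the first place: such a tree is read as a cyclic flow chart, and the two possible readings at a $0$-spine give the same operation exactly because $\zeta$ is invariant under the cyclic action. Without invariance the extended action is ambiguous before any question of commuting with the differential arises.
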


Thus, when the operad in question happens to be cyclic, Theorem B describes the natural operations on a complex associated to the underlying operad.  Now, however, one can also consider the complex of cyclic (co)invariants, which we denote $(\cnsl{O},\delta_\zeta)$.  We are then prompted to ask:

\begin{q2}  What are the natural operations on the complex of cyclic (co)invarants associated to a cyclic operad?  And how is this algebra related to the BV algebra of Theorem B?
\end{q2}

To answer Question 2 we construct a new operad denoted $\cB$ and called the cyclic brace operad, the cyclic analog of the brace operations.  The cyclic brace operad contains the $\op{L}ie$ operad as a suboperad and so one can apply the Maurer-Cartan formalism to produce an operad of natural operations which acts on any cyclic operad after choice of MC element.  Calculating the homology of these natural operations we found the homology of the uncompactified moduli spaces of punctured Riemann spheres $\op{M}_{0,n+1}$.  After Getzler we call the operad $\Sigma H_\ast(\op{M}_{0,n+1})$ the gravity operad $\op{G}rav$.  

\begin{thmc}  Let $\op{O}$ be a cyclic operad in the category of differential graded vector spaces, and let $\zeta\in MC(\cnsl{O})$.  Then the complex of cyclic (co)invariants $(\op{O}^\bullet,\delta_\zeta)$ is an algebra over a dg operad computing the homology of $\op{M}_{0,\ast+1}$. 
In particular the cohomology of the complex of cyclic (co)invariants is a gravity algebra.  Moreover the failure of the gravity relations on the cochain level is measured by explicitly constructed homotopies.  The gravity and BV algebras associated to such data are related by a long exact sequence constructed from the inclusion $\cnsl{O}\to\nsl{O}$.  
\end{thmc}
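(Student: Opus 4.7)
The plan is to parallel the strategy behind Theorems A and B, but replacing the minimal operad with a new dg operad tailored to the cyclic (co)invariant subcomplex. First I would construct the cyclic brace operad $\cB$ by hand: its arity $n$ component is spanned by cyclically symmetrized analogs of the Gerstenhaber brace operations, with composition given by the appropriate symmetrized insertions. The key structural feature is that $\op{L}ie \hookrightarrow \cB$ as a suboperad, with bracket the antisymmetrization of the binary brace. Given $\zeta\in MC(\cnsl{O})$, the Lie suboperad supplies a Maurer--Cartan formalism and $\delta_\zeta = [\zeta,-] + d$ squares to zero on $\cnsl{O}$ precisely because $\zeta$ is cyclically invariant (so it preserves the subcomplex and the MC equation still holds). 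The action of $\cB$ on $\cnsl{O}$ is then induced, operation by operation, from the symmetrized insertions in $\op{O}$, checking that cyclic (co)invariance is preserved by each generator.

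The central computation is identifying $H_\ast(\cB)$ with the gravity operad $\op{G}rav = \Sigma H_\ast(\op{M}_{0,n+1})$. Here I would exhibit explicit cycle representatives of the arity $n$ gravity generators $\{a_1,\dots,a_n\}$ (one in each arity) as suitable cyclic brace elements, and then verify the gravity relations hold in $H_\ast(\cB)$ by producing, at the chain level, explicit null-homotopies witnessing each relation. These homotopies are themselves higher cyclic braces in $\cB$, which is why the operad structure is rich enough: the relations fail on cochains but their failure is an exact boundary, proving the second assertion of Theorem C. To upgrade this to the first assertion (that $\cB$ is a model for $C_\ast(\op{M}_{0,\ast+1})$), I would match $\cB$ with a known cellular chain model for the moduli space, either via a dimension count together with acyclicity of the augmentation (spectral sequence using the filtration by number of brace insertions), or by comparison with Getzler's original description of gravity via suspended cyclic operations. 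The main obstacle lies precisely here: producing enough explicit homotopies to cover all generalized gravity relations (not just the quadratic ones), and simultaneously controlling the operad's homotopy type so that no extraneous classes survive. I expect this to require an inductive argument on arity together with a careful study of the quotient complex that kills non-cyclic brace monomials.

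Finally, for the long exact sequence, observe that the inclusion $\iota \colon \cnsl{O} \hookrightarrow \nsl{O}$ is a chain map with respect to the twisted differentials, since $\zeta\in MC(\cnsl{O})\subset MC(\nsl{O})$ and the untwisted differential and the bracket $[\zeta,-]$ both preserve cyclic (co)invariants. The short exact sequence
\[
0 \longrightarrow \cnsl{O} \stackrel{\iota}{\longrightarrow} \nsl{O} \longrightarrow \nsl{O}/\cnsl{O} \longrightarrow 0
\]
of complexes yields the desired long exact sequence in cohomology; a Connes-type identification of the quotient with a shift of $\cnsl{O}$ then exhibits the connecting map as the BV operator $\Delta$ of Theorem B restricted along $\iota$, thereby linking the gravity algebra structure on $H^\ast(\cnsl{O},\delta_\zeta)$ to the BV algebra structure on $H^\ast(\nsl{O},\delta_\zeta)$. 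The compatibility of the gravity brackets with the BV structure (i.e.\ that $\iota$ intertwines each gravity $n$-ary bracket with the iterated BV commutator) should follow directly from the definition of $\cB$ as symmetrized braces and the expression of the $\op{G}_\infty$ operations in Theorem A in the same brace language.
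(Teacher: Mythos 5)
The central step of your plan fails: you propose to identify $H_\ast(\cB)$ with $\op{G}rav$ and to produce the gravity relations as null-homotopies inside $\cB$, but $\cB$ is a graded operad with no differential (each $\cB(n)$ is spanned by oriented planar trees, concentrated in a single degree), so its homology is itself and is far larger than $\op{G}rav(n)$; moreover there are no boundaries in $\cB$ to serve as homotopies. More fundamentally, the cyclic brace operad does not see the Maurer--Cartan element at all: the $k$-ary gravity generators for $k\geq 3$ and every homotopy witnessing a gravity relation necessarily involve insertions of $\zeta$. In the paper this is encoded by black vertices with $A_\infty$ labels: the acting dg operad is $\ZA$, spanned by oriented non-rooted $A_\infty$-labeled trees (cyclic braces together with corollas evaluating $\zeta$), embedded into the minimal operad $\TA$ by summing over all choices of root, with the differential coupling white-angle contractions to the $A_\infty$ labels. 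The action on $(\cnsl{O},\delta_\zeta)$ is then inherited from the Theorem A action of $\TA$ on $\nsl{O}$, with closure under the inclusion $\cnsl{O}\subset\nsl{O}$ checked on cyclic braces; and the identification $H_\ast(\ZA)\cong\op{G}rav$ is carried out by comparison with spineless cacti, realizing $\Sigma^{-1}\ZZ(n)$ as cellular chains on $\op{C}act^1(n)/S^1$, together with explicit generators $\gamma_k$ (which contain a black vertex, hence do not lie in $\cB$) and explicit bounding chains $\rho_{k,l}$ built from wheels and interchangers. Equivalently, the correct object is a model for Willwacher's twist $\text{Tw}(\cB)$, not $\cB$ itself; the paper proves $H^\ast(\text{Tw}(\cB))\cong\op{G}rav$ as a corollary. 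A smaller slip in the same direction: in the cyclic setting the bracket is not the antisymmetrization of a pre-Lie brace --- the one-edge tree in $\cB$ already \emph{is} the bracket, via the sum of $\circ_{ij}$ operations.

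Your last paragraph is closer to the paper, but it also glosses a needed hypothesis. The identification of the cokernel of $\cnsl{O}\to\nsl{O}$ with a shift of $\cnsl{O}$ (your ``Connes-type identification'') rests on the acyclicity of the auxiliary complex $(\nsl{O},\delta^\prime)$, which the paper obtains from an extra degeneracy coming from a unit; both this step and the very existence of the BV operator $\Delta$ as connecting map require $\op{O}$ to be a \emph{unital} cyclic operad (and the MC element unital), a hypothesis your argument never invokes. Without it, the exactness of $0\to coker(in)\stackrel{1-t}\to(\nsl{O},\delta^\prime)\stackrel{N}\to\cnsl{O}\to 0$ still holds in characteristic zero, but the contracting homotopy $\delta^\prime s_1-s_1\delta^\prime=id$ does not, and the long exact sequence in the stated form is not available.
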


Theorem C is the central result of this paper and it may be interpreted as an $S^1$-equivariant version of Deligne's conjecture (see Corollary $\ref{equiDel}$ and the ensuing discussion).  The chain operad which we construct is the cyclic analog of the minimal operad of \cite{KS} and is denoted $\ZA$.  In particular the constituent spaces $\ZA(n)$ serve as new combinatorial chain models for the moduli spaces $\op{M}_{0,n+1}$.  An example of a consequence of Theorem C is the following.

\begin{cora}  The cyclic cohomology of a Frobenius algebra or a cyclic $A_\infty$ algebra is a gravity algebra.  This structure is induced at the cochain level by the action of $\ZA$ on Connes' $C_\lambda^\ast$ complex.
\end{cora}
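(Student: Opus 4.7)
The plan is to exhibit the hypotheses of Theorem C in the setting of a Frobenius algebra (resp. cyclic $A_\infty$ algebra) and then translate the resulting structure through the standard identification between cyclic coinvariants of the endomorphism operad and Connes' $C^\ast_\lambda$ complex.

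First I would set up the cyclic operad. For a Frobenius algebra $(A,\langle -,-\rangle)$ the endomorphism operad $\op{End}_A(n)=\operatorname{Hom}(A^{\tensor n},A)$ acquires a cyclic structure: the invariant nondegenerate pairing yields a natural isomorphism $\operatorname{Hom}(A^{\tensor n},A)\cong \operatorname{Hom}(A^{\tensor n+1},\kk)$, $f\mapsto \langle f(-,\dots,-),-\rangle$, and the $\Z/(n+1)$-action by cyclic permutation of tensor factors on the right-hand side transports to an action on $\op{End}_A$. Associativity and unitality of the operadic composition in the cyclic-operad sense then follow from the symmetry and nondegeneracy of the Frobenius form.

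Second I would produce the Maurer-Cartan element. For the Frobenius case, the multiplication $\mu_2\colon A^{\tensor 2}\to A$ sits in $\op{End}_A(2)$; its Maurer-Cartan equation $\mu_2\circ_1\mu_2-\mu_2\circ_2\mu_2=0$ is associativity, and its cyclic invariance is precisely the identity $\langle ab,c\rangle=\langle a,bc\rangle$. Hence $\mu_2\in MC(\cnsl{\op{End}_A})$. For a cyclic $A_\infty$ algebra one instead collects the higher multiplications $\{\mu_n\}_{n\ge 2}$ into a single element $\zeta\in\nsl{\op{End}_A}$; the $A_\infty$ relations are the Maurer-Cartan equation, and the defining symmetry property of a \emph{cyclic} $A_\infty$ algebra is exactly $\zeta\in MC(\cnsl{\op{End}_A})$.

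Third I would identify $(\cnsl{\op{End}_A},\delta_\zeta)$ with Connes' complex $(C^\ast_\lambda(A),b)$. Under the adjunction above the cyclic $\Z/(n+1)$-action on $\op{End}_A(n)$ becomes Connes' cyclic operator on $\operatorname{Hom}(A^{\tensor n+1},\kk)$; taking coinvariants thus produces exactly $C^\ast_\lambda(A)$. A direct unwinding of the twisted differential $\delta_\zeta$ yields the Hochschild differential (resp. its $A_\infty$ analogue), which on the cyclic coinvariant quotient is the standard Connes-Hochschild differential computing cyclic cohomology. With this translation in place, Theorem C applied to $(\op{End}_A,\zeta)$ hands us the action of $\ZA$ on $C^\ast_\lambda(A)$ and therefore the gravity algebra structure on $HC^\ast(A)$.

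The main obstacle I anticipate is bookkeeping: checking that the signs and the order of arguments arising in the cyclic structure on $\op{End}_A$ really match the conventional signs of Connes' cyclic permutation and that ``coinvariants'' on the operadic side correspond to $C^\ast_\lambda$ rather than to some shifted or dual variant. Once the sign conventions are aligned the statement follows as a direct specialization of Theorem C with no further input.
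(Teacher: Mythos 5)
Your proposal is correct and is essentially the paper's own argument: the paper's example takes $\op{P}=\op{A}s$ in Definition $\ref{ccdef}$, which (since $\op{A}s$ is regular and self-dual) unwinds to exactly your setup — the cyclic operad $End_A$ with the cyclically invariant MC element given by the Frobenius multiplication or the cyclic $A_\infty$ structure via Theorem $\ref{cycrepthm}$, the identification of the cyclic invariants with Connes' $C^\ast_\lambda$ under $Hom(A^{\tensor n},A)\cong Hom(A^{\tensor n+1},\kk)$, and then Theorem $\ref{actthm}$ together with Theorem $\ref{calcthm}$. The sign/(co)invariant bookkeeping you flag is handled in the paper by the suspension conventions and the characteristic-zero identification of invariants with coinvariants (Remark $\ref{invrmk}$), so no further input is needed.
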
 

So {\bf in summary,} starting from a cyclic operad and a Maurer-Cartan element we construct a BV algebra and a gravity algebra as the cohomology of a pair of complexes, with an explicit up-to-homotopy structure, that fit together in a long exact sequence.  Examples of this construction arise from {\it eg} Frobenius algebras, symplectic dialgebras, string topology, deformation complexes, Fukaya categories and $S^1$-spaces.  Examples are discussed in Section $\ref{exsec}$. 

\subsection*{Connections with the literature.}

Theorem A is a generalization of several results in the literature.  Most notably when $\op{O}$ is the endomorphism operad of an associative algebra this result recovers the original Deligne conjecture.  For a summary of the history of this conjecture and its proofs one may consult the MathSciNet review of \cite{MS} written by A.A. Voronov.  If $\op{O}$ is the endomorphism operad of an $A_\infty$ algebra this result recovers a generalization proved in \cite{KS} and \cite{KSch}.  If the MC element is homogenous with respect to arity, this result recovers the generalization of \cite{MS} for operads with multiplication.  Additional examples of this theorem have been proven when $\op{O}$ is of the form $\op{P}^!\tensor End_A$, for a non-$\Sigma$ Koszul operad $\op{P}^!$ and a $\op{P}$-algebra $A$, in \cite{MM}, \cite{YauD}, and \cite{ValD} which give results in the language of operadic cohomology.  The above theorem proves a stronger result:

\begin{cora}  Let $\op{P}$ be a Koszul operad and a regular operad and let $A$ be a $\op{P}_\infty$-algebra.  Then there is a chain model for $\op{D}_2$ which acts on the operadic cochains $C^\ast_\op{P}(A)$ inducing the Gerstenhaber structure on cohomology. 
\end{cora}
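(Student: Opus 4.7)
The plan is to realize Corollary A as a direct application of Theorem A. Following the paragraph preceding the statement, I take the operad $\op{O}:=\op{P}^!\otimes End_A$, the arity-wise tensor product of the Koszul dual operad with the endomorphism operad; since $\op{P}$ is regular, $\op{P}^!$ descends to a non-$\Sigma$ operad, so $\op{O}$ is most naturally constructed and viewed in the non-$\Sigma$ setting. Under this regularity, the associated Lie algebra $\nsl{O}$ is canonically identified (together with its intrinsic bracket) with the operadic cochain complex $C^\ast_\op{P}(A)$ as defined in \cite{MM,ValD}.

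The next step is to exhibit the $\op{P}_\infty$-structure on $A$ as a Maurer-Cartan element $\zeta\in MC(\nsl{O})$. By the Koszul duality theory for operads, a $\op{P}_\infty$-algebra structure on $A$ is by definition a twisting morphism from the Koszul dual cooperad of $\op{P}$ to $End_A$, equivalently a Maurer-Cartan element in the associated convolution Lie algebra. In the regular setting this convolution Lie algebra agrees with $\nsl{O}$ (up to the standard (de)suspension conventions), so the $\op{P}_\infty$-structure on $A$ produces the required $\zeta$, and the twisted differential $\delta_\zeta$ coincides with the intrinsic differential on operadic cochains. Theorem A then immediately provides the desired action of a chain model for $\op{D}_2$ on $(\nsl{O},\delta_\zeta)\cong C^\ast_\op{P}(A)$, and hence a Gerstenhaber structure on $H^\ast_\op{P}(A)$.

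To complete the proof one verifies that this Gerstenhaber structure agrees with the one previously constructed on operadic cohomology in \cite{MM,YauD,ValD}. Concretely, the bracket on $\nsl{O}$ is the commutator of the sum-of-insertions pre-Lie product, which should correspond under the identification above to the operadic Lie bracket; and the cup product (arising as Gerstenhaber's $B(\zeta,-,-)$ as described in the Introduction) should correspond to the operadic cup product coming from the Koszul pairing. The main obstacle is precisely this last compatibility check. While the construction is formally parallel to the associative case handled by the original Deligne conjecture, tracking signs, (de)suspensions, and Koszul-duality conventions through the identification $\nsl{O}\cong C^\ast_\op{P}(A)$ requires care. The regularity hypothesis is what renders the bookkeeping tractable, confining the verification to planar insertion operations on $\op{P}^!$ and sidestepping symmetric group equivariance on the cofree side of the duality.
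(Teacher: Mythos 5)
Your proposal is correct and follows essentially the same route as the paper: its proof of Corollary \ref{Dconj4} consists precisely of noting that regularity of $\op{P}$ is inherited by $D(\op{P}^!)$ and applying the non-$\Sigma$ form of Theorem A (Corollary \ref{nonsigmaaction}) to $\op{O}=\op{P}^!\otimes End_A$, with the $\op{P}_\infty$-structure supplying the Maurer--Cartan element via Theorem \ref{repthm} and the lifting Lemma \ref{liftlem}, and with the completed cochains $\hat{C}^\ast_\op{P}(A)$ identified with the (non-$\Sigma$) invariants exactly as you do. The compatibility you flag as the main obstacle is not part of the paper's argument: there ``the Gerstenhaber structure'' means the one produced by the action itself (Lemma \ref{GO}, the intrinsic bracket together with $B_2(\zeta;-,-)$), so no identification with the constructions of \cite{MM,ValD} is carried out.
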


A proof of Theorem A appeared in the author's PhD thesis \cite{Wardt}.  Let us underscore that with the work of \cite{KS} and \cite{KSch} to lean on, the proof of Theorem A simply boils down to the observation that the minimal operad acts in this more general context.

Cyclic operads were introduced by Getzler and Kapranov in \cite{GeK1} where cyclic homology of algebras over cyclic operads was introduced.  Our construction of the cyclic cohomology of an operad relative to a MC element is a basic generalization of their construction.  Several tools that we develop in the category of cyclic operads are generalizations of the underlying operadic constructions, many of which are due to Ginzburg and Kapranov \cite{GK}.

Theorem B is a generalization of the cyclic Deligne conjecture proved in \cite{KCyclic} using Cacti \cite{Vor},\cite{KCacti}. Alternatively, Tradler and Zeinalian \cite{TZ2},\cite{TZ1} studied the action of Sullivan chord diagrams.  The proof of Theorem B uses the construction of a chain model for the framed little disks given in the author's previous work \cite{Ward}.  This chain model can be considered an $A_\infty$ version of Cacti.

Gravity algebras were introduced by Getzler in \cite{Geteq} and subsequently studied by Westerland \cite{West}.  Theorem C seems to have few precursors on the cochain level.  On the level of homology, inspiration was Getzler's work \cite{G94}, \cite{Geteq} and Chas and Sullivan's string topology \cite{CS}.  However, we make no attempt to address questions of intersection pairing or Poincare duality at the chain level.  Indeed our approach is to work in a setting where these issues do not arise.

From both Deligne's conjecture and algebraic models for string topology stems a large literature of Hochschild cohomology operations and the above list of references is nowhere near exhaustive.  Our approach is to highlight the role of the brace and cyclic brace operations, the associated Lie structures and the Maurer-Cartan formalism.  This has several advantages.  First it is an approach amenable to generalization; moving beyond (non-$\Sigma$) operads to generalizations of operads by identifying the respective Lie structures in each context.  Second, it expands what our chain models naturally act on, essentially viewing the Hochschild complex as simply an example of an operad.  Third, our approach teases out a hierarchy amongst these operations.  For example, we prove that the cyclic bracket is defined before considering the cup product and for us the defining relation of \cite{CS} becomes a property.

Our work here-in is related to the work of Dolgushev and Willwacher \cite{VW} in several ways.  Firstly, an alternate proof of Theorem A  can be extracted from this paper using Willwacher's twisting construction Tw and its fundamental property.  Secondly, our construction of the natural operations in the cyclic setting is a small model for $\text{Tw}$ applied to the cyclic brace operad $\cB$.  As an application of the homology calculation given in the proof of Theorem C we are thus able to compute the cohomology of $\text{Tw}(\cB)$ as a corollary:
\begin{cora}
There is an isomorphism of reduced operads $H^\ast(\text{Tw}(\cB))\cong \Sigma H_\ast(\op{M}_{\ast+1})$.
\end{cora}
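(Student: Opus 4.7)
The plan is to reduce the statement to the cohomology computation already carried out in the proof of Theorem C. By the remark preceding the corollary, the operad $\ZA$ of natural operations on cyclic (co)invariants constructed in this paper is a ``small model'' for Willwacher's twisted operad $\text{Tw}(\cB)$. I would begin by making this precise as a quasi-isomorphism of dg operads relating $\ZA$ and $\text{Tw}(\cB)$, which then transports the homology calculation from $\ZA$ to $\text{Tw}(\cB)$.

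First I would recall Willwacher's construction $\text{Tw}$: given a dg operad equipped with a map from $\op{L}ie$, one adjoins formal Maurer-Cartan insertions and twists the differential by the inner derivation these elements define. In our setting the required map $\op{L}ie\to \cB$ is exactly the suboperad structure used to formulate the Maurer-Cartan equation in the cyclic brace context, as discussed before the statement of Theorem C. The universal MC element then plays the role of $\zeta$, and the twisted differential on $\text{Tw}(\cB)$ corresponds to $\delta_\zeta$ under the evaluation-at-$\zeta$ map $\text{Tw}(\cB)\to \cnsl{O}$.

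Next I would identify $\ZA$ inside $\text{Tw}(\cB)$ as the sub-dg-operad obtained by restricting the allowed insertion patterns to those which gave rise to the combinatorial chain model for $\op{M}_{0,\ast+1}$ in Theorem C. Since both operads are built from trees decorated by elements of $\cB$ (the universal MC insertions corresponding to internal vertices), the comparison is an explicit inclusion of trees, and I would verify it is a quasi-isomorphism by a filtration argument on the number of MC insertions, reducing to the fact that the extra generators of $\text{Tw}(\cB)$ not in $\ZA$ are acyclic. Combining this quasi-isomorphism with the identification $H^\ast(\ZA)\cong \Sigma H_\ast(\op{M}_{0,\ast+1})$ proved in Theorem C yields the desired isomorphism.

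The main obstacle I anticipate is the comparison step: one must pin down precisely which subcomplex of $\text{Tw}(\cB)$ is identified with $\ZA$ and then construct an explicit contracting homotopy on the quotient. A clean way to phrase this would be to exhibit both operads as computing the same Chevalley--Eilenberg-type complex of $\cB$ with coefficients in the universal MC module, with $\ZA$ arising from a small functorial resolution. Once this identification is in place, the statement that the inclusion $\ZA\hookrightarrow \text{Tw}(\cB)$ is a quasi-isomorphism should follow by a standard spectral sequence comparison. After that, the passage to the gravity operad is entirely internal to the proof of Theorem C.
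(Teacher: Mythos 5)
Your overall strategy is the same as the paper's: define the canonical inclusion $\ZA\hookrightarrow \text{Tw}(\cB)$ (admissible trees, i.e.\ Tw-type trees with no black vertices of valence $<3$, with $A_\infty$ labels blown up), prove it is a quasi-isomorphism by a filtration argument, and then quote the computation $H_\ast(\ZA)\cong\op{G}rav$ from Theorem $\ref{calcthm}$. However, the filtration you propose --- ``on the number of MC insertions'' --- does not work, and this is exactly where the real content of the proof lies. Every term of the twisted differential $\partial^{\text{Tw}}$ (splitting a black vertex via $T_v\circ_{n+1}T_{\bullet\bullet}$, or attaching $T_{\bullet\circ}$ at a white vertex or at the root side) increases the number of black vertices by exactly one. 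So the black-vertex count is a grading for which $\partial^{\text{Tw}}$ is homogeneous of degree $+1$: the associated graded of that filtration either is not preserved by the differential (increasing convention) or carries the zero differential (decreasing convention), and in neither case does it isolate an acyclic piece; moreover the number of black vertices is unbounded, so convergence is also an issue. Relatedly, the phrase ``the extra generators of $\text{Tw}(\cB)$ not in $\ZA$ are acyclic'' has no literal meaning; what one needs is acyclicity of the quotient (or of the relevant associated graded pieces), and that is precisely the step you have not supplied.

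The paper (following Dolgushev--Willwacher) filters instead by $\nu_{\leq 2}(T)-|T|$, where $\nu_{\leq 2}(T)$ is the number of black vertices of valence $1$ or $2$. With this choice the associated graded differential retains exactly the terms creating a bivalent black vertex, the associated graded splits as a direct sum over ``cores'' $\Gamma$ (trees with no bivalent black vertices), and each summand is a tensor product over the edges of $\Gamma$ of small complexes $V_e$ whose cohomology is computed by a parity argument (acyclic when an adjacent vertex is black and univalent, rank one in degree one otherwise); the surviving classes are precisely the admissible trees, i.e.\ $\ZA$. Two further points your sketch omits: the inclusion's compatibility with $\partial^{\text{Tw}}$ is not automatic --- the valence-one and valence-two black vertices created when differentiating an admissible tree must be shown to cancel in pairs --- and the statement only holds for the \emph{reduced} operad: the arity-zero part $\text{Tw}(\cB)(0)$ is a graph complex that must be discarded (and the restriction $n\geq 1$ is used in the parity argument). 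Your alternative ``Chevalley--Eilenberg with universal MC coefficients'' reformulation is plausible as intuition but is not carried out, so as written the proposal has a genuine gap at the quasi-isomorphism step; the passage from there to $\Sigma H_\ast(\op{M}_{0,\ast+1})$ via Theorem $\ref{calcthm}$ is fine.
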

To prove this result we use a straight forward adaptation of the arguments of \cite{VW} to the cyclic setting. 

Finally we mention that this work and many of the above references have been inspired directly and indirectly by introduction of Kontsevich's graph complexes \cite{Kont2}, \cite{Kont} and variants \cite{CV},\cite{Will1}.  Our complexes $\nsl{O}$ and $\cnsl{O}$ play the role of graph complexes, and our results could fairly be described as the calculation of natural operations on graph complexes.

\subsection*{Future Directions}  There are many interesting questions left unaddressed in this paper, several of which we would like to highlight.

{\bf  Higher genus analog.}  One may understand the operad structure of $\Sigma H_\ast(\op{M}_{0,\ast+1})$ via the Kimura, Stasheff, Voronov compactification $\underline{\op{M}}_{0,\ast+1}$ of \cite{KSV}, as gluing with a twist parameter.  In higher genus, this parametrized gluing is the topological analog of the $\fr{K}$-twisted modular operads of \cite{GeK2}.  A higher genus analog of the results here-in would seek to consider the action of a chain model for $\op{M}_{g,n}$ on the complex of invariants associated to a (non-$\Sigma$) $\fr{K}$-twisted modular operad and a solution to the quantum master equation.

{\bf  Feynman categorical generalizations.}  More generally, it would be interesting to expand the line of inquiry here-in to objects with operations parametrized by other classes of graphs.  In addition to modular operads one could consider dioperads or properads and the associated cobracket.  In \cite{KW} a generalization of operads is given which permits the construction of a space of natural operations via a nested limit-colimit formula.  This space of natural operations is typically a Lie algebra and one can ask for a general theorem about the space of dg operations after adding a MC element.

{\bf  Deformation theoretic interpretation.}  The bracket on operadic cohomology controls the deformation theory of a given algebra.  In an analogous way the cyclic bracket controls the deformation theory of the algebra under the restriction that deformations and equivalences be inner product preserving.  I have not found a full accounting of this in the literature; see however \cite{PS} in the associative case.  One could also ask for a deformation theoretic interpretation of the higher brackets.

{\bf Cyclic formality of cochains and higher brackets.}  From Example $\ref{polycycex}$ below we see that the complex computing the polydifferential cyclic cohomology associated to a manifold $M$ with volume form $\omega$, is an algebra over $\ZA$.  A cyclic version of the Hochschild-Kostant-Rosenberg theorem says that this complex is quasi-isomorphic to the complex of `$\Delta$-equivariant polyvector fields',  $(T_{poly}^{\ast+1}(M)[u], u\text{div}_\omega)$.  The $\Delta$-equivariant polyvector fields are naturally a dg gravity algebra by restricting the BV operations to only those operations with pass to $\Delta=\text{div}_\omega$ homology.  It is then natural to ask if the $L_\infty$ quasi-isomorphism of Calaque and Willwacher \cite{WillC} can be extended to the higher brackets.

{\bf  Cyclic Deligne conjecture for Koszul Calabi-Yau algebras.}  There is a BV--gravity structure on the Hochschild cohomology -- negative cyclic cohomology of a Koszul Calabi-Yau algebra, see Example $\ref{cyex}$.  How are these structures induced at the chain level?  Note for the underlying Gerstenhaber structure, the usual Deligne conjecture still applies.  But for the BV and gravity structures one expects new chain models acting on the cochains.  This problems should be called the `cyclic Deligne conjecture: B-side'.

{\bf  $E_\infty$ algebras and $S^1$ spaces.}  Given an $S^1$-space, our construction encodes operations on the cohomology and the $S^1$-equivariant cohomology.  However we stop at the $E_2$ level.  The full $E_\infty$ structure is encoded combinatorially by step diagrams, see \cite{MS}.  Step diagrams admit a filtration by $E_n$ operads, and the $E_2$ action coincides with the action described below.  Pushing out the $S^1$ operations and the $E_\infty$ operations along the $E_2$ operations gives an interesting invariant of the space along with the circle action encoded by `step diagrams with spines' which merits further study.  In the equivariant context, we could consider `cylindrical step diagrams', of which our construction would be a sub-operad.

{\bf Comparison of BV/Gravity structures (via symplectic cohomology).}  A general expectation says that (under suitable conditions and qualifiers) the cohomology level BV--gravity operations and the associated long exact sequence of string topology coincide with those on the Hochschild and cyclic cohomology of the Fukaya category of the cotangent bundle.  This correspondence may be achieved via comparisons with symplectic cohomology.  See eg \cite{PaulS} and \cite{MA} and the references there-in.  See \cite{BO1} in the equivariant context.  Both instances of these structures can be seen as arising from our chain level construction, see Example $\ref{fcex}$ and Example $\ref{kiex}$.  As such one may refine this expectation to an equivalence of $\op{TS}_\infty$ (and hence $\op{BV}_\infty)$ algebras in the non-equivariant case and an equivalence of $\ZA$-algebras in the equivariant case.  One may further refine this expectation by considering the possibility of a weak equivalence between the cyclic operads themselves.

\subsection*{Outline}  We begin with a review of the brace operations, operadic Lie algebras, operadic cohomology, the generalized Deligne conjecture, and the minimal operad of Kontsevich and Soibelman is Section $\ref{opssection}$.  This allows in particular for the proof of Theorem A.  In Section $\ref{sec:algcyc}$ we give several fundamental constructions related to cyclic operads.  In particular in this section we define the cyclic brace operad and the associated Lie structures, construct the long exact sequence relating them, define cyclic cohomology of a cyclic operad, establish the model structure on cyclic operads after \cite{KW}, and prove representation theorems for the MC functor.  In Sections $\ref{sec:bv}$ and $\ref{sec:grav}$ we give the actions on the respective complexes and prove Theorem B and Theorem C respectively.  Finally in Section $\ref{exsec}$ we gather together examples of our constructions.  Terminology and conventions for graphs are recorded in Appendix $\ref{sec:trees}$.  A review of the relevant particulars of cyclic operads is contained in Appendix $\ref{sec:cyc}$.

\subsection*{Acknowledgements}
I would like to thank Ralph Kaufmann whose help and guidance laid the groundwork for this project.  I would also like to thank Kei Irie, Dennis Sullivan, Bruno Vallette, Sasha Voronov, and Craig Westerland for several helpful conversations and Dan Petersen for a helpful email.  I am also grateful to the organizers and participants of the Oberwolfach mini-workshop on BV Algebras, Operads, and Hopf Algebroids for many interesting discussions which have improved this paper.  Finally, I would like to gratefully acknowledge the support of the Simons Center for Geometry and Physics where this paper was written.

\tableofcontents

\section{Operads and the generalized Deligne conjecture}\label{opssection}
Let us start by fixing some preliminaries.  We work principally in the symmetric monoidal category of differential graded vector spaces over a field $\kk$ of characteristic zero, although this last assumption is not always necessary.  Given graded vector spaces $V_n$ the vector space $\bigoplus_{n\in \mathbb{N}} V_n$ takes the total grading whereas $\coprod_{n\in \mathbb{N}} V_n$ takes the internal grading.  Our differentials take cohomological grading conventions, keeping in mind the following remark.

\begin{remark}\label{treedegremark}  A CW interpretation of the $\op{A}_\infty$ operad requires homological grading.  More generally we will consider operads with cells indexed by trees whose edges we view to have degree $1$ to match the topology.  When working with cellular operads we define the differentials to have degree $-1$ with the tacit assumption that taking the opposite grading yields an object in our standing category.
\end{remark}

We write $\Sigma V$ (resp. $\Sigma^{-1}V$) for the graded vector space with degrees shifted up (resp. down) $1$ degree from that of $V$.  Recall that by definition $V$ is an odd Lie algebra if $\Sigma^{-1} V$ is a graded Lie algebra.  In particular, the bracket in an odd Lie algebra has degree $-1$.

We assume the reader is familiar with operads; standard references for this material include \cite{MSS}, \cite{LV}.  For now we consider our operads to have $\op{O}(0)=0$.  Thus we are considering `reduced operads' in the parlance of some authors.  Our results can be phrased in both the categories of operads and of non-$\Sigma$ operads.  Of course these categories are related by the forgetful-free adjunction  (where $-\tensor \op{A}s$  is the left adjoint).  Since retaining the symmetric group action will be convenient for suspension and keeping track of signs, we prefer to work in the category of (symmetric) operads, and hence we define:

\begin{definition}\cite{ValD}\label{regdef}  An operad is called regular if it is in the image of the left adjoint $-\tensor \op{A}s$.
\end{definition}

The operadic suspension (resp. desuspension) will be denoted $\fr{s}\op{O}$ (resp. $\fr{s}^{-1}\op{O}$).  Explicitly $\fr{s}\op{O}(n)=\Sigma^{n-1}sgn_n\tensor \op{O}(n)$ where $sgn_n$ is the alternating representation of the symmetric group $S_n$.  The relationship between the suspension of an operad and the suspension of an algebra is the following (see \cite{MSS} Lemma 3.16).

\begin{lemma}\label{MSSlemma}  There is an isomorphism of operads $\fr{s}^{-1}End_A\cong End_{\Sigma A}$.  In particular, $\Sigma A$ is an $\op{O}$-algebra if and only if $A$ is an $\fr{s}\op{O}$-algebra.
\end{lemma}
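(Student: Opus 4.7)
The plan is to construct, for each $n$, an explicit $S_n$-equivariant isomorphism $\phi_n \colon \fr{s}^{-1}End_A(n) \xrightarrow{\sim} End_{\Sigma A}(n)$ that intertwines operadic composition, from which the ``in particular'' statement follows formally.

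First I would unpack the two sides. By definition $\fr{s}^{-1}End_A(n) = \Sigma^{1-n} sgn_n \tensor \text{Hom}(A^{\tensor n}, A)$, while $End_{\Sigma A}(n) = \text{Hom}((\Sigma A)^{\tensor n}, \Sigma A)$. The natural candidate for $\phi_n$ is ``conjugation by suspensions'': given $f \in \text{Hom}(A^{\tensor n}, A)$, set $\phi_n(f) = \Sigma \circ f \circ (\Sigma^{-1})^{\tensor n}$, with the Koszul sign generated by commuting the $n$-fold $\Sigma^{-1}$ past $f$. Since $\Sigma$ is an auto-equivalence of the ambient graded symmetric monoidal category, $\phi_n$ is a bijection, and naturality makes it compatible with the differentials.

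Second I would check $S_n$-equivariance. Transposing two $\Sigma a_i$'s in the target produces an extra Koszul sign because each $\Sigma$ is odd; this is exactly the sign by which the transposition acts on $sgn_n$ in the source. Extending multiplicatively to all of $S_n$ gives the equivariance. Third I would verify the $\circ_i$-compatibility. The composition in $\fr{s}^{-1}End_A$ is defined using the operadic suspension rule: it combines a Koszul sign from the shift generators of arity $1{-}n$, $1{-}m$, and $1{-}(n{+}m{-}1)$, a sign from multiplying the sign-representation lines under insertion, and the plain $\circ_i$ in $End_A$. On the target side, $\circ_i$ in $End_{\Sigma A}$ only picks up the Koszul sign from commuting a map of degree $|g|$ past the first $i{-}1$ suspensions in $(\Sigma A)^{\tensor(i-1)}$ together with the conjugation-by-$\Sigma$ formula for $\phi_n(f)$ and $\phi_m(g)$. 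A direct comparison shows these coincide.

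The ``in particular'' clause is then formal. An $\op{O}$-algebra structure on $\Sigma A$ is a morphism of operads $\rho \colon \op{O} \to End_{\Sigma A}$; composing with $\phi^{-1}$ yields $\op{O} \to \fr{s}^{-1}End_A$; applying the auto-equivalence $\fr{s}$ of the category of operads, this is the same datum as $\fr{s}\op{O} \to End_A$, i.e.\ an $\fr{s}\op{O}$-algebra structure on $A$, and the correspondence is visibly inverse.

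The main obstacle is the sign bookkeeping in the $\circ_i$-compatibility step: three distinct sign contributions on the source must conspire with two on the target. The cleanest way to avoid errors is to fix once and for all a generator $\sigma$ of the degree-$1$ shift line and carry out every computation as a manipulation of tensor products of generators $\sigma$, elements of $sgn_n$, and operations in $End_A$, using only the Koszul rule to commute them; then the identification becomes a bijective matching of basis vectors with identical signs, rather than a comparison of separately computed formulas.
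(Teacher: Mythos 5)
Your argument is correct, and it is the standard proof of this statement: the paper does not prove the lemma itself but quotes it from \cite{MSS} (Lemma 3.16), and the proof there is exactly the conjugation-by-suspension isomorphism $f\mapsto \Sigma\circ f\circ(\Sigma^{-1})^{\tensor n}$ you describe, with the Koszul signs from the odd shift line accounting for both the $sgn_n$ factor and the extra signs in the $\circ_i$ compositions. Your suggested bookkeeping device (carrying an explicit degree-$1$ generator through all tensor manipulations) is also the usual way this sign check is organized, so there is nothing to add.
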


\begin{remark}\label{signsremark1}
Given an operad $\op{O}$ with structure maps denoted by $\circ_i$ we can identify the elements of $\op{O}(n)$ with the elements of $\fr{s}\op{O}(n)$.  Under this identification, the structure maps $\widetilde{\circ}_i$ of $\fr{s}\op{O}$ satisfy
\begin{equation*}
a\tilde{\circ}_ib = (-1)^{(i-1)(m-1)+(n-1)\text{deg}(b)} a\circ_ib
\end{equation*}
for $a\in \op{O}(n)$ and $b\in\op{O}(m)$.
\end{remark}

\subsection{Algebraic Structure associated to an Operad}  Let $\op{O}$ be a dg operad.  The purpose of this section is to define and study four spaces associated to $\op{O}$ which will be denoted $\nsc{O}, \syc{O}, \nsl{O}, \syl{O}$, each of which is in particular an odd Lie algebra.  

The algebraic structures which we consider here-in arise from odd gluings \cite{KWZ}.  In general odd gluings are not equivalent to standard gluings.  However, operads and cyclic operads are equivalent, through shifts and suspensions, to their odd counterparts, and so we will be able to keep this structure mostly implicit.  Let us however describe the odd structures at work here, so as to give some explanation of the appearance of these shifts and suspensions.  See \cite{KWZ} for the full treatment.

\begin{definition} \label{odddef}  Let $\op{O}$ be a dg $\mathbb{S}$-module such that $\Sigma^{-1}\fr{s}^{-1}\op{O}$ is an operad.  Then we say $\op{O}$ is an odd operad.
\end{definition}

The functor $\Sigma\fr{s}$ is part of an equivalence of categories between odd operads and operads.  A fundamental example is the following.

\begin{example}\label{hochex}  Let $A$ be an associative algebra.  Then the Hochschild cochains of $A$ form an odd operad.  In particular
\begin{eqnarray*}
CH^\ast(A,A)\supset CH^n(A,A) :=\Sigma Hom(A^{\tensor n}, A)\tensor \Sigma^{n-1}\text{sgn}_n = \Sigma \fr{s}End_A(n)
\end{eqnarray*}
so $\{CH^n(A,A)\}_n$ is the oddification of $End_A$.
\end{example}

This example may be unsettling to the reader expecting the definition `$CH^n(A,A) = Hom(A^{\tensor n}, A)$'.  Indeed this definition is perfectly sufficient for defining $CH^\ast(A,A)$ if one is willing to define the differential by hand.  However, here, we want the differential to exist intrinsically.  Making the gluings odd ensures this, as will be seen below.

In analogy with this example, we now define the dg vector spaces associated to $\op{O}$ which we will consider.

\begin{definition}\label{lieopdef}
\begin{eqnarray*}
\nsc{O}:= \coprod_n\Sigma\fr{s}\op{O}(n) & \ \ \ & \syc{O}:= \coprod_n\Sigma(\fr{s}\op{O}(n))_{S_n} \\
\nsl{O}:= \prod_n\Sigma\fr{s}\op{O}(n)& \ \ \ & \syl{O}:= \prod_n\Sigma(\fr{s}\op{O}(n))^{S_n} 
\end{eqnarray*}
\end{definition}

\begin{remark}\label{invrmk}  Since we are working in a field of characteristic zero, the invariants and coinvariants are isomorphic.  We will often make use of this isomorphism and the associated maps:
\begin{equation*}
\xymatrix{\op{O}(n) \ar[r]^{[-] \ }  \ar@/_1pc/[rr]_{\pi}&\op{O}(n)_{S_n} \ar[r]^{\cong \ } & \op{O}(n)^{S_n} }
\end{equation*}
where $\pi(a):=\sum_{S_n} \sigma(a)/n!$.  This isomorphism will often be implicit, and we use the generic term `(co)invariants'.
\end{remark}

A consequence of Lemma $\ref{MSSlemma}$ is that the spaces defined in Definition $\ref{lieopdef}$ are in particular $\op{O}$-algebras.  However this structure does not (always) survive the twisting by a MC element, and is not the structure we will primarily consider.  Rather, in the remainder of this section we will establish odd pre-Lie structures on these spaces as well as brace algebra structures in the non-symmetric cases, $\nsc{O}$ and $\nsl{O}$.  In order to do this we will first recall these structures in the case of the coproducts $\nsc{O}$ and $\syc{O}$ and argue that they extend in a natural way to the products $\nsl{O}$ and $\syl{O}$.

\subsubsection{The pre-Lie structure}
To begin we consider $\coprod_n \op{O}(n)$.  Define $a\circ b =\sum_{i=1}^m a\circ_i b$, for $a\in\op{O}(m)$ and $b\in\op{O}(l)$ and extend $\circ$ linearly to all of $\coprod_n \op{O}(n)$.

\begin{lemma}  $(\coprod_n \op{O}(n), \circ)$ is a dg pre-Lie algebra.
\end{lemma}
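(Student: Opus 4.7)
The plan is to verify the two defining properties of a dg pre-Lie algebra on $L := \coprod_n \op{O}(n)$: first, that $d$ acts as a derivation of $\circ$, and second, that the associator $(a,b,c) := (a \circ b) \circ c - a \circ (b \circ c)$ is (graded) symmetric in its last two arguments. This is Gerstenhaber's classical construction, recorded here as the starting point for the extensions to $\syc{O}$, $\nsl{O}$, and $\syl{O}$ alluded to in the paragraph above; the argument is essentially bookkeeping on the two operadic associativity axioms for $\circ_i$.

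For the pre-Lie identity, I fix homogeneous $a \in \op{O}(n)$, $b \in \op{O}(m)$, $c \in \op{O}(\ell)$ and expand
\[
(a \circ b) \circ c = \sum_{i=1}^n \sum_{j=1}^{n+m-1} (a \circ_i b) \circ_j c.
\]
I partition the inner sum according to whether $j$ lies inside the image of $b$, i.e.\ $i \leq j \leq i+m-1$, or outside it. The sequential operadic axiom rewrites each inside-$b$ term as $a \circ_i (b \circ_{j-i+1} c)$; summed over $i$ and $j$ these reproduce $a \circ (b \circ c)$ exactly, so this block cancels against the second term of the associator. The remaining terms are parallel compositions $(a \circ_i b) \circ_j c$ with $j < i$ or $j > i + m - 1$. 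The parallel axiom converts each such term into one of the form $(a \circ_{j'} c) \circ_{i'} b$ with Koszul sign $(-1)^{|b||c|}$, and a direct re-indexing identifies the resulting double sum with $(-1)^{|b||c|}$ times the analogous remainder for $(a,c,b)$. Therefore $(a,b,c) = (-1)^{|b||c|}(a,c,b)$, i.e.\ the (graded) right-symmetric pre-Lie identity.

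Compatibility with the differential is then automatic: each structure map $\circ_i$ of a dg operad is a chain map, so
\[
d(a \circ_i b) = (da) \circ_i b + (-1)^{|a|}\, a \circ_i (db),
\]
and summing over $i$ yields the Leibniz rule $d(a \circ b) = (da) \circ b + (-1)^{|a|}\, a \circ (db)$.

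The one genuine source of difficulty is sign bookkeeping in the graded setting: keeping track of the Koszul sign $(-1)^{|b||c|}$ coming from the parallel axiom and ensuring that the re-indexing used to match the two remainder sums is carried out compatibly with that sign. I expect this to be the only real obstacle, and it is entirely routine; the underlying combinatorics of the operad axioms is exactly the classical one used by Gerstenhaber.
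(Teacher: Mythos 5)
Your proof is correct and is exactly the classical Gerstenhaber-style argument: split $(a\circ b)\circ c$ into sequential terms (which reproduce $a\circ(b\circ c)$) and parallel terms (which, via the parallel axiom with Koszul sign $(-1)^{|b||c|}$, biject with the parallel terms of $(a\circ c)\circ b$), giving graded right-symmetry of the associator, with the Leibniz rule following since each $\circ_i$ is a chain map. The paper states this lemma without proof, treating it as the standard fact going back to \cite{G} and \cite{GV}, so there is nothing in the paper's argument for yours to diverge from.
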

Applying the above Lemma to the operad $\fr{s}\op{O}$ makes $\nsc{O}$ an odd dg pre-Lie algebra.  As in Remark $\ref{signsremark1}$, we will often identify the elements of $\coprod_n \op{O}(n)$ with the elements of $\nsc{O}$, in which case the pre-Lie operation in $\nsc{O}$ is given by
\begin{equation}\label{signsline}
a\circ b =\ds\sum_{i=1}^n (-1)^{(i-1)(m-1)+(n-1)deg(b)}  a\circ_i b
\end{equation}
for $a\in\op{O}(n)$ and $b\in\op{O}(m)$, where $\circ_i$ denotes the structure maps of $\op{O}$. 

 Since $\nsc{O}$ is an odd dg pre-Lie algebra, its odd graded commutator is an odd Lie bracket:
\begin{equation*}
[a,b] := a\circ b - (-1)^{(|a|-1)(|b|-1)}b\circ a
\end{equation*}
The construction of the odd dg Lie algebra $(\nsc{O}, [-,-], d)$ is due in its original form to Gerstenhaber \cite{G} for the operad $\op{O}=End_A$ for an associative algebra $A$, in which case $\nsc{O}=CH^\ast(A,A)$, as in Example $\ref{hochex}$.

\subsubsection{Brace operations}  The odd pre-Lie operation defined above is the first in a family of so-called brace operations, which we now define.  These operations were first described by Getzler \cite{Brace1} and Kadeishvili \cite{Brace2} in the Hochschild context and by Gerstenhaber and Voronov \cite{GV} in a general operadic context.

\begin{definition}  Let $a, b_1\cdc b_n$ be arbitrary elements of an operad $\op{O}$ of arities $r, t_1\cdc t_n$ respectively.  We define the following element of $\coprod\op{O}(n)$: 
\begin{equation}\label{braceeq}
a\{b_1,\dots,b_n\}:= \ds\sum_I (\dots((a\circ_{i_1} b_1)\circ_{i_2}b_2)\dots \circ_{i_n} b_n)
\end{equation}
where the set $I$ consists of $n$-tuples $(i_1\cdc i_n)$ such that $i_{j+1}\geq i_j+t_j$ and $i_n \leq r-n+1+\sum_{l=1}^{n-1}t_l$.  By convention, a sum over the empty set is zero. The (ungraded) brace operation $B^0_n \in End_{\coprod\op{O}(n)}(n+1)$ is defined by:
\begin{equation*}
B^0_n(a;b_1,\dots,b_n)= a\{b_1,\dots,b_n\}
\end{equation*}
In particular $B^0_1(a;b)=a\circ b$.
\end{definition}

The suboperad of $End_{\coprod\op{O}(n)}$ generated by the ungraded brace operations has a useful interpretation in terms of planar rooted trees which we now define.

\begin{definition}\label{braceopdef}
Let $\B^0(n)$ be the $S_n$-module spanned by labeled planar rooted trees (defined in Appendix $\ref{sec:trees}$) with $n$ vertices.  Define an operad structure on the $\mathbb{S}$-module $\B^0$ by
\begin{equation*}
T\circ_i T^\prime := \sum_{[T,T^\prime,i]} T^{\prime\prime}
\end{equation*}
where $[T,T^\prime, i]$ is the set of labeled planar rooted trees such that $T^{\prime\prime}\in [T,T^\prime, i]$ if and only if both
\begin{itemize}
\item  The full subtree  of $T^{\prime\prime}$ generated by the vertices $i\cdc i+m-1$ is isomorphic to $T^\prime$ and,
\item  Under the identification above, $ T^{\prime\prime}/T^{\prime}\cong T$.
\end{itemize}
\end{definition}

Let $\beta^0_n$ be the planar rooted tree of height two with $n+1$ vertices, having one vertex of height $1$ labeled by $1$ and the remaining $n$ vertices of height $2$ labeled in the planar order.

\begin{lemma}  The trees $\{\beta^0_n\}$ generate the operad $\B^0$ under the operadic composition and $S_n$-action.  The assignment $\beta^0_n\mapsto B^0_n$ induces an injective morphism of operads $\B^0\to End_{\coprod\op{O}(n)}$. \end{lemma}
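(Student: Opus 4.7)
The plan is to prove the three assertions in turn: that the corollas $\beta_n^0$ generate $\B^0$ under operadic composition and $S_n$-action, that the assignment $\beta_n^0 \mapsto B_n^0$ extends to a morphism of operads, and that this morphism is injective.

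For generation, I would induct on the number $n$ of vertices of a labeled planar rooted tree $T \in \B^0(n)$, after first using the $S_n$-action to reduce to a canonical labeling (say, by depth-first traversal). The base case $n=1$ gives $T = \beta_0^0$. For $n > 1$, let the root of $T$ have $k$ children and let $T_1, \dots, T_k$ be the subtrees rooted at those children in planar order. By induction each $T_j$ lies in the suboperad generated by the $\beta_m^0$. The key combinatorial observation is that when $i$ is a leaf of the outer tree $T'$ (has no children), the composition $T' \circ_i T''$ in $\B^0$ reduces to a single term: the indexing set $[T',T'',i]$ has a unique element, because there are no ``old children'' of vertex $i$ to redistribute among the vertices of the inserted subtree $T''$. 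Composing $\beta_k^0$ successively with $T_1,\dots,T_k$ at its $k$ leaves, while tracking the label shifts, therefore yields $T$ exactly, with no correction terms.

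For the morphism, I would invoke the universal property of the free operad on the generators $\{\beta_n^0\}$. This free operad surjects onto $\B^0$ by the generation step, and it admits a canonical operad morphism to $End_{\coprod\op{O}(n)}$ sending each generator to $B_n^0$ and extending operadically. To descend to a morphism $\B^0 \to End_{\coprod\op{O}(n)}$ it suffices to show that the kernel of the first surjection is contained in the kernel of the second. This reduces to a direct bookkeeping check that the defining sum formula for composition in $\B^0$ matches term by term with the sum arising in the corresponding iterated brace computation in $\coprod\op{O}(n)$, via the classical bijection between placements of one planar rooted tree inside another and insertion patterns of brace operations---the tree interpretation of brace operations due to Getzler, Kadeishvili, and Gerstenhaber--Voronov.

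For injectivity, I would apply the morphism to a universal input. Taking $\op{O}$ to be a free operad on sufficiently many generators (one in each arity), a basis tree $T \in \B^0(n)$ maps to an operation whose value on generic free generators is an element of $\op{O}$ whose combinatorial shape uniquely encodes $T$, since free operads admit no nontrivial relations among compositions. Hence any linear combination $X = \sum_T c_T\,T$ that acts as zero must satisfy $c_T = 0$ for all $T$.

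The main obstacle I anticipate is the combinatorial verification in the morphism step: one must check that the sum over trees $T''' \in [T',T'',i]$ defining composition in $\B^0$ is in natural bijection with the sum of insertion patterns arising in the corresponding iterated brace operation, preserving planar order and vertex labels. This is a direct but careful bookkeeping exercise, after which both the well-definedness of the morphism and its injectivity follow cleanly from the free-operad surjection and the universal input, respectively.
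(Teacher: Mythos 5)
The paper states this lemma without proof (it is quoted as standard, going back to Gerstenhaber--Voronov \cite{GV}; the closest argument actually written out in the paper is the cyclic analogue, Lemma \ref{cycmorlem} and Proposition \ref{cycbrace}), so there is no in-paper proof to compare against; judged on its own, your argument is essentially correct and is the standard one. The generation step is clean: inserting into a leaf vertex makes the indexing set $[T',T'',i]$ a singleton, so peeling off the root corolla $\beta^0_k$ and the subtrees $T_1,\dots,T_k$ by induction writes every basis tree as an honest iterated composition of the $\beta^0_m$ (up to relabeling), with no linear combinations needed. For the morphism step, your free-operad framing is fine, but note that the kernel containment is not checked against an explicit presentation of the kernel; what the ``bookkeeping'' really amounts to is defining the candidate map on every basis tree by the sum-over-placements (trees-with-tails) formula, as in Equation \ref{braceeq} and its non-rooted analogue in Proposition \ref{cycbrace}, and verifying that tree composition in $\B^0$ matches composition of these operations term by term --- i.e. that $q=\Phi\circ p$ on iterated composites of generators. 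You correctly identify this bijection as the crux, and that is exactly where all the content of the lemma lives.

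Two smaller points. First, the base case of your induction is the one-vertex tree, which is the operad unit rather than a $\beta^0_n$; harmless, since any suboperad contains the unit. Second, your injectivity argument evaluates on a free operad $\op{O}$ with generators of large, distinct arities; this is the right proof of the right statement, but be aware it establishes injectivity only for such ``big enough'' $\op{O}$. Read literally, the lemma asserts injectivity for the ambient arbitrary dg operad $\op{O}$, which fails for degenerate choices (e.g. $\op{O}(n)=0$ for $n\geq 2$); so your universal-input reading is the charitable and correct one, and for everything the paper uses later (Lemma \ref{bracelem} and the action of $\TA$) only the existence of the morphism, not its injectivity, is needed.
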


The $0$ in the notation is meant to remind one that $\B^0$ is just a linear operad, i.e. concentrated in degree zero.  We will more often consider the desuspension of $\B^0$, which has the effect of giving edges degree $-1$.
\begin{definition}\label{bracedef}  Define $\B$ to be the operad $\fr{s}^{-1}\B^0$.  Define $B_n$ to be the image of $B_n^0$ under the desuspension.
The operation $B_n$ will be called the $n^{th}$ brace operation. 
\end{definition}

\begin{lemma}\label{bracelem}  For any operad $\op{O}$ (resp. non-$\Sigma$ operad), $\nsc{O}$ is a $\B$-algebra.  Moreover, if $\op{O}$ is a regular operad then $\syc{O}$ is a $\B$-algebra.
\end{lemma}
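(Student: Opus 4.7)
The plan is to deduce both claims from the preceding lemma, which produces an operad morphism $\B^0 \to End_{\coprod_n \op{O}(n)}$ for any operad $\op{O}$, by matching the suspension conventions. Since $\B = \fr{s}^{-1}\B^0$, this pairs naturally with the operadic suspension result (Lemma $\ref{MSSlemma}$).

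For the first claim, I would apply the preceding lemma with $\fr{s}\op{O}$ in place of $\op{O}$: the space $\coprod_n \fr{s}\op{O}(n)$ is then a $\B^0 = \fr{s}\B$-algebra. Lemma $\ref{MSSlemma}$ immediately yields that its suspension $\Sigma\coprod_n \fr{s}\op{O}(n) = \nsc{O}$ is a $\B$-algebra, and unwinding the signs recovers the Koszul conventions already displayed in equation $(\ref{signsline})$ for the pre-Lie generator.

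For the second claim, write $\op{O} = \tilde{\op{O}} \tensor \op{A}s$ for some non-$\Sigma$ operad $\tilde{\op{O}}$. Then $\op{O}(n)_{S_n} \cong \tilde{\op{O}}(n)$ canonically, so $\syc{O}$ identifies with the coproduct analog of $\nsc{\tilde{\op{O}}}$ built from the non-$\Sigma$ operad $\tilde{\op{O}}$, on which the formula of equation $(\ref{braceeq})$ makes literal sense. I would then verify that the brace action on $\nsc{O}$ descends along the canonical projection $\nsc{O} \to \syc{O}$, exploiting that the partial compositions in $\op{O}$ split across the two tensor factors of $\tilde{\op{O}} \tensor \op{A}s$.

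The main obstacle is this final descent. The brace formula in $(\ref{braceeq})$ singles out a planar ordering of the insertion positions of the second-level inputs $b_1, \dots, b_n$, and so is not $S_n$-equivariant in any naive sense. What regularity supplies is precisely the auxiliary $\op{A}s$-structure which reconciles this planar order with the $S_n$-symmetry of the coinvariants; this is why the hypothesis of regularity is genuinely needed for the second statement rather than being an artifact of the proof strategy.
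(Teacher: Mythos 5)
Your first half is fine and matches the mechanism the paper itself uses: apply the ungraded brace morphism $\B^0\to End_{\coprod_n\op{O}(n)}$ to the operad $\fr{s}\op{O}$ and then transfer along $\Sigma$ via Lemma $\ref{MSSlemma}$, recovering the signs of Remark $\ref{signsremark1}$. The problem is your second half. The higher brace operations on $\nsc{O}$ do \emph{not} descend along the projection $\nsc{O}\to\syc{O}$, even when $\op{O}=\tilde{\op{O}}\tensor\op{A}s$ is regular. Concretely, let $\tilde{\op{O}}$ be the free non-$\Sigma$ operad on one ternary generator $t$ and two distinct unary generators $u,v$, and let $a=t$, $\sigma=(1\,2)\in S_3$. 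Then $[a]=[\sigma a]$ in the coinvariants, but
\begin{equation*}
[B_2(a;u,v)]=[t(u,v,-)]+[t(u,-,v)]+[t(-,u,v)],\qquad
[B_2(\sigma a;u,v)]=[t(v,u,-)]+[t(u,-,v)]+[t(-,u,v)],
\end{equation*}
and $[t(u,v,-)]\neq[t(v,u,-)]$ since these are distinct planar trees. The source of the failure is exactly the non-equivariance of the planar insertion order that you identified; regularity does not repair it, because the obstruction concerns permuting the inputs of the \emph{outer} argument $a$, and the auxiliary $\op{A}s$-factor cannot reorder the insertions of $b_1,\dots,b_n$ term by term. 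Indeed Remark $\ref{symrmk}$ already signals this: on coinvariants of a general (even regular) operad only the pre-Lie operation, and the symmetric braces it generates, survive.

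The correct route to the second statement is the identification you set up but then abandoned: regularity gives $(\fr{s}\op{O}(n))_{S_n}\cong\fr{s}\tilde{\op{O}}(n)$ canonically, so $\syc{O}$ is isomorphic to the complex $\nsc{\tilde{O}}$ built from the underlying non-$\Sigma$ operad $\tilde{\op{O}}$, and one simply invokes the first statement of the lemma in its non-$\Sigma$ form: the brace formula $(\ref{braceeq})$ is defined intrinsically on $\tilde{\op{O}}$ using its planar $\circ_i$ maps, with no equivariance to check. What regularity buys is this canonical non-$\Sigma$ model for the coinvariants, not a descent of the $\B$-action from $\nsc{O}$; the projection $\nsc{O}\to\syc{O}$ is compatible only with the pre-Lie part (as in Remark $\ref{symrmk}$), not with the higher braces. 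Deleting the descent step and replacing it by this direct application of the non-$\Sigma$ case repairs the argument.
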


In accordance with Remark $\ref{signsremark1}$, the $\B$-algebra structure on $\nsc{O}$ is still denoted $B_n(a;b_1,\dots,b_n)= a\{b_1,\dots,b_n\}$ where now the $\circ_i$ compositions appearing in the sum $a\{b_1,\dots,b_n\}$ are those in the operad $\fr{s}\op{O}$.

\begin{remark}\label{symrmk}  In the symmetric setting, the space $\syc{O}$ is also a pre-Lie algebra, whose product $\overline{\circ}$ is given by the formula $[a]\overline{\circ}[b]:=[a\circ b]$ \cite{KM}.  There is a symmetric version of the brace operad, but it turns out to be entirely generated by the pre-Lie operation, see \cite{CL} and \cite{LM}.
\end{remark}

\subsubsection{Extendable operations}\label{extsec}  Above we have shown that $\nsc{O}$ is a $\B$-algebra.  We would now like to extend these operations to the product $\nsl{O}$.  In order to do so we now give a condition which permits the extension of an operation on the coproduct to the product.  

Write $A_\bullet:=\coprod_{i\in \mathbb{N}}A_i$ and $A^\bullet:=\prod_{i\in \mathbb{N}}A_i$ for a collection of vector spaces $A_i$.

\begin{definition}  An operation $\phi\in End_{A_\bullet}(n)$ is called extendable if for every $N\in \mathbb{N}$, $\pi_N\circ\phi(A_{r_1}\tdt A_{r_n})=0$ for all but finitely many $n$-tuples $(r_1\cdc r_n)$, where $\pi_N\colon \coprod A_i \to A_N$ is the projection.
\end{definition}

\begin{lemma}  The extendable operations form a suboperad of $End_{A_\bullet}$, which we denote by $End^{ex}_{A_\bullet}$.  Moreover there is a morphisms of operads
\begin{equation*}
End_{A_\bullet}\supset End^{ex}_{A_\bullet} \to End_{A^\bullet}
\end{equation*}
\end{lemma}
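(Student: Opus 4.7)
The plan is to establish the suboperad property by direct verification of the operad axioms restricted to extendable operations, and then to construct the morphism to $End_{A^\bullet}$ via a componentwise formula whose finiteness is guaranteed exactly by the extendability condition.

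For the first claim, the identity element of $End_{A_\bullet}(1)$ is trivially extendable since $\pi_N \circ \mathrm{id}|_{A_r}$ vanishes unless $N=r$. The $S_n$-equivariance of the condition is immediate, as permuting a tuple $(r_1,\ldots,r_n)$ preserves the finiteness of its support set. The core step is closure under operadic composition. Given $\phi \in End^{ex}_{A_\bullet}(n)$ and $\psi \in End^{ex}_{A_\bullet}(m)$ and $a_j \in A_{s_j}$, decomposing $\psi(a_i\tdt a_{i+m-1}) = \sum_k \pi_k\psi(a_i\tdt a_{i+m-1})$ yields
\begin{equation*}
\pi_N(\phi \circ_i \psi)(a_1 \tdt a_{n+m-1}) = \sum_k \pi_N \phi\bigl(a_1,\ldots, a_{i-1},\, \pi_k\psi(a_i\tdt a_{i+m-1}),\, a_{i+m},\ldots, a_{n+m-1}\bigr).
\end{equation*}
A summand is nonzero only if the outer tuple $(s_1, \ldots, s_{i-1}, k, s_{i+m}, \ldots, s_{n+m-1})$ lies in the finite support set of $\pi_N \phi$, by extendability of $\phi$; in particular $k$ is constrained to a finite set of values. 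For each such $k$, the inner tuple $(s_i, \ldots, s_{i+m-1})$ must lie in the finite support set of $\pi_k \psi$, by extendability of $\psi$. A finite union of finite sets yields extendability of $\phi \circ_i \psi$.

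For the second claim, define $\Phi$ on $\phi \in End^{ex}_{A_\bullet}(n)$ by specifying $\Phi(\phi)(a_1\tdt a_n) \in A^\bullet$ componentwise: for $a_i \in A^\bullet$ with components $a_i^{(r)} := \pi_r(a_i) \in A_r$, set
\begin{equation*}
\pi_N \Phi(\phi)(a_1 \tdt a_n) := \sum_{(r_1, \ldots, r_n)} \pi_N \phi\bigl(a_1^{(r_1)} \tdt a_n^{(r_n)}\bigr),
\end{equation*}
which is a finite sum for each $N$ by extendability, and thus gives a well-defined element of $A^\bullet = \prod_N A_N$. Multilinearity is inherited from $\phi$, as is $S_n$-equivariance; compatibility of $\Phi$ with $\circ_i$ and the operadic unit reduces to the same reindexing used in the previous paragraph, and is legitimate precisely because the sums are componentwise finite.

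The main (and essentially only) subtle point is the compositional bookkeeping. The extendability condition expresses a uniform bound on support per output degree, and its stability under nesting of operations relies on applying the condition in two stages: first to the outer operation to pin the intermediate index $k$ to a finite set, then to the inner operation for each such $k$. The remaining verifications are routine linearity and equivariance checks.
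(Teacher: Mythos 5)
Your proof is correct and follows essentially the same route as the paper: the map to $End_{A^\bullet}$ is defined componentwise using the finite support set for each output degree $N$ (the paper phrases this as factoring through the finite coproduct indexed by $J_{N,\phi}$ and appealing to the universal property of the product), and the suboperad claim — which the paper simply calls straightforward — you verify explicitly via the two-stage finiteness argument for $\circ_i$. No gaps; your composition bookkeeping is exactly the detail the paper leaves to the reader.
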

\begin{proof}  To show that $End^{ex}_{A_\bullet}$ is a suboperad is straight-forward.  For the second claim, if $\phi$ is an extendable operation of arity $n$ then for a fixed $N$ there is a finite sets of $n$-tuples, call this set $J_{N,\phi}$, such that the projection is non-zero.  Then $\phi$ acts on $A^\bullet$ in the $N$ factor via the composition,
\begin{equation}
(A^\bullet)^{\tensor n} \to \ds\coprod_{J_{N,\phi}} A_{j_1}\tdt A_{j_n} \stackrel{\pi_N\circ \phi}\to A_N
\end{equation}
and we appeal to the universality of the limit to define a map to $A^\bullet$.
\end{proof}

\begin{proposition}  The brace operations are extendable.
\end{proposition}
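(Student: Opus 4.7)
The plan is to verify the extendability criterion by a direct arity count, using the explicit formula~(\ref{braceeq}). First I would observe that if $a\in\op{O}(r)$ and $b_j\in\op{O}(t_j)$, then every summand in
\[
a\{b_1,\dots,b_n\} \;=\; \sum_I (\dots((a\circ_{i_1}b_1)\circ_{i_2}b_2)\dots\circ_{i_n}b_n)
\]
lies in $\op{O}\bigl(r+\sum_{j=1}^n t_j - n\bigr)$, since each $\circ_i$ operation decreases arity by $1$ when the inserted element has arity $t_j$ replacing a single leaf. Consequently, for a fixed target arity $N$, the projection $\pi_N\bigl(a\{b_1,\dots,b_n\}\bigr)$ can be nonzero only when
\[
r + t_1 + \dots + t_n \;=\; N + n.
\]

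Next I would invoke the standing hypothesis (fixed at the top of this section) that our operads are reduced, i.e.\ $\op{O}(0)=0$. This forces $r\ge 1$ and $t_j\ge 1$ for every $j$, so the arity tuples $(r,t_1,\dots,t_n)$ contributing to $\pi_N\circ B_n$ are compositions of the integer $N+n$ into $n+1$ positive parts. There are only $\binom{N+n-1}{n}$ such compositions, which is a finite number. Hence only finitely many $n$-tuples of arity components feed into the arity-$N$ slot, verifying extendability of $B_n$ as an operation on $\nsc{O}_\bullet=\coprod_n \Sigma\fr{s}\op{O}(n)$.

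Finally, I would note that the suspensions $\Sigma$ and $\fr{s}$ in the definition of $\nsc{O}$ do not interfere with the argument: they act factor-wise in the arity grading and merely shift degrees and twist by sign representations, so the projections $\pi_N$ descend from $\op{O}(N)$ to $\Sigma\fr{s}\op{O}(N)$ and the combinatorial count is unchanged. Thus $B_n$ lies in $End^{ex}_{\nsc{O}_\bullet}$ for every $n$, and by the preceding lemma it extends canonically to an operation on $\nsl{O}=\prod_n\Sigma\fr{s}\op{O}(n)$.

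There is no serious obstacle here; the only point that requires vigilance is flagging the reducedness hypothesis $\op{O}(0)=0$, without which the arity count would be infinite and the statement would fail. Since this hypothesis is in force throughout the section, the proposition follows immediately from the arity formula above.
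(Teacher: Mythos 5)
Your proof is correct and follows essentially the same route as the paper's one-line argument: the output of $B_n$ on inputs of arities $(r,t_1,\dots,t_n)$ lands in arity $r+\sum_j t_j-n$, so for fixed $N$ only finitely many arity tuples can hit the $N$-th component, and the suspensions are irrelevant to the count. One small correction to your closing remark: the hypothesis $\op{O}(0)=0$ is not actually needed for extendability, since there are only finitely many tuples of nonnegative integers with fixed sum $N+n$; reducedness shrinks the count but the proposition would not fail without it.
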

\begin{proof}  For a generator $\beta_m$ we see that $\pi_N\beta_m(\op{O}(n_0)\tdt \op{O}(n_m))=0$ unless $N=-m+\sum n_i$.
\end{proof}
\begin{corollary}  $\nsl{O}$ is a $\B$-algebra and $\syl{O}$ is a pre-Lie algebra.
\end{corollary}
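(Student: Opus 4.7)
The proof is essentially a matter of assembling the preceding results, and I would present it as a short and direct consequence. Here is the plan.

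For the first assertion, Lemma \ref{bracelem} provides a morphism of operads $\rho\colon \B \to End_{\nsc{O}}$ which encodes the $\B$-algebra structure on the coproduct. The proposition immediately preceding the corollary shows that each generator $\beta_m$ is sent by $\rho$ to an extendable operation, and since extendable operations form a suboperad (first assertion of the earlier Lemma), the morphism $\rho$ factors through the inclusion $End^{ex}_{\nsc{O}} \hookrightarrow End_{\nsc{O}}$. Composing with the operad morphism $End^{ex}_{\nsc{O}} \to End_{\nsl{O}}$ (second assertion of that Lemma), one obtains the desired structure morphism $\B \to End_{\nsl{O}}$, which is the $\B$-algebra structure on $\nsl{O}$.

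For the second assertion, by Remark \ref{symrmk} the coinvariants $\syc{O}$ carry a pre-Lie product $\overline{\circ}$ given by $[a]\,\overline{\circ}\,[b] := [a\circ b]$. The same arity-counting argument used to prove extendability of the brace operations applies verbatim: the projection $\pi_N\bigl([a]\,\overline{\circ}\,[b]\bigr)$ vanishes unless the arities $n,m$ of $a,b$ satisfy $N = n + m - 1$, so $\overline{\circ}$ is extendable in the sense defined above. Therefore, using the invariants–coinvariants identification of Remark \ref{invrmk} together with the operad morphism $End^{ex} \to End$ from the earlier Lemma, the operation $\overline{\circ}$ extends to a well-defined bilinear operation on $\syl{O}$. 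The pre-Lie identity is universal (it involves only finitely many entries at a time and is a pointwise statement in each arity component), so it transfers automatically to the extended operation.

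The only point that requires a small amount of care is the interplay between invariants and coinvariants in the symmetric case. I would make explicit at the start that under the isomorphism of Remark \ref{invrmk}, $\syl{O}$ is naturally identified with $\prod_n \Sigma(\fr{s}\op{O}(n))_{S_n}$, at which point the extendability construction from the earlier Lemma applies directly with $A_n := \Sigma(\fr{s}\op{O}(n))_{S_n}$. No genuine obstacle arises; the proof is really just verifying that the arity-grading argument that underlies the extendability proposition is insensitive to whether we work with the full spaces or their (co)invariants.
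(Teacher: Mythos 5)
Your proof is correct and follows exactly the route the paper intends: the $\B$-algebra structure on $\nsc{O}$ (Lemma \ref{bracelem}), the extendability of the braces and of the pre-Lie product, and the operad morphism $End^{ex}_{A_\bullet}\to End_{A^\bullet}$, together with the characteristic-zero identification of invariants and coinvariants in the symmetric case. The paper leaves this as an immediate consequence of the preceding Proposition and Lemma, and your write-up is a faithful (slightly more detailed) spelling-out of that same argument.
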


In particular the brace algebra structure on $\nsl{O}$ is determined in a natural way by the action on individual factors. 

\subsection{Algebraic Structure associated to an Operad + MC element}  

We will be interested in the above odd Lie algebras along with choices of Maurer-Cartan (MC) elements.  Given an odd dg Lie algebra $(\fr{g},[-,-],d)$ we define the Maurer-Cartan set of $\fr{g}$, called $MC(\fr{g})$ to be the elements $\zeta\in\fr{g}$ of degree $2$ such that
\begin{equation*}
0=d(\zeta)+\frac{1}{2}[\zeta,\zeta]
\end{equation*}
We will often view $MC(-)$ as a functor from the category of dg Lie algebras to the category of sets.  Note the fact that our Lie algebra is odd results in MC elements of degree $2$, instead of the usual degree $1$.

\begin{lemma}\label{mclemma} \label{twistedder}  Let $\fr{g}$ be an odd dg Lie algebra with $\zeta\in MC(\fr{g})$.  Then the equation
\begin{equation*}
\delta_\zeta(-):=d(-)+[\zeta,-]
\end{equation*}
defines a square zero differential on $\fr{g}$ of degree $+1$ which is an odd derivation of $[-,-]$.
\end{lemma}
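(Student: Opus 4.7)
The plan is a direct computation in three parts, using only the axioms of an odd dg Lie algebra (antisymmetry, odd Jacobi, $d^2=0$, and $d$ an odd derivation of $[-,-]$) together with the MC equation for $\zeta$.

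First, I would verify the degree. In an odd Lie algebra the bracket has degree $-1$, and $|\zeta|=2$, so $[\zeta,-]$ raises degrees by $|\zeta|-1=1$, matching $d$. The key parity observation is that $|\zeta|-1$ is odd; this is the source of every sign cancellation below.

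For the square-zero property, I would expand
\[
\delta_\zeta^2(x)=d^2x+d[\zeta,x]+[\zeta,dx]+[\zeta,[\zeta,x]].
\]
Using $d^2=0$ and $d[\zeta,x]=[d\zeta,x]+(-1)^{|\zeta|-1}[\zeta,dx]=[d\zeta,x]-[\zeta,dx]$, the two middle terms collapse to $[d\zeta,x]$. For the final term, the odd Jacobi identity applied to $\zeta,\zeta,x$ gives
\[
[\zeta,[\zeta,x]]=[[\zeta,\zeta],x]+(-1)^{(|\zeta|-1)^2}[\zeta,[\zeta,x]]=[[\zeta,\zeta],x]-[\zeta,[\zeta,x]],
\]
so $[\zeta,[\zeta,x]]=\tfrac12[[\zeta,\zeta],x]$. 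Therefore $\delta_\zeta^2(x)=\bigl[d\zeta+\tfrac12[\zeta,\zeta],\,x\bigr]$, which vanishes by the MC equation.

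For the odd derivation property $\delta_\zeta[a,b]=[\delta_\zeta a,b]+(-1)^{|a|-1}[a,\delta_\zeta b]$, I would expand $\delta_\zeta[a,b]=d[a,b]+[\zeta,[a,b]]$ and apply the odd derivation property of $d$ together with the odd Jacobi identity $[\zeta,[a,b]]=[[\zeta,a],b]+(-1)^{(|\zeta|-1)(|a|-1)}[a,[\zeta,b]]$. Since $|\zeta|-1=1$, the Jacobi sign simplifies to $(-1)^{|a|-1}$, and the four resulting terms regroup exactly as $[\delta_\zeta a,b]+(-1)^{|a|-1}[a,\delta_\zeta b]$. There is no genuine obstacle here: the argument is sign bookkeeping, and the odd-Lie conventions with $|\zeta|$ even are calibrated precisely so that every parity aligns.
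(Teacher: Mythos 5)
Your computation is correct, and it is exactly the standard argument: the paper states Lemma \ref{mclemma} without proof, and the sign bookkeeping you carry out (degree count, collapsing $d[\zeta,x]+[\zeta,dx]$ to $[d\zeta,x]$, the odd Jacobi identity giving $[\zeta,[\zeta,x]]=\tfrac12[[\zeta,\zeta],x]$, and the derivation check) is precisely what is being left implicit. The division by $2$ is harmless since the Maurer--Cartan equation is already formulated with the factor $\tfrac12$ and the paper works over a field of characteristic zero.
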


Finally notice that if $\fr{g}$ is in particular odd dg pre-Lie, the MC equation becomes $0=d(\zeta)+\zeta\circ\zeta$.

\subsubsection{The operad case}  Given an operad $\op{O}$ and a MC element $\eta$ in one of the odd Lie algebras $\nsc{O}, \nsl{O}, \syc{O}, \syl{O}$ we will often consider said Lie algebra with the twisted differential $\delta:=\delta_\eta$ as constructed above.  In this case if we write $\eta=\prod \eta_n$, then we must have $\text{deg}(\eta_n)=2-n$.  In particular $\eta_1\in\op{O}(1)$ has internal degree $1$, and by arity considerations $d_{\eta_1}(a):=d_{\op{O}}(a)+[\eta_1,a]$ is a differential (since $\delta$ is).  As such we may always restrict our attention to MC elements of the form $\eta^\prime:=\prod_{n\geq 2}\eta_n$ at the cost of replacing the original aritywise differential with $d_{\eta_1}$.

\subsubsection{Representability of MC elements.}
Recall (e.g. from \cite{GK}) the operads $\op{A}_\infty$ encoding $A_\infty$ algebras and $\op{L}_\infty$ encoding $L_\infty$ algebras.  The following well known theorem states that these objects represent the MC functors.

\begin{theorem}\label{repthm}  There are natural bijective correspondences
\begin{equation*}
MC(\nsl{O})\cong Hom_{dg\op{O}ps}(\op{A}_\infty, \op{O}) \ \ \ \ \text{and} \ \ \ \  MC(\syl{O})\cong Hom_{dg\op{O}ps}(\op{L}_\infty, \op{O})
\end{equation*}
\end{theorem}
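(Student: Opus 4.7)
The plan is to use the fact that $\op{A}_\infty$ and $\op{L}_\infty$ are quasi-free dg operads, so that a morphism out of either is determined by a collection of elements satisfying a condition that is precisely the Maurer-Cartan equation. First I would recall the presentation of $\op{A}_\infty$ as the free operad on a (non-$\Sigma$) generator $\mu_n$ in arity $n$ and degree $2-n$ for each $n\geq 2$, equipped with the degree $+1$ differential
\begin{equation*}
d(\mu_n) \;=\; \sum_{\substack{k+l=n+1\\ k,l\geq 2}} \sum_{i=1}^{k} \pm\, \mu_k \circ_i \mu_l,
\end{equation*}
with the signs dictated by Koszul conventions. Analogously $\op{L}_\infty$ is the free (symmetric) operad on generators $\ell_n \in (\text{sgn}_n)$ in arity $n$ and degree $2-n$, with differential given by the symmetric version of the same formula.

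By the universal property of the free operad, a morphism of graded operads $\op{A}_\infty \to \op{O}$ is the same as a choice of elements $\eta_n \in \op{O}(n)$ of degree $2-n$ for each $n\geq 2$, i.e.\ a collection $\eta = \prod_n \eta_n$. Tracking the shifts in Definition \ref{lieopdef}, namely that $\Sigma\fr{s}\op{O}(n)$ shifts $\op{O}(n)$ up by $n$, such a collection is exactly a degree $2$ element of $\nsl{O}$. The chain-map condition on the morphism translates, arity by arity, into $d_{\op{O}}(\eta_n) + \sum_{k+l=n+1,\,i} \pm\, \eta_k \circ_i \eta_l = 0$; assembled over all $n$, and unpacked using the sign rule \eqref{signsline}, this is the identity $d(\eta) + \eta\circ\eta = 0$ in the odd pre-Lie algebra $\nsl{O}$, which is the MC equation since $[\eta,\eta]=2\eta\circ\eta$ for $\eta$ of odd degree. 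This gives the bijection in the $\op{A}_\infty$ case, and the naturality in $\op{O}$ follows directly from the universal property. The $\op{L}_\infty$ case is handled by the same argument applied to the symmetric generators, where the sign-sensitive image of $\ell_n$ lives in $(\fr{s}\op{O}(n))^{S_n}$ and the pre-Lie structure used is the one on $\syl{O}$ noted in Remark \ref{symrmk}.

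The main obstacle I expect is bookkeeping of signs: verifying that the Koszul sign rule defining the internal differential on $\op{A}_\infty$ matches, after the operadic (de)suspension $\fr{s}$ and the shift $\Sigma$, the sign conventions in \eqref{signsline} for the odd pre-Lie product $\circ$. This is essentially a one-time check using Lemma \ref{MSSlemma} and the fact that the odd operad structure (Definition \ref{odddef}) is engineered so that quadratic compositions of degree $2-n$ elements in $\op{O}$ produce a degree $1$ element in $\nsl{O}$ with the correct sign. Once this compatibility is established and the invariant versus coinvariant identification of Remark \ref{invrmk} is invoked for the symmetric case, both bijections and their naturality are immediate.
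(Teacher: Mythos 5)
Your proposal is correct and follows essentially the same route the paper has in mind: Theorem \ref{repthm} is the standard representability statement for quasi-free operads, and the paper's own proof of the cyclic analogue (Theorem \ref{cycrepthm}) is exactly your argument — a morphism out of $\op{A}_\infty$ (resp. $\op{L}_\infty$) is determined by the images of the generators $\mu_n$ (resp. $\ell_n$), these assemble after the shift $\Sigma\fr{s}$ to a degree $2$ element of $\nsl{O}$ (resp. an $S_n$-invariant one in $\syl{O}$), and compatibility with the differential is precisely the Maurer-Cartan equation $d(\eta)+\eta\circ\eta=0$. The only cosmetic point is your phrase ``$\eta$ of odd degree'': $\eta$ has degree $2$, and the factor of $2$ in $[\eta,\eta]=2\eta\circ\eta$ comes from the odd Lie convention, where the Koszul sign uses $|\eta|-1$.
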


We may occasionally abuse notation by using the same character to refer to both sides of this correspondence.

\begin{remark}  Theorem $\ref{repthm}$ tells us that the category of pairs $(\op{O}, \eta)$ where $\op{O}$ is an operad and $\eta$ is a MC element is an undercategory, and thus a model category where the canonical forgetful functor creates weak equivalences, fibrations, and cofibrations \cite{hirsch}.
\end{remark}

In the event that $\op{O}$ is not dg, viewed as dg with trivial differential and concentrated in degree zero, the situation simplifies as follows.

\begin{corollary}\label{MCcor}  For a linear operad $\op{O}$ (not dg) there are natural bijective correspondences
\begin{equation}
MC(\nsc{O})\cong Hom_{\op{O}ps}(\op{A}s, \op{O}) \ \ \ \ \text{and} \ \ \ \  MC(\syc{O})\cong Hom_{\op{O}ps}(\op{L}ie, \op{O})
\end{equation}
\end{corollary}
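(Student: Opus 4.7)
The plan is to deduce the corollary from Theorem \ref{repthm} by viewing the linear operad $\op{O}$ as a dg operad concentrated in internal degree zero with zero differential, and checking that both sides of the bijections collapse appropriately under this degree constraint.

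On the Maurer--Cartan side, an element $a\in\op{O}(n)$ of internal degree zero sits in $\nsl{O}=\prod_n\Sigma\fr{s}\op{O}(n)$ at total degree $1+(n-1)=n$, since $\Sigma$ contributes $+1$ and $\fr{s}$ contributes the operadic suspension shift $n-1$. Because a MC element has total degree $2$, any $\zeta\in MC(\nsl{O})$ is supported in arity two. In particular only one arity component is nonzero, so the inclusion $\nsc{O}\hookrightarrow\nsl{O}$ induces an equality $MC(\nsc{O})=MC(\nsl{O})$. Since $d$ vanishes the MC equation reduces to $\zeta\circ\zeta=0$, which by \eqref{signsline} reads $\zeta\circ_1\zeta-\zeta\circ_2\zeta=0$, i.e.\ associativity of $\zeta\in\op{O}(2)$. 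The $\syl{O}$-case is parallel: the $S_2$-action on $\fr{s}\op{O}(2)=\Sigma sgn_2\tensor\op{O}(2)$ identifies invariants with antisymmetric elements of $\op{O}(2)$, and the MC equation for such a $\zeta$ encodes the Jacobi identity.

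On the operad side, the generator $\mu_n$ of $\op{A}_\infty$ carries the degree required so that a dg morphism $\op{A}_\infty\to\op{O}$ sends it to a MC-degree element of $\op{O}(n)$; the same arity-two concentration forces $f(\mu_n)=0$ for all $n\neq 2$ when the target is concentrated in degree zero. The $\op{A}_\infty$-relation of the form $d(\mu_3)=\mu_2\circ_1\mu_2-\mu_2\circ_2\mu_2$ (up to signs) becomes, after applying $f$, the associativity of $f(\mu_2)\in\op{O}(2)$, so $f$ factors uniquely through the canonical quotient $\op{A}_\infty\twoheadrightarrow\op{A}s$. The analogous argument with $\op{L}_\infty\twoheadrightarrow\op{L}ie$ gives the second bijection. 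Combining these identifications with Theorem \ref{repthm} yields the stated correspondences.

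The only real content is the sign matching: one must verify that the bijection of Theorem \ref{repthm} sends $\mu_2$ to $\zeta_2$ so that associativity on the morphism side translates into $\zeta_2\circ\zeta_2=0$ on the MC side, and similarly for Jacobi. This is a direct check using \eqref{signsline} and the standard presentation of $\op{A}_\infty$ and $\op{L}_\infty$, and I would expect it to be the main (though routine) bookkeeping step.
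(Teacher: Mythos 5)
Your argument is correct and is exactly the route the paper intends: the paper states Corollary \ref{MCcor} as an immediate consequence of Theorem \ref{repthm} once a linear operad is viewed as a dg operad concentrated in degree zero, so the whole content is the degree/arity collapse you carry out (MC elements forced into arity $2$, hence $MC(\nsc{O})=MC(\nsl{O})$, and dg morphisms from $\op{A}_\infty$, resp. $\op{L}_\infty$, forced to factor through $\op{A}s$, resp. $\op{L}ie$, with the MC equation becoming associativity, resp. Jacobi). The sign bookkeeping you defer is indeed routine and matches formula \eqref{signsline} and Remark \ref{symrmk}.
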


There is a standard commutator map $\op{L}_\infty\to\op{A}_\infty$ and thus we have a map of sets $MC(\nsl{O})\to MC(\syl{O})$.  It is natural to ask when there is a lift and this occurs when $\op{O}$ is regular.  Moreover we have:
\begin{lemma}\label{liftlem}  If $\op{O}$ is regular then the map $MC(\nsl{O})\to MC(\syl{O})$ is a bijection.
\end{lemma}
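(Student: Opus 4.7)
Proof plan. By Theorem~\ref{repthm}, the map $MC(\nsl{O})\to MC(\syl{O})$ is identified with precomposition
\[
c^*:\ \mathrm{Hom}_{dg\op{Ops}}(\op{A}_\infty, \op{O})\ \longrightarrow\ \mathrm{Hom}_{dg\op{Ops}}(\op{L}_\infty, \op{O})
\]
along the canonical commutator morphism $c:\op{L}_\infty\to\op{A}_\infty$, which on the generator $\ell_n$ yields the antisymmetrization $\sum_{\sigma\in S_n}\mathrm{sgn}(\sigma)\,\sigma\cdot m_n$ of the $\op{A}_\infty$-generator. My plan is to show that when $\op{O}$ is regular, $c^*$ is a bijection by constructing an explicit inverse from the regular structure.

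For a regular operad, write $\op{O}=\op{P}\otimes\op{A}s$ with $\op{P}$ a non-$\Sigma$ dg operad, so $\op{O}(n)=\op{P}(n)\otimes\kk[S_n]$ as $S_n$-modules, with $S_n$ acting only on the second factor. In characteristic zero the $\mathrm{sgn}_n$-isotypic component of $\kk[S_n]$ is one-dimensional, spanned by $e_n:=\sum_\sigma\mathrm{sgn}(\sigma)\sigma$, which yields a canonical identification of $\op{P}(n)$ with the $\mathrm{sgn}_n$-isotypic component of $\op{O}(n)$ via $p\mapsto p\otimes e_n$. The (odd) antisymmetry of the generators $\ell_n$ forces any morphism $\phi_L:\op{L}_\infty\to\op{O}$ to satisfy $\phi_L(\ell_n)=p_n\otimes e_n$ for uniquely determined $p_n\in\op{P}(n)$ of the correct degree. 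The candidate inverse lift is then $\phi_A:\op{A}_\infty\to\op{O}$ defined on generators by $\phi_A(m_n):=p_n\otimes\mathrm{id}$ (with $\mathrm{id}\in S_n$ the identity) and extended by the operad structure of $\op{P}\otimes\op{A}s$.

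That the composition $c^*\circ(\text{lift})$ recovers $\phi_L$ is immediate:
\[
c^*(\phi_A)(\ell_n)=\sum_\sigma\mathrm{sgn}(\sigma)\,\sigma\cdot(p_n\otimes\mathrm{id})=p_n\otimes\sum_\sigma\mathrm{sgn}(\sigma)\sigma=p_n\otimes e_n=\phi_L(\ell_n).
\]
The main obstacle is twofold: showing that $\phi_A$ is a well-defined dg operad morphism and that the lift exhausts all of $\mathrm{Hom}_{dg\op{Ops}}(\op{A}_\infty,\op{O})$. For the former, one matches the $\op{L}_\infty$-relations satisfied by the antisymmetrized family $\{p_n\otimes e_n\}$ against the $\op{A}_\infty$-relations required of $\{p_n\otimes\mathrm{id}\}$; since operadic composition in $\op{P}\otimes\op{A}s$ factors through composition in $\op{P}$, both sets of relations collapse to the same identities on $\{p_n\}\subset\op{P}$, linking sums over planar rooted trees on the $\op{A}_\infty$ side to antisymmetrized sums over abstract rooted trees on the $\op{L}_\infty$ side. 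For the latter, I would invoke the adjunction between operads and non-$\Sigma$ operads applied to the regular operad $\op{A}_\infty=\op{A}_\infty^{ns}\otimes\op{A}s$, identifying $\mathrm{Hom}_{dg\op{Ops}}(\op{A}_\infty,\op{P}\otimes\op{A}s)$ with the corresponding non-symmetric Hom space of dg operad morphisms $\op{A}_\infty^{ns}\to\op{P}$, which matches the non-symmetric data $\{p_n\}$ parametrizing the Lie side of $c^*$.
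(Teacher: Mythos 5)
Your first half is the right idea for producing a lift: identify the map with precomposition along $c\colon\op{L}_\infty\to\op{A}_\infty$, note that for $\op{O}=\op{P}\tensor\op{A}s$ any equivariant image of $\ell_n$ is forced to be $p_n\tensor e_n$ with $p_n\in\op{P}(n)$ unique, and lift by $m_n\mapsto p_n\tensor\mathrm{id}$. But even here the substantive step is only asserted: you must actually show that the $\op{L}_\infty$-relations imposed on $\{p_n\tensor e_n\}$, once expanded in the free factor $\kk[S_n]$, are equivalent to the non-symmetric $\op{A}_\infty$-relations on $\{p_n\}$ (for instance by extracting the coefficient of a fixed permutation and using that $x\mapsto x\tensor e_n$ is injective on $\op{P}(n)$; already for $n=3$ this is the statement that the Jacobiator of the antisymmetrization equals the full alternation of the associator, which in the regular case vanishes only if the associator does). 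That computation is the real content of the lemma and is missing from the proposal.

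The genuine gap is in the injectivity/exhaustion step. The free--forgetful adjunction gives $\mathrm{Hom}_{dg\op{O}ps}(\op{A}_\infty,\op{P}\tensor\op{A}s)\cong\mathrm{Hom}_{ns}(\op{A}_\infty^{ns},U(\op{P}\tensor\op{A}s))$, where $U$ is the forgetful functor; the target has components $\op{P}(n)\tensor\kk[S_n]$, not $\op{P}(n)$, so you cannot conclude that every morphism $\op{A}_\infty\to\op{O}$ sends $m_n$ to $p_n\tensor\mathrm{id}$. In fact the map $c^\ast$ is not injective on all of $MC(\nsl{O})$: take $\op{O}=\op{A}s$ (which is regular) and the two MC elements $m_2\mapsto\mu$ and $m_2\mapsto-(12)\mu$ (the associative products $xy$ and $-yx$); these are distinct in $MC(\nsl{O})$ yet have the same commutator, hence the same image in $MC(\syl{O})$. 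So the two-sided inverse you propose cannot exist in the form stated, and no argument for exhausting $\mathrm{Hom}_{dg\op{O}ps}(\op{A}_\infty,\op{O})$ by lifts of the form $p_n\tensor\mathrm{id}$ can succeed. What your construction does yield, once the relation-matching above is carried out, is the identification that the paper actually uses in Corollary $\ref{nonsigmaaction}$: for regular $\op{O}=\op{N}\tensor\op{A}s$, every MC element of $\syl{O}$ lifts, and $MC(\syl{O})$ is in bijection with the non-symmetric MC set $MC(\nsl{N})$ of the underlying non-$\Sigma$ operad; the bijectivity assertion should be understood (and proved) in that form rather than as injectivity of $MC(\nsl{O})\to MC(\syl{O})$.
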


In the non dg case, the lifting interpretation applies with the standard (commutator) map $\op{L}ie\to\op{A}s$.

\subsection{Operadic cohomology theories: a fundamental example}\label{opcosec}  Let $\op{P}$ be a finitely generated Koszul operad and let $A$ be a $\op{P}_\infty$-algebra.  There is a sequence $\op{L}ie\to\op{P}^!\circ\op{P}\to \op{P}^!\tensor\op{P}$ (\cite{GK} Corollary 2.2.9b, see also Lemma $\ref{cll}$ below), where $\circ$ represents the Manin white product and where $\op{P}^!$ is the quadratic dual of $\op{P}$.  Since $\op{L}_\infty$ is cofibrant this morphism lifts to a morphism $\op{L}_\infty\to D(\op{P})\tensor\op{P}$.\footnote{Here $D$ is the `dual dg operad' in the sense of \cite{GK}.  That is, $D(\op{P})=\Omega(\op{P}^\ast)$ in the notation of $eg$ \cite{LV}.}  Thus we have a sequence of dg operads
\begin{equation}\label{mccoelement}
\op{L}_\infty\to \op{P}^!\tensor D(\op{P}^!)\to \op{P}^!\tensor End_A
\end{equation}
The composite morphism allows us to define the $\op{P}$-cochains of a $\op{P}_\infty$-algebra.

\begin{definition}  Let $\op{P}$ be a Koszul operad as above and let $A$ be a $\op{P}_\infty$-algebra.  Define $\op{O}=\op{P}^!\tensor End_A$.  Then the completed $\op{P}$-cochains of $A$ are defined to be the cochain complex,
\begin{equation*}
\hat{C}^\ast_\op{P}(A):=(\syl{O}, \delta_\eta)
\end{equation*}
where $\delta_\eta:=d+[\eta,-]$ after Lemma $\ref{mclemma}$ with MC element from equation $\ref{mccoelement}$ via Theorem $\ref{repthm}$.  The completed $\op{P}$-cohomology of $A$, denoted $\hat{H}^\ast_\op{P}(A)$, is defined to be the cohomology of this complex.

In the case that the MC element $\eta\in MC(\syl{O})$ lifts to ${\eta^\prime}\in MC(\syc{O})$ along the standard inclusion $\syc{O}\to\syl{O}$, we define the (noncompleted) cochain complex
\begin{equation*}
C^\ast_\op{P}(A):=(\syc{O}, \delta_{\eta^{\prime}})
\end{equation*}
and define $H^\ast_\op{P}(A)$ to be the cohomology of this complex.
\end{definition}

When $\op{P}=\op{A}s$ we recover Hochschild complex of an associative or $A_\infty$ algebra.  When $\op{P}=\op{L}ie$ we recover the Chevalley-Eilenberg complex of a Lie or $L_\infty$ algebra.  Further examples of interest include Harrison cohomology \cite{Harr}, Poisson cohomology \cite{P1}, Leibniz cohomology \cite{LL}, etc.  These cohomology theories provide a framework to extend the classical deformation theory of Gerstenhaber \cite{Gerst2} and Nijenhuis and Richardson \cite{NR} in the associative and Lie contexts.  See \cite{GerstS} and \cite{LV}.

\subsection{The minimal operad}\label{minopsec}
In this section we will define an operad $\TA$ with the following property:  given any dg operad $\op{O}$ and an element $\zeta\in MC(\nsl{O})$, the operad $\TA$ acts on $(\nsl{O},\delta_\zeta)$.  The action will be the subject of Section $\ref{actionsection}$.  The operad $\TA$ is isomorphic to the `minimal operad' of \cite{KS} and is a chain model for the little disks operad $\op{D}_2$.  As an informal description the operad $\TA$ is an insertion operad of rooted $A_\infty$ labeled trees.  When reading this section note that our terminology and conventions for trees are recorded in Appendix $\ref{sec:trees}$.

\begin{definition}  Define $\TA(n)$ to be the graded vector space generated by rooted $A_\infty$-labeled trees with $n$ white vertices with $S_n$ action by a signed permutation of the labels of the white vertices. 
\end{definition}

For a rooted $A_\infty$-labeled tree $T$ we define the grading and the differential $\partial$ `locally'.  The degree of a white vertex is one less than the number of arcs, and the degree of a black vertex is the degree of the associahedron cell which labels it.  The degree of a tree is the sum of the degrees of its vertices, keeping in mind Remark $\ref{treedegremark}$.  To define the differential we first define the differential at a vertex $v$, call this $\partial(T;v)$, and then define
\begin{equation}\label{mindif}
\partial(T):=\sum_{v\in T}\pm \partial(T;v)
\end{equation}
To define $\partial(T;v)$ we have two cases.

$\textbf{Case 1}$: $v$ is a black vertex with label $\alpha\in\op{A}_\infty(m)$.  In this case we define $\partial(T;v)$ to be the tree resulting from relabeling vertex $\alpha$ with $d(\alpha)\in\op{A}_\infty(m)$.

$\textbf{Case 2}$: $v$ is a white vertex.  In this case $\partial(T;v)$ is a sum of all trees which can be formed by contracting one or more adjacent white angles (see $\ref{g9}$).

The operad structure of $\TA$ is the same as the operad structure of the brace operations, if one ignores the vertex coloring.  Notice that this includes grafting of branches on to black vertices by increasing the arity of the label (see Appendix $\ref{sec:trees}$).

To fix the signs in the above discussion it is convenient to realize the operad $\TA$ as the cellular chains of a topological (quasi)-operad,  see \cite{KSch}.  Then a choice of orientation of the cells fixes the signs in the $S_n$ action, the composition operations and the differential.  This approach also makes clear the fact that $\partial^2=0$.  There are several ways to choose such an orientation, each of which is natural, see subsection 3.2 of \cite{KDeligne} for details.  In particular, using the terminology of Appendix $\ref{sec:trees}$, an orientation of a cell corresponding to a rooted tree $T$ is specified by an order (mod 2) of the union of the set of white edges of $T$ with the black vertices of $T$.  We take as our convention the order induced by the embedding of $T$ into the plane, starting at the root, where an edge or vertex is recorded at first contact.  This choice agrees with \cite{KSch} and the order `Nat' of \cite{KDeligne}.

\begin{definition}\label{simplegluingdef}  Let $T_1$, $T_2$ be trees in $\TA$.  A composition $T_1\circ_iT_2$ is called simple if the vertex labeled by $i$ is of maximum height.  Notice that by definition the result of a simple gluing is a single rooted tree.
\end{definition}  

The importance of the simple gluings is the content of the next lemma.

\begin{lemma}  Any $A_\infty$-labeled tree in $\TA$ can be formed via simple gluings of the corollas and brace operations $\beta_n$ along with the $S_n$ action.
\end{lemma}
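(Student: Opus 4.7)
The plan is to proceed by induction on the total number of vertices of $T$. In the base case $T$ consists of a single vertex, hence is either a white corolla or a black corolla labeled by some $\alpha\in\op{A}_\infty(m)$; both are among the listed generators, and the claim holds trivially.

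For the inductive step, I aim to exhibit $T = T_1 \circ_i T_2$ as a simple gluing of two strictly smaller trees. To locate such a decomposition I would pick a non-root white vertex $v$ in $T$ with the property that the subtree $T_v$ rooted at $v$ contains every vertex of $T$ at height strictly above that of $v$. Setting $T_2 := T_v$ and letting $T_1$ be $T$ with $T_v$ contracted to a single white leaf at the position of $v$, the white vertex labeled by the insertion index $i$ in $T_1$ is of maximal height in $T_1$, so the gluing $T = T_1 \circ_i T_2$ is simple by Definition \ref{simplegluingdef}. The inductive hypothesis then supplies expressions for $T_1$ and $T_2$ as iterated simple gluings of corollas and brace generators $\beta_n$, which one composes to obtain the desired expression for $T$; the $S_n$-action is invoked throughout to effect the necessary relabelings of white vertices.

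The main obstacle is verifying that a suitable vertex $v$ always exists. The difficult case occurs when the maximum-height vertices of $T$ are distributed across several disjoint branches whose only common ancestor is the root, so that no non-root white vertex has the encompassing property described above. To handle this case I expect to use a secondary induction on the number of top-level branches of $T$: begin with a suitable $\beta_k$ where $k$ is the number of children of the root, and attach the child-subtrees one at a time via simple gluings, carefully ordered so that at every intermediate stage the vertex being glued at is still of maximum height in the current partial tree, using the $S_n$-action to reindex as needed. Black vertices are accommodated by the observation, intrinsic to the operad structure of $\TA$, that grafting along an angle adjacent to a black vertex increases the arity of its $\op{A}_\infty$-label, providing the flexibility needed to reconstruct trees with arbitrary $\op{A}_\infty$ labels during the inductive construction.
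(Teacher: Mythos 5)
The step you yourself flag as difficult is precisely where the argument breaks, and the proposed repair cannot be carried out as described. Take $T\in\TA(5)$ to be the all-white tree whose root carries two branches, each a chain of two white vertices. No non-root white vertex $v$ of $T$ dominates all vertices of strictly greater height, so your primary decomposition is unavailable. Your secondary induction is then supposed to attach the two chains to $\beta_2$ one at a time by simple gluings in a ``careful order'', but no order exists: both bare slots of $\beta_2$ sit at height $2$, and the moment either chain is attached the partial tree has height $3$, so the remaining slot is no longer of maximum height and the second attachment is not simple under your reading of Definition $\ref{simplegluingdef}$; the same obstruction appears whenever two or more child subtrees have height at least $2$, and the $S_n$-action only relabels, so it cannot help. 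Worse, under that strict reading (insertion vertex of globally maximal height in $T_1$) this $T$ admits no nontrivial simple decomposition at all: the insertion vertex is a leaf of $T_1$, and since $T$ has no black vertices the inserted factor must be the full subtree of $T$ above the grafting point, with every vertex outside it of height at most that of the grafting point --- which fails for every choice except the trivial ones. So no scheme built on that reading can terminate on $T$.

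What is missing is the observation that the operative content of Definition $\ref{simplegluingdef}$ is that the insertion vertex be a topmost (valence-one) vertex of $T_1$ --- equivalently, that the composition yield a single tree rather than a sum of trees; this is the property actually used in Step 1 of the proof of Theorem $\ref{actionthm1}$, and it is how ``leaf'' is used in Appendix $\ref{sec:trees}$. With this reading your difficulty disappears and no height bookkeeping is needed: induct on the number of vertices and cut $T$ along the full subtree above any child of the root (the contracted vertex is topmost in the quotient, so the gluing is simple, and both factors are smaller), or equivalently graft the child subtrees of the root one at a time onto $\beta_k$ (or onto a corolla when the root is black), each time at a still-bare slot. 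Two smaller points: a single black vertex is not an element of $\TA$ (every tree there has at least one white vertex), and corollas whose black vertex carries an arbitrary $\op{A}_\infty$-label are not generators per se but are obtained from $\mu_n$-labelled corollas via simple gluings that contract the resulting black edges; the base case of your induction should just be the one-white-vertex tree.
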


In light of this lemma we refer to brace operations and corollas as the generators of $\TA$.

\subsection{The action of the minimal operad.}\label{actionsection}  Let $\op{O}$ be a dg operad and let $\zeta\in MC(\nsl{O})$.  In this subsection we will define a morphism of dg operads $\rho_\zeta=\rho$, 
\begin{equation*}
\rho\colon\TA\to End_{(\nsl{O},\delta_\zeta)}
\end{equation*}
We start by defining the action of the generators:
\begin{itemize}
\item  \textbf{Corollas:}  The unique generator with one white vertex maps to the identity operation.  A corolla whose lone black vertex is labeled by $\mu_n$ for $n \geq 2$ is mapped under $\rho$ to,
\begin{equation*} 
\rho(\mu_n)(a_1\cdc a_n)=\ds\prod_{s\geq n}\zeta_s\{a_1\cdc a_n\}
\end{equation*}
Given the convention that the sum over an empty set is zero we can equivalently write
\begin{equation*} 
\rho(\mu_n)(a_1\cdc a_n)=\zeta\{a_1\cdc a_n\}
\end{equation*}
\item  \textbf{Braces:} We define $\rho(\beta_n)=B_n$.  In the bracket notation we may write
\begin{equation*}
\rho(\beta_n)(a,b_1,\dots, b_n)=a\{b_1,\dots, b_n\}
\end{equation*}
\end{itemize}

\begin{theorem}\label{actionthm1}  The above assignment extends to a morphism of dg operads $\rho\colon\TA\to End_{(\nsl{O},\delta_\zeta)}$.  In particular, $(\nsl{O},\delta_\zeta)$ is an $\TA$-algebra.
\end{theorem}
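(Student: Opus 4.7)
The plan is to reduce the verification to generators. By the lemma preceding Definition $\ref{simplegluingdef}$, every $A_\infty$-labeled tree in $\TA$ is built out of the brace corollas $\beta_n$ and the $A_\infty$-corollas $\mu_m$ using only simple gluings together with the $S_n$-action. It therefore suffices to check three things: (i) the assignment $\rho$ is consistent with simple gluings among generators, (ii) it is $S_n$-equivariant, and (iii) it intertwines the differentials on either side. Once this is done, extending $\rho$ to all of $\TA$ by the universal property of operads yields the morphism, and the claim that $(\nsl{O},\delta_\zeta)$ becomes a $\TA$-algebra follows by definition.

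For (i) and (ii), most of the work has already been done. A pure `brace into brace' simple gluing is governed by the $\B$-algebra axioms, and $\nsl{O}$ is a $\B$-algebra by the Corollary following the Proposition on extendability of the brace operations. A simple gluing of a corolla labeled by $\mu_m$ into a white vertex of a brace tree corresponds, under $\rho$, to evaluating the corresponding bracket slot at $\zeta\in\nsl{O}$; because $\zeta$ is itself an element on which the brace operations act, the resulting composite is again expressed entirely within the $\B$-algebra structure of $\nsl{O}$, and the two orders of composition agree by the associativity of the brace operations. The $S_n$-equivariance follows from the Nat cellular orientation convention on $\TA$ recalled in Section $\ref{minopsec}$, matched with the sign conventions of Remark $\ref{signsremark1}$ and equation $(\ref{signsline})$.

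The substantive step is (iii), checking $\rho\partial = d_{\mathrm{End}}\rho$ on generators, where $d_{\mathrm{End}}$ is the differential on $End_{(\nsl{O},\delta_\zeta)}$ induced by $\delta_\zeta = d+[\zeta,-]$ via Lemma $\ref{twistedder}$. By equation $(\ref{mindif})$, $\partial T$ splits into local contributions at each vertex. For a pure-brace generator $\beta_n$, only white-vertex contractions contribute on the left; on the right, direct expansion of the commutators with $\zeta$ through the brace operations produces exactly the same signed sum of iterated brace expressions. This is the classical Gerstenhaber--Voronov brace-differential compatibility, which holds in any $\B$-algebra carrying a compatible operadic Lie element. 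For an $A_\infty$-corolla labeled by $\mu_m$, the black-vertex contribution on the left is $\rho$ applied to the corolla labeled $d(\mu_m)\in\op{A}_\infty(m)$. Under the morphism $\op{A}_\infty\to\op{O}$ corresponding to $\zeta$ (Theorem $\ref{repthm}$), this is the arity-$m$ component of $d\zeta+\zeta\circ\zeta$, which vanishes by the Maurer--Cartan equation. On the right, the contribution of $d$ in $\delta_\zeta$ matches the internal aritywise differential already present in $\op{O}$, while the $[\zeta,-]$-contribution produces precisely the $\zeta\circ\zeta$ terms cancelled above.

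The main obstacle is sign bookkeeping, which is genuinely delicate because the pre-Lie and brace structures are odd, the MC elements have degree $2$ rather than $1$, and the $\TA$-cells carry orientations. I would resolve this by reading all signs off the topological/cellular model of $\TA$ from \cite{KSch}: once the Nat ordering is fixed, the signs for $\partial$, the $S_n$-action, and the operadic compositions of $\TA$ are all determined intrinsically, and the sign conventions on the algebraic side — equation $(\ref{signsline})$ and Remark $\ref{signsremark1}$ — have been chosen to match. Since the construction on generators reduces term-for-term to the classical Kontsevich--Soibelman action once one substitutes the MC element $\zeta$ for the associative multiplication $\mu_2$, no new sign ambiguity is introduced, and the argument of \cite{KS},\cite{KSch} goes through verbatim, as the author indeed anticipates in the remarks following Theorem A.
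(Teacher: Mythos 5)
There is a genuine gap in the overall strategy. Your extension step rests on ``the universal property of operads,'' but $\TA$ is not the free operad on the braces $\beta_n$ and the corollas $\mu_m$: the lemma before Definition $\ref{simplegluingdef}$ only says these elements \emph{generate} $\TA$ under simple gluings and the $S_n$-action, not that they generate it freely. Consequently, an assignment on generators checked only against simple gluings does not automatically extend to a morphism; one must (i) define $\rho(T)$ via a chosen decomposition of $T$ into simple gluings of generators and prove independence of that choice, and (ii) prove that $\rho$ respects \emph{arbitrary} compositions $T\circ_i T'$, which the paper does by first checking the two non-simple generator compositions $\beta_n\circ_1\beta_m$ and $\beta_n\circ_1\mu_m$ (these, not the simple gluings, are where the $\B$-algebra structure of $\nsl{O}$ from Lemma $\ref{bracelem}$ is actually needed) and then running an induction on the number of generators in a decomposition of $T'$, using operad associativity. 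Your proposal conflates these non-simple compositions with simple gluings and omits both the well-definedness argument and the induction, so the passage from ``true on generators'' to ``true on $\TA$'' is unsupported as written.

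There is also a concrete error in your differential check for the corolla. The image under $\rho$ of the corolla relabeled by $d(\mu_m)$ is not ``the arity-$m$ component of $d\zeta+\zeta\circ\zeta$'' and it does not vanish: under the correspondence of Theorem $\ref{repthm}$, $d(\mu_m)=\sum\pm\,\mu_k\circ_j\mu_l$ is sent to the corresponding sum of double evaluations $\zeta\{\dots,\zeta\{\dots\},\dots\}$, i.e.\ to the arity-$m$ part of $\pm\,\zeta\circ\zeta=\mp\,d\zeta$. These terms are exactly what must survive on the left to cancel, on the endomorphism side, the contribution where the internal differential of $\op{O}$ hits the $\zeta$ in $\zeta\{a_1,\dots,a_m\}$ together with the iterated-brace terms produced by $[\zeta,-]$; the Maurer--Cartan equation is used to trade $(d\zeta)\{a_1,\dots,a_m\}$ for $-(\zeta\circ\zeta)\{a_1,\dots,a_m\}$, not to kill the black-vertex term outright. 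If that contribution vanished as you claim, $\rho$ would fail to be a chain map on the corolla generator. So the generator-level differential computation (the paper's Step 4) still has to be carried out honestly, with the MC equation entering as a cancellation between the two sides rather than as a vanishing statement.
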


\begin{proof}
Let us simply outline the steps of a proof which emphasizes the fundamental role played by the generators, i.e. the braces and corollas.  See \cite{Wardt} for a finer level of detail.

{\bf Step 1:} extend $\rho$ via simple gluings and show this extension is independent of choice of decomposition.

Let $T$ be any tree appearing in $\TA$ and choose a decomposition into braces and corollas, such that each composition is simple.  Then define $\rho(T)$ to be the composition of $\rho$ of the generators.  Since each composition occurs at a vertex of maximum height, such a decomposition corresponds to a decomposition of a tree (forgetting the extra data), and so the operad associativity of $End_{\nsl{O}}$ ensures that $\rho(T)$ is well defined independent of the choice of such a decomposition.

{\bf Step 2:}  show that $\rho$ respects the composition of generators.

If the composition happens to be simple then this is true by definition, so we can restrict our attention to the case of a non-simple composition of generators.  There are thus two cases $\beta_n\circ_1 \beta_m$ and $\beta_n\circ_1 \mu_m$.  That the former holds is a consequence of the $\B$-algebra structure on $\nsl{O}$ established in Lemma $\ref{bracelem}$.  The latter follows similarly by evaluating at $\zeta$ in the first factor.

{\bf Step 3:}  argue that $\rho$ respects respects all compositions. 

To show $\rho(T\circ_iT^\prime)= \rho(T)\circ_i\rho(T^\prime)$, induct on the number of generators in a decomposition of $T^\prime$ into simple compositions of generators.  For the base case, $T^\prime$ is a generator and we can use operad associativity to rewrite $T\circ_iT^\prime$ as a sequence of compositions each of which is simple or between generators.  The induction step then follows from operad associativity and the induction hypothesis.

{\bf Step 4:}  show that $\rho$ respects the differential on generators.

This entails two lengthy but straight forward computations checking compatibility with $\beta_n$ and $\mu_n$.  We again refer to \cite{Wardt} for details.

{\bf Step 5:}  show $\rho$ respects the differential.

Again use a decomposition of $T$ into generators and the fact that $\TA$ is a dg operad to write
\begin{equation*}
\partial[(\dots(g_1\circ_{i_1}\dots \circ_{i_n}g_{n+1})\dots)] =\sum_{j=1}^n(-1)^{|g_1|+\dots+|g_{j-1}|}(\dots(g_1\circ_{i_1}\dots \circ_{i_{j-1}}\partial(g_j)\circ_{i_j}\dots\circ_{i_n}g_{n+1})\dots)
\end{equation*}
Applying $\rho$ and the above steps yields the desired result. \end{proof}

\begin{corollary}\label{nonsigmaaction}  Let $\op{N}$ be non-$\Sigma$ dg operad and $\zeta\in MC(\nsl{N})$.  Then $(\nsl{N}, \delta_\zeta)$ is an  $\TA$-algebra.  Equivalently, if $\op{O}$ is a regular operad then $\syl{O}$ is an $\TA$-algebra.
\end{corollary}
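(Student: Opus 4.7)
The plan is to deduce both parts from Theorem \ref{actionthm1} with essentially no new work. For the first statement I would re-examine the proof of Theorem \ref{actionthm1} and note that the $S_n$-actions on $\op{O}$ played no essential role anywhere: the $\B$-algebra structure used in Steps 1--3 is already established for non-$\Sigma$ operads in Lemma \ref{bracelem}; the extendability argument of Section \ref{extsec} is oblivious to symmetry; and the action of the generators of $\TA$ on $\nsl{N}$, namely $\rho(\mu_n)(a_1\cdc a_n) = \zeta\{a_1\cdc a_n\}$ and $\rho(\beta_n) = B_n$, is defined by the same formulas. Each of Steps 1 through 5 therefore transplants verbatim to the non-$\Sigma$ setting, yielding the claimed $\TA$-algebra structure on $(\nsl{N},\delta_\zeta)$.

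For the equivalent formulation I would exhibit a natural isomorphism of odd dg brace algebras $\syl{\op{N}\tensor\op{A}s} \cong \nsl{N}$ for every non-$\Sigma$ operad $\op{N}$. The underlying identification of graded vector spaces arises from $(\op{N}(n)\tensor\op{A}s(n))^{S_n} \cong \op{N}(n)$, which uses that $\op{A}s(n)\cong\kk[S_n]$ is the regular representation and that the $S_n$-action on the $\op{N}(n)$ factor is trivial; explicitly, the averaging projection of Remark \ref{invrmk} supplies the isomorphism. I would then verify that this identification intertwines brace operations and differentials, which is a direct sign and shift calculation using the conventions of Remark \ref{signsremark1}, and that MC elements correspond under Lemma \ref{liftlem}. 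Since by Definition \ref{regdef} every regular operad $\op{O}$ is of the form $\op{N}\tensor\op{A}s$, the regular-operad formulation is precisely the non-$\Sigma$ formulation repackaged through this isomorphism, and the two assertions are interderivable.

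I do not expect any genuine obstacle: the corollary carries no content beyond Theorem \ref{actionthm1} together with the equivalence of non-$\Sigma$ operads and regular symmetric operads provided by the forgetful--free adjunction. The only mildly fussy step is confirming that the isomorphism $\syl{\op{N}\tensor\op{A}s}\cong\nsl{N}$ respects the brace structure and the relevant signs, but this is purely a bookkeeping exercise with the shift/suspension conventions already in play.
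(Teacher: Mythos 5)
Your proposal is correct and takes essentially the same route as the paper, whose entire proof is to cite Lemma \ref{bracelem} (the brace structure on $\nsc{N}$ for non-$\Sigma$ operads and on the symmetric (co)invariants of a regular operad) together with Lemma \ref{liftlem} (the bijection of MC sets in the regular case). Your more explicit version — observing that the proof of Theorem \ref{actionthm1} uses only the brace structure and the MC element, and identifying $\syl{\op{O}}$ for $\op{O}=\op{N}\tensor\op{A}s$ with $\nsl{N}$ — is precisely the content those two lemmas are invoked to supply, so there is no gap.
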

\begin{proof}  This follows from Lemmas $\ref{bracelem}$ and $\ref{liftlem}$.
\end{proof}

\subsection{Deligne's Conjecture}  In this section we will give our generalization of Deligne's conjecture via the above work.

\begin{definition}\label{smile}  Given $\zeta \in MC(\nsl{O})$ we define a bilinear multiplication $\nsl{O}\tensor\nsl{O}\stackrel{\cupp}\to \nsl{O}$ by, 
\begin{equation*}
-\smile - = B_2(\zeta;-,-)
\end{equation*}
\end{definition}

\begin{lemma}\label{GO} Given a dg operad $\op{O}$ and $\zeta\in MC(\nsl{O})$ the associated $\cupp$-product passes to cohomology, making $(H^\ast(\nsl{O},\delta_\zeta), [-,-], \cupp)$ a Gerstenhaber algebra.  If $\zeta$ lifts to $MC(\nsc{O})$ along the standard inclusion $\nsc{O}\to\nsl{O}$ then $(H^\ast(\nsc{O},\delta_\zeta), [-,-], \cupp)$ is a Gerstenhaber algebra and the induced map is a map of Gerstenhaber algebras.
\end{lemma}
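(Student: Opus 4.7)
The proof reduces directly to Theorem \ref{actionthm1}. That theorem yields, for any $\zeta \in MC(\nsl{O})$, a morphism of dg operads
\[
\rho_\zeta\colon \TA \longrightarrow End_{(\nsl{O},\delta_\zeta)}.
\]
Since $\TA$ is stated to be a chain model for $\op{D}_2$, passing to cohomology induces an action of $H_\ast(\op{D}_2)$, i.e.\ of the Gerstenhaber operad, on $H^\ast(\nsl{O},\delta_\zeta)$. Thus the cohomology is automatically a Gerstenhaber algebra; the remaining task is to identify the induced product and bracket with the pair $(\cupp, [-,-])$.

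The product class in $H_\ast(\op{D}_2)$ is represented at the chain level by the unique degree-$0$ binary generator of $\TA$, namely the corolla with two white leaves and a root black vertex labeled by $\mu_2 \in \op{A}_\infty(2)$. By the definition of $\rho_\zeta$ on corollas,
\[
\rho_\zeta(\mu_2)(a,b) = \zeta\{a,b\} = B_2(\zeta;a,b) = a \cupp b,
\]
the last equality by Definition \ref{smile}. The bracket class is represented by the antisymmetrization of the degree-$1$ generator $\beta_1$, whose image under $\rho_\zeta$ is the brace operation $B_1(a;b) = a\circ b$ of Lemma \ref{bracelem}; its graded commutator is exactly the odd Lie bracket $[-,-]$ already defined on $\nsl{O}$. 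Since the Gerstenhaber relations (graded commutativity, associativity, Jacobi, Leibniz) hold in $H_\ast(\op{D}_2)$ prior to transport, no further verification is needed.

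For the second assertion, when $\zeta$ lifts to $MC(\nsc{O})$ the action $\rho_\zeta$ restricts to the subcomplex $\nsc{O}$. The brace operations are extendable in the sense of Section \ref{extsec}, which means they restrict to the direct sum; meanwhile, each corolla operation $\rho_\zeta(\mu_n)(a_1,\dots,a_n) = \zeta\{a_1,\dots,a_n\}$ involves only finitely many arity components of $\zeta$ and hence remains in $\nsc{O}$. The twisted differential $\delta_\zeta = d + [\zeta,-]$ also preserves $\nsc{O}$ since $\zeta$ is finitely supported. Therefore $(\nsc{O},\delta_\zeta)$ is a sub-$\TA$-algebra of $(\nsl{O},\delta_\zeta)$, the restricted action induces a Gerstenhaber structure on $H^\ast(\nsc{O},\delta_\zeta)$, and the inclusion is a morphism of $\TA$-algebras, hence of Gerstenhaber algebras after taking cohomology.

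\emph{Expected main obstacle.} The argument is essentially bookkeeping once Theorem \ref{actionthm1} is in hand; the only delicate point is tracking the oddification signs of Remark \ref{signsremark1} against the standard Gerstenhaber conventions to ensure the homology bracket produced by the symmetrization of $\beta_1$ matches the commutator of $\circ$ on the nose rather than up to a sign, but this is routine.
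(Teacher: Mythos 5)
Your proof is correct and follows essentially the same route as the paper: the statement is deduced from the $\TA$-action of Theorem \ref{actionthm1} together with the fact that $H_\ast(\TA)\cong H_\ast(\op{D}_2)$ is the Gerstenhaber operad, and the second claim from the observation that when $\zeta$ lifts the action is closed on $\nsc{O}\subset\nsl{O}$. Your explicit identification of the product with $\rho_\zeta(\mu_2)=B_2(\zeta;-,-)=\cupp$ and of the bracket with the commutator of $\rho_\zeta(\beta_1)$ is a welcome elaboration of what the paper leaves implicit.
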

\begin{proof} The first part of this theorem follows from the fact that $(\nsl{O}, \delta_\zeta)$ is a $\TA$-algebra along with the fact that $H_\ast(\TA)$ is the Gerstenhaber operad, as we will discuss below.  The second part follows from the fact that if the MC element lifts then the induced action on $\nsc{O}\subset \nsl{O}$ is closed.
\end{proof}

In the symmetric case we may consider the cohomology structure of $H^\ast(\syl{O}, \delta_\eta)$.  In this case the bracket lifts to the cohomology level, and the $\op{L}_\infty$ structure induces another Lie bracket on cohomology.  However this second Lie bracket is the boundary of the pre-Lie operation, and hence is zero on the level of cohomology.

We continue to write $\op{D}_2$ for the little disks operad.  A theorem of F. Cohen (see \cite{CLM}) says that a graded vector space is an algebra over $H_\ast(\op{D}_2)$ if and only if it is a Gerstenhaber algebra.  Therefore, given any associative algebra $A$, $HH^\ast(A,A)$ is an algebra over $H_\ast(\op{D}_2)$.  The original Deligne conjecture is a chain level version of this statement.  We now give a generalization which replaces $HH^\ast(A,A)$ with $H^\ast(\nsl{O}, \delta_\zeta)$.  See the discussion in the introduction and Remark $\ref{litrmk}$ for connections to the literature.  The proof follows from Theorem $\ref{actionthm1}$ above.

\begin{theorem}\label{Dconj3}  Let $\op{O}$ be a dg operad and let $\zeta\in MC(\nsl{O})$.  There is a chain model for $\op{D}_2$, namely $\TA$, which acts on $(\nsl{O}, \delta_\zeta)$ inducing the Gerstenhaber structure (Lemma $\ref{GO}$) on cohomology.  If $\zeta$ lifts to $MC(\nsc{O})$, then $(\nsc{O}, \delta_\zeta)$ is also an $\TA$-algebra, and the standard inclusion $\nsc{O}\to\nsl{O}$ is a morphism of $\TA$-algebras.
\end{theorem}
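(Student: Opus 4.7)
The plan is to assemble the theorem directly from the machinery already developed. The central engine is Theorem \ref{actionthm1}, which produces the morphism of dg operads $\rho_\zeta \colon \TA \to End_{(\nsl{O}, \delta_\zeta)}$ from a MC element $\zeta \in MC(\nsl{O})$. Given this, the first clause of the theorem reduces to two assertions about $\TA$ itself: (i) that $\TA$ is a chain model for $\op{D}_2$, and (ii) that the resulting action on cohomology is the Gerstenhaber algebra structure described in Lemma \ref{GO}.

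For (i), I would invoke the identification of $\TA$ with the minimal operad of Kontsevich--Soibelman \cite{KS}, whose cellular realization is known to be weakly equivalent to the little disks operad $\op{D}_2$; concretely, the white vertices with braces and black associahedral cells assemble into a CW model of $\op{D}_2$, and by F.~Cohen's theorem $H_\ast(\op{D}_2)$ is the Gerstenhaber operad. For (ii), note that the cup product $\smile$ of Definition \ref{smile} is by construction $\rho_\zeta$ evaluated on a specific cellular representative (the corolla $\mu_2$ composed with a brace), and the odd Lie bracket on $\nsl{O}$ is likewise realized by $\rho_\zeta$ applied to the brace $\beta_1$ under the odd commutator. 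Thus the operations induced on $H^\ast(\nsl{O}, \delta_\zeta)$ by $\rho_\zeta$ at the homology level of $\TA$ coincide with the Gerstenhaber operations described in Lemma \ref{GO}.

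For the second clause, the essential observation is that the subspace $\nsc{O} \subset \nsl{O}$ is stable under the action of the generators of $\TA$ whenever the MC element restricts to $\nsc{O}$. Indeed, by Proposition of Subsection \ref{extsec} the brace operations $B_n$ are extendable but their original definition already lies in $\nsc{O}$, and the corolla action $\rho(\mu_n)(a_1,\dots,a_n) = \zeta\{a_1,\dots,a_n\}$ becomes a finite sum once $\zeta \in \nsc{O}$, so it too preserves $\nsc{O}$. Since $\TA$ is generated by corollas and braces, the restriction of $\rho_\zeta$ to $\nsc{O}$ defines an $\TA$-algebra structure compatible with the inclusion; the inclusion is then a strict morphism of $\TA$-algebras essentially by construction. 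The twisted differential $\delta_\zeta$ also preserves $\nsc{O}$ for the same reason, so the complex $(\nsc{O}, \delta_\zeta)$ makes sense.

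I do not expect a genuine obstacle here, since the entire preceding section has been organized to reduce this theorem to Theorem \ref{actionthm1} plus the known homotopy type of $\TA$. The most delicate point is simply bookkeeping: verifying that the chosen cellular cocycles in $\TA$ representing the Gerstenhaber generators really do map under $\rho_\zeta$ to the operations $\smile$ and $[-,-]$ of Lemma \ref{GO}, which is a direct unwinding of the definitions of $\rho$ on corollas and braces.
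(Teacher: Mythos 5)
Your proposal is correct and follows essentially the same route as the paper: the theorem is deduced directly from Theorem \ref{actionthm1} together with the identification of $\TA$ with the Kontsevich--Soibelman minimal operad (hence a chain model for $\op{D}_2$), and the second clause is handled exactly as in Lemma \ref{GO}, by noting that a lifted MC element makes the induced action closed on $\nsc{O}\subset\nsl{O}$.
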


\begin{corollary}\label{Dconj4}  Let $\op{P}$ be a Koszul operad and a regular operad, and let $A$ be a $\op{P}_\infty$-algebra.  Then $\TA$ is a chain model for $\op{D}_2$ which acts on the operadic cochains $\hat{C}^\ast_\op{P}(A)$ inducing the Gerstenhaber structure on cohomology.  If $A$ is in particular a $\op{P}$-algebra the $\TA$ action restricts to the uncompleted operadic cochains $C^\ast_\op{P}(A)$ inducing the Gerstenhaber structure on cohomology.
\end{corollary}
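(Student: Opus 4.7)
The plan is to identify the relevant operad $\op{O} := \op{P}^! \tensor End_A$ together with the canonical Maurer--Cartan element $\eta \in MC(\syl{O})$ coming from the sequence in equation \ref{mccoelement}, and then reduce to Theorem \ref{Dconj3} via Corollary \ref{nonsigmaaction}. The organizing observation is that regularity is preserved by Koszul duality and by tensoring with $End_A$, so $\op{O}$ is itself regular; consequently $\syl{O}$ is identified with the non-symmetric Lie complex $\nsl{N}$ of the underlying non-$\Sigma$ operad $\op{N}$, and Lemma \ref{liftlem} lifts the MC element to the non-symmetric side.

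First I would feed $\eta$ into Corollary \ref{nonsigmaaction}: this immediately yields that the twisted complex $(\syl{O}, \delta_\eta) = \hat{C}^\ast_\op{P}(A)$ is an $\TA$-algebra. Since $\TA$ is a chain model for $\op{D}_2$ (Section \ref{minopsec}) and its homology is the Gerstenhaber operad, F.~Cohen's theorem furnishes a Gerstenhaber structure on the cohomology; comparison of the induced cup product with Definition \ref{smile} confirms that it agrees with the structure from Lemma \ref{GO}, transported through the non-symmetric model.

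For the non-completed statement, when $A$ is an honest $\op{P}$-algebra the morphism $\op{L}_\infty \to \op{O}$ factors through its binary part $\op{L}ie \to \op{P}^! \tensor \op{P} \to \op{O}$, which via Corollary \ref{MCcor} corresponds to a MC element already living in the coproduct $\syc{O}$. Hence $\eta$ lifts along the inclusion $\syc{O} \hookrightarrow \syl{O}$, and the second half of Theorem \ref{Dconj3} supplies an $\TA$-action on $(\syc{O}, \delta_\eta) = C^\ast_\op{P}(A)$ that is compatible with the inclusion into the completed cochains.

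The main technical point requiring care is propagating regularity from $\op{P}$ to $\op{P}^!$ (and thence to $\op{O}$) so as to identify $\syl{O}$ with $\nsl{N}$, since $\TA$ acts directly only on the latter; once this bookkeeping is in hand, together with the sign conventions relating the operadic suspension inside $\op{P}^! \tensor End_A$ to the MC equation, the corollary is a direct application of Theorem \ref{Dconj3}.
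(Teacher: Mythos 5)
Your proposal is correct and follows essentially the same route as the paper: propagate regularity from $\op{P}$ to the relevant Koszul-dual-type operad and hence to its tensor with $End_A$, then invoke Corollary \ref{nonsigmaaction} (i.e.\ Theorem \ref{Dconj3} on the non-$\Sigma$ side via Lemmas \ref{bracelem} and \ref{liftlem}). The paper's two-line proof applies this to $D(\op{P}^!)\tensor End_A$ and leaves the uncompleted-cochain statement implicit; you work with $\op{P}^!\tensor End_A$ as in the definition of $\hat{C}^\ast_\op{P}(A)$ and supply the natural justification (the MC element of a strict $\op{P}$-algebra is concentrated in arity $2$, hence lifts to the coproduct), which is exactly the intended argument.
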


\begin{proof}  Since $\op{P}$ is a regular operad so is $D(\op{P}^!)$.  Apply Corollary $\ref{nonsigmaaction}$ to $\op{O}=D(\op{P}^!)\tensor End_A$.
\end{proof}

\begin{remark}\label{litrmk} The first statement of Corollary $\ref{Dconj4}$ in the case when $\op{P}=\op{A}s$ is equivalent to the $A_\infty$ Deligne conjecture proven in \cite{KS, KSch}.  The second statement of this corollary when $\op{P}=\op{A}s$ is equivalent to the original Deligne conjecture.  The second statement has also been proven in the case of an associative dialgebra by Majumdar and Mukherjee \cite{MM}. Yau proves the second statement of Corollary $\ref{Dconj4}$ for several examples of Loday algebras \cite{YauD}.  In \cite{ValD}, Vallette proves this result for all such $\op{P}$ and an arbitrary $\op{P}$-algebra.  The $\op{P}_\infty$-algebra case is to my knowledge new.  See the introduction for further discussion and references regarding Deligne's conjecture.  
\end{remark}

We conclude this section by connecting our results to the notion of $\op{G}_\infty$-algebras.

\begin{corollary}  Let $\op{O}$ be a dg operad and let $\zeta\in MC(\nsl{O})$.  Then $(\nsl{O}, \delta_\zeta)$ is a $\op{G}_\infty$-algebra.  In particular for $\op{P}$ a Koszul operad and a regular operad and $A$ a $\op{P}_\infty$-algebra, $\hat{C}^\ast_\op{P}(A)$ is a $\op{G}_\infty$-algebra.
\end{corollary}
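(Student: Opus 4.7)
The proof combines Theorem $\ref{Dconj3}$ with the formality of the little disks operad. By Theorem $\ref{Dconj3}$, the complex $(\nsl{O}, \delta_\zeta)$ is already an $\TA$-algebra, with $\TA$ a chain model for $\op{D}_2$. By Tamarkin's theorem (equivalently, Kontsevich's operadic formality), $C_\ast(\op{D}_2)$ is weakly equivalent as a dg operad to its homology, the Gerstenhaber operad $\op{G}$, over any field of characteristic $0$. Hence $\TA$ and $\op{G}$ are connected by a zig-zag of quasi-isomorphisms of dg operads.

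The next step is to transfer the $\TA$-action to a $\op{G}_\infty$-action. Since $\op{G}$ is Koszul, there is a cofibrant resolution $\op{G}_\infty \xrightarrow{\sim} \op{G}$, and by definition a $\op{G}_\infty$-algebra structure on $V$ is a morphism of dg operads $\op{G}_\infty \to End_V$. Because $\op{G}_\infty$ is cofibrant and $\TA$ is weakly equivalent to $\op{G}$, standard model-theoretic lifting in the Hinich--Berger--Moerdijk model structure on dg operads produces a morphism $\op{G}_\infty \to \TA$. Concretely, one writes $\op{G}_\infty$ as a quasi-free operad on the Koszul dual cooperad of $\op{G}$, reducing the construction to choosing cycle representatives in $\TA$ for the generators of $\op{G}$ and extending inductively by arity, with the requisite obstructions vanishing thanks to formality. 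Composing with $\TA \to End_{(\nsl{O}, \delta_\zeta)}$ supplied by Theorem $\ref{Dconj3}$ yields the desired $\op{G}_\infty$-structure.

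The second statement follows by applying the first to $\op{O} = D(\op{P}^!) \tensor End_A$ equipped with the Maurer--Cartan element of $(\ref{mccoelement})$. Since $\op{P}$ is regular, so is $D(\op{P}^!)$ and hence $\op{O}$, so Corollary $\ref{Dconj4}$ (via Corollary $\ref{nonsigmaaction}$) directly provides the $\TA$-action on $\hat{C}^\ast_\op{P}(A) = (\syl{O}, \delta_\eta)$, to which the transfer argument above then applies verbatim. The substantive input is the formality of $\op{D}_2$; once that is granted, the lifting step is routine, so I do not anticipate any genuine obstacle beyond invoking the correct abstract framework.
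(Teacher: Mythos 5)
Your argument is correct and is essentially the paper's own proof, which likewise invokes cofibrancy of $\op{G}_\infty$, formality of $\op{D}_2$, and a standard model-category lifting to transfer the $\TA$-action from Theorem $\ref{Dconj3}$ (resp. Corollary $\ref{Dconj4}$ for the second statement). You have merely spelled out the lifting step in more detail than the paper does.
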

\begin{proof}  This follows from a standard model category argument and the fact that $\op{G}_\infty$ is cofibrant, along with formality of the operad $\op{D}_2$.
\end{proof}

\section{Algebraic structure associated to a cyclic operad}  \label{sec:algcyc}

In considering the generalized Deligne conjecture (Theorem $\ref{Dconj3}$) in Section $\ref{opssection}$ we made use of operadic constructions including associated odd Lie algebras, the brace and symmetric brace operations, the representability of MC elements, Manin products, the homotopical algebra of operads and operadic cohomology theories.  When working with cyclic operads there are generalizations of these constructions which may or may not have appeared in the literature.  The purpose of this section is to develop the above constructions together in the language of cyclic operads as a means of developing a cyclic operadic analog of the results of Theorem A.

In this section we will first show that given a cyclic operad $\op{O}$ we can associate to it odd Lie algebras $\cnsc{O}, \cnsl{O}, \csyc{O}, \csyl{O}$, much in analogy with the case of a (non-cyclic) operad above.  These Lie structures are variations of the Lie bracket first given in this generality in \cite{KWZ}, although particular examples preceded that work, (see \cite{BLB},\cite{CV},\cite{MenichiLie}).  We then show, in analogy with Definition $\ref{lieopdef}$, that these Lie brackets are the first in respective families of `brace-like' operations.  We place more emphasis on the non-$\Sigma$ case, in which the operations will be called cyclic brace operations.  Also in this section we prove representability theorems for the associated Maurer-Cartan functors and construct the long exact sequence.

When reading this section, note that a review of cyclic operads, including associated terminology and notation, is contained in Appendix $\ref{sec:cyc}$.

\begin{definition}  For a cyclic operad $\op{O}$ we define vector spaces:
\begin{eqnarray*}
\cnsc{O}:= \coprod_n\Sigma(\fr{s}\op{O}(n))_{\Z_n^+} & \ \ \ & \csyc{O}:= \coprod_n\Sigma(\fr{s}\op{O}(n))_{S_n^+} \\
\cnsl{O}:= \prod_n\Sigma(\fr{s}\op{O}(n))^{\Z_n^+} & \ \ \ & \csyl{O}:= \prod_n\Sigma(\fr{s}\op{O}(n))^{S_n^+} 
\end{eqnarray*}
\end{definition}

We call $\cnsc{O}$ and $\cnsl{O}$ the non-$\Sigma$ cyclic (co)invariants and we call $\csyc{O}$ and $\csyl{O}$ the full cyclic (co)invariants.  Note that the non-$\Sigma$ cyclic (co)invariants make sense for a cyclic operad and a non-$\Sigma$ cyclic operad.

\subsection{The cyclic bracket}  In this subsection we will define the odd Lie structures on $\cnsc{O},\cnsl{O},\csyc{O},\csyl{O}$ for the cyclic operad $\op{O}$.  The coinvariant form of the bracket uses the $\circ_{ij}$ maps of Definition $\ref{circmaps}$.

\begin{theorem}\label{KWZthm} \cite{KWZ} Let $\op{O}$ be a cyclic operad.  The operation $\sum\circ_{ij}$ induces an odd dg Lie bracket on both $\cnsc{O}$ and $\csyc{O}$ which we call the (coinvariant) cyclic bracket.
\end{theorem}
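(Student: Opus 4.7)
The plan is to model the construction on Gerstenhaber's odd pre-Lie argument from Section $\ref{opssection}$, substituting the cyclic gluings $\circ_{ij}$ for the operadic compositions $\circ_i$. First I would define, for homogeneous $a \in \Sigma\fr{s}\op{O}(m)$ and $b \in \Sigma\fr{s}\op{O}(n)$, the raw product
\[
a \star b := \sum_{i,j} (\pm)\, a \circ_{ij} b,
\]
where $i$ runs over input labels of $a$ and $j$ over input labels of $b$, with signs fixed by the odd cyclic convention so that each gluing edge carries degree $-1$ (in analogy with Remark $\ref{signsremark1}$, with $\Sigma\fr{s}$ absorbing the cyclic suspensions). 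The descent of $\star$ to $\cnsc{O}$ is immediate, because the defining sum ranges exactly over the orbits of $\Z_n^+$ acting on the input pairs of $a$ (and likewise for $b$), hence is invariant under the relevant group actions on either factor. The descent to $\csyc{O}$ follows by the same argument applied to the full symmetric orbit.

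Step two is to set $[a,b] := a \star b - (-1)^{(|a|-1)(|b|-1)} b \star a$ and observe that odd graded antisymmetry is then tautological. The substantive content is the odd graded Jacobi identity, which I would reduce to showing that the associator $(a \star b) \star c - a \star (b \star c)$ is graded symmetric in $(b,c)$. Expanding the iterated gluings and applying the cyclic operad associativity and equivariance axioms recorded in Appendix $\ref{sec:cyc}$, the terms of $(a\star b)\star c$ in which the second gluing lands on the $b$-part of $a \circ_{ij} b$ are precisely matched, after relabeling, by the terms of $a \star (b \star c)$; what remains are the terms in which $b$ and $c$ are glued to disjoint inputs of $a$, and these are manifestly symmetric in $(b,c)$ once the odd signs are reconciled. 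Compatibility with the differential is automatic since $d$ is a derivation of each $\circ_{ij}$ in a dg cyclic operad, so $\delta$ is an odd derivation of $\star$ and hence of $[-,-]$.

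The main obstacle is the sign bookkeeping: unlike the operadic case, where one factor carries a root and the other does not, both factors in $a \circ_{ij} b$ contribute input labels on an equal footing, so a literal transcription of Gerstenhaber's sign conventions does not close to an odd Jacobi. The streamlined way to bypass this, following \cite{KWZ}, is to work from the outset in the category of odd cyclic operads — a cyclic analog of Definition $\ref{odddef}$ in which the gluings $\circ_{ij}$ carry intrinsic degree $-1$. The bracket then arises as the graded commutator of a genuine odd pre-Lie-type product, and antisymmetry, Jacobi, and the Leibniz rule all follow formally from the odd cyclic operadic axioms. This is the viewpoint I would adopt in practice, appealing to \cite{KWZ} for the underlying sign conventions and deducing both the $\cnsc{O}$ and $\csyc{O}$ cases from the single calculation above.
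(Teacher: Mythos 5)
There is a genuine gap, and it lies exactly where the paper warns of a distinction with the operadic case: your plan to prove the odd Jacobi identity by showing the associator of $\star$ is symmetric in $(b,c)$ presupposes that $\star$ is (odd) pre-Lie, which is false in the cyclic setting. The terms of $a\star(b\star c)$ are not exhausted by ``gluings landing on the $b$-part'': since $b\circ_{jk}c$ has no root, the composite can be glued to $a$ along an input belonging to $c$, producing chain-shaped terms $a$--$c$--$b$ that have no counterpart in $(a\star b)\star c$; conversely $(a\star b)\star c$ contains the star-shaped terms in which $b$ and $c$ occupy disjoint inputs of $a$. The associator is therefore (stars at $a$) minus (chains with $c$ in the middle), which is not symmetric in $(b,c)$, so the pre-Lie mechanism does not close. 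The same misconception appears in your final paragraph: the cyclic bracket does \emph{not} arise as the graded commutator of an odd pre-Lie product (the paper states this explicitly after diagram $\ref{cube}$), and indeed your commutator $a\star b-(-1)^{(|a|-1)(|b|-1)}b\star a$ is redundant: by Lemma $\ref{extlem}$ the operation $\sum_{i,j}[a\circ_{ij}b]$ on coinvariants is already odd antisymmetric, so antisymmetrizing merely doubles it.

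The paper's route is to take $\{[a],[b]\}:=\sum_{i,j}[a\circ_{ij}b]$ itself as the bracket on $\cnsc{O}$ (resp.\ $\csyc{O}$): well-definedness on coinvariants follows from the equivariance axioms of Definition $\ref{cycdef}$ (twisting $a$ by $t$ permutes the summands up to an outer cyclic permutation, which dies in the class $[-]$), odd antisymmetry is exactly Lemma $\ref{extlem}$ for the odd cyclic operad $\Sigma\fr{s}\op{O}$, and the Jacobi identity is checked directly, as in \cite{KWZ}: in the cyclic sum $\{a,\{b,c\}\}\pm\{b,\{c,a\}\}\pm\{c,\{a,b\}\}$ only two-edge chain shapes occur (no stars, since the inner bracket is attached to the outer element along a single new edge), and each chain shape appears twice --- once from each of its two end elements --- cancelling in pairs by the sign in Lemma $\ref{extlem}$. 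Your descent-to-coinvariants and compatibility-with-$d$ remarks are essentially fine, but the Jacobi step needs to be replaced by this direct cancellation argument rather than an associator/pre-Lie reduction.
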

\begin{proof}  We will denote the Lie bracket by $\{-,-\}$.  To be precise, for $a\in\Sigma\fr{s}\op{O}(n)$ and $b\in\Sigma\fr{s}\op{O}(m)$ we define
\begin{equation}
\{[a],[b]\}:=\sum_{\substack{0 \leq i \leq n \\ 0 \leq j \leq m }} [a\circ_{ij} b]
\end{equation}
where $[-]$ denotes the class under the $\Z_n^+$ (resp. $S_n^+$) action.  That this operation is well defined and odd commutative follows from Lemma $\ref{extlem}$ and the fact that $\Sigma\fr{s}\op{O}$ is an odd cyclic operad.  Verifying the odd Jacobi identity is straight-forward and we refer to \cite{KWZ}. \end{proof}

\begin{corollary} Let $\op{O}$ be a cyclic operad.  The standard inclusion $\cnsl{O}\hookrightarrow \nsl{O}$ is a morphism of Lie algebras.  In particular, the operadic bracket of two invariant elements is invariant.  Specifically, the bracket on invariants can be written as $N(a\circ_1 b)$.  On the full cyclic invariants, the operation $\{a,b\}=\sum_{\sigma\in S_n^+}\frac{1}{n!}\sigma(a\circ_1 b)$ defines an odd Lie bracket on $\csyl{O}$.  We refer to these brackets as the (invariant) cyclic brackets.  
\end{corollary}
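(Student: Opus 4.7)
The strategy is to transport the coinvariant cyclic bracket of Theorem~\ref{KWZthm} to the invariants via the characteristic-zero isomorphism of Remark~\ref{invrmk}, and to verify that the resulting formula on invariants coincides with the restriction of the operadic Gerstenhaber bracket on $\nsl{O}$. The key first observation is that for $b\in\op{O}(m)^{\Z_m^+}$, the partial composition $a\circ_{ij}b$ is independent of $j$: by definition it amounts to first rotating $b$'s distinguished input to its $j$-th position and then gluing, but the rotations of $b$ leave it fixed. Hence for cyclically invariant $a$ and $b$ the coinvariant expression $\sum_{ij} a\circ_{ij} b$ collapses to a constant multiple of $\sum_i a\circ_i b \pm \sum_j b\circ_j a$, i.e.~to the operadic commutator $[a,b]=a\circ b-(-1)^{(|a|-1)(|b|-1)}b\circ a$ on $\nsl{O}$. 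This identifies $[a,b]$ (for invariant $a,b$) with a lift of the coinvariant bracket, whence $[a,b]$ is itself cyclically invariant and the inclusion $\cnsl{O}\hookrightarrow\nsl{O}$ is a morphism of odd dg Lie algebras.

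The explicit formula $N(a\circ_1 b)$, where $N=\sum_k\tau^k$ is the cyclic norm acting on the output arity, follows from the cyclic covariance between $\tau$ and the partial composition maps recalled in Appendix~\ref{sec:cyc}. Concretely, iterating $\tau$ on $a\circ_1 b$ and using the invariance of $a$ and of $b$ under their respective cyclic actions, each orbit element $\tau^k(a\circ_1 b)$ can be rewritten either as some $a\circ_i b$ (for $i$ determined by $k$) or, with the appropriate commutator sign, as some $b\circ_j a$. Summing over $k$ therefore reproduces the commutator up to the combinatorial scalar absorbed by the invariants--coinvariants normalization. The full symmetric case is formally identical: one replaces the cyclic rotations $\tau^k$ by the full $S^+$-action and the cyclic norm by the symmetrization $\frac{1}{n!}\sum_{\sigma\in S_n^+}\sigma$, with the $\circ_{ij}$-formalism extended by arbitrary permutations of the inserted factor rather than mere rotations. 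Odd graded commutativity and the odd Jacobi identity for the resulting brackets on $\cnsl{O}$ and $\csyl{O}$ are then inherited from the coinvariant statements of Theorem~\ref{KWZthm}, since the invariants--coinvariants isomorphism of Remark~\ref{invrmk} is a chain-level isomorphism intertwining the two bracket formulas.

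The main obstacle will be in the middle step: verifying that the $\tau$-orbit of $a\circ_1 b$ exhausts both families $\{a\circ_i b\}$ and (signed) $\{b\circ_j a\}$ in the correct proportions and with the correct signs, so that $N(a\circ_1 b)$ reassembles to the commutator with the graded commutator sign prescribed by the odd Lie structure on $\fr{s}\op{O}$. This reduces to a careful but formal use of the cyclic axiom, tracking the odd-suspension signs recorded in~(\ref{signsline}).
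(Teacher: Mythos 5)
Your proposal is correct and follows essentially the same route as the paper: transport the coinvariant bracket of Theorem \ref{KWZthm} through the characteristic-zero invariants/coinvariants isomorphism, use cyclic invariance of $a$ and $b$ to collapse the $\circ_{ij}$-sum (up to the arity-dependent scalar), and identify $N(a\circ_1 b)$ with the operadic commutator via the cyclic operad axioms, with the symmetric case and the odd Jacobi identity inherited from the coinvariant statement. The sign/orbit bookkeeping you flag as the "main obstacle" is exactly the step the paper dispatches as "easy to see from the axioms," so nothing essential is missing.
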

\begin{proof}
The Lie bracket defined above on coinvariants is clearly extendable in the sense of subsection $\ref{extsec}$. To prove the corollary we first translate the coinvariant bracket via the standard isomorphism.  For example in the non-symmetric case for invariant elements $a$ and $b$ the coinvariant bracket translates as:
\begin{equation*}
\sum_{i,j}\overline{a\circ_{ij}b}=\frac{(n+1)(m+1)}{n+m} N(a\circ_1 b)
\end{equation*}
where $\overline{a\circ_{ij}b}$ means projection to the cyclic invariants of $a\circ_{ij}b$.  Since multiplication by the degree in an odd Lie bracket produces another odd Lie bracket, it follows that $N(a\circ_1 b)$ is an odd Lie bracket.  Then it is easy to see from the axioms of a cyclic operad (Definition $\ref{cycdef}$) that $N(a\circ_1 b)=[a,b]$ for invariant elements $a$ and $b$.
\end{proof}

Given a cyclic operad, we now have two different Lie algebra constructions each with four distinct variations.  To keep track of these eight odd Lie algebras, we give the following corollary.  The notation is $\pi(a)=\sum_{\sigma\in S_n}\sigma(a)/n!$, $N(a)= \sum_{\sigma\in\mathbb{Z}_n^+}\sigma(a)$, $\bar{N}(a):= \sum_{\sigma\in S_n^+}\sigma(a)/n!$, $iso$ means induced by the levelwise isomorphism, and $inc$ means induced by levelwise inclusion.

\begin{corollary}  Let $\op{O}$ be a cyclic operad.  There are morphisms of odd Lie algebras:
\begin{equation}\label{cube}
\xymatrix{ \cnsc{O} \ar[rr]^{[-]} \ar[dd]_{N} \ar[dr]^{N} & & \csyc{O} \ar@{-}[d]_{\bar{N}} \ar[dr]^N & \\ & \nsc{O} \ar@{-}[d]^{iso} \ar[dd] \ar[rr]^{[-] \ \ \ \ \ \ \ \ \ } & \ar[d] & \syc{O} \ar[dd]^{\pi} \\ \cnsl{O} \ar@{-}[r]^{\pi} \ar[dr]_{inc} & \ar[r] & \csyl{O} \ar[dr]^{inc} \\ & \nsl{O} \ar[rr]^{\pi} && \syl{O} }
\end{equation}
\end{corollary}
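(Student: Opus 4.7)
The plan is to verify that each of the twelve arrows in the cube (\ref{cube}) is a morphism of odd Lie algebras, and then to check that the cube commutes at the level of underlying complexes. The unifying observation is that all four odd Lie structures on the vertices are assembled from the same underlying data, namely the operations $\circ_{ij}$ on the odd cyclic operad $\Sigma\fr{s}\op{O}$: the back face is built by taking $\Z_n^+$- or $S_n^+$-(co)invariants with the cyclic bracket $\{[a],[b]\}=\sum_{i,j}[a\circ_{ij}b]$, while the front face is built by taking $S_n$-(co)invariants (or none) with the operadic bracket. These two families of brackets are linked by the identity, already established in the proof of the preceding corollary,
\[
\sum_{i,j}\overline{a\circ_{ij}b} \;=\; \frac{(n{+}1)(m{+}1)}{n{+}m}\,N(a\circ_1 b).
\]

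First I would dispatch the horizontal edges. Each of the maps labeled $[-]$ or $\pi$ is a projection between two (co)invariant quotients under an inclusion of groups (either $\Z_n^+\subset S_n^+$ on the back face, or the trivial action $\subset S_n$ on the front face), and in each case the bracket on source and target is given by the same expression read in the two different quotients; thus each such projection carries a bracket to a bracket tautologically. The front face was essentially established in Section \ref{opssection}: $\pi$ is a Lie morphism because the operadic bracket is $S_n$-equivariant, and the arrow labeled $iso$ is the aritywise identity viewing a coproduct element as a finite-support product element, which is trivially a Lie morphism.

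Next, the diagonal inclusions $inc\colon\cnsl{O}\to\nsl{O}$ and $inc\colon\csyl{O}\to\syl{O}$ are Lie morphisms by the preceding corollary. The remaining diagonal/vertical maps $N$ and $\bar N$ from the back face to the front face factor through the canonical isomorphism $(-)^G\cong(-)_G$ for $G=\Z_n^+$ or $S_n^+$, and once we pass to invariants the cyclic bracket becomes a scalar multiple of $N(-\circ_1 -)$ by the displayed identity; composing with the Lie morphism $inc$ then shows that $N$ and $\bar N$ intertwine the cyclic and operadic brackets. Commutativity of the cube is then a direct diagram chase: each face is a square of canonical maps associated with either the inclusion $\Z_n^+\subset S_n^+\subset S_n$ or the canonical isomorphism between invariants and coinvariants in characteristic zero, and these compose consistently.

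The main obstacle is bookkeeping: tracking the arity-dependent scalar factors that arise when translating between the $\sum\circ_{ij}$-expression of the cyclic bracket on coinvariants and the $N(-\circ_1 -)$-expression on invariants, and verifying that these scalars cancel consistently with the (co)invariant isomorphisms under each of the four groups in play. Once the normalizations are fixed compatibly (as was done in the proof of the preceding corollary), every arrow is a Lie morphism on the nose and every face of the cube commutes.
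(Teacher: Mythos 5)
Correct, and essentially the paper's own (implicit) argument: the paper gives no separate proof of this corollary, since it is exactly the assembly of Theorem \ref{KWZthm}, the preceding corollary on the invariant bracket $N(a\circ_1 b)$, and the standard equivariance of the operadic bracket that you carry out arrow by arrow. The only substantive check you defer — that the unnormalized norm maps $N$, $\bar N$ (rather than the averaged isomorphisms of Remark \ref{invrmk}) are the strict Lie morphisms, the arity factor $\tfrac{(n+1)(m+1)}{n+m}$ being exactly absorbed by the group orders — does work out, and is precisely the ``multiplication by the degree'' observation already made in the proof of the preceding corollary.
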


This diagram is in the category of odd Lie algebras.  Shifting down, there is a corresponding diagram of honest Lie algebras associated to any anti-cyclic operad.

Finally, let us observe an important distinction between the cyclic brackets and the operadic brackets.  In the operad case the bracket was a commutator of a pre-Lie operation.  In particular, in the non-$\Sigma$ case this pre-Lie operation was part of a hierarchy of chain level operations called the braces.  In the cyclic case the bracket is not a commutator.  However there is a notion of cyclic brace operations in the planar case which we define presently.

\subsection{Cyclic brace operations}  The odd Lie bracket defined above in the planar case is the first in a collection of operations which we call cyclic brace operations.  In this subsection we will introduce the operad generated by these operations, denoted $\cB$, and called the cyclic brace operad.  To be precise, we will consider operads $\cB^+, \cB^-,\cB$ which act on the (co)invariants of a cyclic, anti-cyclic and odd-cyclic operad respectively.  Recall that our terminology for trees and graphs is discussed in Appendix $\ref{sec:trees}$.

\begin{definition}
Let $\cB^+(n)$ be the $S_n$-module spanned by labeled planar trees with $n$ vertices.  Define an operad structure on the $\mathbb{S}$-module $\cB^+$ by
\begin{equation*}
T\circ_i T^\prime := \sum_{[T,T^\prime,i]} T^{\prime\prime}
\end{equation*}
where $[T,T^\prime, i]$ is the set of labeled planar trees such that $T^{\prime\prime}\in [T,T^\prime, i]$ if and only if both
\begin{itemize}
\item  The full subtree  of $T^{\prime\prime}$ generated by the vertices $i\cdc i+m-1$ is isomorphic to $T^\prime$ and,
\item  Under the identification above, $ T^{\prime\prime}/T^{\prime}\cong T$.
\end{itemize}
\end{definition}

Let $T$ be a labeled planar tree and let $R_T$ denote the set of vertex arcs of $T$ (see $\ref{g5}$).  By definition, a rooted structure for $T$ is an element $r\in R_T$, and we denote the associated rooted tree $(T,r)$.

\begin{lemma}\label{cycmorlem}  The assignemnt $\rho_+$ defined on a labeled planar tree $T$ by
\begin{equation*}
\rho_+(T)=\sum_{r\in R_T}(T,r)
\end{equation*}
induces a morphism of operads $\rho_+\colon \cB^+\to \B^0$.
\end{lemma}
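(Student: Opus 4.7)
The plan is to verify the two content-bearing properties: that $\rho_+$ lands in $\B^0$ compatibly with the $\mathbb{S}$-action, and that it respects the operadic composition. The first is essentially tautological, since each element of $R_T$ produces a labeled planar rooted tree with the same vertex labels, and the symmetric-group equivariance follows because the sum over rootings is intrinsic to the underlying planar tree and is invariant under relabeling. The substantive content is the composition identity
\[
\rho_+(T \circ_i T') \;=\; \rho_+(T) \circ_i \rho_+(T'),
\]
which after expanding becomes an equality between two sums of labeled planar rooted trees, the left indexed by pairs $(T'', r'')$ with $T'' \in [T, T', i]$ and $r'' \in R_{T''}$, and the right by tuples $(r, r', T'', r'')$ with $r \in R_T$, $r' \in R_{T'}$, and $(T'', r'') \in [(T,r),(T',r'),i]$.

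My approach is to establish a bijection between these two index sets. The geometric picture is that any $T''$ appearing in the unrooted composition contains a canonical copy of $T'$ on the vertices $i, \ldots, i+m-1$ as a subtree and recovers $T$ as the quotient upon contracting that subtree. Hence, given a rooted tree $(T'', r'')$, the two rooted isomorphism conditions in the definition of the composition in $\B^0$ canonically produce a pair $(r, r')$: namely, $r$ is the image of $r''$ under the contraction $T'' \to T''/T' \cong T$, while $r'$ is either the arc $r''$ itself (if $r''$ lies at a vertex inside the embedded subtree) or the attachment arc through which the subtree joins the rest of $T''$ (if $r''$ lies outside the subtree). Conversely, given $(r, r')$, the rooted composition reproduces exactly one rooted tree $(T'', r'')$ for each unrooted $T''$ in $[T, T', i]$ compatible with these conditions, and the resulting $r''$ recovers $(r, r')$ under the recipe above.

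The main obstacle is bookkeeping at the interface between $T$ and $T'$: the cyclic orders of half-edges at vertex $i$ of $T$ and at the attachment vertex of $T'$ interact under insertion, and one must verify that every $r'' \in R_{T''}$ arises from exactly one pair $(r, r')$ with no multiplicity mismatches. I would handle this by using the explicit description of planar trees and insertion in terms of half-edges and cyclic orders (see Appendix \ref{sec:trees}), matching arcs on the nose and separating the two cases according to whether $r''$ lies inside or outside the embedded subtree. Once the bijection is in place the composition identity follows term by term, and no further checks are needed since both $\cB^+$ and $\B^0$ are concentrated in degree zero with trivial differential and the $S_n$-equivariance has already been addressed.
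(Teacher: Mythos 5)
Your argument is essentially the paper's own proof: both expand the two sides into double sums and match terms by observing that a root $r''$ of $T''$ induces a root on the embedded copy of $T'$ and, via the contraction $T''\to T''/T'\cong T$, a root on $T$, giving the two containments between the index sets. The multiplicity bookkeeping you flag as the main obstacle is handled in the paper by the one-line observation that a labeled rooted tree appears at most once in either double sum (since $(r,r')$ is determined by $(T'',r'')$), so the bijection reduces to a set equality and no further half-edge analysis is needed.
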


\begin{proof}  The induced map is clearly $S_n$ equivariant, so it remains to show that the operad structure is preserved under $\rho_+$.  Let $T$ and $T^\prime$ be planar trees with $n$ and $m$ vertices respectively.  Then 
\begin{equation*}
\rho_+(T\circ_iT^\prime)= \sum_{r^{\prime\prime}\in R_{T^{\prime\prime}}}\sum_{T^{\prime\prime}\in[T,T^\prime,i]}(T^{\prime\prime},r^{\prime\prime})
\end{equation*}
and
\begin{equation*}
\rho_+(T)\circ_i\rho_+(T^\prime)= \sum_{(T^{\prime\prime},r^{\prime\prime})\in[(T,r),(T^\prime,r^\prime),i]}\sum_{\substack{r\in R_T \\ r^\prime \in R_{T^\prime}}} (T^{\prime\prime}, r^{\prime\prime})
\end{equation*}
where the bracket notation $[(T,r),(T^\prime,r^\prime),i]$ with roots refers to the operadic composition of $\B^0$ as in Definition $\ref{braceopdef}$.  Let $Y$ be the set of rooted trees appearing in the first double sum and $Z$ be the set of rooted trees appearing in the second.  Since a labeled rooted tree appears at most once in either double sum, it is enough to show $Y=Z$.

Let $(T^{\prime\prime},r^{\prime\prime}) \in Y$.  The root $r^{\prime\prime}$ induces a root on any subtree, and hence on $T^\prime$ since $T^\prime$ is isomorphic to a subtree of $T^{\prime\prime}$.  The root $r^{\prime\prime}$ also induces a root after collapsing any subtree, and hence induces a root on $T$ since $T^{\prime\prime}/T^\prime \cong T$.  If these roots are called $r^\prime$ and $r$ respectively, then by construction $(T^{\prime\prime},r^{\prime\prime})\in[(T,r),(T^\prime,r^\prime),i]$, and hence $(T^{\prime\prime},r^{\prime\prime}) \in Z$.  Thus $Y\subset Z$.

On the other hand, now let $(T^{\prime\prime},r^{\prime\prime}) \in Z$.  Then there exist rooted trees $(T,r)$ and $(T^\prime, r^\prime)$ such that the rooted subtree of $(T^{\prime\prime},r^{\prime\prime})$ generated by $i\cdc i+m-1$ is isomorphic to $(T^\prime, r^\prime)$ and such that $(T^{\prime\prime}, r^{\prime\prime})/(T^\prime, r^\prime)\cong (T,r)$.  Thus as nonrooted trees the subtree of $T^{\prime\prime}$ generated by $i\cdc i+m-1$ is isomorphic to $T^\prime$ and $T^{\prime\prime}/T^\prime\cong T$, thus $T^{\prime\prime} \in [T, T^{\prime},i]$, and hence $(T^{\prime\prime},r^{\prime\prime}) \in Y$.  Thus $Z\subset Y$.
\end{proof}

The operad $\cB^+$ is concentrated in degree $0$ and acts on the (unshifted/unsuspended) coinvariants of a cyclic operad.  This is the prototype for the more interesting operad $\cB^-$ which will act on an anti-cyclic operad.

\begin{definition}  
Let $\cB^-(n)$ be the $S_n$-module spanned by labeled directed planar trees with $n$ vertices, modulo the equivalence relation generated by setting $T\sim\pm T^\prime$ if $T$ and $T^\prime$ are the same after forgetting the directed structure.  The sign is $-1$ to the number of edges whose directions disagree between $T$ and $T^\prime$.  The $S_n$ action is by relabeling (which may produce a sign).  Define the operad structure on the $\mathbb{S}$-module $\cB^-$ to be that which coincides with $\cB^+$ on the underlying unoriented graphs and which preserves the directions of the edges. 
\end{definition}

Of course, a rooted tree has a natural orientation for each of its edges; toward the root.  If $T$ is an edge oriented tree and $r$ is a choice of root, we define $\epsilon(T,r)$ to be the number of edges whose orientation does not agree with the orientation induced by $r$.

\begin{lemma}\label{cycmorlem2}  The assignemnt $\rho_-$ defined on a directed planar tree $T$ by
\begin{equation*}
\rho_-(T):=\sum_{r\in R_T}(-1)^{\epsilon(T,r)}(T,r)
\end{equation*}
induces a morphism of operads $\rho_-\colon \cB^-\to \B^0$.
\end{lemma}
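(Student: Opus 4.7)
The plan is to model the proof directly on that of Lemma~\ref{cycmorlem}, which already identifies the underlying set of labeled rooted trees matching the two sides of the operadic identity. Once one forgets the edge directions, $\rho_-$ produces the same set-theoretic sum of labeled rooted trees as $\rho_+$, so the entire content of Lemma~\ref{cycmorlem2} is the verification that the signs attached to these rooted trees match. I would organize the argument into three parts: descent to the equivalence relation defining $\cB^-$, $S_n$-equivariance, and compatibility with operadic composition.

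For well-definedness, suppose $T$ and $T'$ are directed planar trees with the same underlying planar tree, differing by the reversal of $k$ edges. For any root $r$, the disagreement indicator flips on exactly the $k$ reversed edges, so $\epsilon(T',r) \equiv \epsilon(T,r) + k \pmod{2}$. Since the rooted planar trees $(T,r)$ and $(T',r)$ coincide in $\B^0$, this yields $\rho_-(T) = (-1)^k \rho_-(T')$, matching the relation $T = (-1)^k T'$ in $\cB^-$. The $S_n$-equivariance claim is formally identical to that for $\rho_+$: any sign produced by relabeling is transported through the sum over roots unchanged, because relabeling does not affect edge orientations.

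The substantive step is the operadic identity $\rho_-(T \circ_i T') = \rho_-(T) \circ_i \rho_-(T')$. The set-level bijection from Lemma~\ref{cycmorlem} pairs each rooted tree $(T'',r'')$ appearing on the left with the pair $((T,r),(T',r'))$, where $r'$ is the root induced on the subtree $T' \subset T''$ and $r$ is the root induced on the quotient $T''/T' \cong T$. The sign check then reduces to the congruence
\[
\epsilon(T'',r'') \;\equiv\; \epsilon(T,r) + \epsilon(T',r') \pmod{2}.
\]
The edges of $T''$ partition into the edges of the $T'$-subtree and the edges inherited from $T$, each carrying the direction prescribed by the respective factor. The $r''$-induced orientation on $T''$ restricts to the $r'$-induced orientation on the subtree and descends, under collapse, to the $r$-induced orientation on $T$; hence the agreement-or-disagreement status of each edge of $T''$ agrees with that of its image in $T'$ or $T$. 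Summing over edges yields the required congruence, and with it the sign-refined version of the computation in Lemma~\ref{cycmorlem}.

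The main obstacle is precisely this edge-by-edge sign identity, which isolates the combinatorial content of the anti-cyclic (signed) setting. Once it is in hand, the proof concludes by a term-by-term comparison identical to the $\rho_+$ case, establishing that $\rho_-$ is a well-defined morphism of operads $\cB^- \to \B^0$.
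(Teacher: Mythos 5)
Your proposal is correct and follows essentially the same route as the paper: reduce to the set-level bijection of Lemma \ref{cycmorlem} and then check that the number of edge-direction disagreements with the root-induced orientation is additive under insertion (and unchanged by relabeling), which is exactly the paper's sign verification, with your treatment of the defining equivalence relation spelling out what the paper leaves as a brief observation.
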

\begin{proof}  In light of Lemma $\ref{cycmorlem}$ it suffices to check the signs.  Clearly the number of arrows which don't agree with the root after an insertion or permutation is the number which don't agree in the constituents before insertion or permutation.  Observe that the sign $(-1)^{\epsilon(T,r)}$ is alternating with respect to cyclically permuting the root.
\end{proof}

We can now define the cyclic brace operad.
 
\begin{definition}\label{cycbracedef}  The cyclic brace operad $\cB$ is defined to be the operad $\fr{s}^{-1}\cB^-$.  The generators of $\cB$ will be called cyclic brace operations.
\end{definition}

Observe that an immediate consequence of the definition is the existence of morphisms of operads $\fr{s}^{-1}\op{L}ie\to\cB\stackrel{\rho}\to\B$.  Also note that, as it has been defined, the cyclic brace operad $\cB$ is an operad, not a cyclic operad.  The `cyclic' in the name is motivated by Proposition $\ref{cycbrace}$.

\begin{proposition}\label{cycbrace}  Let $\op{O}$ be a cyclic operad (or non-$\Sigma$ cyclic operad).  Then $\cnsc{O}$ is naturally a $\cB$-algebra.
\end{proposition}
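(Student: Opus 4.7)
The plan is to pull back the $\B$-algebra structure on $\nsc{O}$ (Lemma \ref{bracelem}) along the operad morphism $\rho \colon \cB \to \B$ obtained by applying operadic desuspension $\fr{s}^{-1}$ to $\rho_-$ of Lemma \ref{cycmorlem2}, and then to show that the resulting $\cB$-action on $\nsc{O}$ descends to the cyclic coinvariants $\cnsc{O}$. Explicitly, for $T \in \cB(n)$ and classes $[a_1], \ldots, [a_n] \in \cnsc{O}$, one sets
$$T \cdot ([a_1], \ldots, [a_n]) := \pi\bigl(\rho(T) \cdot (a_1, \ldots, a_n)\bigr),$$
where $\pi\colon \nsc{O} \twoheadrightarrow \cnsc{O}$ is the projection to cyclic coinvariants, and then checks independence of the representatives $a_i$.

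By the universal property of coinvariants, well-definedness reduces to showing that for every directed planar tree $T \in \cB(n)$ and every vertex $v$, the composite $\pi \circ \rho(T)$ is invariant under the $\Z^+_{k_v}$-action on the $v$-th input $a_v$. Unwinding $\rho_-(T) = \sum_{r \in R_T}(-1)^{\epsilon(T,r)}(T,r)$, one organizes the signed sum by the vertex at which the root arc $r$ lies. At each vertex $v$, the arcs are matched under the planar structure with the arcs of $a_v$ in the cyclic operad, so cyclically rotating $a_v$ can be absorbed into a reindexing of the root $r$ at $v$ together with the corresponding re-routings of the directed edges along the path from $v$ to $r$. The sign $(-1)^{\epsilon(T,r)}$ is designed precisely so that the parity of this reindexing compensates the sign from the $\fr{s}$-twisted $S_n^+$-action on $\fr{s}\op{O}$ (cf.\ Remark \ref{signsremark1}). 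The binary case recovers exactly the cyclic bracket of Theorem \ref{KWZthm}, which provides a sanity check.

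The main obstacle is the sign bookkeeping, since three sources of signs interact: operadic desuspension, the directedness signs $(-1)^{\epsilon(T,r)}$, and the $\fr{s}$-twisted $S_n^+$-action on $\fr{s}\op{O}$. A clean way to organize the verification is first to treat the unshifted, unoriented prototype: Lemma \ref{cycmorlem} pulls back the $\B^0$-action to a $\cB^+$-action on the naive coinvariants $\coprod_n \op{O}(n)_{\Z_n^+}$, where no signs appear and cyclic invariance follows directly from the cyclic-operad axioms (Definition \ref{cycdef}). The signed odd-cyclic case is then obtained by transferring along the suspension conventions of Lemma \ref{MSSlemma} and Definition \ref{odddef}, which are engineered precisely so that the directedness signs cancel the $S_n^+$-signs coherently. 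Once cyclic invariance is established input by input, the output automatically lands in $\cnsc{O}$ after applying $\pi$, completing the construction of the $\cB$-algebra structure.
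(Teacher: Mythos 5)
Your opening reduction via Lemma $\ref{MSSlemma}$ to the anti-cyclic prototype matches the paper's first step, but the core of your construction --- setting $T\cdot([a_1]\cdc [a_n]):=\pi\bigl(\rho(T)(a_1\cdc a_n)\bigr)$ and claiming independence of representatives --- has a genuine gap: that independence is false. The operations $\rho(T)$ are built from rooted insertions, so along each edge of $T$ they only ever glue the output leg (leg $0$) of one argument to an input leg of another; summing over the roots $r\in R_T$ varies which vertex of $T$ is the root, but the arcs of $T$ at a vertex $v$ are in bijection with the edges of $T$ at $v$, not with the legs of $a_v$, so rotating $a_v$ cannot be absorbed into re-rooting, contrary to your key claim. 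Concretely, take the two-vertex tree in the unsigned prototype $\cB^+$, with $a\in\op{O}(n)$, $b\in\op{O}(m)$: $\rho_+(T)(a,b)=\sum_{i=1}^n a\circ_i b+\sum_{j=1}^m b\circ_j a$, whose terms glue leg $i\geq 1$ of $a$ to leg $0$ of $b$, or leg $0$ of $a$ to leg $j\geq 1$ of $b$. Replacing $b$ by $tb$ produces instead the gluings of leg $i\geq 1$ of $a$ to leg $m$ of $b$, and of leg $0$ of $a$ to legs $0\cdc m-1$ of $b$. In $\cnsc{O}$ the class of such a composite depends only on the pair of glued legs (Lemma $\ref{extlem}$), and these two multisets of leg-pairs differ; in a free non-$\Sigma$ cyclic operad the corresponding classes are linearly independent, so $\pi\circ\rho_+(T)$ genuinely depends on the representative already in the sign-free case, and adding the suspension and orientation signs does not repair this.

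The paper's proof avoids $\rho$ altogether: after the reduction via Lemma $\ref{MSSlemma}$, the action of a directed planar tree $T$ on $[a_1]\tdt[a_n]$ is defined directly as a sum over \emph{all} ways of adjoining tails to $T$ (equivalently, over all choices of which legs are glued along each edge), each decorated tree acting by a composite of the $\circ_{ij}$ operations of Definition $\ref{circmaps}$; this is the non-rooted analog of Equation $\ref{braceeq}$, and well-definedness on coinvariants is immediate from Remark $\ref{circijrmk}$. Note that this action is not the pullback of the $\B$-action along $\rho$: already for the two-vertex tree the coinvariant operation $\sum_{i,j}[a\circ_{ij}b]$ has $(n+1)(m+1)$ terms versus the $n+m$ terms of $\pi([a,b])$, and even on cyclically invariant representatives the two differ by the arity-dependent factor $\frac{(n+1)(m+1)}{n+m}$, as recorded in the corollary following Theorem $\ref{KWZthm}$. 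So the missing idea is precisely the sum over all tail-adjoinings (all rotations of each inserted element), which is what makes the operation descend to $\cnsc{O}$.
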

\begin{proof}  First, by Lemma $\ref{MSSlemma}$ it is enough to prove that if $\op{P}$ is an anti-cyclic operad then $\coprod_n \op{P}(n)_{\Z_n^+}$ is a $\cB^-$-algebra.  We define the action as follows.  Given a directed planar tree $T$ with $n$ vertices and $a_{j}\in\op{P}(m_j)$ for $j=1\cdc n$ we define $L_T=L_T(m_1\cdc m_n)$ to be the set of planar trees with tails which can be formed by adjoining tails to $T$ such that the vertex labeled by $j$ has arity $m_j$.

By Remark $\ref{circijrmk}$ any tree $l_T\in L_T$ acts on $[a_{1}]\tdt [a_{n}]$ by a composition of $\circ_{ij}$ operations.  Denote this operation by $l_T([a_{1}]\tdt [a_{n}])$.  We then define
\begin{equation*}
T([a_{1}]\tdt [a_{n}]):= \sum_{l_T\in L_T}l_T([a_{1}]\tdt [a_{n}])
\end{equation*}

This equation is the non-rooted analog of equation $\ref{braceeq}$.  
\end{proof}
Again, our extendability criterion is satisfied, hence:
\begin{corollary}\label{cycbracecor}  Let $\op{O}$ be a cyclic (or non-$\Sigma$ cyclic) operad.  Then $\cnsl{O}$ is naturally a $\cB$-algebra.
\end{corollary}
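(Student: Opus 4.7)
The plan is to mirror exactly the passage from Proposition \ref{cycbrace} to $\cnsl{O}$ that was used earlier in the paper to go from $\nsc{O}$ to $\nsl{O}$ (the ``Extendable operations'' subsection). Concretely, I would invoke the general lemma that extendable operations on $A_\bullet := \coprod A_i$ extend to $A^\bullet := \prod A_i$, applied with $A_i = \Sigma(\fr{s}\op{O}(i))_{\Z_i^+}$ (or the $S_i^+$-version, as appropriate for the cyclic case one is treating). Since Proposition \ref{cycbrace} has already produced a morphism of operads $\cB \to End_{\cnsc{O}}$, the only thing left to check is that its image lies in $End^{ex}_{\cnsc{O}}$.

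First I would reduce to the generators. It suffices to verify extendability on a set of generators of $\cB$, because extendable operations form a suboperad. A generator is given (after desuspension) by a single labeled directed planar tree $T$ with $m$ vertices. Applied to inputs $[a_1]\tdt[a_m]$ with $a_j \in \op{O}(n_j)$, the action defined in the proof of Proposition \ref{cycbrace},
\[
T([a_1]\tdt[a_m]) \;=\; \sum_{l_T \in L_T(n_1,\dots,n_m)} l_T([a_1]\tdt[a_m]),
\]
is a sum of compositions of $\circ_{ij}$ operations following the combinatorics of $T$ with tails attached. A single $\circ_{ij}$ takes arities $(p,q)$ to arity $p+q-2$, so an $m$-vertex tree computation with attached tails yields an output of arity
\[
N \;=\; \Bigl(\sum_{j=1}^m n_j\Bigr) - 2(m-1) + (\text{number of tails contributed}),
\]
which in any case is a fixed affine function of the input arities $(n_1,\dots,n_m)$. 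Thus for each fixed output arity $N$ only finitely many tuples $(n_1,\dots,n_m)$ can contribute, which is precisely the extendability condition.

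Applying the lemma on extendable operations, the generators of $\cB$ act on $\cnsl{O}$, and this extends to all of $\cB$ by the operad structure. The resulting structure agrees with the coproduct structure on each finite truncation, so no coherence is added beyond what Proposition \ref{cycbrace} already provides. The naturality is inherited aritywise from the naturality established in the proposition. The only mildly delicate point, and the one I would write out carefully, is the arity-counting step above: one must be sure that the insertion of tails in the definition of $L_T$ does not spoil the finiteness (it does not, since each tail increases the total arity by one and is bounded by the local valence structure of $T$). The $S_n^+$-invariant variant in the symmetric case is identical, using the isomorphism $(-)_{S_n^+} \cong (-)^{S_n^+}$ of Remark \ref{invrmk}.
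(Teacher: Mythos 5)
Your proposal is correct and takes essentially the same route as the paper, which likewise dispatches the corollary in one line by observing that the extendability criterion of the earlier subsection is satisfied, precisely because the output arity of each cyclic brace operation is a fixed function of the input arities. One bookkeeping remark: in the paper's indexing a single $\circ_{ij}$ sends arities $(p,q)$ to $p+q-1$, so an $m$-vertex tree yields output arity $\sum_j n_j-(m-1)$ with no separate tail correction, but this slip does not affect your finiteness argument.
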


\subsubsection{Orientation data}\label{signsec}  A labeled planar tree is not a cyclic brace operation, rather a labeled planar tree determines a cyclic brace operation via a conventional choice of extra data which (in view of Section $\ref{sec:grav}$) we call orientation data.  This data may take either of two equivalent forms.  The first is a direction of each of the edges along with an order of the set of vertices, and permuting the vertices by an odd permutation or switching an edge order produces a negative sign.  This is what one extracts from Definition $\ref{cycbracedef}$.  The second is an ordering of the edges of $T$, and permuting the order by an odd permutation produces a negative sign.  That there is a natural equivalence between these two notions of orientation data can be extracted from Proposition 4.14 of \cite{GeK2}.  

To be more precise, in our context this equivalence takes the following form.  Suppose $\cB^\prime(n)$ is the graded vector space generated by planar trees with an order on the set of edges, modulo the relation that switching two edges in the order gives a negative sign, then there is a map $\rho^\prime\colon \cB^\prime\to \B$ given by summing over all roots with the sign equal to the number of disagreements between the given edge order of the nonrooted tree and the planar edge order given by the choice of root.  Taking care of signs in the suspension, one can show that the map $\rho\colon \cB\to\B$ induced from Definition $\ref{cycbracedef}$ lands in the image of $\rho^\prime$ and vice versa, and that the operad structure of $\cB$ induces an operad structure $\cB^\prime$ such that $\rho^{-1}\rho^\prime$ is an isomorphism of operads.  Taking this second view of the cyclic brace operad, we may consider it as built from trees whose edges have degree $-1$.  This gives both the correct degree and the correct $S_n$ action under this alternate description. 

Given a planar tree, we choose the following convention for specifying a cyclic brace operation.  Let $\alpha$ be the white vertex arc on the vertex labeled by $1$ which precedes (in the planar orientation) the branch containing the vertex labeled by $2$.  Choosing $\alpha$ as a root specifies both an order of the edges (the planar order starting from the root, using the rule of `first contact' as in Section $\ref{minopsec}$), as well as a direction of each of the edges (toward the root), and hence a cyclic brace operation in each of the descriptions above.  Moreover this convention is compatible with the given isomorphism between the descriptions.

\subsection{Adding a MC element.}
  
Given a cyclic operad $\op{O}$ and a MC element of the odd Lie algebra $\cnsl{O}$  (resp. $\csyl{O}$) we get a twisted differential by the general procedure: $\delta_\eta(-):=d_\op{O}(-)+\{\eta,-\}$.  In analogy with the operad case (Theorem $\ref{repthm}$), $\eta$ results in additional algebraic structure due to the fact that the MC functor is representable, as we shall now see.  Let $\textbf{Cyc}$ be the category of dg cyclic operads.

\begin{theorem}\label{cycrepthm}  Let $\op{O}$ be a cyclic operad.  There are natural bijective correspondences
\begin{equation*}
MC(\cnsl{O})\cong Hom_{\textbf{Cyc}}(\op{A}_\infty, \op{O}) \ \ \ \ \text{and} \ \ \ \  MC(\csyl{O})\cong Hom_{\textbf{Cyc}}(\op{L}_\infty, \op{O})
\end{equation*}
\end{theorem}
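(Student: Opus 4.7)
The plan is to deduce both bijections from the operadic representability statement (Theorem \ref{repthm}) by restricting to cyclic invariants. Two ingredients reduce the task to a routine check. First, by the corollary to Theorem \ref{KWZthm}, the standard inclusions $\cnsl{O}\hookrightarrow\nsl{O}$ and $\csyl{O}\hookrightarrow\syl{O}$ are morphisms of odd dg Lie algebras; hence $MC(\cnsl{O})=MC(\nsl{O})\cap\cnsl{O}$, and similarly in the symmetric case. Second, both $\op{A}_\infty$ and $\op{L}_\infty$ carry canonical cyclic operad structures in which the corolla generators $\mu_n\in\op{A}_\infty(n)$ and $\ell_n\in\op{L}_\infty(n)$ are cyclically invariant, inherited from their cobar/tree presentations as the Koszul resolutions of the cyclic operads $\op{A}s$ and $\op{L}ie$.

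Granting these inputs, the argument is short. Under the bijection of Theorem \ref{repthm}, $\eta\in MC(\nsl{O})$ corresponds to the dg operad morphism $\phi_\eta\colon\op{A}_\infty\to\op{O}$ determined by $\phi_\eta(\mu_n)=\eta_n$. Because $\op{A}_\infty$ is generated as a cyclic operad by the cyclically invariant $\mu_n$, the map $\phi_\eta$ refines to a dg cyclic operad morphism if and only if each $\phi_\eta(\mu_n)$ is cyclically invariant---equivalently, if and only if $\eta\in\cnsl{O}$. Restricting the bijection of Theorem \ref{repthm} along this condition yields the desired bijection $MC(\cnsl{O})\cong Hom_{\textbf{Cyc}}(\op{A}_\infty,\op{O})$. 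The symmetric case is word-for-word identical, with $\op{L}_\infty$ and $S_n^+$ in place of $\op{A}_\infty$ and $\Z_n^+$.

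The main obstacle is the second ingredient: cyclic invariance of $\mu_n$ and $\ell_n$ under the double shift $\Sigma\fr{s}$ defining $\cnsl{O}$ and $\csyl{O}$. The $\Z_n^+$-action on $\Sigma(\fr{s}\op{A}_\infty(n))$ is twisted by the signs of the operadic suspension and the overall degree shift, so one must verify that the corolla is genuinely fixed rather than merely fixed up to a sign. This is most cleanly handled by passing to the odd cyclic operad formalism of \cite{KWZ}, where the cyclic action carries no extra signs and invariance of the corolla is immediate from its planar symmetry; the equivalence between cyclic and odd cyclic operads then transports the conclusion back to the setting of the theorem. Once this sign bookkeeping is discharged, the remainder of the proof is purely formal.
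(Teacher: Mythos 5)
Your proposal is correct and takes essentially the same approach as the paper: the paper likewise reduces everything to the behaviour of the corolla generators, defining $\eta_n:=\rho(\ell_n)\tensor\frac{(-1)^n}{n!}$ so that the suspension/shift sign exactly cancels the $(-1)^{|\sigma|}$ by which $\sigma\in S_n^+$ acts on $\ell_n$ (Example $\ref{cycliclex}$), and identifying the Maurer-Cartan equation with compatibility of $\rho$ with the differential. Your passage through the odd cyclic formalism of \cite{KWZ} and your framing as a restriction of the bijection of Theorem $\ref{repthm}$ are just repackagings of that same sign computation and generation-by-corollas argument.
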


\begin{proof}  For example if $\rho\colon \op{L}_\infty\to \op{O}$ then by Example $\ref{cycliclex}$ we know that for $\sigma\in S_n^+$, $\sigma\ell_n:=(-1)^{|\sigma|}\ell_n$, and so we define the $S_n^+$ invariant element $\eta_n$ by:
\begin{equation*}
\eta_n:=\rho(\ell_n)\tensor \frac{(-1)^n}{n!} \in \op{O}(n)\tensor^{S_n^+}sgn_{n+1}
\end{equation*}
and define $\eta:=\prod \eta_n\in \csyl{O}$.  The fact that $\rho$ is respects the differential is then equivalent to the fact that $\eta$ satisfies the MC equation. \end{proof}

In the event that $\op{O}$ is not dg, the situation simplifies as follows.

\begin{corollary}\label{MCcorcyc}  For a linear cyclic operad $\op{O}$ (viewed as dg with zero differential and concentrated in degree $0$) there are natural bijective correspondences
\begin{equation}
MC(\cnsc{O})\cong Hom_{\textbf{Cyc}}(\op{A}s, \op{O}) \ \ \ \ \text{and} \ \ \ \  MC(\csyc{O})\cong Hom_{\textbf{Cyc}}(\op{L}ie, \op{O})
\end{equation}
\end{corollary}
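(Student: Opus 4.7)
The plan is to deduce this corollary directly from Theorem \ref{cycrepthm} by tracking what the hypothesis ``concentrated in degree zero with trivial differential'' forces on both sides of the bijections. First I would invoke Theorem \ref{cycrepthm} to obtain the correspondences $MC(\cnsl{O})\cong Hom_{\textbf{Cyc}}(\op{A}_\infty,\op{O})$ and $MC(\csyl{O})\cong Hom_{\textbf{Cyc}}(\op{L}_\infty,\op{O})$. Then my two tasks are to identify the left-hand sides with $MC(\cnsc{O})$ and $MC(\csyc{O})$ respectively, and to identify the right-hand sides with $Hom_{\textbf{Cyc}}(\op{A}s,\op{O})$ and $Hom_{\textbf{Cyc}}(\op{L}ie,\op{O})$. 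Both identifications proceed by a degree-counting argument.

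On the Maurer-Cartan side, recall from Definition of $\cnsl{O}$ that an element in the $n$-th factor lives in $\Sigma\fr{s}\op{O}(n)$. Since $\fr{s}\op{O}(n)=\Sigma^{n-1}\mathrm{sgn}_n\otimes\op{O}(n)$ and $\op{O}(n)$ is concentrated in degree $0$, the $n$-th factor sits entirely in internal degree $n$. A Maurer-Cartan element $\eta=\prod\eta_n$ has degree $2$, so $\eta_n$ can only be nonzero when $n=2$. This collapse shows that the infinite product coincides with the single nonzero $n=2$ factor, so $MC(\cnsl{O})=MC(\cnsc{O})$; the MC equation itself, since $d=0$, reduces to $\{\eta_2,\eta_2\}=0$. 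The same argument applied to $\csyl{O}$ gives $MC(\csyl{O})=MC(\csyc{O})$.

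On the $\mathrm{Hom}$ side, recall that $\op{A}_\infty$ is generated by $\mu_n\in\op{A}_\infty(n)$ of degree $2-n$. A cyclic operad morphism $\rho\colon\op{A}_\infty\to\op{O}$ must preserve degree, and since $\op{O}$ sits in degree $0$ we are forced to have $\rho(\mu_n)=0$ for all $n\neq 2$. The defining $A_\infty$ relations then collapse to the single surviving identity $d\rho(\mu_2)=\rho(\mu_2)\circ_1\rho(\mu_2)-\rho(\mu_2)\circ_2\rho(\mu_2)$, and because $d=0$ this is exactly the associativity of $\rho(\mu_2)$. Hence the data is precisely a cyclic operad map $\op{A}s\to\op{O}$. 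The same argument applied to $\ell_n\in\op{L}_\infty(n)$ and the $L_\infty$ relations yields $Hom_{\textbf{Cyc}}(\op{L}_\infty,\op{O})=Hom_{\textbf{Cyc}}(\op{L}ie,\op{O})$.

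The proof is essentially a specialization, so there is no real obstacle; the only subtle point, worth stating explicitly, is the bookkeeping of the combined operadic suspension $\fr{s}$ and the shift $\Sigma$ in the definition of $\cnsl{O}$, which must be used carefully to see that the degree $2$ condition on a Maurer-Cartan element isolates arity $n=2$ when the underlying operad has no internal grading and no differential. Once that verification is made, the two bijective correspondences follow by naturally restricting the correspondences of Theorem \ref{cycrepthm} and Example \ref{cycliclex}.
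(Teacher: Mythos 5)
Your proposal is correct and matches the paper's intent: the paper states this corollary without proof as an immediate specialization of Theorem \ref{cycrepthm}, and your degree-counting argument (total degree $2$ forces arity $2$ on the Maurer--Cartan side, and degree preservation kills $\rho(\mu_n)$, $\rho(\ell_n)$ for $n\neq 2$ so the relations reduce to associativity, resp. Jacobi) is exactly the intended verification. The only cosmetic slip is writing the surviving relation as $d\rho(\mu_2)=\rho(\mu_2)\circ_1\rho(\mu_2)-\rho(\mu_2)\circ_2\rho(\mu_2)$; it should come from $0=\rho(\partial\mu_3)$, but your conclusion (associativity of $\rho(\mu_2)$) is unaffected.
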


Representability of the MC functors gives natural transformations between them.  These natural transformations can also be seen as being induced by diagram $\ref{cube}$.

\subsection{Cyclic cohomology and the long exact sequence}  
Let $\op{O}$ be a cyclic operad with $\eta\in MC(\cnsl{O})$.  The inclusion of dg Lie algebras
\begin{equation*}
0\to (\cnsl{O},\delta_\eta)\to (\nsl{O},\delta_\eta)
\end{equation*}
is part of a short exact sequence whose associated long exact sequence generalizes the Connes-Tsygan long exact sequence associated to a (co)cyclic module.  In particular, the pair $(\op{O},\eta)$ can be thought of as an $A_\infty$ generalization of a cyclic module, and we follow \cite{Loday}.  See Section $\ref{exsec}$ for details of particular examples.  Note that we will give the construction of the long exact sequence for $\nsl{O}$ and $\cnsl{O}$.  We could also consider the variant for the other outward pointing arrows in diagram $\ref{cube}$.  We focus on the non-symmetric case, but the symmetric case is also interesting, see \cite{GeK1}.  

To begin, observe that the odd pre-Lie identity ensures that $(\alpha\circ\eta)\circ \eta=0$ and consequently the formula
\begin{equation*}
\delta^\prime(\alpha) := \alpha\circ\eta +(-1)^{|\alpha|}\eta\circ_1\alpha 
\end{equation*}
is a differential.  The following lemma is an elementary generalization of the standard argument for cocyclic modules.
\begin{lemma}  The operators $\delta=\delta_\eta,\delta^\prime, N$ and $t$ satisfy $N\delta^\prime = \delta N$ and $(1-t)\delta = \delta^\prime (1-t)$.
\end{lemma}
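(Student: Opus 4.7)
The plan is to mirror the standard proof of the corresponding identities for cocyclic modules, using cyclic-operad axioms to translate between operadic composition at position $1$ and composition at arbitrary positions.

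First I would unpack the two differentials on $\nsl{O}$. Since $|\eta|=2$, the cyclic bracket gives $\delta(\alpha) = d_\op{O}(\alpha) + \eta\circ\alpha + (-1)^{|\alpha|}\alpha\circ\eta$, where $\circ=\sum_i\circ_i$ is the pre-Lie composition from Section \ref{opssection}. By contrast $\delta'(\alpha)=\alpha\circ\eta+(-1)^{|\alpha|}\eta\circ_1\alpha$, so the crucial structural difference is that $\delta$ inserts $\alpha$ into every slot of $\eta$ while $\delta'$ inserts only into the first slot. This is precisely the relation between the Hochschild boundary $b$ and the degenerate boundary $b'$ of Connes, after suppressing the $d_\op{O}$ term and the outward-composition $\alpha\circ\eta$ which is common to both.

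Second, I would invoke the cyclic-operad axiom (Appendix \ref{sec:cyc}) that expresses arbitrary compositions $\circ_i$ in terms of $\circ_1$ via the generator $t\in\Z_n^+$: schematically $a\circ_i b$ equals, up to a predictable sign, a cyclic rotation of $(t^{i-1}a)\circ_1 b$, and $t$ acts on $\eta\circ_1\alpha$ by rotating $\alpha$ into output position. Under this dictionary the family $\{\op{O}(n)\}$ with the operations $t$, $\circ_1$, and $\eta\circ_i -$ satisfies compatibilities formally parallel to those of a paracyclic module in the sense of Loday, with $\delta$ playing the role of $b$ and $\delta'$ the role of $b'$.

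With this translation in hand, both identities become telescoping sums. For $N\delta'=\delta N$, applying $N=\sum_{k=0}^{n} t^k$ to $(-1)^{|\alpha|}\eta\circ_1\alpha$ produces, via the axiom, the full pre-Lie composition $(-1)^{|\alpha|}\eta\circ\alpha$ inside $\delta N(\alpha)$, while the $\alpha\circ\eta$ terms on either side match directly because they are already cyclically summed and $Nt=N$. For $(1-t)\delta=\delta'(1-t)$, the "missing" terms $\sum_{i\geq 2}\eta\circ_i\alpha$ in $\delta-\delta'$ are exactly what is cancelled by $t\delta-\delta' t$ under a single cyclic shift, using $t^{n+1}=\mathrm{id}$. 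The intrinsic differential $d_\op{O}$ poses no difficulty since it is $\Z_n^+$-equivariant and a derivation of every $\circ_i$.

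The principal obstacle will be sign bookkeeping, because the cyclic action is defined on the oddified cyclic operad $\Sigma\fr{s}\op{O}$, so the translation between $\circ_i$ and $\circ_1$ carries shift-dependent signs which interact with the Koszul signs in $\delta'$ and with the orientation convention of Section \ref{signsec}. I would handle this by working once and for all in the conventions of Theorem \ref{KWZthm}, in which $\{-,-\}$ is odd-graded-Jacobi, and checking that those same conventions make the cyclic-shift identity hold with the expected sign; once this is reconciled, the two identities are formal.
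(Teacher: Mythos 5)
Your overall strategy---rerun the Connes--Tsygan computation for cocyclic modules, using the cyclic-operad axioms to move powers of $t$ past the insertion operations---is exactly what the paper intends: it gives no argument beyond the remark that the lemma is an elementary generalization of the standard cocyclic-module computation. So on the level of approach you and the paper coincide; the issues are with the specific cancellation scheme you describe, which would not close up as written.

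First, $N(\alpha\circ\eta)$ is not $(N\alpha)\circ\eta$: the identity $Nt=N$ concerns $N$ and $t$ acting at one and the same arity, whereas in $N\delta'(\alpha)$ the sum $N$ rotates the composite of arity $n+s-1$, while in $\delta N(\alpha)$ the powers $t^k$ rotate $\alpha$ before insertion. By axiom (3) of Definition $\ref{cycdef}$, the $t$-orbit of an ``inner'' term $\alpha\circ_i\eta_s$ passes through terms of the shape $\eta_s\circ_j(t^k\alpha)$, and conversely the $N$-orbit of $\eta_s\circ_1\alpha$ produces not only $\sum_i\eta_s\circ_i\alpha$ but also terms $(t^j\alpha)\circ_j\eta_s$; so the two summands of $\delta'$ mix with both summands of $\delta$, and the verification is a single global reindexing over pairs (slot, power of $t$), not the blockwise matching (``the $\alpha\circ\eta$ terms match directly'', ``$N(\eta\circ_1\alpha)$ gives the full pre-Lie composition'') that you propose. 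Second, each rotation step $t(\eta_s\circ_i\alpha)=(t\eta_s)\circ_{i+1}\alpha$ is only recognized as a term of $\delta N$ or $N\delta'$ because $t\eta_s=\eta_s$: the hypothesis that $\eta$ lies in $MC(\cnsl{O})$ rather than merely $MC(\nsl{O})$ is precisely what the lemma uses, and your sketch never invokes it. Finally, your claim that $d_\op{O}$ ``poses no difficulty by equivariance'' is valid only if the internal differential appears in both $\delta$ and $\delta'$ (then $Nd=dN$ and $(1-t)d=d(1-t)$ settle it); with the formulas as you display them it occurs on one side of $N\delta'=\delta N$ only, so you should say explicitly that $\delta'$ is read with the internal (or $\eta_1$-twisted) differential included, or that one reduces to the case $d_\op{O}=0$. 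All of these are repairable within your strategy, but as stated the telescoping does not go through.
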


This lemma allows us to define the cyclic bicomplex of the pair $(\op{O},\eta)$.

\begin{definition}  Let $\op{O}$ be a cyclic operad and let $\eta\in MC(\cnsl{O})$.  The cyclic bicomplex of $(\op{O},\eta)$, denoted $C^{\bullet,\bullet}(\op{O},\eta)$ is the bicomplex $0\to(\nsl{O},\delta)\stackrel{1-t}\to(\nsl{O},\delta^\prime)\stackrel{N}\to (\nsl{O},\delta)\stackrel{1-t}\to \dots$.  We define the cyclic cohomology of $\op{O}$ with respect to $\eta$  to be the cohomology of this bicomplex, which we shall denote by $HC^\ast(\op{O},\eta)$.
\end{definition}

\begin{proposition} 
\begin{equation*}
HC^\ast(\op{O},\eta)\cong H^\ast(\cnsl{O},\delta_\eta)
\end{equation*}
\end{proposition}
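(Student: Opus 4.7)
The plan is to run the spectral sequence of the cyclic bicomplex $C^{\bullet,\bullet}(\op{O},\eta)$ whose $E_1$ page is the horizontal cohomology, and show that $E_1$ is concentrated in the first column with value $\cnsl{O}$, the induced vertical differential being precisely $\delta_\eta$; collapse then yields the desired isomorphism. This runs parallel to the classical comparison $HC^\ast\cong H^\ast(C^\ast_\lambda)$ for a (co)cyclic module, cf.\ \cite{Loday}.

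First I would reduce the horizontal cohomology computation to a single arity. Since $\nsl{O}=\prod_n \Sigma(\fr{s}\op{O}(n))$, and both $1-t$ and $N$ preserve arity, it suffices to prove that for each $n$, with $M=\Sigma(\fr{s}\op{O}(n))$, $t$ a generator of $\Z_n^+$, and $N=1+t+\cdots+t^n$, the horizontal complex
\begin{equation*}
0 \to M \xrightarrow{1-t} M \xrightarrow{N} M \xrightarrow{1-t} M \xrightarrow{N} \cdots
\end{equation*}
has cohomology $M^{\Z_n^+}$ in position $0$ and vanishes in higher positions.

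Next I would verify this by the standard characteristic-zero argument. The averaging operator $\pi = N/(n+1)$ is an idempotent with image $M^{\Z_n^+}$ and kernel $\ker N$, giving the splitting $M = M^{\Z_n^+}\oplus\ker N$. On $\ker N$ the eigenvalues of $t$ are the nontrivial $(n+1)$-st roots of unity, so $1-t$ restricts to an invertible operator there. Consequently $\ker(1-t) = M^{\Z_n^+}$, $\operatorname{im}(1-t) = \ker N$, and $\operatorname{im} N = N(M^{\Z_n^+}) = (n+1)M^{\Z_n^+} = M^{\Z_n^+}$, from which the claim at each slot follows immediately.

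Finally I would conclude as follows. Taking products over $n$ gives $E_1^{0,\bullet}\cong\prod_n \Sigma(\fr{s}\op{O}(n))^{\Z_n^+}=\cnsl{O}$ and $E_1^{p,\bullet}=0$ for $p\geq 1$. The induced $d_1$ on the surviving column is the restriction of the vertical differential $\delta_\eta$, which agrees with the differential intrinsic to $\cnsl{O}$ because the inclusion $\cnsl{O}\hookrightarrow\nsl{O}$ is closed under $\delta_\eta$ by the corollary to Theorem $\ref{KWZthm}$. Hence $E_2^{0,q}=H^q(\cnsl{O},\delta_\eta)$, all higher differentials vanish for support reasons, convergence is automatic, and the isomorphism follows. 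The main obstacle is really just bookkeeping --- handling signs in $t$ and the (co)invariants identification via Remark $\ref{invrmk}$ --- with no substantive difficulty anticipated.
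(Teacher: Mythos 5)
Your argument is correct and is essentially the paper's own proof: the paper likewise observes (citing Loday, Theorem 2.1.5) that the rows of the cyclic bicomplex have cohomology only in degree $0$, so the total cohomology reduces to the cyclic (co)invariants with the differential $\delta_\eta$; your spectral-sequence formulation with the arity-wise characteristic-zero averaging just makes that row-exactness explicit. (Minor point: instead of invoking eigenvalues of $t$, one can exhibit the inverse of $1-t$ on $\ker N$ directly via $\beta=\tfrac{-1}{n+1}\sum_{r=1}^{n}r\,t^{r}\alpha$, as the paper does in the proof of the long exact sequence.)
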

\begin{proof}  This follows as in \cite{Loday} Theorem 2.1.5.  In particular, the rows of the cyclic bicomplex have cohomology only in degree $0$, and so the cohomology of $C^{\bullet,\bullet}(\op{O},\eta)$ is the cohomology of the cokernel of the first two columns.  This is precisely the cyclic (co)invariants with the differential $\delta_\eta$. \end{proof}

We can now construct the long exact sequence.  The input for this construction is a cyclic operad which is unital; see Definition $\ref{unitaldef}$.  

\begin{proposition}\label{les}  Let $\op{O}$ be a unital cyclic operad and let $\eta\in MC(\cnsl{O})$.  There is a long exact sequence
\begin{equation*}
\dots\to H^n(\cnsl{O})\stackrel{in}\to H^n(\nsl{O})\stackrel{\Delta}\to H^{n-1}(\cnsl{O})\stackrel{S}\to H^{n+1}(\cnsl{O})\to\dots
\end{equation*}
\end{proposition}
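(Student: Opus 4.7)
The plan is to mimic the classical Connes--Tsygan derivation of the long exact sequence for a (co)cyclic module, applied to the cyclic bicomplex $C^{\bullet,\bullet}(\op{O},\eta)$ introduced just above the proposition. Everything reduces to producing a short exact sequence of bicomplexes whose outer terms compute $H^\ast(\nsl{O},\delta_\eta)$ and a shifted copy of $H^\ast(\cnsl{O},\delta_\eta)$.

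First I would let $B \subset C^{\bullet,\bullet}(\op{O},\eta)$ denote the sub-bicomplex consisting of the first two columns (the $\delta$-column and the adjacent $\delta^\prime$-column joined by $1-t$), and let $Q := C^{\bullet,\bullet}/B$. Since dropping the first two columns and reindexing is the identity operation on the remaining periodic pattern, there is an isomorphism of bicomplexes $Q \cong C^{\bullet,\bullet}(\op{O},\eta)[-2,0]$. Combined with the proposition preceding this one, this yields $H^\ast(\mathrm{Tot}(Q))\cong H^{\ast-2}(\cnsl{O},\delta_\eta)$, which will supply both the $H^{n-1}(\cnsl{O})$ and $H^{n+1}(\cnsl{O})$ terms of the claimed sequence.

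Next I would identify $H^\ast(\mathrm{Tot}(B))$ with $H^\ast(\nsl{O},\delta_\eta)$. The key input here is unitality of $\op{O}$: using the unit $1 \in \op{O}(1)$ (together with its prescribed behavior under the cyclic action, via Definition~\ref{unitaldef}), I would construct an explicit contracting homotopy $h\colon \nsl{O}\to \nsl{O}$ of degree $-1$ satisfying $\delta^\prime h + h\,\delta^\prime = \mathrm{id}$, by inserting the unit into the cyclic slot that $\delta^\prime$ omits. This is the operadic avatar of the extra degeneracy $s_{-1}$ which acyclifies the $b^\prime$-complex in the classical Hochschild setting; indeed the formula $\delta^\prime(\alpha)=\alpha\circ\eta+(-1)^{|\alpha|}\eta\circ_1\alpha$ is a truncation of $\delta_\eta$ precisely of this $b^\prime$-type. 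Acyclicity of the second column of $B$ together with the standard argument for a two-column bicomplex then produces a quasi-isomorphism $\mathrm{Tot}(B)\simeq (\nsl{O},\delta_\eta)$.

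Finally, the short exact sequence of total complexes $0\to \mathrm{Tot}(B)\to \mathrm{Tot}(C^{\bullet,\bullet})\to \mathrm{Tot}(Q)\to 0$ yields a long exact sequence in cohomology. Substituting the three identifications (with the middle term being the previous proposition's $H^\ast(\cnsl{O},\delta_\eta)$), and observing that the map induced by the inclusion of the first column is the natural inclusion $\cnsl{O}\hookrightarrow\nsl{O}$, gives exactly the sequence in the statement; the connecting map is the desired $\Delta$ and the composite with the next inclusion is the periodicity operator $S$. The only real obstacle is Step~2: pinning down the contracting homotopy and verifying $\delta^\prime h + h\delta^\prime = \mathrm{id}$ from the unital cyclic operad axioms. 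Once that is in hand, the proposition is a formal consequence of the short exact sequence of bicomplexes.
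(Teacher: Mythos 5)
Your Step 2 is essentially fine---the contracting homotopy for $\delta'$ is exactly the paper's $s_1=-\circ_1 e$, where $e\in\op{O}(0)$ is the degeneracy element of Definition~\ref{unitaldef} (not a unit in $\op{O}(1)$), and unitality of $\eta$ is what makes $\delta' s_1-s_1\delta'=\mathrm{id}$ hold---but Step 1 breaks down. In the paper's bicomplex $C^{\bullet,\bullet}(\op{O},\eta)$ the horizontal arrows point \emph{away} from the first column, $(\nsl{O},\delta)\xrightarrow{1-t}(\nsl{O},\delta')\xrightarrow{N}(\nsl{O},\delta)\to\cdots$, so the first two columns are not closed under the horizontal differential: $N$ maps the second column into the third. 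Your $B$ is therefore a quotient bicomplex, not a sub-bicomplex, and the short exact sequence $0\to\mathrm{Tot}(B)\to\mathrm{Tot}(C^{\bullet,\bullet})\to\mathrm{Tot}(Q)\to 0$ on which the whole argument rests does not exist; you have transplanted the homological Connes--Tsygan argument (where the first two columns of the homological cyclic bicomplex really are a sub) into this cohomological setting without dualizing. The symptom shows even if one grants the sequence: you would obtain $\cdots\to H^n(\nsl{O})\to H^n(\cnsl{O})\to H^{n-2}(\cnsl{O})\to H^{n+1}(\nsl{O})\to\cdots$, in which the periodicity map lowers degree by two and the connecting map lands in $H^\ast(\nsl{O})$; this is not the proposition's sequence, where $in$ goes from cyclic invariants into $\nsl{O}$, $\Delta$ is a degree $-1$ map into $H^\ast(\cnsl{O})$, and $S$ raises degree by two.

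The repair is straightforward: take as sub-bicomplex the columns of index $\geq 2$, which is isomorphic to $C^{\bullet,\bullet}(\op{O},\eta)$ shifted by two columns and hence has total cohomology $H^{\ast-2}(\cnsl{O})$ by the preceding proposition, with quotient the first two columns, whose total cohomology is $H^\ast(\nsl{O})$ by your Step 2. The long exact sequence of this short exact sequence, together with a check that the composite $H^n(\mathrm{Tot}\,C^{\bullet,\bullet})\cong H^n(\cnsl{O})\to H^n(\nsl{O})$ is induced by the inclusion of invariants, then gives exactly the statement, with $\Delta$ the connecting map and $S$ induced by the inclusion of the shifted sub-bicomplex. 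For comparison, the paper argues more directly and avoids the bicomplex: it starts from $0\to(\cnsl{O},\delta_\eta)\to(\nsl{O},\delta_\eta)\to\mathrm{coker}(in)\to 0$ and identifies $H^{\ast+1}(\mathrm{coker}(in))\cong H^\ast(\cnsl{O})$ via the auxiliary exact sequence $0\to\mathrm{coker}(in)\xrightarrow{1-t}(\nsl{O},\delta')\xrightarrow{N}\cnsl{O}\to 0$ with acyclic middle term. Both routes rest on the same two inputs---exactness $\ker N=\mathrm{im}(1-t)$ in characteristic zero and acyclicity of $(\nsl{O},\delta')$---so your Step 2 would survive intact once the sub/quotient roles are corrected.
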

\begin{proof}  To start, consider the short exact sequence
\begin{equation*}
0\to (\cnsl{O},\delta_\eta)\stackrel{in}\to (\nsl{O},\delta_\eta) \to coker(in)\to 0
\end{equation*}
To compute the cohomology of $coker(in)$ we consider the auxiliary short exact sequence
\begin{equation*}
0\to coker(in)\stackrel{1-t}\to(\nsl{O},\delta^\prime) \stackrel{N}\to \cnsl{O}\to 0
\end{equation*}
The fact that this sequence is exact can be seen as follows.  If $\alpha\in ker(N)$ of arity $n$, then we can define $\beta = \frac{-1}{n+1}\sum_{r=1}^nrt^r\alpha$ to see that $(1-t)\beta=\alpha$ and hence $ker(N)=im(1-t)$.  Note here we used our characteristic zero assumption.

Next observe that the complex $(\nsl{O},\delta^\prime)$ is acyclic.  Indeed, since $s_1 = -\circ_1 e$ places the degeneracy in position $1$,  it is easy to see that $\delta^\prime s_1-s_1\delta^\prime = id$.  It follows that $H^{\ast+1}(coker(in))\cong H^\ast(\cnsl{O})$, and the long exact sequence is that associated to the initial short exact sequence after this isomorphism.  This duplication of the cyclic cohomology in the long exact sequence is a manifestation of the Koszul self-duality of the associative operad.
\end{proof}
The morphism $\Delta$ in the long exact sequence will be the BV operator in Section $\ref{sec:bv}$.   One can also give a chain level description of the periodicity operator $S$, following \cite{Loday}.

\subsubsection{Cyclic cohomology of $\op{P}_\infty$-algebras.}  As seen above we can consider the cyclic cohomlogy of any cyclic or anti-cyclic operad after the choice of a MC element.  An important example occurs when we have a cyclic (resp. symplectic) $\op{P}_\infty$-algebra $A$ where $\op{P}$ is a cyclic (resp. anti-cyclic) Koszul operad.   Presently we consider this example.  This discussion is dual to the cyclic homology of $\op{P}$-algebras first defined in \cite{GeK1}.

We let $\op{P}$ be a cyclic operad or an anti cyclic operad which is also Koszul.  This implies that the Koszul dual $\op{P}^!$ is also cyclic or anti-cyclic (agreeing with $\op{P}$), and in particular $\op{P}\tensor\op{P}^!$ is cyclic.
\begin{lemma}\label{cll}  The morphism of operads $\op{L}ie\to\op{P}\tensor \op{P}^!$ (see subsection $\ref{opcosec}$) extends to a morphism of cyclic operads.
\end{lemma}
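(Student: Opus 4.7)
The strategy is to use the representability of the Maurer--Cartan functor to translate the claim into a statement about invariance of a single canonical element. Set $\op{O} := \op{P}\tensor\op{P}^!$, which is a cyclic operad under the hypotheses (cyclic $\tensor$ cyclic and anti-cyclic $\tensor$ anti-cyclic both yield cyclic operads). By Corollary \ref{MCcorcyc}, a morphism of cyclic operads $\op{L}ie\to\op{O}$ is the same datum as a Maurer--Cartan element in $\csyc{O}$, while by Corollary \ref{MCcor} the underlying operadic morphism of Ginzburg--Kapranov corresponds to some $\eta\in MC(\syc{O})$. Thus it suffices to prove $\eta$ already lies in the cyclic invariants $\csyc{O}\subset\syc{O}$.

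First I would recall the explicit form of $\eta$. Arity-wise, $\eta_n$ is (up to a sign convention) the canonical element corresponding to the identity under the Koszul duality pairing $\langle-,-\rangle\colon \op{P}(n)\tensor\op{P}^!(n)\to\kk$: if $\{e_i\}$ is a basis of $\op{P}(n)$ and $\{e^i\}$ is the dual basis of $\op{P}^!(n)$, then
\begin{equation*}
\eta_n \;=\; \sum_i e_i\tensor e^i \;\in\; (\op{P}(n)\tensor\op{P}^!(n))\tensor \mathrm{sgn}_n ,
\end{equation*}
which is manifestly $S_n$-invariant. The heart of the proof is to show it is $S_n^+$-invariant, equivalently that $\eta_n$ is fixed by the cyclic generator $\tau_n\in\Z_n^+$.

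This in turn reduces to the assertion that the Koszul pairing is $\Z_n^+$-equivariant with the appropriate signs. I would verify this in two steps: (i) at the level of free quadratic cyclic (or anti-cyclic) operads, the natural pairing is $\Z_n^+$-equivariant essentially by construction, since the cyclic structure on the dual is defined precisely so that this holds; (ii) the pairing descends to the quotients because the quadratic relations defining $\op{P}^!$ are the annihilator of those of $\op{P}$, and the $\Z_n^+$-action preserves these annihilators (this is where one uses that cyclic/anti-cyclic nature agrees on $\op{P}$ and $\op{P}^!$). Granted $\Z_n^+$-equivariance of $\langle-,-\rangle$, a dual basis $\{\tau_n e^i\}$ for $\{\tau_n e_i\}$ differs from $\{e^i\}$ by the inverse transpose cyclic action, so the sum $\sum_i e_i\tensor e^i$ is invariant under the diagonal $\Z_n^+$-action.

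The main obstacle is bookkeeping of signs, especially in the anti-cyclic case where the $S_n^+$-action on $\op{P}$ and on $\op{P}^!$ each carry a sign character, but the diagonal action on $\op{P}\tensor\op{P}^!$ is genuinely cyclic (the signs cancel); one must check that this cancellation is compatible with the suspension and sign twist used in Definition \ref{lieopdef} and Corollary \ref{MCcorcyc}. Once $\eta_n\in(\op{P}(n)\tensor\op{P}^!(n))^{S_n^+}$ is established for all $n$, the Maurer--Cartan equation automatically holds in $\csyc{O}$ since the inclusion $\csyc{O}\hookrightarrow\syc{O}$ is a morphism of odd Lie algebras (diagram \eqref{cube}), and this inclusion restricts the original MC equation. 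Invoking Corollary \ref{MCcorcyc} in the reverse direction then yields the desired morphism of cyclic operads.
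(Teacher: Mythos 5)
Your framing via Maurer--Cartan representability is reasonable, but the description of the MC element is wrong, and it misdirects what you call the heart of the proof. With $\op{O}=\op{P}\tensor\op{P}^!$ linear (concentrated in internal degree $0$), an element of $\op{O}(n)$ sits in $\syc{O}=\coprod_n\Sigma(\fr{s}\op{O}(n))_{S_n}$ in total degree $n$; since MC elements have degree $2$, the element corresponding under Corollary $\ref{MCcor}$ to the morphism $\op{L}ie\to\op{P}\tensor\op{P}^!$ is concentrated in arity $2$ and is simply the image of the bracket generator. There are no components $\eta_n$ for $n\geq 3$: the ``canonical element'' $\sum_i e_i\tensor e^i\in\op{P}(n)\tensor\op{P}^!(n)$ is not the image of any element of $\op{L}ie(n)$ (note $\dim \op{L}ie(n)=(n-1)!$), it is not part of the MC element, and its $\Z_n^+$-invariance would in any case say nothing about $S_n^+$-equivariance of the map on $\op{L}ie(n)$, which is what the lemma asserts. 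So the proposed core step --- $\Z_n^+$-equivariance of the Koszul pairing in every arity --- is aimed at the wrong object. Your step (ii) about annihilators of relations is likewise not needed: that $\op{P}^!$ is cyclic (or anti-cyclic) agreeing with $\op{P}$ is part of the standing hypotheses stated just before the lemma, and in arity $2$ no relations intervene.

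What survives is exactly the arity-$2$ computation, and there your dual-basis argument coincides with the paper's key observation: as $S_2^+$-modules $\op{P}(2)\tensor\op{P}^!(2)\cong sgn_3\tensor Hom(\op{P}(2),\op{P}(2))$ and the bracket maps to $sgn_3\tensor id$, which is $S_2^+$-invariant. To finish along your route you must still say why this yields the lemma: you need the compatibility of the bijections in Corollaries $\ref{MCcor}$ and $\ref{MCcorcyc}$ (both evaluate on the generator) together with the fact that $\op{L}ie$ is generated in arity $2$, so the cyclic morphism produced by Corollary $\ref{MCcorcyc}$ has the same underlying operad morphism; and to know the MC equation holds in $\csyc{O}$ with its cyclic bracket you should note that the map $N\colon\csyc{O}\to\syc{O}$ of diagram $\ref{cube}$ is an injective morphism of odd Lie algebras (characteristic $0$), so the equation pulls back. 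The paper proceeds differently: it extends the $S_2^+$-equivariant generator-level map through free cyclic operads by adjunction, so $\mathbb{S}^+$-equivariance is automatic, and checks that the Jacobi relator lands in $F(\op{P}(2))\tensor R^{\perp}+R\tensor F(\op{P}^!(2))$, hence maps to zero. With the arity confusion removed and the compatibility points made explicit your plan can be repaired into a correct, somewhat more formal alternative; as written, it does not establish the lemma.
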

\begin{proof} 
As $S_2^+$-modules, $\op{L}ie(2)\cong sgn_3$ and $\op{P}(2)\tensor\op{P}^!(2)\cong sgn_3\tensor Hom(\op{P}(2),\op{P}(2))$, and the morphism sends $sgn_3\mapsto sgn_3\tensor id$, hence is $S_2^+$ invariant.  Call this map $\phi\colon \op{L}ie(2)\to(\op{P}\tensor \op{P}^!)(2)$, which we view as a map of $\mathbb{S}^+$-modules, taking $0$ in other arities.  There is then an inclusion of $\mathbb{S}^+$-modules $\op{P}(2)\tensor\op{P}^!(2)\to F(\op{P}(2))\tensor F(\op{P}^!(2))$, and by left adjointness of the free cyclic operad functor, a morphism of cyclic operads $F((\op{P}\tensor\op{P}^!)(2)) \stackrel{\Phi}\to F(\op{P}(2))\tensor F(\op{P}^!(2))$ which we call $\Phi$.  Thus, we have the following diagram in the category of cyclic operads:
\begin{equation*}
\xymatrix{ \op{L}ie & F(\op{L}ie(2))\ar[r]^{F(\phi) \ \ \ } \ar[l] & F((\op{P}\tensor\op{P})(2)) \ar[r]^{\Phi \ \ \ \ } & F(\op{P}(2))\tensor F(\op{P}^!(2)) \ar[r]^{ \ \ \ \ \ \ \ \pi}  & \op{P}\tensor\op{P}^!}
\end{equation*}

Now, $\phi$ extends to a morphisms of operads; in particular one can show that the Jacobi identity $J\in R_{\op{L}ie}\subset  F(\op{L}ie(2))$ is sent into $F(\op{P}(2)) \tensor R^\perp + R\tensor F(\op{P}^!(2))$ by composition in the diagram, and hence to $0$ by the projection $\pi$ (see {\it eg} \cite{LV} Lemma 7.6.6).  Since this morphism of operads is by construction $\mathbb{S}^+$ equivariant, it is a morphism of cyclic operads.
\end{proof}

\begin{remark}  A more conceptual proof of the above lemma can be had via Manin products, see \cite{GK}, \cite{ValD}, \cite{LV}.  In particular the Manin product of cyclic and anti-cyclic operads behaves as the tensor product in the sense of Lemma $\ref{detcyc}$.  The above morphisms can then be seen as the composite of the sequence $\op{L}ie\to\op{P}\circ\op{P}^!\to\op{P}\tensor\op{P}^!$, where $\circ$ represents the Manin white product.  One could also use this fact and the results of \cite{ValD} to provide another proof of the existence results for anti-cyclic structures given in \cite{Cha}. 
\end{remark}

There is a quasi-isomorphism of dg operads $D(\op{P})\stackrel{\sim}\longrightarrow \op{P}^!$.  Under the homotopy theory of cyclic operads (see \cite{KW}) it is still the case that $\op{L}_\infty$ is cofibrant and hence the morphism of cyclic operads in Lemma $\ref{cll}$ lifts to a morphism $\op{L}_\infty\to D(\op{P})\tensor\op{P}$ in the category of cyclic operads.  If we let $\op{O}=D(\op{P})\tensor\op{P}$, then Theorem $\ref{cycrepthm}$ specifies a MC element of $\csyl{O}$.  Now let $A$ be a cyclic $\op{P}_\infty$-algebra.  By definition this means that $A$ is a cyclic algebra over the cyclic (or anti-cyclic) operad $D(\op{P}^!)$.  Then we have morphisms of dg cyclic operads
\begin{equation}\label{mccoelementcyc}
\op{L}_\infty\to \op{P}^!\tensor D(\op{P}^!)\to \op{P}^!\tensor End_A
\end{equation}

This allows for an efficient definition of the cyclic cochains of a $\op{P}_\infty$-algebra.

\begin{definition}\label{ccdef}  Let $\op{P}$ be a Koszul operad and a cyclic or anti-cyclic operad and let $A$ be a cyclic $\op{P}_\infty$-algebra.  Define $\op{O}=\op{P}^!\tensor End_A$.  Then the completed cyclic $\op{P}$-cochains of $A$ is defined to be the cochain complex,
\begin{equation*}
\hat{CC}^\ast_\op{P}(A):=(\csyl{O}, \delta_\eta)
\end{equation*}
where $\delta_\eta:=d+\{\eta,-\}$ with MC element $\eta$ from Equation $\ref{mccoelementcyc}$ via Theorem $\ref{cycrepthm}$.  The cyclic $\op{P}$-cohomology of $A$, denoted $\hat{HC}^\ast_\op{P}(A)$, is defined to be the cohomology of this complex.

In the case where the MC element lifts to $\eta^\prime\in MC(\csyc{O})$ along the standard inclusion $\csyc{O}\to\csyl{O}$, we may define the noncompleted cyclic cochain complex
\begin{equation*}
CC^\ast_\op{P}(A):=(\csyc{O}, \delta_\eta)
\end{equation*}  
and denote the cohomology of this complex as $HC^\ast_\op{P}(A)$.
\end{definition}

We will be most interested in the regular case (Definition $\ref{regdef}$), where in particular the long exact sequence in Proposition $\ref{les}$ is pertinent.  Examples will be discussed below in Section $\ref{exsec}$.

\section{The BV operator and the generalized cyclic Deligne conjecture}\label{sec:bv}
The generalized Deligne conjecture (Theorem $\ref{Dconj3}$) says that given an operad $\op{O}$ and $\zeta\in MC(\nsl{O})$, the complex $(\nsl{O}, \delta_\zeta)$ is a $\op{G}_\infty$-algebra.  In this section we will show that if the operad in question is cyclic and unital then we can define a family of so-called spined brace operations which generalized Connes' boundary operator, in analogy with how the brace operations generalize Gerstenhaber's pre-Lie product.  We then show that if the Maurer-Cartan element $\zeta$ lifts along the standard inclusion $MC(\cnsl{O})\to MC(\nsl{O})$, then there is a chain model for the framed little disks which acts on $(\nsl{O}, \delta_\zeta)$ extending the $\op{G}_\infty$ structure.  In particular $(\nsl{O}, \delta_\zeta)$ is a $\op{BV}_\infty$-algebra.

\subsection{Degeneracies and Normalization}

\begin{definition}\label{unitaldef}  A unital operad is an operad $\op{O}$ along with associative compositions $\op{O}(n)\stackrel{s_i}\longrightarrow \op{O}(n-1)$ for $1\leq i\leq n$.  We call these operations degeneracies.  Here associative means we extend the usual operad associativity axioms encoded by grafting trees with tails to include erasing the tail labeled by $i$, encoded by the operation $s_i$.  The associativity can also be described by considering the degeneracies as internal: define $\op{O}(0)=\kk[e]$ and define $s_i(-) = -\circ_i e$.  A unital cyclic operad is a unital operad and a cyclic operad.  A MC element is called unital if $s_i(\zeta_n) =0$ for all $n\geq 3, i\geq 1$ and $s_i(\zeta_2)=id$ for $i=1,2$.
\end{definition}
Note no compatibility is postulated between the cyclic and unital structures.  This is because the dependency comes in the form of an extra degeneracy $s_0$, see Equation $\ref{exdeg}$.  From now on when considering MC elements associated to unital cyclic operads, we tacitly assume they are unital.

Let $\op{O}$ be a unital cyclic operad with MC element $\zeta$.  We call an element $a\in\op{O}(n)$ normalized if $s_i(a)=0$ for every $1 \leq i \leq n$.  The normalized elements form a subcomplex whose inclusion is a quasi-isomorphism and when considering chain level actions we may restrict our attention to the normalized subcomplex.

\subsection{Spined Brace Operations.}  Just as Gerstenhaber's pre-Lie product is the first in a series of higher operations (the brace operations), Connes' boundary operator $B$ may be viewed as the first in a family of higher operations, which we call spined brace operations.  The spined brace operations act on $\nsl{O}$ for any unital cyclic operad $\op{O}$.  

Recall (Appendix $\ref{sec:trees}$) that each vertex of a planar tree determines a cellular subdivision on $S^1$ whose $0$-cells correspond to the adjacent flags and whose $1$ cells are the angles of the vertex.  A spine is a distinguished cell of a white vertex, and a tree with spines is a planar rooted tree with a spine at each vertex.  We call a spine a $1$-spine or $0$-spine depending on the dimension of the distinguished cell.   The trivial spine is the outgoing $0$-cell, and a tree with spines is spineless if each spine is trivial.  Write $(T,\nu)$ for a tree with spines where $\nu$ denotes the set of spines (distinguished vertex cells) of $T$.

\begin{definition}\label{spbraceopdef}
Let $\spB^0(n)$ be the graded $S_n$-module spanned by trees with spines having $n$ vertices, and whose spines are either $1$-spines or trivial.  The degree of such a tree (before suspension) is $-$ the number of $1$-spines.  Define an operad structure on the $\mathbb{S}$-module $\spB^0$ by

\begin{equation*}
(T,\nu)\circ_i (T^\prime,\nu^\prime) := \sum_{[(T,\nu),(T^\prime,\nu^\prime),i]} (T^{\prime\prime}, \nu^{\prime\prime})
\end{equation*}
where $[(T,\nu),(T^\prime,\nu^\prime),i]$ is the set of trees with spines which can be formed by the following procedure.  Let $v$ be the vertex of $T$ labeled by $i$ and let $u$ be the vertex of $T^\prime$ adjacent to the root. If both $v$ has a spine on a $1$-cell and $u$ has a nontrivial spine the collection is empty.  Else, identify the root of $T^\prime$ with the spine of $T$, forming the new spine of $v$, and then graft the remaining $v$-branches of $T$ to $T^\prime$ such that the linear order (starting at the spine of $T$ = root of $T^\prime$) is preserved.  
\end{definition}

\begin{lemma}  As defined above, $\spB^0$ forms a graded operad. 
\end{lemma}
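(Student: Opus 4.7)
The plan is to verify the operad axioms --- grading, $S_n$-equivariance and unit, and associativity --- by reducing to the already-established operad structure on the brace operad $\B^0$ from Definition $\ref{braceopdef}$ and then tracking the extra spine data.

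First I would confirm that the composition respects the grading. The vanishing rule --- which kills any summand in which both the spine of $v$ in $T$ and the root-side spine of $T'$ at $u$ are $1$-cells --- ensures that in every surviving summand at most one of the two constituent spines at the merge point is a $1$-spine. Consequently the new spine at the merged vertex is a $1$-spine precisely when exactly one of the two original spines was, so the total number of $1$-spines is additive and degrees add under composition. The $S_n$-action just permutes vertex labels, and both the grafting rule and the spine assignment rule are stated in terms of labels and root-incidence, so equivariance is automatic. The unit is the one-vertex corolla with trivial spine, and it is immediate that composing with it on either side is the identity.

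The main obstacle is associativity. Given $(T_1,\nu_1),(T_2,\nu_2),(T_3,\nu_3)$ and composition indices $i,j$, one must compare $(T_1\circ_iT_2)\circ_jT_3$ against $T_1\circ_i(T_2\circ_{j'}T_3)$ when the vertex labeled $j$ lies in the inserted copy of $T_2$, and against $(T_1\circ_{j''}T_3)\circ_{i'}T_2$ otherwise. Forgetting spines, the corresponding sets of underlying labeled planar rooted trees coincide by the associativity of $\B^0$ built into Definition $\ref{braceopdef}$, so it remains to match spine data summand-by-summand. In the disjoint case, the two spine modifications occur at vertices that remain distinct throughout, so they trivially commute. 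In the nested case, both association orders perform two (or, when $j$ lands at the root of $T_2$, three) successive identifications of distinguished cells at the same merged vertex, combining the spines of $v_i\in T_1$, $u\in T_2$, and where applicable the root-adjacent vertex of $T_3$; one checks by inspection that the resulting distinguished cell is the unique $1$-spine among these (if any), or trivial otherwise, irrespective of the order of identification, and since the vanishing rule is symmetric in its two inputs it kills exactly the same summands on each side. An alternative and cleaner route, if one prefers, would be to realize $\spB^0$ as the cellular chains of a topological quasi-operad of trees with spines in the spirit of \cite{KSch}, making associativity manifest at the CW level; this is in line with the treatment of $\TA$ in Subsection $\ref{minopsec}$ and fixes orientations uniformly.
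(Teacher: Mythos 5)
The associativity step is where your argument breaks down, because the reduction to $\B^0$ is not actually available. In $\spB^0$ the underlying (spine-forgotten) composition is \emph{not} the brace composition of Definition $\ref{braceopdef}$: when the vertex $v$ labeled by $i$ carries a $1$-spine, the root of $T^\prime$ is identified with that spine rather than with the outgoing flag of $v$, and then all of the $v$-branches --- including the one pointing toward the root of $T$ --- are distributed over $T^\prime$ in the linear order starting at the spine. Consequently the set of underlying labeled planar rooted trees produced by $(T,\nu)\circ_i(T^\prime,\nu^\prime)$ differs in general from the $\B^0$-set $[T,T^\prime,i]$: the composite is genuinely re-rooted through whichever vertex of $T^\prime$ receives the outgoing branch of $v$. (This is visible already in arity one: the generator $\Delta\in\spB(1)$ acts by $Ns_0=\sum_j t^{-j}s_0$, which is not a rooted brace operation.) So you cannot quote associativity of $\B^0$ to match the two triple composites tree-by-tree. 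A genuine combinatorial proof has to compare the spine-dependent distributions directly, and in doing so also verify closure and the finer spine bookkeeping: ``trivial'' is measured against the outgoing flag, which can change for vertices of $T^\prime$ once the root is re-routed, so one must check that both bracketings produce the same spines (and only trivial or $1$-spines) on \emph{all} vertices, not just at the merged vertex $u$. Your degree count at $u$ is correct, but it does not address any of this.

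The clean route --- and the one the paper itself relies on, since it gives no combinatorial verification but immediately identifies $\spB^0$ as a non-dg suboperad of $\fr{s}CC_\ast(\op{C}acti^1)$ --- is the one you mention only as an aside: trees with spines index the cells of Kaufmann's normalized cacti, the composition in Definition $\ref{spbraceopdef}$ is the chain-level composition there, and associativity, equivariance and the grading are inherited from the operad structure on the cellular chains of $\op{C}acti^1$ established in \cite{KCacti}, \cite{KSch}. If you prefer a purely combinatorial argument, you must carry out the nontrivial-spine case analysis sketched above rather than deferring to $\B^0$.
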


The graded operad $\spB^0$ may be viewed as a non-dg suboperad of $\fr{s}CC_\ast(\op{C}acti^1)$.  The salient feature of this suboperad is that it acts {\it before} adding a MC element.

\begin{definition}
Define $\spB:=\fr{s}^{-1}\spB^0$, and call $\spB$ the spined brace operad.
\end{definition}

\begin{lemma}\label{yal}  Let $\op{O}$ be a cyclic unital operad.  Then $\spB$ acts on $\nsl{O}$.
\end{lemma}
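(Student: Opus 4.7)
The plan is to mimic the strategy used in Theorem $\ref{actionthm1}$ for the minimal operad: identify a set of generators of $\spB$, define the action on these generators using the cyclic operad and unital data, and extend via simple gluings. Note that $\spB$ contains $\B$ as the suboperad of trees with all trivial spines, and a $\B$-action on $\nsl{O}$ is already provided by Lemma $\ref{bracelem}$; the real content is in handling the $1$-spines. I would take as additional generators the single-vertex corollas carrying a $1$-spine at a distinguished angle, since any spined tree decomposes as an iterated simple composition of brace operations and $1$-spine corollas.

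For the $1$-spine generator, I would define the action as follows. Given a corolla with $m$ flags and a $1$-spine at the angle between the $k$-th and $(k+1)$-st flag (in the cyclic order), acting on $a \in \op{O}(m)$ gives
\begin{equation*}
\pm \, \tau^k(a) \circ_m e,
\end{equation*}
where $\tau$ is the cyclic rotation, $e \in \op{O}(0)$ is the unit, and the sign is fixed by the orientation convention of Section $\ref{signsec}$ together with the $\fr{s}^{-1}$-suspension signs of Remark $\ref{signsremark1}$. This is the operadic realization of the Connes boundary operator applied at a single vertex, and extendability (Section $\ref{extsec}$) immediately upgrades this operation from $\nsc{O}$ to $\nsl{O}$.

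Having defined $\rho$ on generators, I would extend to all of $\spB$ via simple gluings, verifying independence of the decomposition exactly as in Step 1 of the proof of Theorem $\ref{actionthm1}$: after forgetting spines, a simple-gluing decomposition of $(T,\nu)$ is a simple-gluing decomposition of $T$ in $\B$, and the spine at each vertex is intrinsic to that vertex, so operadic associativity of $End_{\nsl{O}}$ suffices. Compatibility of $\rho$ with compositions of brace-into-brace and brace-into-$1$-spine follows from the $\B$-algebra structure of Lemma $\ref{bracelem}$ and from the fact that evaluating the brace at a vertex commutes with the cyclic rotation applied at a different vertex.

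The main obstacle is verifying compatibility with compositions $(T,\nu) \circ_i (T',\nu')$ in which the spine of vertex $i$ of $T$ and the spine of the root of $T'$ are both $1$-spines, where the tree-side definition gives zero. Algebraically, this must reduce to a vanishing statement in any unital cyclic operad --- essentially $s_0 \circ s_0 = 0$, i.e.\ that applying the extra degeneracy (the composite of a cyclic rotation and insertion of $e$) twice annihilates the result --- which follows from the cyclic operad axioms (Definition $\ref{cycdef}$) together with the unit compatibility $s_i(-)= -\circ_i e$ of Definition $\ref{unitaldef}$. A secondary concern is consistency of the ``spine becomes root'' gluing rule in all other cases: here one checks by inspection that fusing a $1$-spine of $T$ with a trivial spine of $T'$ (or vice versa) amounts, under $\rho$, to the algebraic identity obtained by evaluating the inner operation at the $\tau^k$-rotated slot before applying the outer composition. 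Once these sign-and-case analyses are dispatched, the lemma follows.
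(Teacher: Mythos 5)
There is a genuine gap, and it starts with your generating set. A one-vertex tree in $\spB$ carries only its root flag, so the arity-one part of $\spB$ contains exactly one nontrivially spined element, $\Delta$; your ``corolla with $m$ flags and a $1$-spine between the $k$-th and $(k{+}1)$-st flag'' is not an element of $\spB(1)$, because the position of the spine relative to future inputs is not part of the operadic datum (tails are only adjoined when the operation acts). Moreover, the braces together with $\Delta$ do not generate $\spB$ under simple gluings: a simple composition inserts only at a spineless vertex of maximal height, so in any iterated simple-gluing decomposition your spined corolla can only ever deposit $1$-spines at leaves. A spined tree whose $1$-spine sits at a vertex with children --- in particular the type-$1$ spined braces (height $\le 2$, with the $1$-spine on the height-$1$ vertex) that the paper uses as generators alongside the unspined braces --- admits no such decomposition, so your extension step has nothing to extend from. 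Relatedly, the action of a spined generator cannot be a single term $\pm\,\tau^k(a)\circ_m e$: since the abstract tree does not remember where the inputs sit relative to the spine, the action is necessarily a sum over all tail decorations, each contributing $t^j s_0$ applied to the unspined operadic evaluation; for $\Delta$ this is the full Connes-type operator $N s_0$, not an individual rotation-conjugated degeneracy. Defining the action term-by-term in $k$ does not give an action of $\spB$.

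The second problem is the vanishing you rely on for the colliding-spines case. The identity ``$s_0\circ s_0=0$ in any unital cyclic operad'' is false: degeneracies do not square to zero, and neither the cyclic axioms nor $s_i(-)=-\circ_i e$ forces it. The relation in $\spB$ that two colliding $1$-spines compose to zero is matched on the algebra side only after restricting to the normalized subcomplex, where the summed operators of the form $\sum_j t^j s_0$ produce degenerate (hence zero) terms; this is precisely why the paper sets up degeneracies and normalization first and then sketches the action on the normalized subcomplex, following the cyclic Deligne-conjecture constructions it cites. Your proposal never invokes normalization, so even with a corrected generating set and summed action formulas, the compatibility check in your ``main obstacle'' paragraph would fail as stated.
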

\begin{proof} This fact can be extracted from \cite{Ward}, but we will sketch the action on the normalized subcomplex.  Note the statement concerns graded operads, not dg.

Consistent with Definition $\ref{simplegluingdef}$, we will call a composition of spined brace trees simple if it occurs at a spineless vertex of maximum height.  The spined brace operad is generated under simple gluings by the (unspined) braces along with the spined brace operations of type $1$ in the parlance of \cite{Ward}.  A spined brace operation of type $1$ is a tree of height $\leq 2$ having only one vertex of height $1$, call it $v$, whose only non-trivial spine is a $1$-spine on $v$ (see Figure $\ref{fig:spinedbrace}$).  Thus, to define an action of $\spB$ on $\nsl{O}$, it suffices to define the action on these generators.  For the suboperad of unspined braces, the action is the same as was given above (Lemma $\ref{bracelem}$).  
\begin{figure}
  \begin{center}
    \includegraphics[scale=.5]{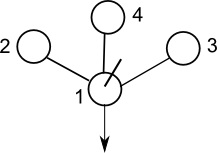}
  \end{center}	
	\caption{An example of a generating spined brace operation in $\spB(4)$.  The $1$-spine on vertex $1$ is indicated by a tic mark.}\label{fig:spinedbrace}
\end{figure} 

Since $\op{O}$ is now assumed to be both cyclic and unital, $\op{O}(n)$ also comes with an action of $t=(0\cdc n)$ and with degeneracy operators $s_i\colon\op{O}(n)\to\op{O}(n-1)$ for $1\leq i\leq n$.  In this context we define an extra degeneracy operator $s_0$ as:
\begin{equation}\label{exdeg}
s_0:= t_{n-1}s_1t_n^{-1} 
\end{equation} 

Following \cite{KCyclic}, for a generator of the spined braces the action is defined as a sum of the action of the corresponding trees with tails.  Let $T$ be such a spined brace operation of type $1$ having $n$ vertices.  Relative to a set of elements on which $T$ is to act, we adjoin tails to $T$ until achieving the proper arity at each vertex.  If $\hat{T}$ represents a decoration of $T$ with tails, then $\hat{T}(a_1\cdc a_n):= t^j s_0 T(a_1\cdc a_n)$ where $T(a_1\cdc a_n)$ is the operad action, forgetting spines, and where $j$ is the number of tails between the spine and the root in the clockwise order.

For example let $\Delta\in \spB(1)$ be the unique generator of degree $-1$.   An arrangement of tails taking the spine between flag $i-1$ and flag $i$ acts by  $t^{-i}s_0$ and so $\Delta$ in total acts by $\Delta:= N s_0$.  Analogously, it is possible to explicitly write a formula for the operation corresponding to each spined brace tree, although we don't choose to do so here, and this formula is a sum of compositions of the operations $\circ_i, s_0$, and $t$.  This formula is the unital cyclic analog of Equation $\ref{braceeq}$.
\end{proof}

\subsection{The $\op{BV}_\infty$ structure}

By the above lemma, given a unital cyclic operad $\op{O}$ with a MC element $\zeta\in\cnsl{O}$ we have a spined brace operad structure, including the operator $\Delta$ on $\nsl{O}$.  This structure accompanies the homotopy Gerstenhaber structure already constructed on $\nsl{O}$ and we can ask for an operad encoding their compatibility.  This operad was first constructed in \cite{Ward} and called $\op{TS}_\infty$ (for `trees with spines and $A_\infty$ labels') and is a chain model for the framed little disks operad, with the spined brace tree $\Delta$ inducing the BV operator.
 
\begin{theorem}\label{cycdelthm}  Let $\op{O}$ be a cyclic and unital operad and let $\eta\in MC(\cnsl{O})$.  Then there is a chain model for the framed little disks operad which acts on $\nsl{O}$ inducing a BV structure on cohomology.  In particular, in characteristic zero, $(\nsl{O},\delta_\eta)$ is a $\op{BV}_\infty$-algebra.
\end{theorem}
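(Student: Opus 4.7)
The plan is to combine the two actions already constructed: the action of the minimal operad $\TA$ on $(\nsl{O},\delta_\zeta)$ for any $\zeta\in MC(\nsl{O})$ given in Theorem \ref{actionthm1}, and the action of the spined brace operad $\spB$ on $\nsl{O}$ for a unital cyclic operad given in Lemma \ref{yal}. The chain model for the framed little disks will be the operad $\op{TS}_\infty$ from \cite{Ward}, whose cells are $A_\infty$-labeled trees with spines; it contains both $\TA$ and $\spB$ as suboperads generated by the spineless $A_\infty$-labeled trees and the trivially-$A_\infty$-labeled trees with spines respectively. The hypothesis $\eta\in MC(\cnsl{O})$ (rather than merely $MC(\nsl{O})$) is exactly what is needed to glue these two actions into one.

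First I would view $\eta$ as a Maurer-Cartan element of $\nsl{O}$ via the inclusion $\cnsl{O}\hookrightarrow \nsl{O}$ from Proposition \ref{les}, and then define the action of $\op{TS}_\infty$ on $\nsl{O}$ generator by generator: corollas with black vertex $\mu_n$ act via insertion of $\eta_n$ into the white vertices, unspined braces act as in Theorem \ref{actionthm1}, and the generating spined brace operations of type $1$ act by the formulas from the proof of Lemma \ref{yal}, using the extra degeneracy $s_0$ and the cyclic operator $t$. One then extends to all of $\op{TS}_\infty$ by the same five-step strategy as in the proof of Theorem \ref{actionthm1}: decompose a tree into simple gluings of generators, show that the extension is independent of decomposition using operad associativity of $End_{\nsl{O}}$, and check compatibility with composition and differential on pairs of generators.

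The main obstacle is the compatibility of the differential on the mixed cells, which is where cyclic invariance of $\eta$ enters. Concretely, when one computes $\rho(\partial T)$ for a tree $T$ with both black $A_\infty$-vertices and a $1$-spine, the boundary contains terms in which the $1$-spine is absorbed into a black vertex or a newly grafted component built from $\eta$. Matching these against $\delta_\eta\circ\rho(T)-(-1)^{|T|}\rho(T)\circ\delta_\eta$ requires that the rotation operator $t$ commute up to sign with insertion of $\eta$, which is precisely the content of $\eta\in MC(\cnsl{O})$: the $\Z_n^+$-invariance of $\eta_n$ lets one move spines past $\mu_n$-vertices, and the MC equation itself accounts for the $d$-boundaries exactly as in Step 4 of the proof of Theorem \ref{actionthm1}. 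Passing to normalized cochains keeps the combinatorics tractable because the internal $s_i$-degeneracies vanish on $\eta$ for $n\geq 3$.

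Once the dg operad morphism $\op{TS}_\infty\to End_{(\nsl{O},\delta_\eta)}$ is established, the chain-model property of $\op{TS}_\infty$ for the framed little disks proven in \cite{Ward} gives the first conclusion, and the induced BV structure on cohomology follows from the standard identification $H_\ast(f\op{D}_2)\cong BV$ together with the fact that $\Delta=N s_0$ descends to Connes' operator via the long exact sequence of Proposition \ref{les}. The $\op{BV}_\infty$ conclusion in characteristic zero is then formal: the framed little disks operad is formal (a theorem of Severa), so any chain model acting on a complex factors up to homotopy through a cofibrant replacement of $\op{BV}$, which is by definition $\op{BV}_\infty$; this is the same model-category argument used for the $\op{G}_\infty$ corollary at the end of Section \ref{opssection}.
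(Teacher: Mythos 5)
Your proposal is correct and follows essentially the same route as the paper: the chain model is $\op{TS}_\infty$ from \cite{Ward}, the action is assembled from the $\TA$-action of Theorem $\ref{actionthm1}$ and the spined brace action of Lemma $\ref{yal}$, cyclic invariance of $\eta$ is exactly what makes the spined and unspined pieces cohere, and the $\op{BV}_\infty$ statement follows from formality of $f\op{D}_2$. The only difference is one of bookkeeping: where you propose to rerun the five-step generator-by-generator verification, the paper delegates the coherence of the mixed relations to the arguments of \cite{Ward} (which use only $\circ_i$, $s_0$, $t$, $\mu_n$ and the unital cyclic axioms) together with Theorem $\ref{cycrepthm}$.
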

\begin{proof}  The operad $\op{TS}_\infty$ of \cite{Ward} is an $A_\infty$ blow-up of the Voronov's operad of Cacti and hence is a chain model for the framed little disks, $f\op{D}_2$.  By $\op{BV}_\infty$ we mean any cofibrant replacement of the operad $\op{BV}\cong f\op{D}_2$ in the model category of dg operads, eg that of \cite{GCTV}.  Hence, the formality of $f\op{D}_2$ implies the existence of a quasi-isomorphism of dg operads $\op{BV}_\infty\to \op{TS}_\infty$.  As an operad $\op{TS}_\infty$ is generated by the spined braces and the dg suboperad $\cong\TA$ having all spines trivial.  Thus, the action is defined via Theorem $\ref{actionthm1}$ and Lemma $\ref{yal}$.  To check that such an action is coherent with the relations in the endomorphism operad we appeal to the arguments in \cite{Ward}. In particular, in {\it loc.cit.} the action of $\op{TS}_\infty$ is given on the Hochschild complex of a cyclic $A_\infty$ algebra and requires only the operations $\circ_i$, $s_0$, $t$, and, $\mu_n$, along with the compatibility assured by the axioms of a unital cyclic operad with unital $A_\infty$ multiplication.  This observation and Theorem $\ref{cycrepthm}$ proves the result.

The action can be interpreted by reading a tree with a single (nontrivial) spine as a cyclic flow chart, starting at the spine and traversing clockwise.  The potential ambiguity in such a flow chart comes with a nontrivial $0$-spine, where we could either read by going up and around the given branch first or last.  The fact that $\eta$ is cyclically invariant ensures that either interpretation produces the same operation.
\end{proof}

\section{The gravity structure on the cyclic (co)invariants} \label{sec:grav}  We have seen that given a cyclic operad $\op{O}$, the cyclic brace operad acts on $\cnsl{O}$ and $\cnsc{O}$ encoding an odd Lie algebra structure.  The goal of this section is to construct a dg operad which acts on $(\cnsl{O},\delta_\eta)$ or $(\cnsc{O},\delta_\eta)$ in the presence of a Maurer-Cartan element $\eta$, encoding the cyclic brace operations  and associated evaluations by $\eta$ and then to compute its homology.  We continue to emphasize the planar case, as it has a richer structure, but a similar analysis could be applied to the non-planar case.

\subsection{The operad $\ZA$.} 
\begin{definition}  Let $T$ be an $A_\infty$ labeled tree.  Define $E_T$ to be the union of the set of white edges of $T$ and the set of black vertices of $T$.  An orientation of $T$ is a total order of $E_T$.  Two orientations are said to be equivalent if they differ by an even permutation.  Such a tree $T$ along with an equivalence class of orientations $\prec$ is said to be oriented and is denoted $T_\prec$.  For an orientation $\prec$ we let $\bar{\prec}$ denote the opposite orientation.
\end{definition}

\begin{definition}  Define $\ZA(n)$ to be the graded vector space spanned by oriented $A_\infty$-labeled trees $T$ having $n$ white vertices, modulo the relation $T_\prec = - T_{\bar{\prec}}$.
\end{definition}

The terminology `orientation' will be further justified below when we view $\ZA(n)$ as the cells of a CW complex, see Lemma $\ref{cactlem}$ and the proof of Theorem $\ref{calcthm}$.

The dg operad structure of $\ZA$ is most naturally given by comparison with $\TA$.  To begin, we construct an injection of $\mathbb{S}$-modules $\phi\colon\ZA\to\TA$.  This map extends the construction of Lemma $\ref{cycmorlem}$.  Let $T_\prec\in\ZA(n)$ and let $R$ be the set of angles of $T$, and define
\begin{equation*}
\phi(T_\prec)=\sum_{r\in R}\pm (T,r)
\end{equation*}
where the sign is the sign of the permutation sending the order $\prec$ to the planar order of the data starting at the root $r$ (in accordance with section $\ref{signsec}$).  Note here we are summing over both black and white angles in the parlance of Appendix $\ref{sec:trees}$. Consistent with section $\ref{signsec}$ we define the standard orientation of an $A_\infty$ labeled tree to be the order specified by starting at the white angle of the vertex labeled by $1$ which precedes the branch containing the vertex labeled by $2$.  In what follows, if no orientation is specified it means we assume the standard orientation.

The map $\phi$ is an injection whose image is closed under the $S_n$ action, and we give $\ZA(n)$ the inherited grading and $S_n$ action.  Moreover:

\begin{proposition}\label{dqprop}  The image $\phi(\ZA)\subset \TA$ is a suboperad.  In particular $\ZA$ inherits the structure of a dg operad from the injection $\phi$ such that $\phi\colon\ZA\to \TA$ is a morphism of dg operads.
\end{proposition}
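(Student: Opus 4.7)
The plan is to verify three claims: (i) $\phi(\ZA)$ is closed under the $S_n$-action (already noted above), (ii) $\phi(\ZA)$ is closed under the operadic compositions of $\TA$, and (iii) $\phi(\ZA)$ is closed under the differential $\partial$ of $\TA$. Once these are in hand, the dg operad axioms transfer to $\ZA$ by pullback along the injection $\phi$, which then automatically becomes a morphism of dg operads.

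For (ii), I would adapt the strategy of Lemma \ref{cycmorlem}. Given oriented $A_\infty$-labeled trees $T_{1,\prec_1}$ and $T_{2,\prec_2}$ with $n$ and $m$ white vertices respectively, the composition $\phi(T_{1,\prec_1}) \circ_i \phi(T_{2,\prec_2})$ expands as a double sum: over pairs of angles $(r_1,r_2)$, and, for each such pair, over the rooted composites $(T'',r'')$ appearing in the operad structure of $\TA$. Exactly as in the proof of Lemma \ref{cycmorlem}, a rooted composite $(T'',r'')$ arising in this sum uniquely determines $(T_1,r_1)$ (as the quotient tree) and $(T_2,r_2)$ (as the distinguished subtree), so reindexing yields a single sum over unrooted composites $T''$ each equipped with all of its rootings. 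Collecting rootings of each $T''$ using the sign convention of subsection \ref{signsec} produces $\sum_{T''}\phi(T''_{\prec''})$, where $\prec''$ is the orientation on $E_{T''}$ assembled from $\prec_1$ and $\prec_2$ together with any new edge or black vertex introduced at the gluing locus.

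For (iii), I would exploit the local nature of the differential of Equation \ref{mindif}. Each summand $\partial(T,r;v)$ modifies the underlying tree only in a neighborhood of a single vertex $v$, so the set of resulting unrooted trees is independent of the choice of root $r$. Interchanging sums,
\begin{equation*}
\partial(\phi(T_\prec)) \;=\; \sum_{v\in T}\;\sum_{r\in R}\,\pm\, \partial(T,r;v),
\end{equation*}
and for each fixed $v$ the inner sum over $r$ assembles to $\phi$ applied to an oriented tree obtained by a local modification at $v$: relabeling a black vertex $\alpha$ by $d(\alpha)$ leaves $E_T$ unchanged, while contracting adjacent white angles augments $E_T$ by the newly produced white edges, inserted at canonical positions in the order $\prec$ to match the standard orientation convention. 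Summing over $v$ then expresses $\partial(\phi(T_\prec))$ as $\phi$ of an element of $\ZA$.

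The principal obstacle is the sign bookkeeping in (ii) and (iii). One must verify that the convention of subsection \ref{signsec}, which passes from an order on $E_T$ to the planar `first contact' order of subsection \ref{minopsec}, is compatible both with the grafting signs of $\TA$ and with the boundary signs of \cite{KSch}. This is most cleanly handled by reinterpreting $\phi$ cell-theoretically: an unrooted oriented $A_\infty$-labeled tree corresponds to a coherent orientation on the union of the cells indexed by all of its rootings, and under this identification both composition and differential receive direct geometric meaning, an interpretation that will be invoked again in the homology calculations below.
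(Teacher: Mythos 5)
Your steps (i) and (ii) are essentially the paper's argument: the paper also defines the composition so that it extends the cyclic brace composition and then checks compatibility with $\phi$ exactly ``as in the proof of Lemma \ref{cycmorlem}'', so that part is fine. The gap is in step (iii). Your interchange of sums rests on the assertion that each summand $\partial(T,r;v)$ modifies the tree only near $v$, so that the resulting unrooted trees are independent of the root and the inner sum over $r$ assembles, vertex by vertex, into $\phi$ of a local modification of $T_\prec$. This is false termwise: when the differential acts in the locality of the root --- contracting a white angle adjacent to the outgoing root flag, or blowing up an edge at the black vertex carrying the root --- it can produce a rooted tree whose distinguished black vertex is stable only because the root counts toward its valence; forgetting the root leaves an unstable (bivalent) black vertex, so the resulting rooted tree is not a rooting of any b/w tree and hence not a summand of $\phi$ applied to anything in $\ZA$. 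Thus $\partial(\phi(T_\prec))$ contains terms that simply do not appear in $\phi(\partial(T_\prec))$, and your reindexing argument breaks down at exactly these terms.

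The paper's proof handles this with an additional cancellation argument that your proposal is missing: each such ``bad'' term, arising from a given root with the angle at the root contracted (or edge blown up), cancels against the analogous term coming from the \emph{next} choice of root in the clockwise cyclic order with the \emph{prior} angle contracted, so the offending contributions vanish in pairs and only the terms matching $\phi(\partial(T_\prec))$ survive. Without identifying these boundary-at-the-root terms and exhibiting their pairwise cancellation (with signs), the closure of $\phi(\ZA)$ under $\partial$ is not established; the cell-theoretic reinterpretation you invoke at the end is a reasonable way to organize the signs, but it does not by itself supply this cancellation.
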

\begin{proof}  To prove this proposition we will give a combinatorial description of the $\circ_i$ maps and the differential and check their compatibility with $\phi$.  A more topological proof can be had using the cellular structures of $\ZA(n)$ and $\TA(n)$, and we will develop this intuition later in the section.

  First, the $\circ_i$ maps are defined to extend those in the cyclic brace operations.  Namely, let $T$ and $T^\prime$ be two $A_\infty$-labeled trees having $w$ and $w^\prime$ white vertices and $b$ and $b^\prime$ black vertices.  Then we define the $\circ_i$ operation $T\circ_i T^\prime$ by the following procedure.  First label the black vertices of $T$ by ${w+1\cdc w+b}$ and label the black vertices of $T^\prime$ by ${w^\prime+1\cdc w^\prime+b^\prime}$ in any way we like, then perform a cyclic brace operation composition, in this case
\begin{equation*}
\cB(w+b)\tensor \cB(w^\prime+b^\prime)\stackrel{\circ_i}\longrightarrow \cB(w+w^\prime -1+b+b^\prime)
\end{equation*}
Finally, forget those numerical labels $\geq w+w^\prime$, all the while retaining the $A_\infty$ labels on these vertices.  If black edges are created, we contract them as usual (see $\ref{g7}$).  To check that $(\ZA, \circ_i)$, is an operad, i.e. to check associativity and equivariance of the $\circ_i$ maps, it suffices to check that the $\circ_i$ maps are compatible with $\phi$, since $\phi$ is an injection of graded $\mathbb{S}$-modules.  This follows as in the proof of Lemma $\ref{cycmorlem}$.

The fact that $\phi(\ZA)\subset \TA$ is closed under the differential of $\TA$ follows from the local nature of the differential of the minimal operad.  Namely, we can define
\begin{equation*}
\partial(T)=\sum_{v\in T} \pm\partial(T;v)
\end{equation*}
as the non-rooted analog of Equation $\ref{mindif}$ and argue that $\phi(\partial(T))=\partial(\phi(T))$.  In particular if $v$ is white,  $\partial(T;v)$ collapses white angles and if $v$ is black $\partial(T;v)$ blows up an edge is all possible ways.  We then observe that every term appearing in $\phi(\partial(T))$ also appears in $\partial(\phi(T))$.  Indeed terms in the former expression correspond to a choice of a vertex $v$, contracting white angles or blowing up an edge in $v$, and then choosing a root $r$.  Any such term can also be found by first choosing a corresponding root $r$ in $T$ and then taking the rooted differential $\partial((T,r),v)$.  

On the other hand there are terms in $\partial(\phi(T))$ which do not appear in $\phi(\partial(T))$.  This occurs if we choose a root and then apply the differential in the locality of the root to produce a rooted black vertex whose underlying unrooted black vertex is unstable.  However, such a term cancels with the term having the next choice of root (in the clockwise cyclic order) and the prior angle contracted/edge blown up. 
\end{proof}

The relationship to the brace and cyclic brace operads is the following.

\begin{corollary}\label{cormor}  The cyclic brace operad $\cB$ includes into $\ZA$ as a graded suboperad.  Moreover there are morphisms of graded operads
\begin{equation*}
\xymatrix{\cB \ar[r]^\rho \ar[d] & \B \ar[d] \\ \ZA \ar[r]^\phi & \TA }
\end{equation*}

\end{corollary}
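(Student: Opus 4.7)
The plan is to build the inclusion $\cB\hookrightarrow\ZA$ at the level of generators by viewing a cyclic brace generator as an $A_\infty$-labeled tree whose set of black vertices is empty. Once this is done, commutativity of the square is essentially formal, since by construction $\rho$ and $\phi$ are defined by the very same prescription: sum over choices of root with signs coming from comparison to the planar order. The whole argument is structural; the genuine work lies in aligning orientation data and signs.

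First I would make the inclusion explicit. Working with the second description of $\cB$ from Section~\ref{signsec}, a generator is a labeled planar tree $T$ together with an equivalence class of orders on its edge set (with the usual sign rule for transpositions). Now regard $T$ as an $A_\infty$-labeled tree whose set of black vertices is empty, so that $E_T$ reduces to the set of edges of $T$. Then the edge order on $T$ is precisely the datum of an orientation in the sense of $\ZA$, and the degree conventions (each edge contributes $-1$ after the operadic desuspension) match, as does the $S_n$-action by relabeling. Denote the resulting degree-preserving injection of $\mathbb{S}$-modules by $\iota\colon \cB\to\ZA$.

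Next I would verify that $\iota$ is a morphism of graded operads. Since $\iota$ is injective it suffices to check that the combinatorial $\circ_i$ of $\ZA$ given in the proof of Proposition~\ref{dqprop} restricts to the cyclic brace composition of Definition~\ref{cycbracedef} on trees with no black vertices. But that recipe consists of (a) auxiliary numbering of black vertices, (b) the cyclic brace composition in $\cB$, and (c) forgetting the auxiliary labels and contracting resulting black edges. Steps (a) and (c) are trivial when both inputs have no black vertices, and step (b) is tautologically the composition in $\cB$. Hence $\iota$ preserves $\circ_i$ and is a graded suboperad inclusion (note the differential of $\ZA$ can create black vertices via white-angle contractions, so this is only a graded, not dg, subinclusion).

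For commutativity of the square I would compare on generators. For a cyclic brace generator $T$, by Lemma~\ref{cycmorlem2} and the suspension conventions of Definition~\ref{cycbracedef}, $\rho(T)=\sum_{r\in R_T}\pm(T,r)$, the sign being determined by comparing the chosen edge order to the planar edge order induced by the root $r$. Further including $\B\hookrightarrow\TA$ as rooted $A_\infty$-labeled trees with empty set of black vertices leaves this sum untouched. On the other route, $\phi(\iota(T))=\sum_{r\in R_T}\pm(T,r)$ by the definition of $\phi$ preceding Proposition~\ref{dqprop}, with the same sign rule (and with the sum over $R$ restricted to white angles since there are no black vertices). Thus the two routes coincide term by term.

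The main obstacle is precisely the sign bookkeeping in the previous paragraph: the cyclic brace operad admits two equivalent presentations (directed edges with vertex order versus ordered edge set), and one must check that the convention translating ``edge order'' into ``planar order from a root'' used in Section~\ref{signsec} to build $\rho$ is the same as the one used in the definition of $\phi$. Both are pinned down by the ``first contact'' rule from Section~\ref{minopsec} applied to the standard orientation on each generator, so after fixing this convention the signs agree and no residual signs come from either vertical inclusion, completing the proof.
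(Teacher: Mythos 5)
Your argument is correct and is essentially the paper's (implicit) one: the corollary is stated without proof because Proposition \ref{dqprop} already defines the $\circ_i$ maps of $\ZA$ as extensions of the cyclic brace compositions and defines $\phi$ by the same sum-over-roots prescription as $\rho$ (with the orientation conventions of Section \ref{signsec}), which is exactly what you spell out. Your remark that the inclusion is only graded, not dg, since $\partial$ can create black vertices, matches the paper's phrasing ``graded suboperad.''
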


\subsubsection{The action}  Let $\op{O}$ be a cyclic operad and $\eta\in MC(\cnsl{O})$.

\begin{theorem} \label{actthm}  $(\cnsl{O}, \delta_\eta)$ is naturally a dg $\ZA$-algebra.
\end{theorem}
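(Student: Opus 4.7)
The plan is to build the action by pulling back the $\TA$-action of Theorem \ref{actionthm1} along the morphism $\phi\colon\ZA\to\TA$ of Proposition \ref{dqprop}, and then to verify that the cyclic invariants form a sub dg $\ZA$-algebra of $\nsl{O}$. First, via the standard inclusion $\cnsl{O}\hookrightarrow\nsl{O}$ of odd dg Lie algebras (from diagram \ref{cube}), the MC element $\eta\in MC(\cnsl{O})$ maps to a MC element in $\nsl{O}$, so Theorem \ref{actionthm1} endows $(\nsl{O},\delta_\eta)$ with the structure of a dg $\TA$-algebra. Precomposing with $\phi$ yields a morphism of dg operads $\ZA\to End_{(\nsl{O},\delta_\eta)}$, and the task is to show that this action restricts along the subcomplex inclusion.

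Since $\phi$ is already known to be a map of dg operads and since $\delta_\eta$ preserves $\cnsl{O}$ (the bracket of two cyclic invariants is cyclic invariant), the only nontrivial step is to verify that every $T\in\ZA(n)$ sends $\cnsl{O}^{\otimes n}$ into $\cnsl{O}$. By an argument parallel to that of Theorem \ref{actionthm1}, it suffices to check this closure on a generating set for $\ZA$ under simple gluings. Such a generating set consists of two families: the cyclic brace operations (the image of $\cB\hookrightarrow\ZA$ from Corollary \ref{cormor}), and the cyclic corollas $T_{\mu_n}$ consisting of a single black vertex labeled by $\mu_n\in\op{A}_\infty(n)$ with $n$ white leaves.

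For the cyclic brace generators, the action pulled back along $\phi$ coincides with the $\cB$-algebra structure on $\cnsl{O}$, since $\phi$ is by construction the extension of the comparison map $\rho\colon\cB\to\B$ of Section \ref{signsec}; closure is then Corollary \ref{cycbracecor}. For a cyclic corolla $T_{\mu_n}$, the map $\phi$ expresses $T_{\mu_n}$ as a signed sum over angle-rootings, and each rooted summand acts on $(a_1,\dots,a_n)$ via the $\TA$ formula $\rho(\mu_n)=\eta_n\{a_{i},\dots,a_{i-1}\}$ for the appropriate cyclic shift, by the definition of $\rho$ on corollas in Section \ref{actionsection}. Summing over all rootings then realizes the cyclic symmetrization of $\eta_n\{a_1,\dots,a_n\}$, which is manifestly cyclically invariant since $\eta_n$ is (as $\eta\in\cnsl{O}$) and each $a_i$ is. Closure for arbitrary $T\in\ZA$ follows by inducting on the number of generators in a simple-gluing decomposition, exactly as in Steps 1--3 of the proof of Theorem \ref{actionthm1}.

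The main obstacle will be the sign bookkeeping: one must check that the signs coming from the orientation convention of Section \ref{signsec}, the operadic suspension $\fr{s}^{-1}$, and the passage from coinvariants to invariants (Remark \ref{invrmk}) conspire so that the signed sum $\phi(T_{\mu_n})$ precisely reproduces the cyclic symmetrization of the corresponding brace expression rather than some twisted version of it. Once the generating action is verified and the signs are pinned down, the fact that $\phi$ is a morphism of dg operads together with the dg $\TA$-algebra structure on $\nsl{O}$ automatically promotes the resulting $\ZA$-action on $\cnsl{O}$ to a morphism of dg operads, completing the proof.
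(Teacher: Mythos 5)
Your overall strategy is the paper's: pull the $\TA$-action of Theorem $\ref{actionthm1}$ back along $\phi$ from Proposition $\ref{dqprop}$ and then show the resulting operations preserve the subcomplex $\cnsl{O}\subset\nsl{O}$, reducing to Corollary $\ref{cycbracecor}$ for the cyclic brace part.  However, there is a genuine gap in your treatment of the black-vertex generators.  You claim that $\phi(T_{\mu_n})$ is a signed sum of rooted corollas, each acting by a cyclic shift of $\eta_n\{a_1,\dots,a_n\}$, so that the total is ``the cyclic symmetrization of $\eta_n\{a_1,\dots,a_n\}$'' and hence manifestly invariant.  This misreads $\phi$: by its definition the sum runs over \emph{both} black and white angles.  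A black-angle rooting does give a corolla acting by $\eta\{a_i,\dots,a_{i-1}\}$, but a white-angle rooting at the vertex labelled $i$ gives a rooted tree with $a_i$ at the root and the black vertex above it, which acts by a brace-type term of the form $a_i\{\eta\{a_{i+1},\dots\},\dots\}$, not by a cyclic shift of $\eta\{a_1,\dots,a_n\}$.  Moreover these white-rooted terms are not a harmless omission: the sum of the black-rooted terms alone is in general \emph{not} cyclically invariant, because the cyclic action on the composite mixes the inputs of $\eta$ with those of the $a_i$ (this is exactly the content of Lemma $\ref{extlem}$); invariance only holds for the sum over \emph{all} attachments/rootings.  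So the ``manifest invariance'' step does not establish closure.

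The correct way to finish—this is what the paper does in one line—is to recognize the full rooted sum $\phi(T)$, acting through $\rho$, as the cyclic brace operation of Proposition $\ref{cycbrace}$ associated to the underlying tree, with the black vertices serving merely as evaluation slots for the cyclically invariant element $\eta$ (this is built into how the $\circ_i$ structure of $\ZA$ was defined in Proposition $\ref{dqprop}$, as cyclic brace compositions with the black labels forgotten).  Then closure under $\cnsl{O}\subset\nsl{O}$ is exactly Corollary $\ref{cycbracecor}$ applied to inputs $a_1,\dots,a_n$ and $\eta$, with no generation-by-simple-gluings argument needed; note also that your inductive reduction ``exactly as in Steps 1--3 of Theorem $\ref{actionthm1}$'' is itself shaky in the unrooted setting, since simple gluings are defined via height, which has no meaning for trees in $\ZA$.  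If you repair the corolla step by routing it through Proposition $\ref{cycbrace}$/Corollary $\ref{cycbracecor}$ rather than through a claimed cyclic symmetrization, your argument coincides with the paper's proof.
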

\begin{proof}  By the generalized Deligne conjecture (Theorem $\ref{Dconj3}$), we know that there is a sequence of operads 
\begin{equation}
\ZA\stackrel{\phi}\longrightarrow \TA \to End_{(\nsl{O},\delta_\eta)}
\end{equation}
Viewing $\cnsl{O}\subset \nsl{O}$ we can apply the induced action of $\ZA(n)$ to get an operation in $Hom(\cnsl{O}^{\tensor n}, \nsl{O})$, and it suffices to show that this action is closed under the inclusion $\cnsl{O}\subset \nsl{O}$.  For this it is enough to consider the cyclic brace operations, since the black vertices are just evaluation on elements, and for the cyclic brace operations this follows from Corollary $\ref{cycbracecor}$.
\end{proof}

\subsection{The homology of $\ZA$ and the gravity operad.}

In this subsection we calculate the homology of $\ZA$, showing that its homology is the gravity operad of Getzler \cite{Geteq}.  This operad is equivalent (modulo degree shifts) to both the $S^1$-equivariant homology of the operad $\op{D}_2$, with $S^1$ action by rotation, or to the operad given by the homology of $\op{M}_{0,n+1}$, the moduli spaces of marked Riemann surfaces of genus zero,  (with $n+1\geq 3$).  Here we are using homological grading conventions to match the cellular intuition as in Remark $\ref{treedegremark}$.

\subsubsection{Spineless cacti}  Recall the topological $E_2$ operad of spineless cacti \cite{Vor} and variants \cite{KCacti} have been used to model string topology operations.  In particular we consider Kaufmann's cellular operad of normalized spineless cacti $CC_\ast(\op{C}act^1)$.  For each $n\geq 2$, the space $\op{C}act^1(n)$ has a free $S^1$-action by moving the base point.  Taking $S^1$ as a CW complex with a single $1$-cell, we get a map $\nabla$ by evaluating the action on said cell: 
\begin{equation*}
\nabla\colon CC_n(\op{C}act^1)\to CC_{n+1}(\op{C}act^1)
\end{equation*}
The relationship between the operads $\ZA$, $\TA$ and spineless cacti is the following.

\begin{lemma}\label{cactlem}  The image of $\nabla$ is a suboperad of $CC_\ast(\op{C}act^1)$.  This suboperad fits into a diagram of operads
\begin{equation}
\xymatrix{\ZA \ar[r]^\phi \ar[d]^{\sim} & \TA \ar[d]^{\sim} \\ im(\nabla)\ar[r] & CC_\ast(\op{C}act^1)  }
\end{equation}
where the vertical maps are weak equivalences and the horizontal maps are the canonical injections.
\end{lemma}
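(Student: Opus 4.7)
The plan is to leverage the known cellular quasi-isomorphism $\Psi\colon \TA \xrightarrow{\sim} CC_\ast(\op{C}act^1)$ from \cite{KSch, Ward} and identify the restriction of $\Psi$ along the injection $\phi$ with the map $\nabla$. The bulk of the argument is geometric bookkeeping using Kaufmann's cellular description of cacti together with Voronov's $S^1$-action.

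First I would check that $\text{im}(\nabla)$ is a suboperad. The $S^1$-action on $\op{C}act^1(n)$ translates the base point around the outer boundary, while the operadic composition $c_1 \circ_i c_2$ returns a cactus whose outer boundary is that of $c_1$: the base point of $c_2$ is identified with the $i$-th inner boundary of $c_1$ and no longer serves as a basepoint. Consequently the $S^1$-action commutes with $\circ_i$ in the first slot, giving the cellular identity $\nabla(c_1)\circ_i c_2 = \nabla(c_1\circ_i c_2)$, which shows closure of $\text{im}(\nabla)$ under operadic composition.

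For the square and the left vertical equivalence, I would match cells of $\ZA$ with cells of $\text{im}(\nabla)$ explicitly. Under $\Psi$, the cells of $CC_\ast(\op{C}act^1)$ corresponding to a fixed underlying $A_\infty$-tree $T$ are indexed by choices of basepoint position, that is, by the angles $r$ of $T$; write $\Psi(T,r)$ for such a cell. Applying $\nabla$ to any one of these sweeps the basepoint once around the outer boundary, producing a cellular chain of one higher dimension whose expansion, with orientation signs dictated by Section~\ref{signsec}, is precisely $\sum_r \pm \Psi(T,r) = \Psi\phi(T_\prec)$. Thus $\Psi\circ\phi$ factors through $\text{im}(\nabla)$ and yields the commutative square; the induced map $\psi\colon \ZA \to \text{im}(\nabla)$ is, by inspection, a cellular bijection of graded $\mathbb{S}$-modules, since each oriented unrooted $A_\infty$-tree corresponds to a unique cell of $\op{C}act^1$ whose full dimension incorporates the outer $S^1$-parameter. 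Because both differentials come from the cellular differential of $CC_\ast(\op{C}act^1)$, $\psi$ is automatically a chain map, and being an $\mathbb{S}$-module isomorphism is a weak equivalence (indeed, an isomorphism of dg operads).

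The main obstacle is sign management: matching the orientation convention on $\ZA$ (sums over roots with signs fixed by the edge-ordering of Section~\ref{signsec}) with the cellular orientations of $\op{C}act^1$ and the chosen $1$-cell structure for $S^1$ in the definition of $\nabla$. Barring signs, the correspondence is geometrically transparent; the sign analysis proceeds by adapting the conventions of \cite{KDeligne} in direct parallel to the sign verification already used in the proof of Proposition~\ref{dqprop}.
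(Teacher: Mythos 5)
Your overall strategy (factor the composite $\ZA \stackrel{\phi}\to \TA \to CC_\ast(\op{C}act^1)$ through $im(\nabla)$ and compare) is close to the paper's, but there is a genuine error at the decisive step: the induced map $\psi\colon \ZA \to im(\nabla)$ is \emph{not} a cellular bijection, and $\ZA$ is not isomorphic to $im(\nabla)$ as a dg operad. The cells of $CC_\ast(\op{C}act^1(n))$ are indexed by rooted b/w trees \emph{without} $A_\infty$-labels (the operad $\TT$ of \cite{KDeligne}), and the comparison map $\TA \to \TT \cong CC_\ast(\op{C}act^1)$ is the contraction of the associahedral labels: it kills every generator of $\TA$ carrying a black-vertex label of positive degree and is only a weak equivalence, not an isomorphism. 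Consequently $\ZA(n)$, which contains unrooted trees with positive-degree $\op{A}_\infty$-labels, has strictly larger dimension than the span of cells hit by $\nabla$ already for $n\geq 3$, so the claimed ``isomorphism of dg operads'' cannot hold; the left vertical arrow in the lemma is likewise only a weak equivalence. Relatedly, your statement that ``each oriented unrooted $A_\infty$-tree corresponds to a unique cell of $\op{C}act^1$'' conflates a single cell with the whole $S^1$-family of cells obtained by moving the root: the element of $im(\nabla)$ attached to an unrooted tree is a \emph{sum} of cells, one for each white-root position.

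The paper repairs exactly this point by interposing an auxiliary operad $\ZZ$ spanned by unlabeled (non-rooted) b/w planar trees: the map $\pi\colon\ZA\to\ZZ$ that forgets degree-$0$ labels and kills positive-degree labels is a levelwise quasi-isomorphism (contracting associahedra), and the sum-over-white-roots injection $\ZZ\to\TT$ then identifies $\ZZ$ isomorphically with $im(\nabla)$. That identification requires the two-way combinatorial analysis you omit: from a tree in $\ZZ(n)$ one produces a cell $\gamma$ of $\op{C}act^1(n)$ with marked point at an intersection of lobes and checks $\nabla(\gamma)\cong T$, and conversely one must verify that $\nabla(\gamma)=0$ whenever the marked point of $\gamma$ is not at a lobe intersection; this case analysis is also what makes $im(\nabla)$ a suboperad, rather than your asserted commutation of the $S^1$-action with $\circ_i$ in the first slot, which as stated is too strong (the outer circumferences of $c_1$ and $c_1\circ_i c_2$ differ, so the action only commutes up to reparametrization, and one needs the cellular chain-level statement). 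Finally, the arity-$1$ term needs separate treatment since $\nabla$ is only defined for $n\geq 2$.
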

\begin{proof}  
To be precise, since $\nabla$ is only defined in arity $\geq 2$, we must specify that the arity $1$ term is simply the ground field $\kk$ in each operad in the diagram.  Alternatively we could work with pseudo-operads, thus allowing the arity $1$ term to be zero.  In any event the argument below is for $n\geq 2$.

Recall that the cells of $CC_\ast(\op{C}act^1(n))$ are indexed by b/w rooted trees with $n$ white vertices, and that the insertion operad structure on trees, as above, turns this correspondence into an isomorphism of operads.  After \cite{KDeligne} we call this operad of trees $\TT$.  Further recall \cite{KSch} that contracting labels of black vertices (see next paragraph) gives a weak equivalence $\TA\to \TT$.

Now to prove this lemma let us first construct an auxiliary operad $\ZZ$.  We define $\ZZ(n)$ to be the vector space spanned by (non-rooted) b/w planar trees having $n$ white vertices.  There is a map of $\mathbb{S}$-modules $\pi\colon\ZA\to\ZZ$ defined on generators as follows.  If $T\in\ZA(n)$ is a b/w tree with one or more $A_\infty$ labels of non-zero degree, then $\pi(T)=0$, and if $T$ has only $A_\infty$ labels of degree $0$ then $\pi(T)$ simply forgets the black vertex labels.  The map $\pi$ is to be thought of topologically as contracting associahedra.  In particular, the map $\pi\colon \ZA\to \ZZ$ induces the structure of a dg operad on $\ZZ$, and this map is a levelwise quasi-isomorphism.

Mimicking the construction of the injection $\ZA\to\TA$ above, we have an injection $\ZZ\to \TT$ by summing over all choices of white roots, and hence a diagram of dg operads:
\begin{equation}
\xymatrix{\ZA \ar[r] \ar[d]^{\sim} & \TA \ar[d]^{\sim} \\ \ZZ \ar[r] & \TT\cong CC_\ast(\op{C}act^1)  }
\end{equation}
and so it remains to show that the induced map $\ZZ\to CC_\ast(\op{C}act^1)$ induces an isomorphism $\ZZ\cong im(\nabla)$.  

If $T\in \ZZ(n)$ is a basis element, we enlarge the white vertices to intersect at the (possibly unstable) black vertices.  Since $n\geq 2$, there is at least one (possibly unstable) black vertex.  This graph is now a planar configuration of (topological) circles and we choose one of the intersection points, i.e. former black vertices, as a marked point.  As an unweighted planar configuration of circles with a marked point, this graph represents a cell in $CC_\ast(\op{C}act^1)$, call it $\gamma$, and it is easy to see that $\nabla(\gamma)\cong T$ under the above isomorphism.

Conversely, given a cell $\gamma\in CC_\ast(\op{C}act^1)$ there are two cases.  First if the marked point is on an intersection of lobes, we can erase the marked point, pass to the corresponding non-rooted b/w planar tree $T$ (removing the formerly marked vertex if it is unstable), and as above $\nabla(\gamma)\cong T$.  Second if the marked point is not on an intersection of lobes, then $\nabla(\gamma)=0$.  

It readily follows that this correspondence between $im(\nabla)$ and $\ZZ$ is a bijection, and thus $im(\nabla)\cong\ZZ$ under the above isomorphism.
\end{proof}

\subsubsection{The gravity operad}  We now recall the gravity operad, $\op{G}rav$.   Below we will give explicit generators for the homology of $\ZA$ as well as explicit boundaries for the relations between these generators.  As such we give a definition of $\op{G}rav$ in terms of generators and relations.

\begin{definition}\label{gpdef}  For a given $n$ we define $\upsilon_{i,j}\in S_{n}$ to be the unique permutation such that
\begin{enumerate}
\item  $\upsilon_{i,j}(1)=i, \upsilon_{i,j}(2)=j$,
\item  mod $n$, 
\begin{equation*}
\upsilon_{i,j}(3)\equiv \begin{cases} i+1 & \text{if } j\not\equiv i+1  \\ i+2 & \text{if } j\equiv i+1 \end{cases}
\end{equation*}
\item  and the cyclic order of $\{3\cdc n\}$ is preserved by $\upsilon_{i,j}$.
\end{enumerate}

\end{definition}

\begin{definition}\cite{Geteq} The gravity operad $\op{G}rav=F(E)/(R)$ is given in terms of generators and relations as follows.  For each $n\geq 2$, $E(n)$ is the trivial representation concentrated in degree $1$, call its generator $g_n$.  We consider $E(1)=0$, formally adjoining the unit if desired.  The relations $(R)$ are the operadic ideal generated by $R_{k,l}\in F(E)(k+l)$ where
\begin{equation*}
R_{k,l}= -g_{l+1}\circ_1 g_k+\sum_{1\leq i<j\leq k}\upsilon_{i,j}(g_{k+l-1}\circ_1 g_2)
\end{equation*}
for $k\geq 3, l\geq 0$, where $g_1$ is interpreted as $0$, and where $\upsilon_{i,j}\in S_{k+l}$.
\end{definition}

\begin{remark}  The insertion operad structure in the above examples, {\it eg} in $\cB$ can be extended to non connected graphs.  In this case the Gerstenhaber operad $\op{G}$ can be interpreted as the suboperad generated by the two $2$ vertex graphs of genus $0$. The operad $\op{G}rav$ is the suboperad of $\op{G}$ taking $g_k$ to be the sum of all genus $0$ one edged graphs on $k$ vertices.
\end{remark}

\begin{remark}
There is an interesting connection between the gravity operad and the $\op{L}_\infty$ operad essentially going back to \cite{CS} Theorem 6.2.  If $V$ is a gravity algebra with generating operations $g_2,g_3\cdc $ then every sublist of the generating operations can be completed by the zero operations to an $L_\infty$ algebra structure on $\Sigma V$ with zero differential.  The converse is not true; there are more relations in the gravity operad.
\end{remark}

\subsubsection{Generators of $H_\ast(\ZA)$}
\begin{remark}\label{arem}
In this section we give the graphs in $\ZA$ whose associated homology classes correspond to the gravity generators $g_k$.  Observe that if $T\in \ZA(k)$ is a tree having black vertex labels of degree $0$ which happens to be a cycle, then the class $[T]$ does not change if we change the black vertex labels of $T$ to other labels of degree $0$.  As such, we repress the black vertex labels below.  Alternatively, we could consider these trees as having no black vertex labels and thus as generators for the homology of $\ZZ(k)$.  The two approaches are equivalent under the above quasi-isomorphism $\ZA(k)\stackrel{\sim}\to \ZZ(k)$.
\end{remark}
\begin{definition}
For $k\geq 3$, let $\gamma_k\in \ZA(k)$ be the b/w tree having one black vertex of degree $0$, $k$ edges, and $k$ white vertices such that each white vertex is connected to the black vertex by an edge and such that the white vertices are labeled $\{1\cdc k\}$ clockwise in the planar order.  See Figure $\ref{gammafig}$.  In the case $k=2$ the above description produces an unstable black vertex which is erased to define $\gamma_2$.  In particular $\gamma_2\in \cB(2)$ is the Lie bracket in the cyclic brace operad. \end{definition}
	
	\begin{figure}	\centering
		\includegraphics[scale=.2]{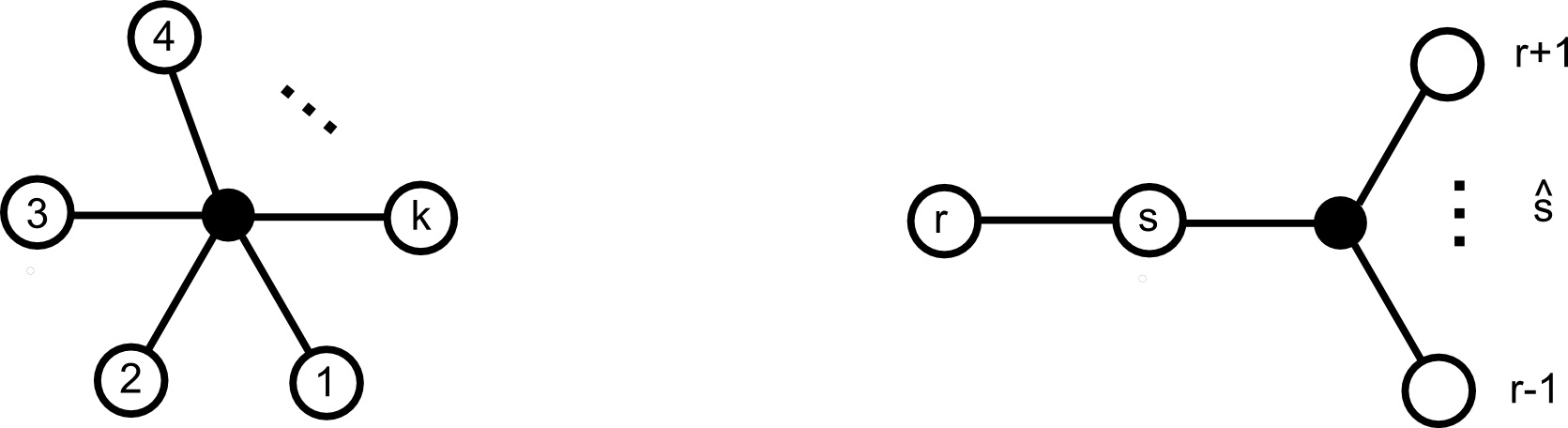}
				\caption{Left: $\gamma_k$.  Right:  $\zeta_k(r,s)$.} \label{gammafig}
	\end{figure}

\begin{lemma} \label{anotherlemma} The graph $\gamma_k\in \ZA(k)$ is a cycle of degree $1$.  Furthermore for any $\sigma\in S_k, [\gamma]= [\sigma\gamma]$.
\end{lemma}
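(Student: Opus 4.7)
My plan is to verify the two claims directly from the definitions of degree, differential, and $S_k$-action on $\ZA$.

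For the degree, I would use the embedding $\phi\colon\ZA\to\TA$ from Proposition \ref{dqprop} and work with a rooted representative of $\gamma_k$: the root white vertex carries two arcs (the edge to the central black vertex plus the root flag) contributing degree $1$, all other white vertices have a single arc contributing $0$, and the central black vertex is labeled by a $0$-cell of $K_k$ and contributes $0$, for a total degree of $1$. For $\partial \gamma_k = 0$, the black-vertex component of $\partial$ vanishes because $d$ is trivial on $0$-cells of the associahedron, and the white-vertex components vanish because no white vertex of $\gamma_k$ has a pair of adjacent white angles available to contract. Independence from the choice of root follows from the consistency proved at the end of the proof of Proposition \ref{dqprop}.

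For the invariance of $[\gamma_k]$ under the $S_k$-action, I would first observe that the cyclic subgroup $\Z_k^+ \subset S_k$ preserves $\gamma_k$ on the nose (any sign coming from the orientation data is absorbed by the rotational symmetry of the star). Since $\Z_k^+$ together with any single adjacent transposition $(i, i+1)$ generates $S_k$, it suffices to produce, for one such transposition, a chain $\zeta \in \ZA(k)$ of degree two with $\partial \zeta = \gamma_k - (i, i+1)\gamma_k$. The natural candidate is the tree $\zeta_k(i, i+1)$ from Figure \ref{gammafig}: replace the central black vertex of $\gamma_k$ by two black vertices joined through an intermediate white vertex, with the pair $\{i, i+1\}$ grouped on one side and the remaining labels on the other; the two white-angle contractions at the intermediate vertex then produce $\gamma_k$ and $(i, i+1)\gamma_k$ respectively after normalizing away the black edges created en route, while the $A_\infty$ label differentials on either black vertex are chosen to vanish.

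I expect the main technical obstacle to be sign bookkeeping through the orientation convention of Section \ref{signsec}. One must verify that the two contraction terms appearing in $\partial\zeta_k(i, i+1)$ carry opposite relative signs so that they genuinely combine as a difference rather than a sum, and that this difference matches the sign produced by the $(i, i+1)$-action on the orientation data of $\gamma_k$. Once the signs are verified, combining with the cyclic invariance gives $[\gamma_k] = [\sigma \gamma_k]$ for all $\sigma \in S_k$.
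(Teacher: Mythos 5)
Your overall strategy for the second statement is the paper's own: the paper proves $[\gamma]=[\sigma\gamma]$ by exhibiting the degree-two wheels $\zeta_k(r;s)$ of Definition~\ref{tcpdef} and Figure~\ref{gammafig}, whose boundaries interchange two labels, exactly as you propose (your extra reduction to the $k$-cycle plus one adjacent transposition is harmless but unnecessary, and note that in the paper's notation $\Z_k^+$ is a subgroup of $S_k^+$, not of $S_k$). However, your description of the homotopy is wrong as stated: ``replace the central black vertex by two black vertices joined through an intermediate white vertex, with $\{i,i+1\}$ on one side'' produces a graph with $k+1$ white vertices (or an unlabeled one), which is not an element of $\ZA(k)$ and would not have $\gamma_k-(i,i+1)\gamma_k$ as its boundary. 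The actual $\zeta_k(r;s)$ is the two-sector wheel with \emph{center} $s$: after erasing the unstable black vertex, the white vertex $s$ is joined directly by a white edge to $r$ and by one edge to a single black vertex carrying the remaining $k-2$ labels; its two boundary terms are the two white-angle contractions at the center $s$, yielding the stars with cyclic orders $(s,r,\dots)$ and $(r,s,\dots)$, i.e.\ $\gamma$ and $\tau_{r,s}\gamma$ up to the identification you need. Your appeal to strict invariance of $\gamma_k$ under the $k$-cycle ``with signs absorbed by rotational symmetry'' is asserted rather than checked; it does hold here, essentially because $E_{\gamma_k}$ is a single black vertex so there is no orientation data to produce a sign, but this deserves a sentence, since the lemma is later cited (in Lemma~\ref{intlem}) precisely for the sign-free action on these stars.

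There is also a genuine flaw in your verification that $\gamma_k$ is a cycle. Working, as you set it up, with a single rooted representative in $\TA$, the white-vertex part of $\partial$ does \emph{not} vanish: the differential contracts ``one or more adjacent white angles,'' so a single angle suffices, and the root white vertex has two contractible angles, each producing a nonzero term (a black-rooted corolla with a degree-zero $\op{A}_\infty$ label). These root-local terms cancel only after summing over all roots, which is exactly the cancellation established at the end of the proof of Proposition~\ref{dqprop} --- you cite that proposition, but only for ``independence of the choice of root,'' which is not the issue. The clean fixes are either the paper's argument, which avoids any computation ($\gamma_k$ has degree $1$ and $\ZA(k)$ is concentrated in degrees $\geq 1$, so its boundary lies in degree $0$ and vanishes), or a computation with the non-rooted local differential of Proposition~\ref{dqprop}, where every white vertex of $\gamma_k$ is a leaf and admits no angle contraction and the degree-zero black label admits no expansion. (Also, for a white-angle-rooted representative the black vertex is labeled by a $0$-cell of $\op{A}_\infty(k-1)$, not of $K_k$.)
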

\begin{proof}  Under homological grading conventions (see Remark $\ref{treedegremark}$), the degree of $\gamma_k$ is $1$ by definition, and since $\ZA(k)$ is concentrated in degree $\geq 1$ it is a cycle.  For the latter statement we use the graph $\zeta_k(r;s)$, pictured in Figure $\ref{gammafig}$ and defined formally in Definition $\ref{tcpdef}$. In particular $\partial(\zeta_k(r,s))$ is a boundary interchanging the order of $r$ and $s$.
\end{proof}

We have not yet argued that the $\gamma_k$ constitute all generators.  This will be shown below.

\subsubsection{Explicit relations in $\ZA$ and $\ZZ$}  We will now see that the cycles $\gamma_k\in \ZA(k)$ satisfy the relations of the gravity operad up to explicit homotopies.  The boundary giving the relation $R_{k,l}$ on homology will be denoted $\rho_{k,l}$ and is given as a signed sum of graphs.  For simplicity we will give the explicit homotopies in $\ZZ$, which can be augmented to explicit homotopies in $\ZA$ by including trees with the appropriate degree $1$ (edge) labels to move between the different degree $0$ (vertex) labels which arise in the compositions in the $\op{A}_\infty$ case, see Remark $\ref{arem}$.

Since the definition of the $\rho_{k,l}$ is somewhat involved, let me summarize it before we begin.  The indexing set of the sum $\rho_{k,l}$ is combinatorial in nature, and is introduced in two steps.  First we introduce the notion of a `wheel' and an operator $tr$ whose orbit indexes $\rho_{k,l}$ up to simultaneous relabeling of (morally) half of the terms (Definition $\ref{tcpdef}$).  This relabeling can not be done term by term, but it can be done simultaneously, and for this we define `interchangers' $\iota_{k}$ (Definition $\ref{intdef}$).  We then define $\rho_{k,l}$ as the orbit of $tr$ along with the $\iota_{k}$ graphs, and show that $\partial(\rho_{k,l})$ is precisely the corresponding gravity relation in Lemma $\ref{hardlem}$.

\begin{definition}\label{tcpdef}

\begin{enumerate}

\item  If $A$ and $B$ are disjoint finite ordered sets we let $A\sqcup B$ denote the ordered set whose elements are $A\cup B$ and whose order is colloquially `$A$ then $B$'.

\item  Let $1\leq m \leq n$ and $1\leq d\leq n-1$.  A wheel of arity $n$ of degree $d$ with center $m$ is a partition of the set $\{1\cdc n \}\setminus\{m\}$ into $d$ nonempty ordered subsets $A_1\cdc A_d$ such that the cyclic order of $A_1\sqcup\dots\sqcup A_d$ agrees with the natural cyclic order on $\{1\cdc n \}\setminus\{m\}$ and such that $m+1\in A_1$.  The set of such wheels is denoted $\op{W}_n^d(m)$.  (Note the total order of $A_i$ need not agree with that induced from the integral order.)  

\item  The transfer operator $tr$ is an assignment
\begin{equation}
tr\colon\op{W}_n^d(m)\to \op{W}_n^d(m) 
\end{equation}
defined provided $\text{min}(A_1)\neq m+1$ and given by $tr(A_1)=A_1\setminus \text{min}(A_1)$, $tr(A_2)=A_2\sqcup \text{min}(A_{3})$, and $tr(A_i)= (A_i\setminus \text{min}(A_i))\sqcup \text{min}(A_{i+1})$ (mod $d$) for $3\leq i\leq d$.
\item  For $n\geq 4$ we define $\omega_n(m)\in \op{W}^3_n(m)$ to be the wheel with $A_1=\{m+4\cdc m-1,m+1\}, A_2=\{m+2\}, A_3=\{m+3\}$ (mod $n$).
\item  For $n\geq 3$ and $1\leq r,s\leq n$ we define $\zeta_n(r;s)\in \op{W}^2_n(s)$ to be the unique wheel with $A_i={r}$ for some $i$.
\item  For a fixed $k\geq 3$ we define $\epsilon_{r,s}\in\{0,1\}$ so that $\upsilon_{r,s}\zeta_k(1,2)= (-1)^{\epsilon_{r,s}}\zeta_k(r,s)$.
\end{enumerate}

\end{definition}

\begin{figure}
	\centering
		\includegraphics[scale=.35]{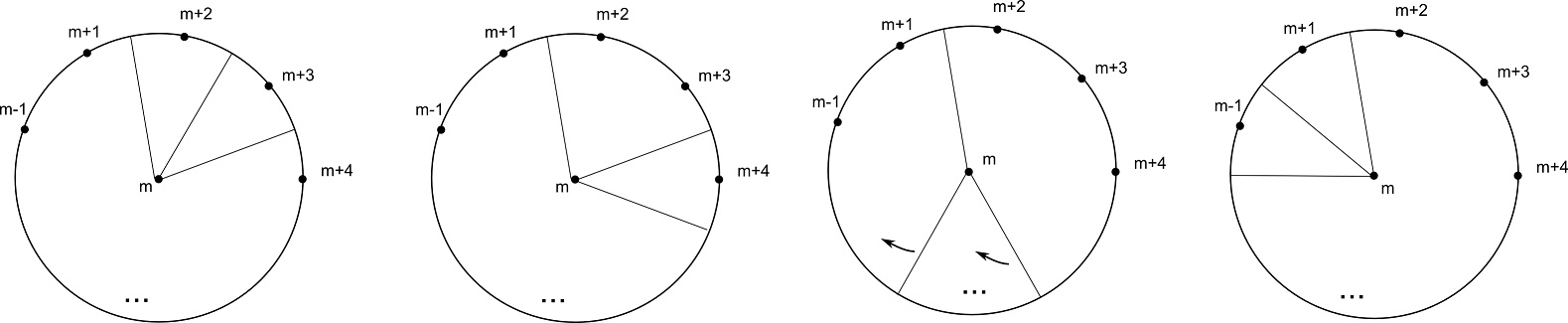}
	\caption{The orbit of $tr$ applied to $\omega_n(m)$.  From left: the wheel $\omega_n(m)$; $tr(\omega_n(m))$; $tr^j(\omega_n(m))$; $tr^{n-4}(\omega_n(m))$}
	\label{fig:morewheels}
\end{figure}

We represent wheels pictorially as follows.  Draw a circle with $n-1$ marked points labeled clockwise in the cyclic order by $\{1\cdc n\}\setminus\{m\}$ and label the center of the circle by $m$.  Then draw $d$ line segments connecting $m$ to the circle and not intersecting the marked points, such that the labeled marked points on the circle between two line segments are exactly the sets $A_i$ in order.  Clearly every wheel can be uniquely represented by such a diagram.

\begin{figure}
	\centering
		\includegraphics[scale=.2]{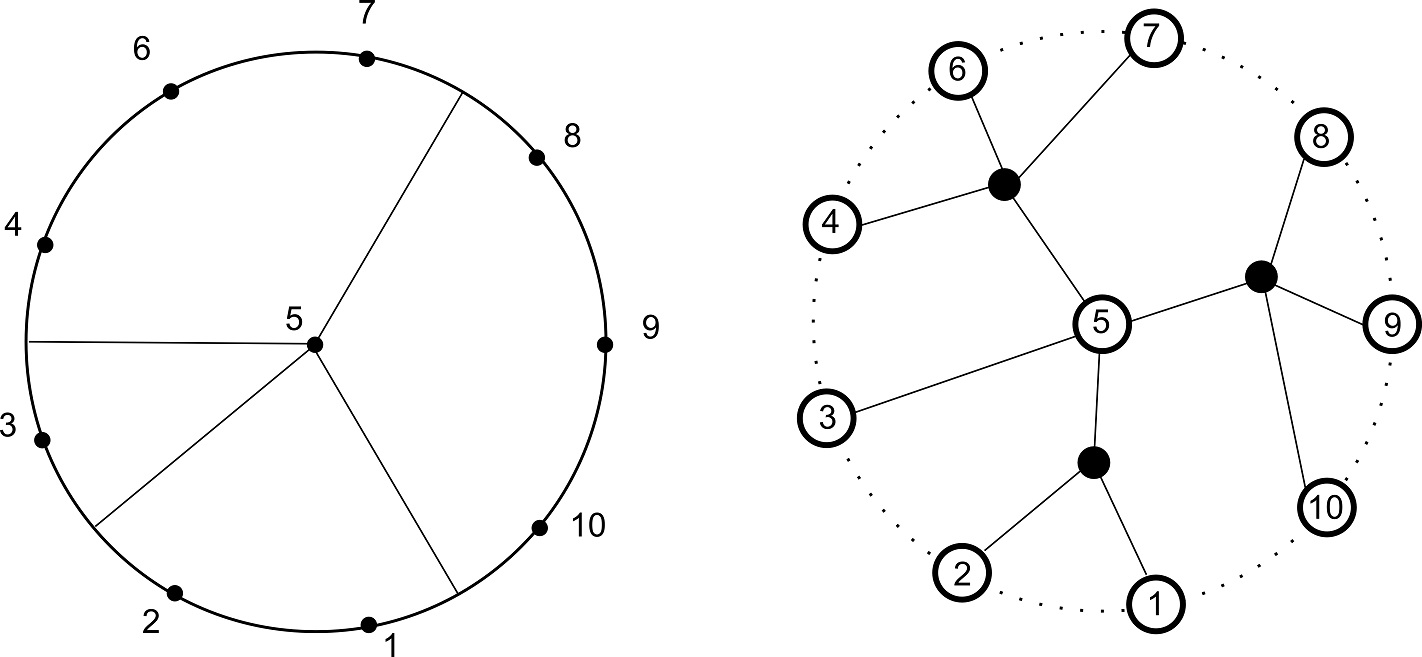}
	\caption{Left: a wheel in $\op{W}_{10}^4(5)$.  Right: its dual graph.}
	\label{fig:awheel}
\end{figure}

To a wheel $\op{W}^d_n(m)$ we associate a degree $d$ element of $\ZZ(n)$ by the following dual graph construction.  Place a white vertex over each of the $n$ marked points, including the center.  Place a black vertex in each of the $d$ sectors of the circle, as specified by the line segments.  Connect each black vertex to the center and to any white vertices that appear in the boundary in its sector (there is always at least one).  Finally, erase any unstable black vertices.  By abuse of notation we consider $\kk[\op{W}^d_n(m)]\subset\ZZ^d(n)$. (See Figure $\ref{fig:awheel}$).  Such wheel complexes form a resolution of the generator $\gamma_n$, in the sense of the following lemma.

\begin{lemma}\label{wlem}  Under the above identification the differential $\partial$ of $\ZZ^d(n)$ sends $\partial\colon \kk[\op{W}^d_n(m)]\to \kk[\op{W}^{d-1}_n(m)]$.  In particular a wheel is sent to a signed sum of wheels given by removing one line segment in each of the $d$ possible ways.  The homology of the complex $\kk[\op{W}^\ast_n(m)]$ is concentrated in degree $1$ and is generated by $\gamma_n$.
\end{lemma}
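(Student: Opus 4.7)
The plan is to decompose the lemma into two tasks: (i) verify that the minimal-operad differential on $\ZZ$ restricts to the span $\kk[\op{W}^\ast_n(m)]$ and takes the asserted ``remove a line segment'' form, and (ii) compute the homology of the resulting purely combinatorial subcomplex. Part (i) contains essentially all of the technical content and is the principal obstacle; part (ii) then reduces to a routine simplicial calculation.

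For (i) I would exploit the rigid structure of the dual graph $\widehat{W}$ of a wheel $W\in\op{W}^d_n(m)$. The central vertex $m$ is the unique white vertex of valence greater than one, since every other white vertex is a leaf attached to a single sector black vertex. The $\partial$ contribution at a white vertex $v$ collapses pairs of consecutive white angles at $v$, so the leaf whites contribute nothing (they have no angles to collapse). At $m$ there are exactly $d$ white angles, one between each pair of consecutive edges to black vertices; collapsing the angle between the $i$-th and $(i+1)$-th line segments merges the two adjacent sector black vertices into one, and the resulting tree is precisely the dual graph of the wheel obtained by erasing the $(i+1)$-st line segment. This supplies the claimed $d$ terms, each a wheel in $\op{W}^{d-1}_n(m)$. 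The remaining, and by far the hardest, task is to argue that the pieces of the differential localized at black vertices contribute nothing new. Since $\ZZ$ is obtained from $\ZA$ by killing positive-degree $A_\infty$ labels, the black-vertex piece of the differential is forced to act by operations which either produce a black edge (then immediately contracted by the convention of Appendix $\ref{sec:trees}$) or cancel in pairs; using the orientation convention of Section $\ref{signsec}$ to control signs, one verifies that such contributions either vanish outright or are already accounted for among the $d$ wheel terms enumerated above.

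For (ii), once step (i) is in hand, I would identify each wheel in $\op{W}^d_n(m)$ with the data of its line-segment positions on the boundary circle modulo the pinning imposed by the constraint $m+1\in A_1$. This yields a bijection between $\op{W}^d_n(m)$ and subsets of an appropriately sized set, and under this bijection the wheel differential becomes a standard simplicial boundary map. The complex $\kk[\op{W}^\ast_n(m)]$ is thereby identified (up to a degree shift) with the chain complex of a simplex on $n-2$ vertices, whose homology is concentrated in a single degree. Unwinding the shift places this homology in wheel-degree $1$, where it is generated by the unique wheel with $d=1$; its dual graph is $\gamma_n$ by inspection. Combined with Lemma~$\ref{anotherlemma}$ to identify this class canonically with $[\gamma_n]$, this completes the computation.
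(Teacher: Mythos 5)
The paper states Lemma~$\ref{wlem}$ without proof, so there is no argument of the author's to compare against; judged on its own terms, your proposal gets the easy half right and the half you call routine wrong. In part (i), your identification of the $d$ collapsible white angles at the center with the $d$ segment removals is correct (and nothing goes wrong in the case of a sector with a single marked point, where the sector black vertex has been erased and the collapse creates a new one -- though this case deserves a sentence). But the step you single out as ``by far the hardest'' is disposed of with a mechanism that is not how the differential works: the black-vertex summand of $\partial$ in $\ZA$ does not create and contract black edges, it replaces the $A_\infty$ label by its cellular boundary, and in $\ZZ$ every black label is a (forgotten) degree-zero associahedron cell, whose boundary is zero. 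So this step is in fact immediate: the induced differential on $\ZZ$ is precisely the signed sum of single white-angle contractions at non-leaf white vertices (contracting two or more adjacent angles inserts a positive-degree corolla and is killed by $\pi$, and the root-local terms discussed in the proof of Proposition~$\ref{dqprop}$ cancel in pairs). Your purported verification -- ``produce a black edge\dots or cancel in pairs\dots or are already accounted for among the $d$ wheel terms'' -- proves nothing, even though what it aims at is true.

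The genuine error is in part (ii). The condition $m+1\in A_1$ only names which block is $A_1$; it does not pin a line segment next to $m+1$ (look at $\omega_n(m)$, whose $A_1$ ends, rather than begins, with $m+1$). Consequently $\op{W}^d_n(m)$ is in bijection with the $d$-element subsets of the $n-1$ gaps between consecutive marked points, so there are $\binom{n-1}{d}$ wheels of degree $d$ and, in particular, $n-1$ wheels of degree $1$ -- not ``the unique wheel with $d=1$'' -- and their dual graphs are pairwise distinct elements of $\ZZ(n)$, since the position of the center label in the cyclic order at the black vertex records the gap. Under your pinned reading the lemma cannot even be set up: removing the pinned segment leaves the span of wheels, while forbidding its removal turns $\kk[\op{W}^\ast_n(m)]$ into the augmented chain complex of a simplex, which is acyclic and contradicts the homology claim. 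The correct identification is with the non-augmented simplicial chains of the simplex on the $n-1$ gaps, with $(d-1)$-faces placed in wheel degree $d$ (one should also note that the induced signs agree with the simplicial ones up to rescaling generators, e.g.\ because the closed wheel cells form a regular CW subcomplex of $\op{C}act^1(n)/S^1$ whose face poset is that of a simplex). Its homology is $\kk$ concentrated in wheel degree $1$, spanned by the common class of the $n-1$ degree-one wheels, which are mutually homologous inside the wheel complex via the degree-two wheels $\zeta_n(r;m)$, and one of which is literally $\gamma_n$; this, not an appeal to Lemma~$\ref{anotherlemma}$, is what identifies the generator.
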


\begin{definition}\label{intdef}  An interchanger of arity $n$ is a subset of $\{1\cdc n\}$ of size $4$, which we depict as a circle with marked $n^{th}$ roots of unity having two intersecting line segments terminating at the $4$ chosen points.  
\end{definition}

To an interchanger $\iota=i<r<j<s$ of arity $n$ we associate a graph by the following procedure.  Define the following partition of $\{1\cdc n\}\setminus \{i,r,j,s\}$,  $A =\{m: i<m<r\}, B =\{m: r<m<j\}, C =\{m: j<m<s\}, D =\{m: s<m\}, E =\{m: m<i\}$.  Now to form the graph, first attach white vertices labeled by $i$ and $r$ to a central black vertex.  Second, attach white vertices $j$ to $i$ and and $s$ to $r$.  Finally, attach the remaining white vertices to the central black vertex such that the cyclic order of the white vertices attached to the central black vertex agrees with $\{i \}\sqcup A\sqcup C\sqcup\{r\}\sqcup D \sqcup B \sqcup E$.  See Figure $\ref{fig:int}$ for an example.  In the case that $n=4$ the black vertex is unstable and is not drawn.  Define $\op{I}(n)$ to be the set of all such graphs.  We will view $\kk[\op{I}(n)]\subset \ZZ(n)$ but interchangers do not take the standard orientation, rather their orientation is fixed in the proof of Lemma $\ref{intlem}$ below.

\begin{figure}
	\centering
		\includegraphics[scale=.15]{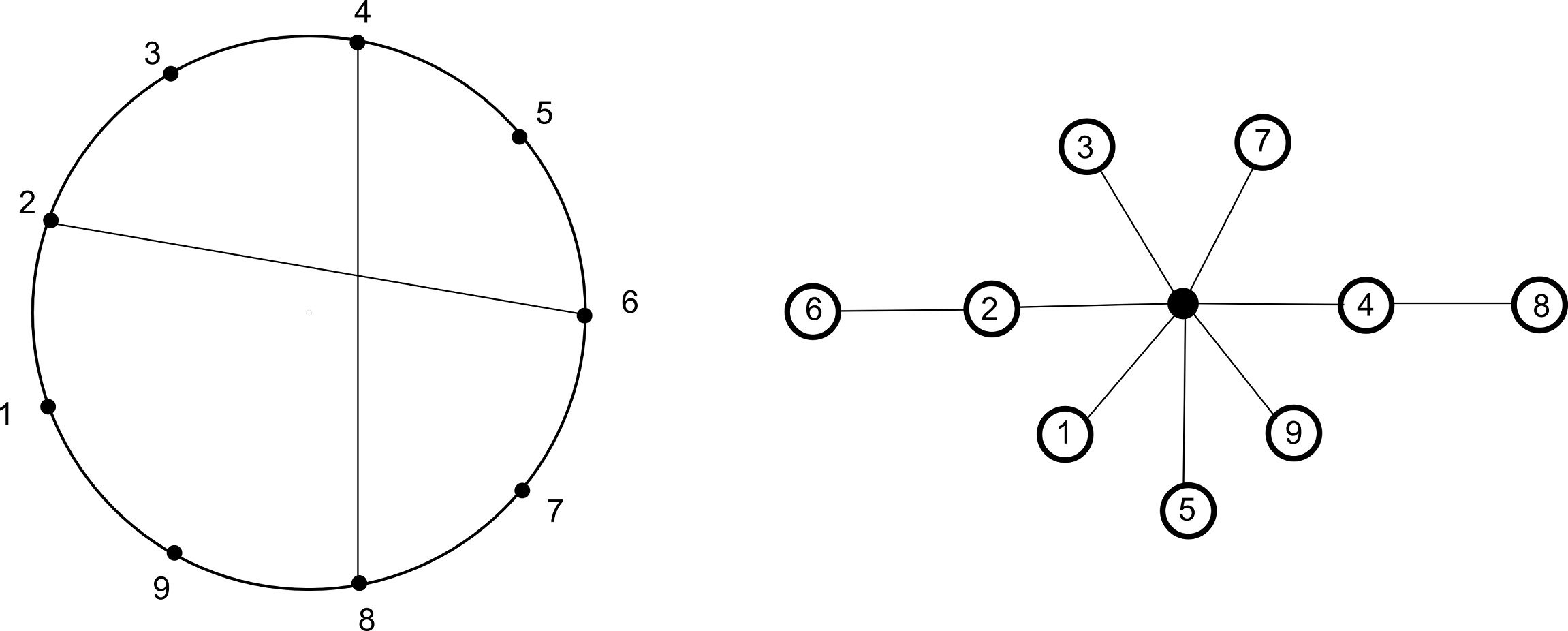}
	\caption{An interchanger in $\op{I}(9)$ and its associated graph in $\ZA(9)$.}
	\label{fig:int}
\end{figure}

\begin{lemma}\label{intlem}  Write $\tau_{r,s}$ for the transposition $(r,s)$.  Then
\begin{equation}\label{inteq}
\partial(\sum_{\iota\in\op{I}(n)}\iota)= \sum_{1\leq r<s\leq n}(-1)^{\epsilon_{s,r}}\zeta_n(s;r) - (-1)^{\epsilon_{r,s}}\tau_{r;s}\zeta_n(r;s)
\end{equation}
\end{lemma}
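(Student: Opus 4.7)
My plan is to compute $\partial(\iota)$ for each interchanger $\iota\in\op{I}(n)$, show that boundary terms from the ``bulk'' of $\iota$ cancel in pairs as $\iota$ ranges over $\op{I}(n)$, and then check that the surviving terms assemble into the right-hand side of $(\ref{inteq})$. By Proposition $\ref{dqprop}$, the differential in $\ZZ$ (which suffices here, since $\iota$ has no nontrivial $A_\infty$ labels beyond degree $0$) is local: $\partial(T)=\sum_v \partial(T;v)$, where at a white vertex $v$ we contract one or more adjacent white angles, and at a black vertex we blow up in all possible ways. For the interchanger graph determined by $i<r<j<s$, the leaves $j$ and $s$ are univalent, the vertices $i$ and $r$ each carry two white angles (on either side of their flag toward the central black vertex), and the other perimeter vertices attached to the central black vertex carry further angles whose contractions amount to merging adjacent labels in the cyclic order at $c$.

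My second step is to track cancellations. Contractions at perimeter vertices $x\notin\{i,r,j,s\}$ yield graphs whose structure is again crossing-like, only with one of $j$ or $s$ replaced by its cyclic neighbor; I expect these terms to cancel against analogous contributions from neighboring interchangers $\iota'\in\op{I}(n)$ obtained by shifting one label across the crossing. A similar combinatorial involution will handle black-vertex blow-ups and angle contractions at $i$ or $r$ that preserve rather than resolve the crossing. The terms that survive after all such cancellations are precisely the contractions that untangle the crossing: contracting the white angle at $i$ on the side away from the central vertex pulls $j$ into the sector adjacent to $r$, leaving $s$ as a singleton and producing a wheel isomorphic to $\zeta_n(s;r)$; the symmetric contraction at $r$ produces $\tau_{r,s}\zeta_n(r;s)$. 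Summing over all $4$-element subsets containing a fixed pair $(r,s)$ with $r<s$ then reconstructs the outer sum on the right-hand side of $(\ref{inteq})$.

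The main obstacle will be the sign bookkeeping. The orientation of $\iota$ as an element of $\ZZ(n)$ (hence of $\ZA(n)$) is not pinned down by Definition $\ref{intdef}$ and must be fixed so that the signs produced by the angle contractions combine with the orientation conventions on $\zeta_n$ to match the pattern $(-1)^{\epsilon_{s,r}}\zeta_n(s;r)-(-1)^{\epsilon_{r,s}}\tau_{r,s}\zeta_n(r;s)$. Since $\epsilon_{r,s}$ is defined via $\upsilon_{r,s}\zeta_k(1,2)=(-1)^{\epsilon_{r,s}}\zeta_k(r,s)$, my plan is to fix a reference interchanger, verify $(\ref{inteq})$ by direct computation for it, and then extend to arbitrary interchangers by the $S_n$-equivariance of $\partial$. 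The case $n=4$, where the interchanger degenerates to the linear tree $j-i-r-s$ with no stable central black vertex and only a handful of angle contractions, provides a clean sanity check that can be bootstrapped to general $n$ using the locality of the differential.
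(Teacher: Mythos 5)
Your overall skeleton (compute $\partial(\iota)$, cancel in pairs, identify survivors, then fix signs) is the same as the paper's, but two of your concrete steps fail.  First, you miscount the boundary: in the interchanger graph the perimeter vertices other than $i$ and $r$, as well as $j$ and $s$, are univalent leaves, so they have no collapsible white angles, and there are no black-vertex contributions in $\ZZ$ (the black labels are degree $0$).  Thus $\partial(\iota)$ has exactly four terms, coming from the two angles at each of the two bivalent vertices $i$ and $r$; the ``bulk'' terms and black-vertex blow-ups you plan to cancel by involutions do not exist.  Second, and more seriously, your identification of the surviving terms is wrong.  Contracting an angle at $i$ does produce a graph in which $s$ hangs off $r$ and all other white vertices sit on the central black vertex, but the cyclic order there is $(i,j),A,C,r,D,B,E$ (or with $j$ on the other side of $i$), which for a generic interchanger is \emph{not} the natural cyclic order; hence the term is not $\zeta_n(s;r)$, nor $\tau_{r,s}\zeta_n(r;s)$, nor any wheel at all.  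These non-wheel terms cancel in pairs \emph{between different} interchangers sharing the chord $\{r,s\}$, and the wheels on the right-hand side of $(\ref{inteq})$ arise only from the two extreme configurations in which the opposite chord is as short as possible at either end ($\{r-1,r+1\}$ or $\{s-1,s+1\}$).  Your claim that every interchanger contributes one copy of $\zeta_n(s;r)$ and one of $\tau_{r,s}\zeta_n(r;s)$ would over-count by roughly the number of chords crossing $\{r,s\}$ and cannot match the right-hand side, which has exactly one such pair per $(r,s)$.

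The sign strategy also does not go through as proposed.  Interchangers with different sector sizes are not related by relabeling (the $S_n$ action changes labels, not the planar structure), so verifying $(\ref{inteq})$ on one reference interchanger and invoking equivariance of $\partial$ cannot fix the orientations of all of them; and the essential point is that these orientations must be chosen \emph{coherently}, since the cancellations happen across distinct interchangers.  The paper resolves this globally: the interchangers assemble into a cubical subdivision of an $n$-gon, a single orientation of the $n$-gon induces the orientation of each square, interior edges cancel, and the $n(n-3)$ boundary edges are precisely the noncancelling terms; the signs $(-1)^{\epsilon_{r,s}}$ are then pinned down using $\partial^2=0$ together with the fact that $S_n$ acts without signs on the terms of $\partial(\zeta(1;2))$ (Lemma $\ref{anotherlemma}$).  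Your $n=4$ sanity check is fine as far as it goes, but ``bootstrapping by locality'' does not substitute for this global orientation argument.
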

\begin{proof}  This is a straightforward but lengthy calculation.  First, if $r$ and $s$ are cyclically adjacent then $\upsilon_{s,r}= \tau_{r,s}\upsilon_{r,s}$ and the corresponding terms cancel.  Now fix $r<s$ which are not cyclically adjacent and consider all interchangers with the chord $r,s$.  For each interchanger, $\partial(\iota)$ has four terms, corresponding to the four collapsible white angles.  Each term in the differential which does not contract an angle on $r$ appears twice with opposite sign, with the exception of one unique contribution each from those configurations that have the opposite chord as short as possible on each end.  These two produce two noncancelling terms which are those above for the given $r,s$.  

Regarding signs and orientation, we view an interchanger celluarly as a square with weights on the white angles which sum to $1$ at each vertex.  (See the proof of Theorem $\ref{calcthm}$ for more detail of the cellular structure of $\Sigma^{-1}\ZZ$.)  The sum $\Sigma \iota$ fits together to form a highly symmetric cubical subdivision of an $n$-gon, and we orient this configuration to induce an orientation of each interchanger.  In particular each boundary edge is subdivided into $n-3$ $1$-cells and the boundary of the $n$-gon corresponds to the $n(n-3)$ noncancelling terms in line $\ref{inteq}$.  The example $n=8$ is pictured.
\begin{equation*}
		\includegraphics[scale=.3]{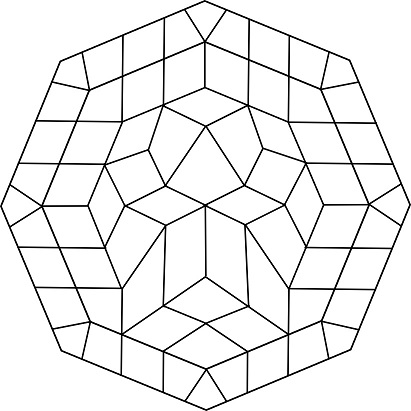}
\end{equation*}

The signs in equation $\ref{inteq}$ are dictated (up to an overall sign) by the fact that the $S_n$ action is compatible with the boundary operator along with the fact that $\partial^2=0$.  In particular we observe that $\partial((-1)^{\epsilon_{s,r}}\zeta(s;r))=\upsilon_{s,r}\partial(\zeta(1;2))$ and that $S_n$ acts without signs on the terms in $\partial(\zeta(1;2))$ (Lemma $\ref{anotherlemma}$).  The overall sign is fixed by the correct choice of orientation in the preceding paragraph.
\end{proof}

\begin{definition}  Let $T$ be a planar tree in $\ZZ(n)$ whose vertex labeled by $n$ is a leaf.  We define $T^{\vee l}\in \ZZ(n+l-1)$ to be the tree given by turning said vertex black and grafting on $l$ adjacent white vertices labeled in the planar order from $n$ to $n+l-1$ to said vertex.
\end{definition}

\begin{definition}  For $k+l\geq 4$ we define $\rho_{k,l}\in \ZZ(k+l)$ by
\begin{equation}
\rho_{k,0}=\sum_{m=1}^k\sum_{j=0}^{k-4} (-1)^j tr^j(\omega_k(m))-\sum_{\iota\in\op{I}(k)}\iota
\end{equation}
if $l=0$ and
\begin{equation}
\rho_{k,l}=\sum_{m=1}^k\sum_{j=0}^{k-3} (-1)^j (tr^j(\omega_{k+1}(m)))^{\vee l}-\sum_{\iota\in\op{I}(k+1)}\iota^{\vee l}
\end{equation}
if $l\neq 0$.  We also define $\rho_{3,0}=0$.
\end{definition}

\begin{lemma}\label{hardlem}  The elements $\rho_{k,l}$ are explicit homotopies measuring the failure of the gravity relations on $\ZZ$.  Precisely this means:
\begin{equation}\label{exeq}
\partial(\rho_{k,l}) =-\gamma_{l+1}\circ_1 \gamma_k+\sum_{1\leq i<j\leq k}\upsilon_{i,j}(\gamma_{k+l-1}\circ_1 \gamma_2)
\end{equation}
\end{lemma}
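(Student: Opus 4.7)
The plan is to reduce the verification to a direct computation of $\partial$ on each summand of $\rho_{k,l}$, handle $l=0$ first, and then note that the $\vee l$ construction is compatible with the differential so that the $l\neq 0$ case follows by the same bookkeeping, with $\gamma_k$ replaced by $\gamma_{l+1}\circ_1\gamma_k$ and $\gamma_{k-1}\circ_1\gamma_2$ replaced by $\gamma_{k+l-1}\circ_1\gamma_2$. The key computational input is Lemma~\ref{wlem}, which identifies $\partial$ on a degree-$d$ wheel as a signed sum over its $d$ line segments (each removal merging two adjacent sectors), together with Lemma~\ref{intlem}, which evaluates $\partial(\sum_\iota \iota)$.

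First I would analyze $\partial\bigl(tr^j(\omega_k(m))\bigr)$ for each $j$. Because $\omega_k(m)$ has three sectors of sizes $(k-3,1,1)$ and $tr$ transfers one element at a time from $A_1$ to $A_2$ through $A_3$, explicit computation shows that $tr^j(\omega_k(m))$ has sectors $A_1^{(j)}=\{m{+}4{+}j,\ldots,m{+}1\}$, $A_2^{(j)}=\{m{+}2,\ldots,m{+}2{+}j\}$, $A_3^{(j)}=\{m{+}3{+}j\}$. Removing the $A_3$--$A_1$ line at level $j{+}1$ and the $A_2$--$A_3$ line at level $j$ produce the same degree-$2$ wheel with two non-singleton sectors; since the sign $(-1)^j$ alternates, these cancel in the telescoping sum. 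This is the main structural cancellation.

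Hence $\partial\bigl(\sum_{j=0}^{k-4}(-1)^j tr^j(\omega_k(m))\bigr)$ reduces (up to signs fixed by Lemma~\ref{wlem}) to two endpoint terms together with the ``middle segment'' contributions. The $A_1$--$A_2$ removals at each level $j$ are exactly the $\zeta$-wheels $\zeta_k(m{+}3{+}j,m)$, while the two endpoints are $\zeta_k(m{+}2,m)$ and $\zeta_k(m{+}1,m)$. Summing over $m\in\{1,\dots,k\}$, one then needs to verify that every pair $(r,s)$ with $r\neq s$ appears with the correct multiplicity and sign, so that the total equals a signed sum of $\zeta_k(r,s)$ ranging over all ordered pairs. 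The next step is to compare this signed sum to $\partial(\sum_\iota \iota)$ from Lemma~\ref{intlem}: the endpoint terms at $\zeta_k(m{+}1,m)$ correspond precisely to the ``cyclically adjacent'' contributions that were absent from the interchanger boundary, while the bulk generic $\zeta_k(r,s)$ terms are matched off by the two kinds of contributions in \eqref{inteq}. After this matching, what remains is a single copy of each $\zeta_k(r,s)$ with $r<s$ of the form $(-1)^{\epsilon_{r,s}}\zeta_k(r,s)$, which by definition of $\epsilon_{r,s}$ equals $\upsilon_{r,s}(\zeta_k(1,2))=\upsilon_{r,s}(\gamma_{k-1}\circ_1\gamma_2)$, plus a single remaining term identified (using $\sum_m$) with $-\gamma_1\circ_1\gamma_k = -\gamma_k$ in the $l=0$ case; in the general case the $\vee l$ operation turns this into $-\gamma_{l+1}\circ_1\gamma_k$.

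The main obstacle will be the careful sign bookkeeping in Steps 2--3: the orientations on wheels and interchangers, the $(-1)^j$ in the defining sum, the signs $(-1)^{\epsilon_{r,s}}$ appearing in \eqref{inteq}, and the signs of $\partial$ on wheels from Lemma~\ref{wlem} must all be coordinated so that the endpoint $\zeta_k(m{+}1,m)$ contributions from the telescope exactly cancel the $\tau_{r,s}\zeta_k(r,s)$ terms of the interchanger boundary, and that the remaining $\zeta$-terms assemble with the correct signs $(-1)^{\epsilon_{r,s}}$ into $\upsilon_{i,j}(\gamma_{k+l-1}\circ_1\gamma_2)$. Once the signs are pinned down (most efficiently via the cellular interpretation of $\Sigma^{-1}\ZZ$ already invoked in the proof of Lemma~\ref{intlem}), the identity \eqref{exeq} follows as stated.
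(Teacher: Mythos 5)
Your overall strategy coincides with the paper's: you telescope the $tr$-orbit (the removal of the $A_2$--$A_3$ segment at level $j$ cancels the removal of the $A_3$--$A_1$ segment at level $j+1$), so that only the fixed-segment removals $\zeta_k(m+3+j;m)$ and the two endpoint wheels $\zeta_k(m+2;m)$, $\zeta_k(m+1;m)$ survive, giving $\partial\bigl(\sum_j(-1)^j tr^j(\omega_k(m))\bigr)=\sum_{r\neq m}(-1)^{\epsilon_{r,m}}\zeta_k(r;m)$; you then match against Lemma \ref{intlem} and invoke compatibility of $(-)^{\vee l}$ with $\partial$. Up to that point your outline agrees with the paper's argument. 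One presentational slip: $\upsilon_{r,s}(\gamma_{k-1}\circ_1\gamma_2)$ is not a single wheel; since $\gamma_{k-1}\circ_1\gamma_2=\zeta_k(1;2)+\tau_{1,2}\zeta_k(1;2)$ it equals $(-1)^{\epsilon_{r,s}}\bigl(\zeta_k(r;s)+\tau_{r,s}\zeta_k(r;s)\bigr)$, and the second summand is exactly what subtracting the interchanger boundary supplies, so "a single copy of each $\zeta_k(r,s)$ remains" is not the right bookkeeping, though it is repairable.

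The genuine gap is in your endgame. Your claimed leftover term "$-\gamma_1\circ_1\gamma_k=-\gamma_k$" in the $l=0$ case does not exist: $g_1$ (hence $\gamma_1$) is $0$ by the paper's convention, so the relation for $l=0$ has no such term, and it is excluded on degree grounds anyway, since $\partial(\rho_{k,0})$ is homogeneous of degree $2$ while $\gamma_k$ has degree $1$. In fact the wheels-minus-interchangers boundary at arity $k$ is exactly $\sum_{r<s}\upsilon_{r,s}(\gamma_{k-1}\circ_1\gamma_2)$ with nothing left over. Consequently your mechanism for producing $-\gamma_{l+1}\circ_1\gamma_k$ when $l\geq 1$ --- applying $\vee l$ to this phantom term --- is not available, and this is precisely the part of the lemma that needs a genuine argument. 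In the paper the term arises differently: for $l\geq 1$ one uses wheels and interchangers of arity $k+1$, but the center sum runs only over $m=1,\dots,k$, so the boundary terms in which the label $k+1$ is involved (the pairs with $s=k+1$ in the earlier indexing) cannot be absorbed into $\upsilon$-terms, and after applying $(-)^{\vee l}$ they are exactly the summands of $-\gamma_{l+1}\circ_1\gamma_k$. You would need to carry out this identification (and also treat the excluded base case $k=3$, $l=0$, where $\rho_{3,0}=0$ and the statement amounts to $\gamma_2$ satisfying the Jacobi identity strictly in $\ZZ$) for the proof to be complete.
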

\begin{proof}
Note that if $k=3$ and $l=0$ the fact that $\rho_{3,0}=0$ is equivalent to $\gamma_2$ satisfying the Jacobi identity strictly in $\ZZ$, i.e. on the chain level.  Of course the graph $\gamma_2\in \cB(2)\subset \ZZ(2)$ is exactly the Lie bracket in $\cB$ and the Jacobi identity has been verified above.  In particular Equation $\ref{exeq}$ holds in this case.

Let us first consider the case $k\geq 4$ and $l=0$.  For a given $m$, by Lemma $\ref{wlem}$ we know that $\partial(tr^j(\omega_k(m)))$ consists of three terms given by erasing one of the three line segments.  In $\sum_{j=1}^{k-3} \pm tr^j(\omega_k(m))$, the line segment landing between $m+1$ and $m+2$ is fixed, and therefore all of the terms in $\partial(\sum_{j=1}^{k-3} \pm tr^j(\omega_k(m)))$ which do not remove this line segment appear twice with opposite sign, unless $j=0$ or $j=k-4$.  The only remaining terms are those with a sector containing exactly one marked point.  In particular,
\begin{equation*}
\partial(\sum_{j=0}^{k-4} (-1)^j tr^j(\omega_k(m))) = \sum_{\substack{1\leq r\leq k \\ r\neq m}}(-1)^{\epsilon_{r,m}} \zeta_k(r;m)
\end{equation*}
Here, once again, we view $\sum_{j=0}^{k-4} (-1)^j tr^j(\omega_k(m))$ cellularly as an oriented contractible complex given by gluing triangles along common faces.  The sign $(-1)^{\epsilon_{r,m}}$ is then dictated by the fact that $\partial^2=0$ along with the fact that $\partial((-1)^{\epsilon_{r,m}}\zeta(r;m))=\upsilon_{s,r}\partial(\zeta(1;2))$ and the fact that $S_n$ acts without signs on the terms in $\partial(\zeta(1;2))$ (Lemma $\ref{anotherlemma}$).

Those terms in the above expression with $r<m$ are desired, and those terms with $r>m$ are accommodated by the interchangers as follows.  We first compute
\begin{equation*}
\gamma_{k-1}\circ_1 \gamma_2= \zeta_k(1;2)-\zeta_k(2;1)= \zeta_{k}(1;2)+\tau_{1,2}\zeta_{k}(1;2) \in \kk[\op{W}^2_k(1)]\oplus \kk[\op{W}^2_k(2)]
\end{equation*} 
where $\tau$ continues to mean transposition.  Applying $\upsilon_{r,s}$ we see that
\begin{equation}\label{permeq}
\upsilon_{r,s}(\gamma_{k-1}\circ_1 \gamma_2) = (-1)^{\epsilon_{r,s}}( \zeta_{k}(r;s)+ \tau_{r,s}\zeta_{k}(r;s))
\end{equation}
Thus, $\partial(\sum_{m=1}^k\sum_{j=0}^{k-4} (-1)^j tr^j(\omega_k(m))) =$
\begin{align*}
& \sum_{\substack{1\leq r<s \leq k}}(  (-1)^{\epsilon_{r,s}}\zeta_k(r;s) +   (-1)^{\epsilon_{s,r}}\zeta_k(s;r)) \\
& \ \ =  \ds\sum_{1\leq r<s \leq k} (-1)^{\epsilon_{r,s}}(\zeta_k(r;s)+ \tau_{r,s}\zeta_k(r;s)) +  \ds\sum_{1\leq r<s \leq k}  ( (-1)^{\epsilon_{s,r}}\zeta_k(s;r)- (-1)^{\epsilon_{r,s}}\tau_{r,s}\zeta_k(r;s)) \\
& \ \ \ \ \ \ \ \ \  = \ds\sum_{1\leq r<s\leq k}\upsilon_{r,s}(\gamma_{k-1}\circ_1 \gamma_2) +\sum_{\iota\in \op{I}(k)} \partial(\iota)  
\end{align*}
from Lemma $\ref{intlem}$ and Equation $\ref{permeq}$.  Hence Equation $\ref{exeq}$ is satisfied.

The calculation works similarly when $l\geq 1$, once we observe that the $(-)^{\vee l}$ operation commutes with the differential.  The extra terms (in the above indexing notation) given by $s=k+1$ correspond to the terms in the boundary of $\gamma_{l+1}\circ_1 \gamma_k$. \end{proof}

\subsubsection{Completing the calculation.}

\begin{theorem}\label{calcthm} The assignment $g_k\mapsto [\gamma_k]$ induces an isomorphism of operads $\psi\colon\op{G}rav\stackrel{\cong}\to H_\ast(\ZA)$.
\end{theorem}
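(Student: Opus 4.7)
The plan is to establish that $\psi$ is a well-defined operad morphism and then compute $H_\ast(\ZA)$ independently, matching the result to $\op{G}rav$. For well-definedness, Lemma $\ref{anotherlemma}$ shows that $\gamma_k$ is a degree-$1$ cycle with $[\sigma\gamma_k]=[\gamma_k]$ for all $\sigma\in S_k$, so $[\gamma_k]$ lies in the trivial $S_k$-representation, matching the location of $g_k$ in $\op{G}rav(k)$. Lemma $\ref{hardlem}$ provides explicit chain homotopies $\rho_{k,l}$ realizing the gravity relations $R_{k,l}$ on homology, so the assignment $g_k\mapsto [\gamma_k]$ extends to a morphism of operads $\psi\colon\op{G}rav\to H_\ast(\ZA)$.

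To compute the target I would use Lemma $\ref{cactlem}$ to reduce to a calculation in the cactus operad. The zig-zag $\ZA\stackrel{\sim}{\to}\ZZ\stackrel{\sim}{\to}im(\nabla)\hookrightarrow CC_\ast(\op{C}act^1)$ of quasi-isomorphisms lets us compute $H_\ast(\ZA(n))$ via $H_\ast(im(\nabla)(n))$. The $S^1$-action on $\op{C}act^1(n)$ by moving the base point is free for $n\geq 2$ with orbit space homotopy equivalent to $\op{M}_{0,n+1}$, so the Gysin long exact sequence (equivalently, the Connes--Tsygan bicomplex associated to this circle action) identifies $H_{\ast+1}(im(\nabla)(n))\cong H_\ast(\op{M}_{0,n+1})$ as graded $S_n$-modules. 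After the suspension shift this yields $H_\ast(\ZA(n))\cong \Sigma H_\ast(\op{M}_{0,n+1})$, which by Getzler's theorem agrees with $\op{G}rav(n)$ as a graded $S_n$-module, so in particular source and target of $\psi$ have the same graded dimensions in each arity.

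With both sides of $\psi$ identified as graded $S_n$-modules of equal dimension, it remains to see that $\psi$ itself is the isomorphism. Tracing $[\gamma_k]$ through the zig-zag and the Gysin identification, one checks that it maps to a nonzero multiple of the generator of $H_0(\op{M}_{0,k+1})\cong \kk$, suspended into degree $1$---precisely Getzler's generator $g_k$. Since $\op{G}rav$ is generated by the $g_k$'s and $\psi$ sends each $g_k$ to a class that hits the corresponding generator on the moduli space side, surjectivity follows from operad generation and injectivity from the dimension count. The main obstacle lies in this last step: tracking $[\gamma_k]$ explicitly through the cellular quasi-isomorphisms and the Gysin identification to verify the nonzero scalar. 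One could instead sidestep the matching by establishing directly, via an inductive filtration (for example by the number of edges of a cactus graph), that $\{[\gamma_k]\}$ generates $H_\ast(\ZA)$ as an operad, in which case the dimension count forces $\psi$ to be injective and the proof concludes.
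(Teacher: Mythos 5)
Your first two steps are sound and essentially the paper's: well-definedness of $\psi$ comes from Lemmas $\ref{anotherlemma}$ and $\ref{hardlem}$, and the rank of $H_\ast(\ZA(n))$ is obtained by identifying $\Sigma^{-1}\ZZ(n)$ with cellular chains on $\op{C}act^1(n)/S^1$ --- the paper does this by exhibiting an explicit CW structure on the quotient rather than by the Gysin/Connes--Tsygan route you sketch, but either way the output is only a \emph{levelwise} isomorphism of graded $S_n$-modules $H_\ast(\ZA(n))\cong\op{G}rav(n)$, because the quotient $\op{C}act^1(n)/S^1$ (equivalently $\ZZ$ viewed cellularly) carries no operad structure to transport.

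The gap is in your final step. Knowing that $\psi(g_k)=[\gamma_k]$ is a nonzero multiple of the bottom class in $H_\ast(\ZA(k))$, together with equality of graded dimensions, yields neither injectivity nor surjectivity: an operad map can be nonzero on every generator yet kill, in higher arity, some composite of the $g_k$'s that is nonzero in $\op{G}rav$; and surjectivity would require that the $[\gamma_k]$ \emph{generate} $H_\ast(\ZA)$ as an operad, which cannot be read off from the levelwise identification with $\Sigma H_\ast(\op{M}_{0,\ast+1})$ precisely because that identification is not operadic. Your fallback (a filtration argument establishing generation) is only asserted and is essentially the remaining content of the theorem. The paper closes this gap by composing with $H_\ast(\phi)\colon H_\ast(\ZA)\to H_\ast(\TA)\cong\op{G}$ and proving, by induction with an explicit homotopy, that $[\phi(\gamma_n)]=\sum_{i<j}\{a_i,a_j\}\cdot a_1\cdots a_n$, i.e.\ that the composite equals the canonical injection $\op{G}rav\hookrightarrow\op{G}$; injectivity of $\psi$ in every arity and degree follows, and then the dimension count you already have finishes the proof. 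To repair your outline you need some argument of this kind that controls $\psi$ on all of $\op{G}rav$ at once (for instance via a known injective operad map out of $\op{G}rav$), not merely nonvanishing on the generators.
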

\begin{proof}  The fact that the above assignment is a morphism of operads follows from Lemma $\ref{hardlem}$.  Thus it suffices to show that this assignment induces a levelwise isomorphism.  As such we fix an arbitrary arity $n\geq 2$ for the remainder of the argument.  Let us first show that $\psi$ is a levelwise injection.  For this it is enough to show that the following diagram commutes,
\begin{equation} 
\xymatrix{\op{G}rav(n) \ar[d]^{\psi} \ar@{>>->}[drr]^{in} && \\ H_\ast(\ZA(n)) \ar[rr]_{H_\ast(\phi) \ } && H_\ast(\op{D}_2(n))\cong \op{G}(n)}
\end{equation}
where the diagonal arrow is the canonical injection.  Explicitly this means $[\phi(\gamma_n)]=\sum_{i<j}\{a_i,a_j\}\cdot a_1\cdot ...\cdot a_n$.  This can be seen by induction as follows.  If $n=2$ the statement is clearly true.  Provided that the statement holds up to $n-1$, we conclude that in $\op{G}$ we have $[\phi(\gamma_{n-1})]\circ_{n-1}[\mu_2]=in(g_{n-1})\circ_{n-1}[\mu_2]$, where $\mu_2\in\TT(2)$ is the corolla representing the associative product.  Thus, it is sufficient to construct an explicit homotopy between $\phi(\gamma_n)-\phi(\gamma_{n-1})\circ_{n-1}\mu_2$ and the canonical representative of $g_n-(g_{n-1}\circ_{n-1}[\mu_2])$.  This homotopy may be constructed explicitly by assembling the following $2$-cells:
\begin{equation*}\label{fig:homotopy}
		\includegraphics[scale=.2]{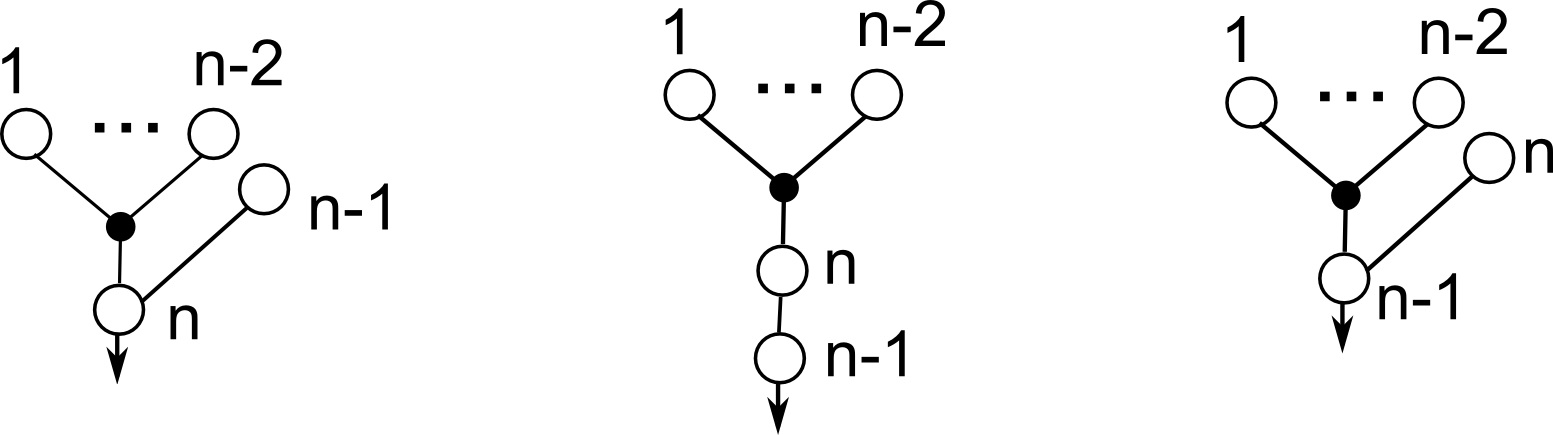}
\end{equation*}

Thus $g_k\mapsto [\gamma_k]$ induces a levelwise injection and to conclude that it is an isomorphism, it is enough to argue $\op{G}rav(n)$ and $H_\ast(\ZA(n))$ have the same rank.  To this end we observe that $\Sigma^{-1}\ZZ(n)$ can be viewed as the cellular chains of a CW model for $\op{C}act^1(n)/S^1$.  To see this we abbreviate $X=\op{C}act^1(n)/S^1$ and define the $m$-skeleton $X^m$ by declaring a point $[q]\in X$ to be in $X^m$ iff there is a representative $p$ of $[q]$ with $\leq i+n$ total arcs in the cactus.  Then, given a basis element in $\Sigma^{-1}\ZZ(n)$ of degree $m$, we either choose a black vertex to which we affix a root or, if there are no black vertices, add an unstable black vertex to a white edge and attach the root to it.  This determines an $m$ cell in $\op{C}act^1(n)$, and hence a map
\begin{equation*}
D_m\to\op{C}act^1(n)\to \op{C}act^1(n)/S^1
\end{equation*}
where $D_m$ is an $m$-disk.  The composite map is independent of the choice of root and lands in $X^m$.  The fact that these cells form a CW structure for $\op{C}act^1(n)/S^1$ follows the CW structure of $\op{C}act^1(n)$ and the fact that the set of boundary components (white angles) is not altered by the choice of a black root.  Thus $H_\ast(\ZA(n))\cong \Sigma H_\ast(\Sigma^{-1} \ZA(n))\cong \Sigma H_\ast(\op{C}act(n)/S^1)\cong \op{G}rav(n)$.  We conclude that the levelwise injection $\psi$ is a levelwise isomorphism, and hence an isomorphism of operads. \end{proof}

\begin{corollary}\label{equiDel}($S^1$-equivariant Deligne Conjecture)  Let $\op{O}$ be a cyclic operad and let $\eta\in MC(\cnsl{O})$.  Then $\ZA$ is a chain model for the $S^1$-equivariant homology of $\op{D}_2$ which acts on $(\cnsl{O},\delta_\eta)$ inducing a gravity algebra structure on cohomology.
\end{corollary}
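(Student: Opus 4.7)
The first thing I would do is invoke Theorem \ref{actthm}, which already establishes that $(\cnsl{O},\delta_\eta)$ is a dg $\ZA$-algebra; this takes care of the existence of an action. What remains of the corollary is two-fold: (i) the justification that $\ZA$ deserves to be called a chain model for the $S^1$-equivariant homology of $\op{D}_2$, and (ii) the identification of the induced structure on cohomology as a gravity algebra.

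For (ii) the plan is essentially immediate. Theorem \ref{calcthm} identifies $H_\ast(\ZA)$ with the gravity operad $\op{G}rav$, so any dg $\ZA$-algebra induces a gravity algebra structure on its cohomology; together with the action from Theorem \ref{actthm} this yields the desired statement after checking that the induced maps respect the bracket by simply tracing through the identifications $\gamma_k \mapsto [\gamma_k]$ made in the proof of Theorem \ref{calcthm}.

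For (i) I would assemble two pieces already in the paper. First, Lemma \ref{cactlem} places $\ZA$ in a commutative diagram of dg operads with $CC_\ast(\op{C}act^1)$ via a zigzag of weak equivalences, and $\op{C}act^1$ is a standard topological model for $\op{D}_2$, giving a bridge from $\ZA$ to $\op{D}_2$. Second, in the course of proving Theorem \ref{calcthm} one exhibits $\Sigma^{-1}\ZZ(n)$ as the cellular chain complex of an explicit CW structure on $\op{C}act^1(n)/S^1$, where $S^1$ acts freely by moving the base point. Since the action is free the quotient computes Borel equivariant homology, and combining this with the quasi-isomorphism $\ZA\to\ZZ$ gives a natural isomorphism $H_\ast(\ZA(n))\cong H^{S^1}_{\ast-1}(\op{D}_2(n))$ (up to the expected shift), which is the chain-model claim.

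The main obstacle I expect is purely bookkeeping: matching the degree shifts coming from the operadic suspension $\fr{s}$, the extra shift $\Sigma$ built into $\cnsl{O}$, Getzler's convention $\op{G}rav=\Sigma H_\ast(\op{M}_{0,n+1})$, and the cellular grading on $\op{C}act^1/S^1$. I would want to check that the isomorphism $\op{M}_{0,n+1}\simeq \op{C}act^1(n)/S^1$ used to connect (i) and (ii) is compatible with the explicit generators $\gamma_k$ constructed in Theorem \ref{calcthm}, which amounts to comparing Kaufmann's cellular decomposition of cacti with the standard stratification of $\op{M}_{0,n+1}$. Beyond this there should be no new analytic or algebraic content: the corollary is an assembly of Theorem \ref{actthm}, Lemma \ref{cactlem}, and Theorem \ref{calcthm}.
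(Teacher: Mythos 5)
Your assembly is, for the most part, the paper's own: Theorem \ref{actthm} supplies the $\ZA$-action on $(\cnsl{O},\delta_\eta)$ and Theorem \ref{calcthm} supplies the isomorphism of operads $H_\ast(\ZA)\cong\op{G}rav$, and these two facts are indeed all the paper uses for the action and for the gravity structure on cohomology. The gap is in your part (i), the chain-model claim. From the freeness of the $S^1$-action and the CW structure exhibited in the proof of Theorem \ref{calcthm} you do get a levelwise identification $H_\ast(\ZA(n))\cong \Sigma H_\ast(\op{C}act^1(n)/S^1)\cong \Sigma H^{S^1}_\ast(\op{C}act^1(n))$, but the step from there to $H^{S^1}_\ast(\op{D}_2(n))$ --- and, more importantly, to an isomorphism of \emph{operads} --- does not follow from the bare statement that $\op{C}act^1$ is ``a standard topological model for $\op{D}_2$''. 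A non-equivariant weak equivalence does not in general induce an isomorphism on Borel equivariant homology; one needs the comparison zigzag to be compatible with the circle actions. Moreover, $\Sigma H^{S^1}_\ast(-)$ of an operad whose spaces carry circle actions is not automatically an operad (the composition maps of $\op{D}_2$ are not equivariant for the relevant actions), so the operadic content of the phrase ``chain model for the $S^1$-equivariant homology of $\op{D}_2$'' is itself something to be established. This is exactly the ingredient the paper imports from Westerland: the equivalence $\op{C}act\stackrel{\sim}\longleftarrow\cdots\stackrel{\sim}\longrightarrow\op{D}_2$ induces an isomorphism of operads after applying $\Sigma H^{S^1}_\ast(-)$ (\cite{West}, Lemma 7.8 and Theorem 7.9), which is then combined with $\ZA\stackrel{\sim}\to\ZZ\cong\Sigma CC_\ast(\op{C}act^1(n)/S^1)$ and Lemma \ref{cactlem}. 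Your proposal has no substitute for this equivariant, operad-level comparison, and that is the one place where the argument as written would fail.

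A secondary remark: the difficulties you flag (degree bookkeeping, matching Kaufmann's cellular decomposition with the stratification of $\op{M}_{0,n+1}$) are not where the remaining work lies; the moduli space $\op{M}_{0,n+1}$ does not need to enter at all if you phrase the chain-model claim through the equivariant comparison above. Once that comparison is in place, the gravity-algebra statement on cohomology is immediate from Theorems \ref{actthm} and \ref{calcthm}, exactly as you say.
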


The use of the term `chain model' in the previous corollary is justified by the fact that the equivalence $\op{C}act\stackrel{\sim}\longleftarrow \dots \stackrel{\sim}\longrightarrow \op{D}_2$ induces an isomorphism of operads after applying $\Sigma H^{S^1}_\ast(-)$ (see \cite{West} Lemma 7.8 and Theorem 7.9), along with the fact that $\ZA\stackrel{\sim}\rightarrow \ZZ \cong \Sigma CC_\ast(\op{C}act^1(n)/S^1)$.

The fact that the chains $\Sigma^{-1}\ZZ$ on $\op{C}act^1(n)/S^1$ do not form a dg operad, but rather an odd operad (Definition $\ref{odddef}$), was expected in light of \cite{KWZ}.  Topologically, there are two ways to view $\ZZ$.  First we can interpret generating trees as specifying the collection of cells in $CC_\ast(\op{C}act^1)$ that are formed by adjoining all possible white roots.  These cells are glued together along the adjacent black roots.  See Figures $\ref{fig:cylinder}$ and $\ref{fig:both}$.  Alternatively we can view the generating trees as specifying cells in the quotient $\op{C}act^1(n)/S^1$, which gives the desuspended degree.  The former is operadic but not cellular.  The latter is cellular but not operadic.  In moving from $\ZZ$ to $\ZA$, the cellular interpretation is the usual topological blow-up.

\begin{figure}	
	\centering
		\includegraphics[scale=.33]{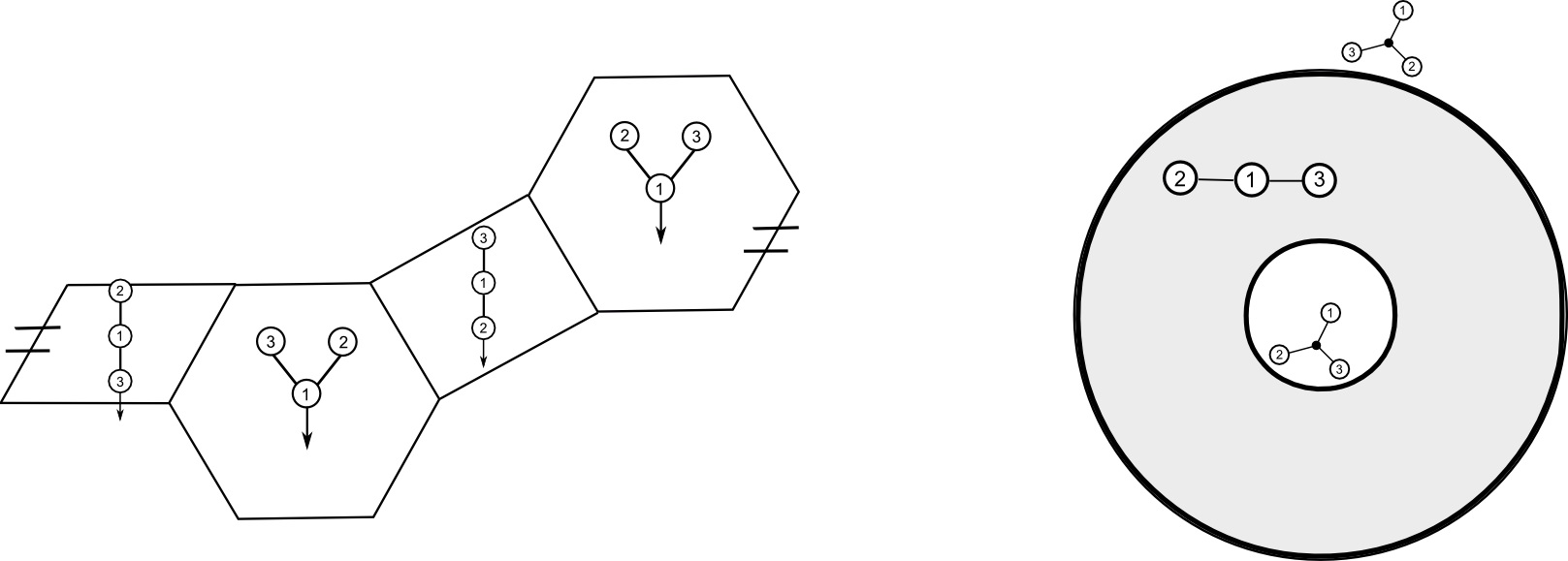}
		
	\caption{Gluing together cells in $\TA$ to form two cycles in $\ZA$ and the cyclic brace operation between them.  Notice the two boundary components of the cylinder on the left hand side have $6$ terms each which correspond precisely to the choices of root in the boundary components on the left hand side.}
\label{fig:cylinder}
\end{figure}

\begin{figure}
	\begin{center}
		\includegraphics[scale=.33]{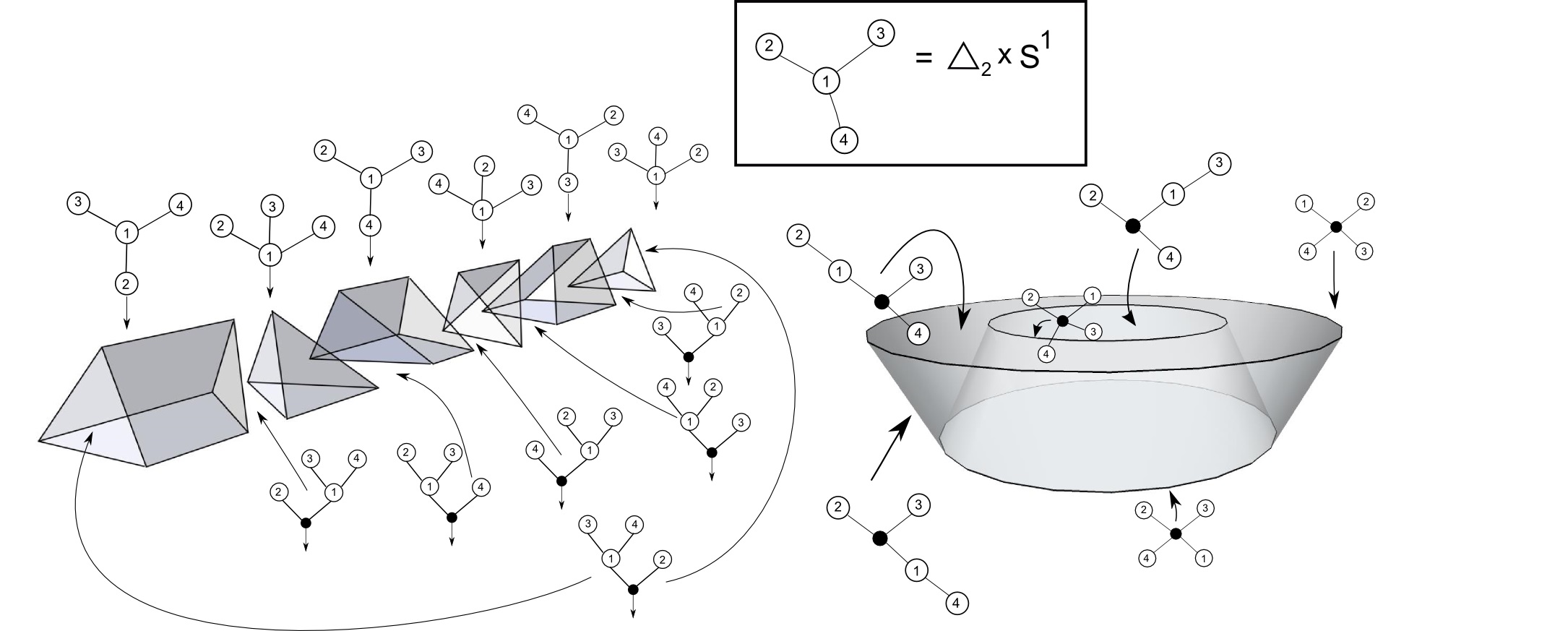}
	\caption{The cyclic brace operation pictured in the box maps to $\TT\cong CC_\ast(\op{C}act^1)$ as a sum of $6$ cells, $3$ tetrahedra and $3$ triangular prisms.  These cells are glued together along faces having black roots.  The result is $\Delta_2\times S^1$.  In $\ZA$ the blow up results in $W_3\times S^1$, where $W_3$ is the $2$-dimensional cyclohedron (a hexagon).  On the right hand side, the $3$ homologous $1$-cycles represent the homology class corresponding to $g_4\in\op{G}rav(4)$.}
	\label{fig:both}
	\end{center}
\end{figure}

\subsection{Connection to Tw}  To conclude this section we will compare the above constructions with the twisting construction Tw of T. Willwacher in \cite{Will1}.  In particular, the existence of a morphism $\fr{s}\op{L}ie\to \cB$ qualifies the cyclic brace operad as an input for Tw and we will show that the operad $\ZA$ is quasi-isomorphic to $\text{Tw}(\cB)$.  The argument here-in is a nearly verbatim adaptation of those in Dolgushev and Willwacher \cite{VW}.

For an explanation of Tw we refer the reader to \cite{Will1} and \cite{DR}, as we will only recall the most relevant of particulars here.  By definition $\text{Tw}(\cB)$ is a dg operad whose arity $n$ component can be written
\begin{equation}\label{tweq}
\text{Tw}(\cB)(n) =\ds\bigoplus_{r=0}^\infty \Sigma^{r}\cB(n+r)^{S_r}
\end{equation}
where $S_r$ acts on the labels $n+1\cdc n+r$.  We can therefore consider $\text{Tw}(\cB)(n)$ to be the vector space spanned by all planar trees with $n+r$ vertices, partitioned into two sets $V_\text{white}$ of size $n$ and $V_\text{black}$ of size $r$ where the white vertices are labeled by $\{1\cdc n\}$ and the black vertices are not labeled.  Note that such trees are not in general `b/w' in the parlance of Appendix $\ref{sec:trees}$; they can have black edges and unstable black vertices, and we say such trees are of `Tw-type'.  The operad structure is then the insertion operad structure inherited from $\cB$.    Equation $\ref{tweq}$ takes the total grading so that the degree of such a tree is
\begin{equation*}
|T|=2|V_\text{black}|-|E(T)|
\end{equation*}

The differential has the following combinatorial description.  Let $T_{\bullet\circ}\in \text{Tw}(\cB)(1+1)$ be the unique tree with one white and one black vertex, we let $T_{\bullet\bullet}\in \text{Tw}(\cB)(0+2)$ be the unique tree with two black vertices, and given a tree $T\in  \text{Tw}(\cB)(n+r)$ with a black vertex $v$ we let $T_v\in  \text{Tw}(\cB)(n+1+r-1)$ be the tree formed from $T$ by switching the color of $v$ and labeling the result by $n+1$.  Then we define
\begin{equation*}
\partial^{\text{Tw}}(T)=-(-1)^{|T|}\ds\sum_{v\in V_\text{black}} T_v\circ_{n+1} T_{\bullet\bullet} + T_{\bullet\circ}\circ_1 T -(-1)^{|T|}\ds\sum_{i=1}^n T\circ_i  T_{\bullet\circ}
\end{equation*}
This is the non-rooted analogue of equation 8.14 of \cite{VW}.  Note that $T_v\circ_{n+1} T_{\bullet\bullet}$ has the effect of blowing up a black edge at the given black vertex and $T\circ_i  T_{\bullet\circ}$ has the effect of blowing up a mixed edge at the white vertex labeled by $i$.

Let us now define a dg map $\ZA(n)\hookrightarrow \text{Tw}(\cB)(n)$ in the obvious way.  Namely an $A_\infty$ labeled tree $T$ can be interpreted as a Tw-type tree if we blow up the black vertex according to its label.  By abuse, we refer to this morphism as the canonical inclusion.  The image of this map is simply those Tw-type trees which have no black vertices of valence $<3$.  We call this property (after \cite{VW}) admissibility.  This property is closed under the operad structure, and so this assignment is a morphism of operads.  To see that it is compatible with the differential we may observe (after \cite{VW} p. 73) that if the differential $\partial^{Tw}(T)$ is applied to an admissible tree, while valence one and two black vertices can be created, they appear in canceling pairs.  Indeed a valence two black vertex appears on an edge with a contribution from each of the adjacent vertices, while a valence $1$ black vertex appears with a contribution from the single adjacent vertex and a contribution from $T_{\bullet\circ}\circ_1 T$.

Recall that the notion of a reduced operad considers only arities $n\geq 1$.  To compare $Tw(\cB)$ with $\ZA$ we will from now on forget the arity $0$ term and consider $Tw(\cB)$ to be a reduced operad.  Of course, the graph complex $Tw(\cB)(0)$ is an interesting algebra in its own right, but it is not the object of our study here.

\begin{theorem}  The canonical inclusion $\ZA\hookrightarrow \text{Tw}(\cB)$ is a weak equivalence of reduced operads.

\end{theorem}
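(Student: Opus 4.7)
The plan is to adapt the Dolgushev--Willwacher proof of $\text{Tw}(\B)\simeq \TA$ in \cite{VW} to the cyclic setting. I would split the statement into two claims: first, that the canonical inclusion identifies $\ZA$ with the sub-dg-operad $\text{Tw}(\cB)^{\text{adm}}\subset \text{Tw}(\cB)$ consisting of admissible Tw-type trees (those in which every black vertex has valence $\geq 3$); and second, that the further inclusion $\text{Tw}(\cB)^{\text{adm}}\hookrightarrow\text{Tw}(\cB)$ is a quasi-isomorphism in each arity $n\geq 1$.

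For the first claim I would send an $A_\infty$-labeled b/w tree $T\in\ZA(n)$ to the Tw-type tree obtained by expanding each black vertex according to its planar tree label (an element of $\op{A}_\infty(m)$), assigning the resulting internal vertices the color black. A direct check shows this assignment is a levelwise bijection of graded $S_n$-modules onto the admissible part, is compatible with the insertion operad structures, and intertwines differentials. For the last point, note that on admissible trees the summand of $\partial^{\text{Tw}}$ which blows up a mixed edge at a white vertex reproduces the white-angle-contraction term of the $\ZA$-differential, while the summand which blows up a black edge at a black vertex reproduces the $\op{A}_\infty$-differential on black-vertex labels; the remaining two kinds of contributions create univalent or bivalent black vertices which cancel pairwise, as already observed in the excerpt.

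For the second claim, I would introduce the increasing filtration on $\text{Tw}(\cB)(n)$ by the number of non-admissible black vertices (those of valence $\leq 2$); the admissible subcomplex is the bottom stage. It then suffices to prove that on each higher associated-graded piece the induced differential is acyclic. On the associated graded the only surviving pieces of $\partial^{\text{Tw}}$ are those that do not alter the set of non-admissible black vertices, so the remaining boundary acts locally around each bad vertex. Using the planar structure and orientation data of Section $\ref{signsec}$, I would single out a canonical non-admissible vertex in each tree (for instance, the lex-minimal valence-$\leq 2$ black vertex with respect to the orientation order on $E_T$) and produce a contracting homotopy that either deletes a pendant valence-1 vertex or inserts one at the canonical location. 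The resulting chain-level splitting then yields the required acyclicity.

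The main obstacle is carrying out the sign bookkeeping in the homotopy of step two. The interplay between $\partial^{\text{Tw}}$ and the orientation signs on $\ZA$-type trees is delicate in the non-rooted planar setting, and ensuring that the homotopy is globally well-defined (independent of the auxiliary choices used to order $E_T$) requires verifying local sign identities in each case. Once the canonical bad-vertex rule is chosen compatibly with the orientation order, these checks are local and mirror those of \cite{VW}. The restriction to reduced operads is essential here, as it avoids the arity-zero Tw-graph-complex whose homology is of independent interest; in positive arity the pendant- and bivalent-vertex contractions above suffice to collapse everything onto the admissible subcomplex.
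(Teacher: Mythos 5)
Your first step (identifying $\ZA$ with the admissible part of $\text{Tw}(\cB)$, with the pairwise cancellation of the univalent/bivalent terms) agrees with the paper's setup. The gap is in your second step. Filtering $\text{Tw}(\cB)(n)$ by the number of non-admissible black vertices is legitimate (the differential never decreases this number), but the induced differential on an associated graded piece consists of \emph{all} terms that preserve the count of bad vertices -- in particular all blow-ups of good black vertices into two good ones and all mixed-edge blow-ups creating good black vertices. So the graded differential is not ``local around each bad vertex,'' and the higher graded pieces are not acyclic. Concretely, take arity $n=1$ and the piece with exactly one bad vertex: the tree $T_{\bullet\circ}$ (one white vertex with a pendant black vertex) has degree $1$, every term of $\partial^{\text{Tw}}(T_{\bullet\circ})$ has two bad vertices and hence dies in the graded piece, and the piece has nothing in degree $0$ (degree equals the number of black vertices in arity $1$, so degree $0$ forces zero bad vertices). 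Thus $T_{\bullet\circ}$ is a nontrivial class and your contracting homotopy cannot exist; no choice of ``canonical bad vertex'' or sign conventions will repair this, because the obstruction is already visible at the level of ranks.

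The paper's argument (following Dolgushev--Willwacher) avoids this by filtering by the combined quantity $\nu_{\leq 2}(T)-|T|$, where $\nu_{\leq 2}(T)$ is the number of black vertices of valence $\leq 2$. Since the differential raises $|T|$ by exactly $1$ and $\nu_{\leq 2}$ by at most $1$, on the associated graded only the terms that \emph{create} a bad vertex survive; the univalent-creating ones cancel in pairs, so only bivalent-creating terms remain. The resulting complex splits as a direct sum over ``cores'' and factors as a tensor product over edges of small local complexes $V_e$, whose cohomology is settled by a parity argument (acyclic when an adjacent vertex is black univalent, rank one in degree one otherwise). This identifies the graded cohomology with $\ZA$, and a comparison of filtrations together with the finiteness of the filtration in each arity and degree completes the quasi-isomorphism. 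If you want to salvage your outline, you should replace your filtration with this one (or otherwise arrange that only the bad-vertex-creating part of $\partial^{\text{Tw}}$ survives passage to the associated graded), since that is what makes the remaining computation genuinely local.
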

\begin{proof}  Being a morphism of dg operads, it remains to show that the canonical inclusion is a levelwise quasi-isomorphism.  Fix an arity $n\geq 1$.  We continue to follow \cite{VW}.   Let $\nu_{\leq 2}(T)$ be the number of black vertices of valence 1 or 2 for a Tw-type tree $T$.  Define a filtration
\begin{equation*}
\dots \subset \op{F}^{m-1}\text{Tw}(\cB)\subset \op{F}^{m}\text{Tw}(\cB)\subset \dots
\end{equation*}
where $\op{F}^{m}\text{Tw}(\cB)$ is the span of those trees having $\nu_{\leq 2}(T)-|T|\leq m$.  Note that since the differential increases $\nu_{\leq 2}(T)$ by at most $1$, the filtration is closed under $\partial$.  Turning then to the associated graded complex we see terms in the differential that introduce a univalent black vertex appear in pairs, and so the only terms in the differential which survive the quotienting are those which introduce a bivalent black vertex.  As such we may write,
\begin{equation*}
Gr_\op{F}(\text{Tw}(\cB)) = \bigoplus_{\text{cores } \Gamma} Tw(\cB)_\Gamma
\end{equation*} 
where a core is a Tw-type tree with no bivalent vertices and where $\text{Tw}(\cB)_\Gamma$ is the span of those trees which can be formed by adding zero or more bivalent vertices to $\Gamma$.  Now we can write 
\begin{equation*}
Tw(\cB)_\Gamma = \bigotimes_{\text{edges }e\in \Gamma} V_e
\end{equation*} 
where $V_e$ is a complex which depends only on the vertices adjacent to edge $e$.  Note $n\geq 1$ means at most one adjacent vertex is black and univalent.  A simple parity argument shows that if one of the adjacent vertices is black and univalent, then $V_e$ is acyclic and if none of the adjacent vertices is black and univalent then $V_e$ has cohomology of rank $1$ in degree $1$.  As such the cohomology of the associated graded complex is generated by those trees having no univalent and no bivalent black vertices, i.e. precisely by those generators of $\ZA$.  That is as graded vector spaces,
\begin{equation*}
H^\ast(Gr_\op{F}(Tw(\cB)(n))\cong \ZA(n)
\end{equation*}

Endowing $\ZA$ with the its degreewise filtration, the associated graded complex has $0$ differential.  Thus, the inclusion induces a quasi-isomprphism of associated graded complexes, and hence on the original spaces (see \cite{DR} Lemma A.3).
\end{proof}
\begin{corollary}  The cohomology of the reduced operad $\text{Tw}(\cB)$ is the gravity operad.
\end{corollary}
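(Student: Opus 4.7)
The plan is to chain the two immediately preceding results, as the corollary is in essence a repackaging of what has already been proven. First I would invoke the theorem just established, which asserts that the canonical inclusion $\iota\colon \ZA \hookrightarrow \text{Tw}(\cB)$ is a weak equivalence of reduced dg operads, i.e.\ a levelwise quasi-isomorphism that is moreover compatible with the operadic composition. Consequently $\iota$ induces an isomorphism of graded operads $H^\ast(\iota)\colon H^\ast(\ZA) \stackrel{\cong}\longrightarrow H^\ast(\text{Tw}(\cB))$. Second, I would invoke Theorem $\ref{calcthm}$, which produces an operadic isomorphism $\psi\colon \op{G}rav \stackrel{\cong}\longrightarrow H_\ast(\ZA)$ determined by $g_k \mapsto [\gamma_k]$. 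Composing $H^\ast(\iota)\circ\psi$ gives the sought isomorphism $\op{G}rav \cong H^\ast(\text{Tw}(\cB))$.

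The only bookkeeping concerns grading conventions. Per Remark $\ref{treedegremark}$, cellular dg operads such as $\ZA$ and $\text{Tw}(\cB)$ have been set up with homological grading to match the underlying topology, with the tacit understanding that negating degrees yields an object in the standing category of dg vector spaces. Under this convention $H_\ast(\ZA)$ and $H^\ast(\ZA)$ refer to the same underlying graded operad, so the composite above is well defined without further adjustment. There is no substantive obstacle to overcome here; the full content of the corollary is already contained in the preceding theorem together with Theorem $\ref{calcthm}$, and no additional calculation is needed.
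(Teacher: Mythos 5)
Your proposal is correct and matches the paper's (implicit) argument exactly: the corollary follows immediately by composing the isomorphism $H^\ast(\ZA)\cong H^\ast(\text{Tw}(\cB))$ induced by the weak equivalence of the preceding theorem with the isomorphism $\op{G}rav\cong H_\ast(\ZA)$ of Theorem $\ref{calcthm}$. The remark about grading conventions is a reasonable piece of bookkeeping and does not change the substance.
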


\section{Examples}\label{exsec}

The aim of this section is to further discuss examples of the above structures, interpreted in two different ways.  First, we give several direct applications of our constructions;  specifying a cyclic operad and a Maurer-Cartan element as input and drawing the obvious corollaries.  But we also specify several indirect applications; transferring the structure across (quasi)-isomorphisms.  The examples of indirect application raise the question of how to construct these operations directly in their respective environments.

The first two examples are members of a family of algebraic examples coming from the Koszul duality theory of (cyclic) operads via Definition $\ref{ccdef}$.

\begin{example}{\bf Hochschild and cyclic cohomology.}  The most fundamental example arising from Definition $\ref{ccdef}$ is the case where $\op{P}=\op{A}s$ and $A$ is a cyclic $A_\infty$ algebra.  Then
\begin{equation*}
\hat{CC}^\ast_{\op{A}s}(A)= \prod \Sigma(\fr{s}\op{A}s) \tensor^{S_n^+} Hom(A^{\tensor n}, A)\cong \prod_n \left(\Sigma^n sgn_{n+1}\tensor Hom(A^{\tensor n}, A)\right)^{\Z_{n+1}} 
\end{equation*} 
If $A$ is in particular Frobenius we have
\begin{equation*}
CC^\ast_{\op{A}s}(A)\cong \bigoplus_n \left(sgn_{n+1}\tensor Hom(A^{\tensor n}, A)\right)^{\Z_{n+1}} 
\end{equation*} 

Under the identification $Hom(A^{\tensor n}, A)\cong Hom(A^{\tensor n+1},\kk)$ we see precisely the set of `cyclic cochains' as defined by Connes \cite{Connes} often denoted $C^\ast_{\lambda}(A)$, and the Connes-Tsygan long exact sequence.  The BV structure on the Hochschild cohomology was established in \cite{Menichi} and \cite{Tradler}.  The homotopy BV structure was first given in \cite{Ward} in generality and by Kaufmann \cite{KCyclic} in the associative case.

\begin{corollary}  The cyclic cohomology of a Frobenius or cyclic $A_\infty$ algebra is a gravity algebra.  This structure is induced at the cochain level by the action of $\ZA$.
\end{corollary}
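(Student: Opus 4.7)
The plan is to exhibit Connes' cyclic complex as the complex $(\cnsl{End_A}, \delta_\zeta)$ associated to a cyclic operad with a Maurer--Cartan element, reducing the claim to Corollary $\ref{equiDel}$ combined with the homology calculation $H_\ast(\ZA)\cong\op{G}rav$ of Theorem $\ref{calcthm}$.

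First I would confirm that $End_A$ is a cyclic operad in both cases. When $A$ is Frobenius, the invariant non-degenerate pairing $\langle-,-\rangle$ induces the identification $Hom(A^{\otimes n}, A)\cong Hom(A^{\otimes n+1},\kk)$, and the natural cyclic $\mathbb{S}^+$-action on the right-hand side endows $End_A$ with the structure of a cyclic operad in the standard way of Getzler--Kapranov. The same identification works for a cyclic $A_\infty$ algebra using the invariant bilinear form. In both cases the multiplication (resp. the $A_\infty$ operations) determines a morphism of cyclic operads $\op{A}s\to End_A$ (resp. $\op{A}_\infty\to End_A$), which by Corollary $\ref{MCcorcyc}$ (resp. Theorem $\ref{cycrepthm}$) corresponds to a Maurer--Cartan element $\zeta\in MC(\cnsc{End_A})$ (resp. $MC(\cnsl{End_A})$).

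Second, I would identify $(\cnsl{End_A},\delta_\zeta)$ explicitly with Connes' complex. The display preceding the corollary already computes
\[
CC^\ast_{\op{A}s}(A)\cong \bigoplus_n \left(sgn_{n+1}\otimes Hom(A^{\otimes n+1},\kk)\right)^{\mathbb{Z}_{n+1}},
\]
which is precisely $C^\ast_\lambda(A)$. Tracing through the definition $\delta_\zeta = d_{End_A}+\{\zeta,-\}$ via the coinvariant cyclic bracket $\sum \circ_{ij}$ of Theorem $\ref{KWZthm}$, the operation $\{\zeta,-\}$ reproduces Connes' cyclic differential on $C^\ast_\lambda(A)$, with the cyclic $A_\infty$ case including the obvious higher corrections coming from $\zeta_{\geq 3}$. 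In the Frobenius case $\zeta$ lifts to $\cnsc{End_A}$, so the identification holds on the non-completed complex.

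Finally, Theorem $\ref{actthm}$ upgrades $(\cnsl{End_A},\delta_\zeta)$ to a dg $\ZA$-algebra, and passage to cohomology together with Theorem $\ref{calcthm}$ endows $HC^\ast(A)$ with the structure of an algebra over $H_\ast(\ZA)\cong\op{G}rav$, as claimed. The main obstacle will be signs-and-shifts bookkeeping: one needs to verify that the isomorphism $\cnsl{End_A}\cong C^\ast_\lambda(A)$ intertwines $\delta_\zeta$ with Connes' cyclic differential and to reconcile the cyclic orientation conventions dictated by the Frobenius pairing with the $\mathbb{S}^+$-action inherited from the $\Sigma\fr{s}$-twist built into Definition $\ref{lieopdef}$. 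Once this unraveling is carried out, everything else is a direct application of the machinery of the preceding sections.
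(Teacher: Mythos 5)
Your proposal is correct and follows essentially the same route as the paper: identify the cyclic cochain complex of the (cyclic $A_\infty$ or Frobenius) algebra with the cyclic (co)invariants of the cyclic operad $End_A$ twisted by the Maurer--Cartan element coming from Theorem $\ref{cycrepthm}$ (resp. Corollary $\ref{MCcorcyc}$), recognize this as Connes' $C^\ast_\lambda$ complex, and then invoke the $\ZA$-action of Theorem $\ref{actthm}$ together with $H_\ast(\ZA)\cong\op{G}rav$ from Theorem $\ref{calcthm}$. The only cosmetic difference is that you place the MC element directly in $\cnsl{End_A}$ rather than passing through $\csyl{\op{A}s\tensor End_A}$ as in Definition $\ref{ccdef}$, but this is exactly the unwinding performed by the displayed isomorphism in the paper's example.
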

\end{example}

Our results tell us that the associative operad in the above example, can be replaced by any regular Koszul cyclic or anti-cyclic operad.  Here is another such example.

\begin{example}\label{diex} {\bf Dialgebra cohomology.}  Loday introduced the notion of a Leibniz algebras as a suitably non-commutative version of Lie algebras, and in turn introduced (associative) dialgebras, related via a commutator bracket \cite{lodi}.  The operad encoding dialgebras is anti-cyclic \cite{Cha} and we define the cyclic cohomology of a symplectic dialgebra using Definition $\ref{ccdef}$.  The cyclic cohomology of a  dialgebra is a gravity algebra, induced at the cochain level by the action of $\ZA$.  If the dialgebra is unital, we may apply Theorem $\ref{cycdelthm}$ to conclude that the operadic cochains form a $\op{BV}_\infty$ algebra.  This generalizes the results of \cite{MM}, which exhibits the homotopy Gerstenhaber structure.
\end{example}

\begin{example}  {\bf Deformation complexes.}  More generally, suppose we are given a morphism of cyclic operads $\phi\colon D(\op{N})\to \op{P}$, where $D$ is the bar construction \cite{GK}.  Such a $\phi$ specifies a MC element in the associated (cyclic) convolution Lie algebra, which is the Lie algebra associated to the convolution (cyclic) operad $\op{O}=Hom(\op{N}^\vee,\op{P})$ (where $\vee$ denotes the linear dual).  If the cyclic operads in question are regular and unital the above results apply to show that the associated deformation complex is an $\op{BV}_\infty$-algebra and the cyclic deformation complex is an $\ZA$-algebra.
\end{example}

\begin{example}\label{cyex} {\bf  Cyclic cohomology of Calabi-Yau algebras.}  The deformation theory of a Calabi-Yau algebra $A$ is controlled by a dg Lie algebra computing its negative cyclic homology \cite{CYVB}.  If $A$ is a Koszul Calabi-Yau algebra, then $A^!$, its Koszul dual, is Frobenius \cite{GinzCY}.  Given that
\begin{equation*}
HC^-_\ast(A)\cong HC^{-\ast}(A^!)
\end{equation*}
as Lie algebras (Theorem 35 of \cite{CY}), one can complete the Lie structure on the negative cyclic cohomology of $A$ to the structure of a gravity algebra, via our results above.  How to directly construct the higher brackets comprising this structure on the Calabi-Yau side is an open question.  The BV part of this story has been studied in greater detail, see \cite{GinzCY} and \cite{CY}.  Another interesting question is an operadic description of the up-to-homotopy structures on the cochain level of the Calabi-Yau side.
\end{example}

\begin{example}{\bf Hopf Algebroids.}  In \cite{Kow} Theorem 3.9, Kowalzig gives conditions on a left Hopf algebroid $U$ and a coefficient module $N$ such that the Hopf-cyclic cohomology $HC^\ast(U,N)$ is computed as the cyclic cohomology of a cyclic operad with multiplication, denoted $C^\ast_{\text{co}}(U,N)$.  We thus conclude, under the conditions specified in $loc.cit,$ that $C^\ast_{\text{co}}(U,N)$ is an $\ZA$ algebra inducing the structure of a gravity algebra on $HC^\ast(U,N)$.
\end{example}

\begin{example}\label{polycycex} {\bf Polydifferential cyclic complex.}  Following \cite{WillC} we let $M$ be a smooth oriented manifold with volume form $\omega$ and define $D_{\text{poly}}^n(M)$ to be the subcomplex of the Hochschild complex of $A:=C^\infty(M)$ consisting of those cochains which are differential operators in each argument, (after Kontsevich \cite{Kontdef}).  These spaces correspond to a suboperad of $End_A$ and we give this suboperad a cyclic operator by defining $t(\Psi)$ such that for compactly supported $f_i$ one has,
\begin{equation*}
\int_M f_0 (t\Psi)(f_1\cdc f_n) \ \omega = (-1)^n \int_M f_n \Psi(f_0\cdc f_{n-1}) \ \omega 
\end{equation*}
The complex of cyclic invariants, denoted $D_{\text{poly}}^{\ast+1}(M)^t$ is then an algebra over $\ZA$ inducing a gravity algebra structure on the polydifferential cyclic cohomology.  As mentioned in the introduction, this structure extends the Lie algebra that is the subject of Kontsevich's cyclic formality conjecture proven in \cite{WillC}.
\end{example}

\begin{example}\label{singex}  {\bf Singular cochains of an $S^1$-space.}  Given a topological space $X$, Gerstenhaber and Voronov showed \cite{GV} that its singular cochains $S^\ast(X)$ form an operad with multiplication, as we now recall.  We consider $\Delta_n=\{(s_0\cdc s_n) : \sum s_i=1, 0 \leq s_i\leq 1\}$.  For a choice of non-negative integers $n,m,i$ with  $1\leq i \leq n$ and an $n+m-1$ chain $\sigma\colon \Delta_{n+m-1}\to X$ we define
\begin{equation*}
 \sigma_{i,m}\colon \Delta_{m}  \to X \ \text{    by    } \ \sigma_{i,m}(s_0\cdc s_m) := \sigma(\underbrace{0\cdc 0}_{i-1},s_0\cdc s_m,\underbrace{0\cdc 0}_{n-i})
\end{equation*}
and
\begin{equation*}
 \sigma_{\hat{i},n}\colon \Delta_{n} \to X \text{    by    } \ \sigma_{\hat{i},n}(s_0\cdc s_n) := \sigma(s_0\cdc s_{i-1},\underbrace{0\cdc 0}_{m-1},s_{i}\cdc s_{n})
\end{equation*}

Define $\op{S}(n):= S^n(X)$.  For $\phi\in \op{S}(n), \psi\in \op{S}(m)$ we define:
\begin{equation}
\phi\circ_i\psi(\sigma) =  \phi(\sigma_{\hat{i},n})\cdot\psi(\sigma_{i,m})
\end{equation}
There is a morphism of operads $\op{A}s\to \op{S}$ induced by sending $\mu\in\op{A}s(2)$ to the $2$-cochain which is constantly $1$.
A compatible unit is given by the $1$-cochain which is constantly $1$.

We now assume $X$ has a continuous $S^1$-action, and give $\op{S}(X)$ the structure of a cyclic operad.  For an $n$-chain $\sigma$ we define

\begin{equation}
t(\sigma)(s_0\cdc s_n) = e^{2\pi i s_n}\cdot\sigma(s_n,s_0\cdc s_{n-1})
\end{equation}
and define $t^\ast\colon\op{S}(n)\to\op{S}(n)$ to be the linear dual of $t$.  Clearly $t^{n+1}=id$.

\begin{lemma} The pair $(\op{S},t^\ast)$ forms a unital cyclic operad with cyclic multiplication.
\end{lemma}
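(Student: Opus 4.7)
The plan is to verify, by direct dualization to identities on singular chains, each ingredient of a unital cyclic operad with cyclic multiplication: first $(t^\ast)^{n+1}=\mathrm{id}$, then compatibility of $t^\ast$ with the composition maps $\circ_i$, and finally cyclic invariance of the multiplication $\mu$ and of the unit $e$.

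The relation $(t^\ast)^{n+1}=\mathrm{id}$ is immediate: iterating the defining formula for $t$ gives
\[
t^{n+1}\sigma(s_0,\ldots,s_n)=e^{2\pi i(s_0+\cdots+s_n)}\sigma(s_0,\ldots,s_n),
\]
and the simplicial constraint $\sum s_k=1$ makes the accumulated $S^1$-factor trivial while the coordinates return to their original positions.

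The heart of the proof is the compatibility of $t^\ast$ with $\circ_i$. I would dualize and compute $(t\sigma)_{i,m}$ and $(t\sigma)_{\hat i,n}$ directly from the definitions. Two regimes arise. When $1\leq i\leq n-1$, the last coordinate of the restriction is one of the padded zeros, so the $S^1$-factor $e^{2\pi i\cdot 0}$ is trivial; a direct index shuffle shows
\[
(t\sigma)_{i,m}=\sigma_{i+1,m},\qquad (t\sigma)_{\hat i,n}=t(\sigma_{\widehat{i+1},n}),
\]
which dualizes to $t^\ast(\phi\circ_i\psi)=t^\ast(\phi)\circ_{i+1}\psi$. In the boundary case $i=n$ the last coordinate is an active variable and the $S^1$-rotation is genuinely nontrivial; commuting the trailing block of $m-1$ padded zeros past the active variables inside $\sigma$ yields
\[
(t\sigma)_{n,m}=t(\sigma_{\hat 1,m}),\qquad (t\sigma)_{\hat n,n}=t(\sigma_{1,n}),
\]
with the outer/inner arities swapped on the right. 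Dualizing gives the exchange relation $t^\ast(\phi\circ_n\psi)=t^\ast(\psi)\circ_1 t^\ast(\phi)$. Together these two families of identities are the defining axioms of a cyclic operad, and combined with the preexisting $S_n$-action from the Gerstenhaber–Voronov operad structure they upgrade automatically to the required $S^+_{n+1}$-action.

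Cyclic multiplication $t^\ast\mu=\mu$ and cyclic invariance of the unit $t^\ast e=e$ are both immediate, since $\mu\in\op{S}(2)$ and $e\in\op{S}(1)$ are each constantly $1$ and so are preserved by precomposition with $t$. The only nontrivial bookkeeping in the whole argument is the boundary case $i=n$: one must reorganize the coordinates of $\sigma$ by commuting a block of padded zeros past an active variable, and verify that the $S^1$-factor $e^{2\pi i s_m}$ produced by pulling the last coordinate to the front of $t\sigma$ matches exactly the factor produced by the corresponding $t$-action on the face on the other side. Once this rearrangement is carried out, the rest of the verification is formal.
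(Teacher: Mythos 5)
Your proof is correct and follows essentially the same route as the paper: the paper also verifies the lemma by dualizing to chains, checking the dual of the two cyclic-operad diagrams via exactly the identities you derive ($(t\sigma)_{i,m}=\sigma_{i+1,m}$, $(t\sigma)_{\hat i,n}=t(\sigma_{\widehat{i+1},n})$ for $1\leq i\leq n-1$, and the swapped relations $(t\sigma)_{n,m}=t(\sigma_{\hat 1,m})$, $(t\sigma)_{\hat n,n}=t(\sigma_{1,n})$ for $i=n$), and then observing $t\mu=\mu$ so that $\op{A}s\to\op{S}$ is a map of cyclic operads. Your additional checks of $(t^\ast)^{n+1}=\mathrm{id}$ and invariance of the unit are stated in the paper just before the lemma, so nothing essential differs.
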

\begin{proof}  The proof is simple, so let us merely outline the process.  To verify that $(\op{S},t^\ast)$ is a cyclic operad one should work from the axiomatic definition (Definition $\ref{cycdef}$) although it is easier to verify the dual of the axioms.  To this end, for a fixed $n,m$ define $\circ^i(\sigma)=\sigma_{\hat{i},n}\tensor \sigma_{i,m}$.  Then for $i\leq n-1$, considering the dual of diagram $\ref{cycdefeq1}$ we may verify $\circ^i t_{n+m-1} = (t_n\tensor id)\circ^{i+1}$ by computing both sides.  The other case has $i=n$, in which we consider the dual of diagram $\ref{cycdefeq2}$ and verify that $\circ^n t_{n+m-1} = (t_m\tensor t_n)S_{\op{S}(n)\tensor\op{S}(m)}\circ^{1}$ by computing both sides.  This shows that $\op{S}$ is a cyclic operad and it remains to observe that $t\mu=\mu$ and thus the map $\op{A}s\to\op{S}$ defined above is a map of cyclic operads.  \end{proof}

The two complexes $\nsc{S}$ and $\cnsc{S}$ associated to the pair $(\op{S},\mu)$ compute $H^\ast(X)$ and $H^\ast_{S^1}(X)$ respectively (see \cite{Jones} Theorem 3.3, also \cite{Loday} p. 240).  On singular cohomology the product is the cup product, the brace operations encode Steenrod's $\cup_i$ products and the Gerstenhaber bracket is $0$ \cite{GV}.  The BV operator is that induced by the fundamental class of $S^1$ under the Kunneth theorem.  The associated long exact sequence is the Gysin sequence of the fibration $ES^1\times X \to ES^1\times_{S^1}X$.  On the chain level we see that the (normalized) singular cochains are an algebra over $CC_\ast(\op{C}acti^1)$.  In particular the BV operator is square-zero on the cochain level.  
\end{example}

\begin{example}\label{fcex}  {\bf The Fukaya category.}  Given that the Fukaya category $\op{F}(N)$ of a suitable symplectic manifold $N$ is a cyclic $\op{A}_\infty$ category \cite{FOO}, we can apply the above constructions to its endomorphism operad.  Here, in moving from cyclic $A_\infty$ algebras to categories, we observe that the operations defined above preserve the sequential matching of inputs.  The homotopy BV structure on $CH^{\ast}(\op{F}(N),\op{F}(N))$ was given in \cite{Ward} and requires unitality.  We now record the following corollary of our above work.

\begin{corollary}  The Lie bracket on the cyclic cochains of the Fukaya category is the first of a family of higher brackets which assemble to a gravity algebra on the cyclic cohomology.  This structure is induced on the cochain level by the action of $\ZA$. 
\end{corollary}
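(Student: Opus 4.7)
The plan is to reduce the statement to Theorem C by constructing an appropriate cyclic endomorphism operad for the $A_\infty$-category $\op{F}(N)$ and a cyclically invariant Maurer-Cartan element encoding its structure maps. Concretely, for a (small) cyclic $A_\infty$-category $\op{C}$ with cyclic inner products $\langle -,-\rangle \colon Hom_{\op{C}}(X,Y)\otimes Hom_{\op{C}}(Y,X)\to \kk$, define a cyclic $\mathbb{S}^+$-module $End_{\op{C}}$ whose arity-$n$ component is the direct sum, over cyclically composable tuples $(X_0,\dots,X_n)$, of the spaces
\begin{equation*}
Hom_{\kk}\!\left(Hom_{\op{C}}(X_0,X_1)\otimes\cdots\otimes Hom_{\op{C}}(X_{n-1},X_n)\otimes Hom_{\op{C}}(X_n,X_0),\ \kk\right),
\end{equation*}
with $\Z^+_n$ (resp.\ $S^+_n$) acting by rotating (resp.\ permuting) the slots and the partial composition $\circ_i$ defined by inserting an operation along a shared object label. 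The inner product lets one freely rotate inputs provided the object labels match around the cycle, which is exactly the statement that ``sequential matching of inputs is preserved.''

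With this in hand, the first step is to verify that $End_{\op{C}}$ is indeed a unital cyclic operad in the sense of Definition $\ref{cycdef}$; the argument is the categorical analog of the associative/cyclic $A_\infty$-algebra case and amounts to checking the cyclic equivariance diagrams separately on each cyclically composable tuple of objects. The second step is to package the cyclic $A_\infty$-operations $m_n$ of $\op{F}(N)$ into a morphism of cyclic operads $\op{A}_\infty \to End_{\op{F}(N)}$; cyclic symmetry of $m_n$ with respect to the Fukaya pairing is precisely the condition needed for the image to lie in the invariants, so by Theorem $\ref{cycrepthm}$ this datum is an element $\eta\in MC(\cnsl{End_{\op{F}(N)}})$.

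The third step is a direct invocation of Theorem $\ref{actthm}$: the complex of cyclic coinvariants $(\cnsl{End_{\op{F}(N)}},\delta_\eta)$ is a dg algebra over $\ZA$, and Theorem $\ref{calcthm}$ identifies its cohomology with a gravity algebra. By construction, $\delta_\eta$ agrees with the bar differential defining the cyclic cochains of $\op{F}(N)$, and the action of $\gamma_2\in\cB(2)\subset\ZA(2)$ recovers the known cyclic Lie bracket, so the higher gravity generators $\gamma_k$ supply the promised higher brackets. If one wishes to retain a $\op{BV}_\infty$-enhancement ambient to this gravity structure, one additionally invokes unitality of $\op{F}(N)$ to apply Theorem $\ref{cycdelthm}$, recovering the structure given in \cite{Ward} and connecting it to the gravity side via the long exact sequence of Proposition $\ref{les}$.

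The main obstacle is purely bookkeeping: making precise the cyclic operad of a cyclic $A_\infty$-category so that all of the constructions of Sections $\ref{sec:algcyc}$--$\ref{sec:grav}$ apply verbatim. Once the matching of object labels is set up carefully, none of the proofs in those sections uses anything beyond the axioms of a (unital) cyclic operad with a MC element, so the application is mechanical. The only genuinely categorical input is that the cyclic pairing on $\op{F}(N)$ is cyclically invariant and non-degenerate, which is built into the definition of a cyclic $A_\infty$-category.
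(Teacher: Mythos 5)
Your proposal is correct and follows essentially the same route as the paper: the paper's (terse) argument is precisely to apply the general machinery (the representability of MC elements, Theorem $\ref{actthm}$, and Theorem $\ref{calcthm}$) to the endomorphism operad of the cyclic $A_\infty$-category $\op{F}(N)$, with the only categorical point being that all operations preserve the sequential matching of inputs/object labels — the bookkeeping you carry out explicitly. Your packaging of the Hom-spaces into an honest uncolored cyclic operad and the optional $\op{BV}_\infty$/long-exact-sequence remarks match the paper's intent, just spelled out in more detail.
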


To relate our results to the Getzler's original construction of a gravity algebra on the equivariant cohomology one should appeal to Costello's theorem \cite{Co} relating TCFTs and cyclic $A_\infty$ categories.
\end{example}

\begin{example}\label{kiex} {\bf de Rham chains and string topology.}  The string topology constructions of Chas and Sullivan \cite{CS} were one inspiration for this work.  To relate our results to string topology requires a suitable algebraic model and we discuss that of Irie given in \cite{Irie1}.  In $loc.cit$, the author introduces the `de Rham chain complex' of a closed oriented Riemannian manifold, denoted $C_\ast^{LP}(M)$ as a hybrid of the singular chains and usual de Rham complex which is suitable to define chain level versions of string topology operations.  The homology of $C_\ast^{LP}(M)$ is the homology of the free loop space and one may define a chain level multiplication, BV operator and Lie bracket on $C_\ast^{LP}(M)$ inducing the Chas-Sullivan BV structure on homology.

The complex $C_\ast^{LP}(M)$ is of the form $(\nsl{C}, \delta_\eta)$ for a certain cyclic operad $\op{C}$ with associative multiplication $\eta$.  This fact is implicit in the author's subsequent paper \cite{Irie2} and was pointed out to me by him.  We may therefore see (the normalized subcomplex of) $C_\ast^{LP}(M)$ as an algebra over $\op{TS}_\infty$ (and thus $\op{BV}_\infty$).  It should be possible to show further that the homotopy gravity structure on $(\cnsl{C}, \delta_\eta)$ given by Theorem C induces the original homology level construction of \cite{CS}.
\end{example}

\begin{example}\label{cmex} {\bf String topology of classifying spaces.}

In parallel with the string topology of manifolds, Chataur and Menichi have described string topology operations on the homology of the free loop space of $BG$, for a finite discrete group or compact Lie group $G$ \cite{ChM}, see also \cite{Ch}.  These operations are also modeled conjecturally (Conjecture 39 of \cite{ChM}) by Hochschild and cyclic cohomology.  Namely, via the Burghelea-Fiedorowicz, Goodwillie isomorphisms $H^{\ast+d}(LBG;\kk)\cong HH^\ast(S_\ast(G), S_\ast(G))$ and $H_{S^1}^\ast(LBG;\kk)\cong HC^\ast(S_\ast(G))$ \cite{BF}, \cite{Good}.  When $S_\ast(G)$ is quasi-isomorphic to a Frobenius algebra ({\it eg} if $G$ is finite) the BV and gravity algebras arise on the algebraic side via a cochain level action of $\TA$ and $\ZA$.

\end{example}

\appendix

\section{Trees}\label{sec:trees}  Axiomatic treatment of graph theory is available elsewhere ({\it eg} \cite{KW}) and we assume terminology such as graph, vertex, edge, valence, etc. is understood.  In general one often considers graphs with half-edges called flags.  Flags not part of an edge are called tails.  Here is a graph with $3$ vertices, $2$ edges, $7$ tails and $11$ flags.
\begin{equation}\label{g1}
	\includegraphics[scale=.15]{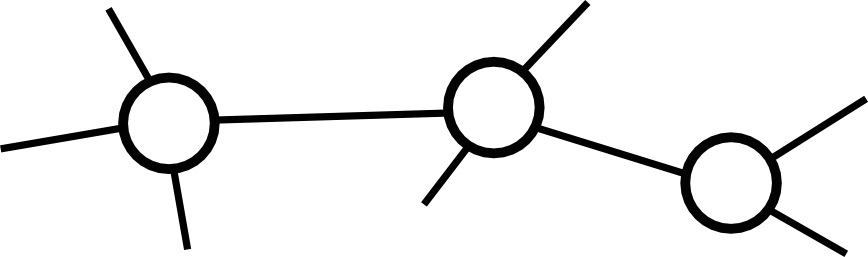}
\end{equation}

A tree is a contractible graph.  For the purposes of this paper, when using the terminology `tree' we mean a tree with no non-root tails, and we will use the terminology `tree with tails' if tails are needed.  Trees may be non-planar (left) or planar (center).  Our planar trees come with a cyclic order on the flags adjacent to each vertex.  A `labeled tree' refers to a labeling of the vertices by $1\cdc n$ (right).
\begin{equation}\label{g2}
	\includegraphics[scale=.3]{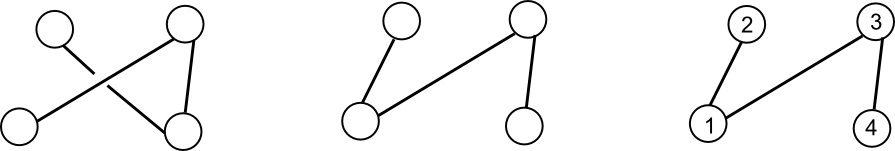}
\end{equation}

A directed graph has an orientation of each edge (left).  A non-planar tree with a globally consistent direction is called non-planar rooted.
We depict the direction with a $\downarrow$ (right).  Note this arrow is not an edge.  It can be considered a tail and counts toward the valence of a vertex.
\begin{equation}\label{g3}
	\includegraphics[scale=.3]{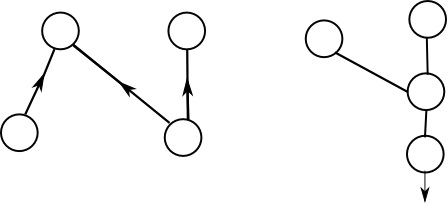}
\end{equation}
In a rooted tree there is a notion of height of a vertex by counting the number of vertices on the unique path to the root.  We take the convention that the rooted vertex has height $1$.  A vertex of maximum height is called a leaf.  More generally a vertex of valence $1$ is called a leaf.

In a tree we can contract an edge and identify its adjacent vertices to form another tree.  More generally we may contract a subtree.  In the labeled context we can contract a subtree having sequential labels.  Our notation for contracting a subtree $T^\prime$ in $T$ is $T/T^\prime$.  Here is a contraction of the subtree spanned by $\{2,3,4\}$:

\begin{equation}\label{g10}
	\includegraphics[scale=.3]{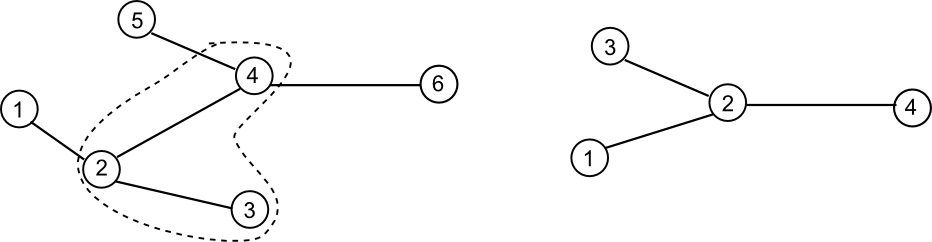}
\end{equation}

A two-color graph comes with a subdivision of the set of vertices into the `black' and `white' vertices.  We call an edge black (resp. white) if both adjacent vertices are black (resp. white).  We call a vertex unstable if it has valence $\leq 2$.  A black and white tree (abbr. b/w tree) is a two-color tree with labeled white vertices having no unstable black vertices and no black edges.  In the rooted case, the root counts toward the valence and informs stability.  Left and left-center are not b/w trees. Right-center is a planar b/w tree.  Right is a planar rooted b/w tree.

\begin{equation}\label{g4}
	\includegraphics[scale=.3]{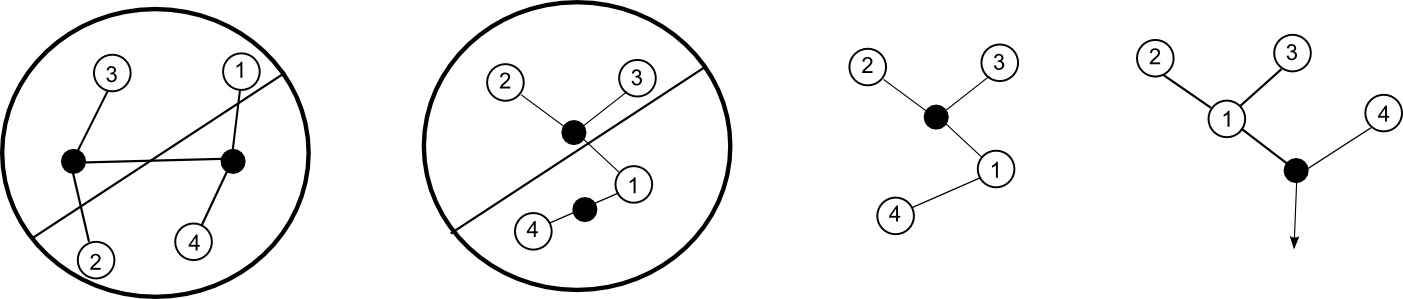}
\end{equation}

In a planar tree, an angle of a vertex is the arc joining two cyclically adjacent flags.  In a b/w tree we use the terminology white (resp. black) angle.  Left is a tree with 5 white angles and 3 black angles.  In a planar tree, the choice of an angle determines a root.  Center is a choice of black root.  Right is a choice of white root.  Due to stability issues, not every rooted b/w tree arises from the choice of an angle of a b/w tree.

\begin{equation}\label{g5}
	\includegraphics[scale=.3]{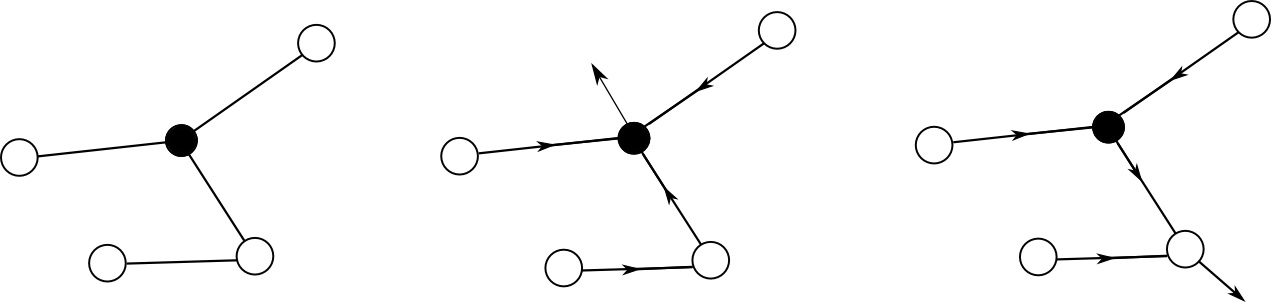}
\end{equation}

The branches of a vertex are the objects left after deleting the vertex.  In particular the branches of a vertex are in bijective correspondence with its set of adjacent flags.  In a b/w tree we may contract a white angle at a non-leaf vertex to form another b/w tree; before is left, after is right.  The process can be described as removing the corresponding branches, gluing them together with a black vertex, and then reattaching.  This operations is associative with respect to consecutive angles.

\begin{equation}\label{g55}
	\includegraphics[scale=.3]{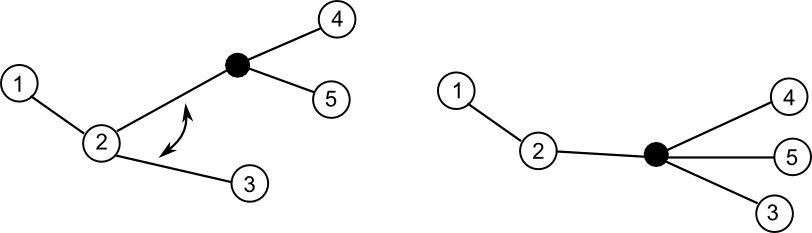}
\end{equation}

An $A_\infty$ labeled tree is a b/w tree along with the additional data of a label of each black vertex by a planar tree whose tails correspond to the flags at said vertex.  Left is an $A_\infty$ labeled tree with one black vertex.  A rooted $A_\infty$ labeled tree with black root is center and with white root is right. 

\begin{equation}\label{g6}
	\includegraphics[scale=.3]{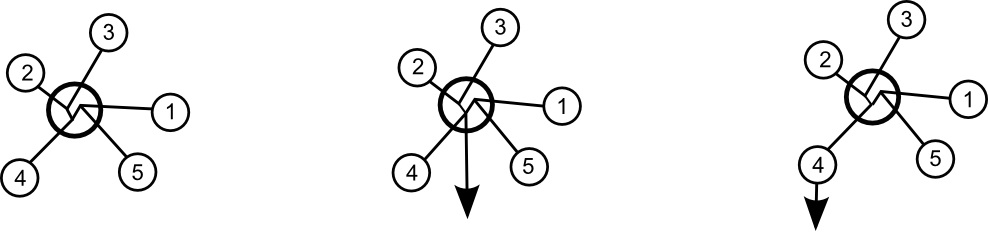}
\end{equation}

The terminology `$A_\infty$ labeled tree' arises from the fact that a rooted $A_\infty$ labeled tree has black vertices labeled by cells of the $\op{A}_\infty$-operad (below).  As such it makes sense to talk about the degree of a black vertex label in such a tree.

\begin{equation}\label{g8}
	\includegraphics[scale=.3]{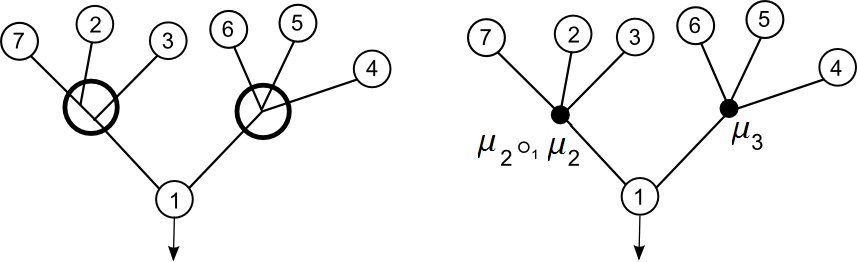}
\end{equation}
Occasionally, we will consider a construction with b/w trees that creates black edges or black vertices of arity $2$.  If so we will contract the black edges and we will erase those unstable black vertices.  Notice that contracting black edges also makes sense for $A_\infty$ labeled trees:
\begin{equation}\label{g7}
	\includegraphics[scale=.3]{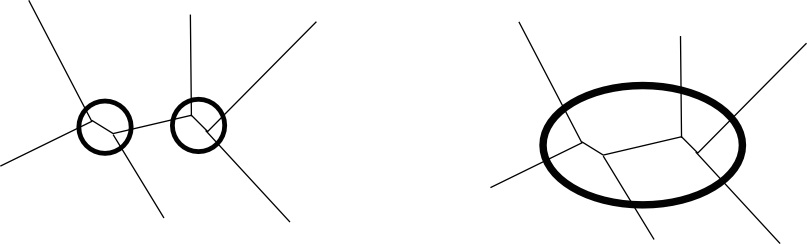}
\end{equation}

Contracting white angles also makes sense for an $A_\infty$ labeled tree by multiplying the labels.  In the $A_\infty$ labeled context contracting adjacent angles is not an associative operation.  We may contract multiple angles simultaneously using higher corollas to interpolate.  Here is a simultaneous contraction of two angles.

\begin{equation}\label{g9}
	\includegraphics[scale=.3]{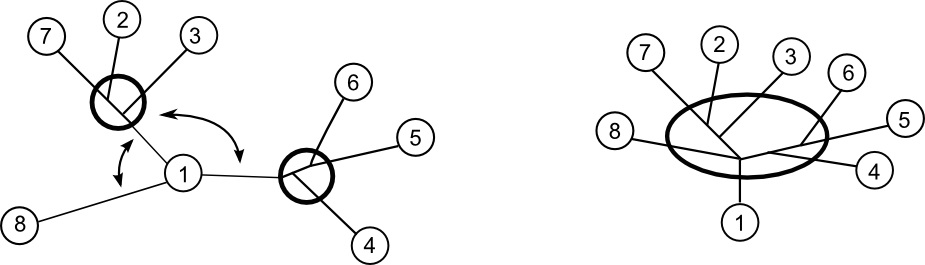}
\end{equation}

\section{Recollection of cyclic operads} \label{sec:cyc}

The purpose of this section is to briefly review cyclic operads, along with anti-cyclic and odd cyclic variants, and to give several fundamental examples.

\subsection{Cyclic operads: definition}
Define $S_n^+$ to be the group of permutations of $\{0,\dots,n\}$.  The symmetric group $S_n$ is viewed as the subset of $S_n^+$ fixing $0$.  Define $t_n$ to be the permutation $(0 \dots n)\in S_n^+$ and let $\Z_n^+\subset S_n^+$ be the subgroup generated by $t_n$.

\begin{definition}\label{cycdef} \cite{GeK1}  A cyclic operad $\op{C}$ is an operad along with an action of $S_n^+$ on each $\op{C}(n)$, agreeing with the underlying operad structure on the subgroup $S_n$, such that the following three axioms are satisfied: 
\begin{enumerate}
\item If $\nu \colon \kk \to \op{C}(1)$ denotes the operad unit then $t_1 \circ \nu = +\nu$.
\item The following diagram commutes for all $1\leq i \leq m-1$.

\begin{equation}\label{cycdefeq1}
\xymatrix{\op{C}(m)\tensor \op{C}(n) \ar[rr]^{\circ_i} \ar[d]_{t_m\tensor id} && \op{C}(n+m-1) \ar[d]^{t_{n+m-1}} \\ \op{C}(m)\tensor\op{C}(n) \ar[rr]^{\circ_{i+1}} && \op{C}(n+m-1)}
\end{equation}
\item The following diagram commutes.

\begin{equation}\label{cycdefeq2}
\xymatrix{\op{C}(m)\tensor \op{C}(n) \ar[rr]^{\circ_m} \ar[d]_{S\circ (t_m\tensor t_n)}   && \op{C}(n+m-1) \ar[d]^{t_{n+m-1}}  \\ \op{C}(n)\tensor\op{C}(m) \ar[rr]^{\circ_1} && \op{C}(n+m-1)}
\end{equation}
Here $S$ denotes the commutator $\op{C}(m) \tensor \op{C}(n) \stackrel{\cong}\longrightarrow \op{C}(n) \tensor  \op{C}(m)$.  
\end{enumerate}
\end{definition}

\begin{definition} \label{nonsigmacycdef}  A non-$\Sigma$ cyclic operad is a non-$\Sigma$ operad $\op{C}$ along with an action of $\Z_n^+$ such that axioms 1-3 of Definition $\ref{cycdef}$ are satisfied.
\end{definition}

These definitions make sense in any symmetric monoidal category, but we restrict our primary attention to dg vector spaces.  The endomorphism operad is not cyclic in general.  However, we say a dg vector space is cyclic if it comes with a symmetric nondegenerate inner product and if $A$ is a cyclic dg vector space then $End_A$ is a cyclic operad, using the inner product to identify $A$ with its linear dual.  When saying $A$ is an algebra over a cyclic operad $\op{C}$ one presupposes both that $A$ is cyclic and that there is an operad morphism $\op{C} \to End_A$ which is $S_n^+$ equivariant.

\subsection{Examples of cyclic operads} 

\begin{example}\label{Frobeniusalgebra} (Frobenius Algebras)   The associative operad $\op{A}s$ (resp. the commutative operad $\op{C}om$) is cyclic by defining the action of $(0\dots n)$ in arity $n$ to be the identity. A cyclic algebra over $\op{A}s$ (resp. $\op{C}om$) is a symmetric non-commutative (resp. commutative) Frobenius algebra.
\end{example}

\begin{example}\label{liecyclic}  The fact that $\op{L}ie$ has a coherent $S_{n+1}$ action was first observed by Kontsevich \cite{Kont2}, and the fact that $\op{L}ie$ is cyclic follows from the fact that the quadratic dual of $\op{L}ie$ is cyclic \cite{GeK1}.  In particular Getzler and Kapranov show that there is an isomorphism of $S_{n+1}$-modules $\op{L}ie(n)\tensor V_{n,1}\cong \op{L}ie(n+1)$, where $V_{n-1}$ is the hyperplane representation.
\end{example}

\begin{example}\label{cycaex} (cyclic $A_\infty$ algebras)  The cells of the $\op{A}_\infty$ operad are indexed by planar rooted trees with tails.  Labeling the root by $0$ and the other tails in the planar order gives an action of $\Z_n^+$ which makes $\op{A}_\infty$ a cyclic operad.  A cyclic $A_\infty$ algebra is an algebra over the cyclic operad $\op{A}_\infty$.  The $\Z_n^+$ action on $\mu_n$ is given by $\mu_n\mapsto (-1)^{n}\mu_n$, but the rotation action for operadic compositions is non-trivial in general.  A utility of cyclic operads in describing cyclic $A_\infty$ algebras is to efficiently encode the infinitely many axioms of invariance.
\end{example}

\begin{example}\label{cycliclex} (cyclic $\op{L}_\infty$ algebras)  Let $\op{L}_\infty:=D(\op{C}om)$ as in \cite{GK}.  The contravariant functor $D$ takes cyclic operads to cyclic operads \cite{GeK1}.  Applying $D$ to the standard map $\op{A}s\to \op{C}om$ induces a map of cyclic operads $\op{L}_\infty\to\op{A}_\infty$.  This tells us that the $\mathbb{Z}_{n}^+$-action on a generator $\ell_n$ is by multiplication by the sign $(-1)^{n}$.
\end{example}

\subsection{Anti/odd cyclic operads and suspension.}  In this subsection we will recall anti-cyclic and odd cyclic operads.  Anti-cyclic operads originated in \cite{GeK1}.  Odd structures were emphasized in \cite{KWZ}.

\begin{definition}\label{anticycdef}   Let $\op{C}$ be an operad.  We say $\op{C}$ is anti-cyclic if Definition $\ref{cycdef}$ holds after making the following two alterations; replace the $+$ sign with a $-$ sign in the unit axiom and ask diagram $\ref{cycdefeq2}$ to anti-commute.
\end{definition}

Recall the determinant operads $\Lambda^{\pm}:=\fr{s}^{\pm}\op{C}om$.  Extending the $S_n$ action to an $S_n^+$ action by the sign representation yields the following lemma.

\begin{lemma}\label{detcyc}  The determinant operads are naturally anti-cyclic operads.  The tensor product of a cyclic operad and an anti-cyclic operad is anti-cyclic.  The tensor product of two cyclic or two anti-cyclic operads is cyclic.  In particular, the operadic suspension or desuspension of a cyclic operad is naturally an anti-cyclic operad and vice versa.
\end{lemma}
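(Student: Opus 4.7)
My plan is to attack the four claims in the stated order, so that (4) becomes a corollary of (1)--(3). First, I will pin down the $S_n^+$-action on $\Lambda^{\pm}(n)=\Sigma^{\pm(n-1)}\text{sgn}_n$ by extending $\text{sgn}_n$ to the unique sign representation $\text{sgn}_{n+1}$ of $S_n^+\cong S_{n+1}$; under this extension the generator $t_n$, being an $(n{+}1)$-cycle, acts by $(-1)^n$. The unit axiom is then immediate: $t_1$ acts on the one-dimensional space $\Lambda^{\pm}(1)$ by $(-1)^1=-1$, which is exactly the sign required by Definition \ref{anticycdef}. For the modified version of diagram \ref{cycdefeq2}, I would compute on the generators $a,b$ of $\Lambda^{\pm}(m),\Lambda^{\pm}(n)$ directly. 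The underlying composition in $\op{C}om$ is trivial, so the only content is a comparison of Koszul signs: the factor $(-1)^{(m-1)(n-1)}$ from the symmetry $S$, the factor $(-1)^{m+n}$ from $t_m\tensor t_n$, the factor $(-1)^{n+m-1}$ from $t_{n+m-1}$, together with the sign relating $\tilde\circ_m$ and $\tilde\circ_1$ given by Remark \ref{signsremark1}. Combining these parities produces an overall factor of $-1$ between the two routes around the diagram, which is precisely the anti-commutativity statement.

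For the tensor product claims, I would equip $(\op{C}\tensor\op{D})(n)$ with the diagonal $S_n^+$-action. The unit axiom then multiplies: a cyclic operad contributes $+1$ to $t_1(\nu)$ and an anti-cyclic one contributes $-1$, so cyclic$\tensor$anti-cyclic gives $-1$ (anti-cyclic), while either cyclic$\tensor$cyclic or anti-cyclic$\tensor$anti-cyclic gives $+1$ (cyclic). For diagram \ref{cycdefeq2} the key step is to rewrite the symmetry $S$ on $(\op{C}\tensor\op{D})(m)\tensor(\op{C}\tensor\op{D})(n)$ as the composite of the interchange $\op{D}(m)\tensor\op{C}(n)\to\op{C}(n)\tensor\op{D}(m)$ with the separate symmetries $S_\op{C}$ and $S_\op{D}$. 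The interchange introduces a Koszul sign, but the same interchange appears on the other route of the diagram (in realizing $\circ_m$ and $\circ_1$ on the tensor product), so these Koszul signs cancel against each other. What remains is precisely the product of the commutativity/anti-commutativity signs of $\op{C}$ and $\op{D}$, giving the claimed parity rule.

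Finally, statement (4) follows from the identification of operads $\fr{s}\op{O}\cong \Lambda^+\tensor\op{O}$ (and $\fr{s}^{-1}\op{O}\cong\Lambda^-\tensor\op{O}$), since under the diagonal $S_n^+$-action this is an isomorphism of $\mathbb{S}^+$-modules: combining (1) with (2) converts a cyclic $\op{O}$ into an anti-cyclic $\fr{s}\op{O}$, and an anti-cyclic $\op{O}$ into a cyclic $\fr{s}\op{O}$. The main obstacle I foresee is entirely the bookkeeping in the first paragraph: getting the signs in the modified diagram \ref{cycdefeq2} for $\Lambda^\pm$ to come out as $-1$ requires simultaneously tracking the Koszul sign from $S$, the two cyclic-action signs $(-1)^m,(-1)^n,(-1)^{n+m-1}$, and the suspension sign relating $\tilde\circ_m$ to $\tilde\circ_1$. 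Once this anchoring computation is done for the determinant operads, the tensor product assertions are essentially formal and statement (4) is immediate.
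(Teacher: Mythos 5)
Your proposal is correct and follows exactly the route the paper intends (the paper gives no written proof beyond the remark that one extends the $S_n$-action by the sign representation, so that $t_n$ acts by $(-1)^n$, and your sign bookkeeping for the unit axiom and the anti-commutativity of diagram $\ref{cycdefeq2}$, the cancellation of Koszul interchange signs in the Hadamard tensor product, and the identification $\fr{s}\op{O}\cong\Lambda^+\tensor\op{O}$ are all accurate). For completeness you should also record that the unmodified axiom, diagram $\ref{cycdefeq1}$, still commutes for $\Lambda^{\pm}$ and for the tensor products; this is an identical routine sign check and causes no difficulty.
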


\begin{example} \label{anticycexample}  The suspension of a cyclic vector space is a symplectic vector space.  If then follows from Lemmas $\ref{MSSlemma}$ and $\ref{detcyc}$ that the endomorphism operad of a symplectic vector space $V$ is anti-cyclic.  We say $V$ is a (symplectic) algebra over the anti-cyclic operad $\op{P}$ if there is a map of anti-cyclic operads $\op{P}\to End_V$.
\end{example}

\begin{example}  The operad encoding pre-Lie algebras is anti-cyclic \cite{Cha}.  The fact that operadic suspension takes anti-cyclic operads to cyclic operads (see Lemma $\ref{detcyc}$) thus implies the operad of symmetric brace operations is cyclic.
\end{example}

In analogy with the operad case we now define odd cyclic operads.

\begin{definition}\cite{KWZ}  Let $\op{C}$ be an $\mathbb{S}^+$-module such that $\Sigma^{-1}\fr{s}^{-1}\op{C}$ is a cyclic operad.  Then we say $\op{C}$ is an odd cyclic operad.
\end{definition}

As in the operad case we view $\Sigma\fr{s}$ as the `oddification' functor for cyclic operads.  The fundamental Lie algebras which we will consider come from first moving to an odd structure in this way.  The importance of the odd gluings is again masked by the fact that every odd cyclic operad arises from a cyclic operad in this way.

\subsection{Graph and triple interpretation}

$\it{A \ priori}$ the structure maps in a cyclic operad are of the form $\circ_i$; thought of in the tree picture as gluing the $0$ leg of the second tree to the $i^{th}$ leg of the first tree, with $i \neq 0$.  We can, however, use the action of $S_n^+$ to define structure maps gluing any two legs of two given trees.  As such cyclic operads can be thought of as having operations encoded by all trees, and this intuition can be made precise via a triple interpretation: cyclic operads can be defined as algebras over a triple of (non-rooted) trees with labeled leaves \cite{GeK1}.

Similarly, anti and odd cyclic operads should be thought of as having operations corresponding to decorated trees.  For anti-cyclic operads the decorations are an orientation of each edge and flipping an orientation produces a $-$ sign.  For odd-cyclic operads the decoration is an order on the set of edges, and permuting the order by an odd permutation produces a $-$ sign.  Once again we can give the alternate definition of these structures as algebras over the corresponding triple.

One subtlety in establishing the equivalence of the two definitions concerns relabeling.  The rooted structure when dealing with operads gives a canonical way to relabel the leaves of a rooted tree after grafting.  However, there is no canonical way to relabel in a non-rooted context for an arbitrary grafting.  As such we must make a choice when extending the defining operations to arbitrary edge graftings (see Definition $\ref{circmaps}$).  In the end, however, the structures that we care about will be independent of these choices.

\begin{definition} \label{circmaps}  Let $\op{C}$ be a cyclic operad, anti-cyclic operad, or odd cyclic operad and let $a \in \op{C}(n)$ and $b \in \op{C}(m)$.  We first define $a\circ_0 b=t_n(a)\circ_1 b$. We then define the `edge-grafting' operation $\circ_{ij}$ by:
\begin{eqnarray*}
\circ_{ij} \colon \op{C}(n) \tensor \op{C}(m) &\to & \op{C}(n+m-1) \\
a \tensor b &\mapsto& a\circ_i t_m^{-j} b
\end{eqnarray*}
\end{definition}

The choice of extension is natural up to cyclic permutation (see Remark $\ref{circijrmk}$).  The extension given here is chosen so that $a\circ_{i0}b=a\circ_ib$.

\begin{lemma}\label{extlem}  The $\circ_{ij}$ operations are commutative up to cyclic permutation.  Precisely, in the notation of Definition $\ref{circmaps}$, 
\begin{equation*}
a\circ_{ij}b = (-1)^{\epsilon} t_{n+m-1}^{m+i-j}(b\circ_{ji}a)
\end{equation*}
where
\begin{equation}
\epsilon = \begin{cases} |a||b| & \text{if } \op{C}\text{ is cyclic} \\ |a||b|+1 & \text{if $\op{C}$ is anti-cyclic} \\ (|a|-1)(|b|-1)+1 & \text{if $\op{C}$ is odd cyclic}   \end{cases}
\end{equation}
\end{lemma}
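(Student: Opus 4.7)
The plan is to reduce both sides to a common $\circ_1$-composition expression using axioms (2.2) and (2.3) of Definition \ref{cycdef}. First, I would apply the commutativity of diagram (2.2) iteratively to lower the insertion index to $1$, obtaining
\begin{align*}
a\circ_{ij}b &= t_{n+m-1}^{\,i-1}\bigl((t_n^{-(i-1)}a)\circ_1(t_m^{-j}b)\bigr),\\
b\circ_{ji}a &= t_{n+m-1}^{\,j-1}\bigl((t_m^{-(j-1)}b)\circ_1(t_n^{-i}a)\bigr).
\end{align*}
Consequently the claimed identity reduces to verifying
\begin{equation*}
(t_n^{-(i-1)}a)\circ_1(t_m^{-j}b) = (-1)^{\epsilon}\,t_{n+m-1}^{\,m}\bigl((t_m^{-(j-1)}b)\circ_1(t_n^{-i}a)\bigr).
\end{equation*}

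Next I would derive and apply the swap formula for $\circ_1$ coming from axiom (2.3): for $P\in\op{C}(p)$ and $Q\in\op{C}(q)$,
\begin{equation*}
P\circ_1 Q = (-1)^{|P||Q|+\kappa}\,t_{p+q-1}\bigl((t_q^{-1}Q)\circ_q(t_p^{-1}P)\bigr),
\end{equation*}
where $\kappa=0$ in the cyclic case and $\kappa=1$ in the anti-cyclic case (reflecting the anticommutativity of diagram (2.3)). Applied to $(t_m^{-(j-1)}b)\circ_1(t_n^{-i}a)$, this converts $\circ_1$ into $\circ_n$; one more iteration of axiom (2.2) returns the composition to $\circ_1$. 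The accumulated cyclic shift becomes $t_{n+m-1}^{\,m+n}$, which is the identity since $t_{n+m-1}\in S_{n+m-1}^+$ has order $n+m$. The right-hand side of the reduction collapses to $(-1)^{|a||b|+\kappa}(t_n^{-(i-1)}a)\circ_1(t_m^{-j}b)$, identifying $\epsilon = |a||b|+\kappa$ in both the cyclic and anti-cyclic cases.

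For the odd cyclic case, I would appeal to the correspondence $\Sigma\fr{s}$ between odd cyclic and cyclic operads from Definition \ref{odddef}: the identity holds in the cyclic operad $\Sigma^{-1}\fr{s}^{-1}\op{C}$, and may be transported back to $\op{C}$ by tracking the shift-induced signs. Under this transport each $\circ_i$ acquires the factor $(-1)^{(i-1)(m-1)+(n-1)\deg b}$ from Remark \ref{signsremark1}, and the degrees themselves shift by the arity. After reducing modulo $2$, the cyclic-case sign $|a'||b'| = (|a|-n)(|b|-m)$ in the desuspended cyclic operad translates to $(|a|-1)(|b|-1)+1$ in $\op{C}$.

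The principal obstacle is the sign bookkeeping in the odd cyclic case. The extra $+1$ in the exponent $(|a|-1)(|b|-1)+1$ has no counterpart in the cyclic case and must emerge systematically from the combined effect of the desuspension $\Sigma^{-1}$ on the $t$-action together with Remark \ref{signsremark1} at each composition. The cyclic and anti-cyclic cases, by contrast, are essentially pure axiom manipulation via the $t_{n+m-1}^{\,m+n}=\mathrm{id}$ collapse.
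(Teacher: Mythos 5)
Your argument is correct and is essentially the paper's own proof, which is recorded only as ``a direct calculation from the axioms (Definition \ref{cycdef})'': reducing both insertions to $\circ_1$ via diagram \ref{cycdefeq1}, swapping with diagram \ref{cycdefeq2}, and collapsing $t_{n+m-1}^{\,n+m}=\mathrm{id}$ is exactly that calculation and yields $\epsilon=|a||b|$, resp.\ $|a||b|+1$. Treating the odd cyclic case by transport along $\Sigma\fr{s}$ (with the suspension signs of Remark \ref{signsremark1}) is likewise the intended route, since odd cyclic operads are defined through precisely that equivalence.
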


\begin{proof}  A direct calculation from the axioms (Definition $\ref{cycdef}$).
\end{proof}

\begin{remark}\label{circijrmk}
The definition of the operation $\circ_{ij}$ depended on a non-canonical choice but all choices differ only by a cyclic permutation, hence the induced operation on (co)invariants is natural.  Moreover there is a natural operation corresponding to every planar tree with leaves having labeled flags at each vertex (plus additional decorations in the odd and anti case as discussed above).  Such trees having one edge and two vertices correspond to $\circ_{ij}$.  We define the action on a tree with multiple edges is the composition of the $\circ_{ij}$ operations, one for each edge.  
\end{remark}

\bibliography{bib}

\def\cprime{$'$}
\begin{thebibliography}{FOOO09}

\bibitem[Abo13]{MA}
M~Abouzaid.
\newblock Symplectic cohomology and {V}iterbo's theorem.
\newblock {\em http://arxiv.org/abs/1312.3354}, 2013.

\bibitem[BF86]{BF}
D.~Burghelea and Z.~Fiedorowicz.
\newblock Cyclic homology and algebraic {$K$}-theory of spaces. {II}.
\newblock {\em Topology}, 25(3):303--317, 1986.

\bibitem[BF09]{BeF}
Kai Behrend and Barbara Fantechi.
\newblock Gerstenhaber and {B}atalin-{V}ilkovisky structures on {L}agrangian
  intersections.
\newblock In {\em Algebra, arithmetic, and geometry: in honor of {Y}u. {I}.
  {M}anin. {V}ol. {I}}, volume 269 of {\em Progr. Math.}, pages 1--47.
  Birkh\"auser Boston, Inc., Boston, MA, 2009.

\bibitem[BG10]{BG}
Vladimir Baranovsky and Victor Ginzburg.
\newblock Gerstenhaber-{B}atalin-{V}ilkovisky structures on coisotropic
  intersections.
\newblock {\em Math. Res. Lett.}, 17(2):211--229, 2010.

\bibitem[BLB02]{BLB}
Raf Bocklandt and Lieven Le~Bruyn.
\newblock Necklace {L}ie algebras and noncommutative symplectic geometry.
\newblock {\em Math. Z.}, 240(1):141--167, 2002.

\bibitem[BO13]{BO1}
Fr{\'e}d{\'e}ric Bourgeois and Alexandru Oancea.
\newblock The {G}ysin exact sequence for {$S^1$}-equivariant symplectic
  homology.
\newblock {\em J. Topol. Anal.}, 5(4):361--407, 2013.

\bibitem[BV81]{BV1}
I.~A. Batalin and G.~A. Vilkovisky.
\newblock Gauge algebra and quantization.
\newblock {\em Phys. Lett. B}, 102(1):27--31, 1981.

\bibitem[Cha05]{Cha}
F.~Chapoton.
\newblock On some anticyclic operads.
\newblock {\em Algebr. Geom. Topol.}, 5:53--69 (electronic), 2005.

\bibitem[Cha10]{Ch}
David Chataur.
\newblock String topology of classifying spaces and gravity algebras.
\newblock In {\em Homotopy theory of function spaces and related topics},
  volume 519 of {\em Contemp. Math.}, pages 55--65. Amer. Math. Soc.,
  Providence, RI, 2010.

\bibitem[CL01]{CL}
Fr{\'e}d{\'e}ric Chapoton and Muriel Livernet.
\newblock Pre-{L}ie algebras and the rooted trees operad.
\newblock {\em Internat. Math. Res. Notices}, (8):395--408, 2001.

\bibitem[CLM76]{CLM}
Frederick~R. Cohen, Thomas~J. Lada, and J.~Peter May.
\newblock {\em The homology of iterated loop spaces}.
\newblock Lecture Notes in Mathematics, Vol. 533. Springer-Verlag, Berlin,
  1976.

\bibitem[CM12]{ChM}
David Chataur and Luc Menichi.
\newblock String topology of classifying spaces.
\newblock {\em J. Reine Angew. Math.}, 669:1--45, 2012.

\bibitem[Con85]{Connes}
Alain Connes.
\newblock Noncommutative differential geometry.
\newblock {\em Inst. Hautes \'Etudes Sci. Publ. Math.}, (62):257--360, 1985.

\bibitem[Cos07]{Co}
Kevin Costello.
\newblock Topological conformal field theories and {C}alabi-{Y}au categories.
\newblock {\em Adv. Math.}, 210(1):165--214, 2007.

\bibitem[CS99]{CS}
Moira Chas and Dennis Sullivan.
\newblock String topology.
\newblock {\em arxiv.org/abs/math/9911159}, 1999.

\bibitem[CS14]{CY}
X~Chen and Yang S.
\newblock {B}atalin-{V}ilkovisky algebra and the noncommutative {P}oincare
  duality of {K}oszul {C}alabi-{Y}au algebras.
\newblock {\em http://arxiv.org/pdf/1406.0176.pdf}, 2014.

\bibitem[CV03]{CV}
James Conant and Karen Vogtmann.
\newblock On a theorem of {K}ontsevich.
\newblock {\em Algebr. Geom. Topol.}, 3:1167--1224, 2003.

\bibitem[DR12]{DR}
Vasily~A. Dolgushev and Christopher~L. Rogers.
\newblock Notes on algebraic operads, graph complexes, and {W}illwacher's
  construction.
\newblock In {\em Mathematical aspects of quantization}, volume 583 of {\em
  Contemp. Math.}, pages 25--145. Amer. Math. Soc., Providence, RI, 2012.

\bibitem[DW15]{VW}
Vasily Dolgushev and Thomas Willwacher.
\newblock Operadic twisting---with an application to {D}eligne's conjecture.
\newblock {\em J. Pure Appl. Algebra}, 219(5):1349--1428, 2015.

\bibitem[FOOO09]{FOO}
Kenji Fukaya, Yong-Geun Oh, Hiroshi Ohta, and Kaoru Ono.
\newblock {\em Lagrangian intersection {F}loer theory: anomaly and obstruction.
  {P}art {I}}, volume~46 of {\em AMS/IP Studies in Advanced Mathematics}.
\newblock American Mathematical Society, Providence, RI; International Press,
  Somerville, MA, 2009.

\bibitem[Fre98]{P1}
Benoit Fresse.
\newblock Homologie de {Q}uillen pour les alg\`ebres de {P}oisson.
\newblock {\em C. R. Acad. Sci. Paris S\'er. I Math.}, 326(9):1053--1058, 1998.

\bibitem[GCTV12]{GCTV}
Imma G{\'a}lvez-Carrillo, Andrew Tonks, and Bruno Vallette.
\newblock Homotopy {B}atalin-{V}ilkovisky algebras.
\newblock {\em J. Noncommut. Geom.}, 6(3):539--602, 2012.

\bibitem[Ger63]{G}
Murray Gerstenhaber.
\newblock The cohomology structure of an associative ring.
\newblock {\em Ann. of Math. (2)}, 78:267--288, 1963.

\bibitem[Ger64]{Gerst2}
Murray Gerstenhaber.
\newblock On the deformation of rings and algebras.
\newblock {\em Ann. of Math. (2)}, 79:59--103, 1964.

\bibitem[Get93]{Brace1}
Ezra Getzler.
\newblock Cartan homotopy formulas and the {G}auss-{M}anin connection in cyclic
  homology.
\newblock In {\em Quantum deformations of algebras and their representations
  ({R}amat-{G}an, 1991/1992; {R}ehovot, 1991/1992)}, volume~7 of {\em Israel
  Math. Conf. Proc.}, pages 65--78. Bar-Ilan Univ., Ramat Gan, 1993.

\bibitem[Get94a]{G94}
E.~Getzler.
\newblock Batalin-{V}ilkovisky algebras and two-dimensional topological field
  theories.
\newblock {\em Comm. Math. Phys.}, 159(2):265--285, 1994.

\bibitem[Get94b]{Geteq}
E.~Getzler.
\newblock Two-dimensional topological gravity and equivariant cohomology.
\newblock {\em Comm. Math. Phys.}, 163(3):473--489, 1994.

\bibitem[Gin07]{GinzCY}
V~Ginzburg.
\newblock {C}alabi-{Y}au algebras.
\newblock {\em http://arxiv.org/abs/math/0612139}, 2007.

\bibitem[GK94]{GK}
Victor Ginzburg and Mikhail Kapranov.
\newblock Koszul duality for operads.
\newblock {\em Duke Math. J.}, 76(1):203--272, 1994.

\bibitem[GK95]{GeK1}
E.~Getzler and M.~M. Kapranov.
\newblock Cyclic operads and cyclic homology.
\newblock In {\em Geometry, topology, \& physics}, Conf. Proc. Lecture Notes
  Geom. Topology, IV, pages 167--201. Int. Press, Cambridge, MA, 1995.

\bibitem[GK98]{GeK2}
E.~Getzler and M.~M. Kapranov.
\newblock Modular operads.
\newblock {\em Compositio Math.}, 110(1):65--126, 1998.

\bibitem[Goo85]{Good}
Thomas~G. Goodwillie.
\newblock Cyclic homology, derivations, and the free loopspace.
\newblock {\em Topology}, 24(2):187--215, 1985.

\bibitem[GS88]{GerstS}
Murray Gerstenhaber and Samuel~D. Schack.
\newblock Algebraic cohomology and deformation theory.
\newblock In {\em Deformation theory of algebras and structures and
  applications ({I}l {C}iocco, 1986)}, volume 247 of {\em NATO Adv. Sci. Inst.
  Ser. C Math. Phys. Sci.}, pages 11--264. Kluwer Acad. Publ., Dordrecht, 1988.

\bibitem[GV95]{GV}
Murray Gerstenhaber and Alexander~A. Voronov.
\newblock Homotopy {$G$}-algebras and moduli space operad.
\newblock {\em Internat. Math. Res. Notices}, (3):141--153 (electronic), 1995.

\bibitem[Har62]{Harr}
D.~K. Harrison.
\newblock Commutative algebras and cohomology.
\newblock {\em Trans. Amer. Math. Soc.}, 104:191--204, 1962.

\bibitem[Hir03]{hirsch}
Philip~S. Hirschhorn.
\newblock {\em Model categories and their localizations}, volume~99 of {\em
  Mathematical Surveys and Monographs}.
\newblock American Mathematical Society, Providence, RI, 2003.

\bibitem[Iri14]{Irie1}
K~Irie.
\newblock Transversality problems in string topology and de {R}ham chains.
\newblock {\em http://arxiv.org/abs/1404.0153}, 2014.

\bibitem[Iri15]{Irie2}
K~Irie.
\newblock A chain level {B}atalin-{V}ilkovisky structure in string topology and
  decorated cacti.
\newblock {\em http://arxiv.org/abs/1503.00403}, 2015.

\bibitem[Jon87]{Jones}
John D.~S. Jones.
\newblock Cyclic homology and equivariant homology.
\newblock {\em Invent. Math.}, 87(2):403--423, 1987.

\bibitem[Kad88]{Brace2}
T.~V. Kadeishvili.
\newblock The structure of the {$A(\infty)$}-algebra, and the {H}ochschild and
  {H}arrison cohomologies.
\newblock {\em Trudy Tbiliss. Mat. Inst. Razmadze Akad. Nauk Gruzin. SSR},
  91:19--27, 1988.

\bibitem[Kau05]{KCacti}
Ralph~M. Kaufmann.
\newblock On several varieties of cacti and their relations.
\newblock {\em Algebr. Geom. Topol.}, 5:237--300 (electronic), 2005.

\bibitem[Kau07]{KDeligne}
Ralph~M. Kaufmann.
\newblock On spineless cacti, {D}eligne's conjecture and {C}onnes-{K}reimer's
  {H}opf algebra.
\newblock {\em Topology}, 46(1):39--88, 2007.

\bibitem[Kau08]{KCyclic}
Ralph~M. Kaufmann.
\newblock A proof of a cyclic version of {D}eligne's conjecture via cacti.
\newblock {\em Math. Res. Lett.}, 15(5):901--921, 2008.

\bibitem[KK12]{KoKr}
N.~Kowalzig and U.~Krahmer.
\newblock {B}atalin-{V}ilkovisky structures on ext and tor.
\newblock {\em http://arxiv.org/abs/1203.4984}, 2012.

\bibitem[KM01]{KM}
M.~Kapranov and Yu. Manin.
\newblock Modules and {M}orita theorem for operads.
\newblock {\em Amer. J. Math.}, 123(5):811--838, 2001.

\bibitem[Kon93]{Kont2}
Maxim Kontsevich.
\newblock Formal (non)commutative symplectic geometry.
\newblock In {\em The {G}el\cprime fand {M}athematical {S}eminars, 1990--1992},
  pages 173--187. Birkh\"auser Boston, Boston, MA, 1993.

\bibitem[Kon94]{Kont}
Maxim Kontsevich.
\newblock Feynman diagrams and low-dimensional topology.
\newblock In {\em First {E}uropean {C}ongress of {M}athematics, {V}ol.\ {II}
  ({P}aris, 1992)}, volume 120 of {\em Progr. Math.}, pages 97--121.
  Birkh\"auser, Basel, 1994.

\bibitem[Kon03]{Kontdef}
Maxim Kontsevich.
\newblock Deformation quantization of {P}oisson manifolds.
\newblock {\em Lett. Math. Phys.}, 66(3):157--216, 2003.

\bibitem[Kow13]{Kow}
N.~Kowalzig.
\newblock {B}atalin-{V}ilkovisky structures on co{T}or and {P}oisson
  bialgebroids.
\newblock {\em http://arxiv.org/pdf/1305.2992.pdf}, 2013.

\bibitem[KS00]{KS}
Maxim Kontsevich and Yan Soibelman.
\newblock Deformations of algebras over operads and the {D}eligne conjecture.
\newblock In {\em Conf\'erence {M}osh\'e {F}lato 1999, {V}ol. {I} ({D}ijon)},
  volume~21 of {\em Math. Phys. Stud.}, pages 255--307. Kluwer Acad. Publ.,
  Dordrecht, 2000.

\bibitem[KS10]{KSch}
Ralph~M. Kaufmann and R.~Schwell.
\newblock Associahedra, cyclohedra and a topological solution to the
  {$A_\infty$} {D}eligne conjecture.
\newblock {\em Adv. Math.}, 223(6):2166--2199, 2010.

\bibitem[KSV95]{KSV}
Takashi Kimura, Jim Stasheff, and Alexander~A. Voronov.
\newblock On operad structures of moduli spaces and string theory.
\newblock {\em Comm. Math. Phys.}, 171(1):1--25, 1995.

\bibitem[KW14]{KW}
Ralph~M. Kaufmann and Benjamin~C. Ward.
\newblock Feynman categories.
\newblock {\em http://arxiv.org/abs/1312.1269}, 2014.

\bibitem[KWZ13]{KWZ}
Ralph~M. Kaufmann, Benjamin~C. Ward, and J~Javier Zuniga.
\newblock The odd origin of {G}erstenhaber, {B}{V}, and the master equation.
\newblock {\em http://arxiv.org/abs/1208.5543}, 2013.

\bibitem[LM05]{LM}
Tom Lada and Martin Markl.
\newblock Symmetric brace algebras.
\newblock {\em Appl. Categ. Structures}, 13(4):351--370, 2005.

\bibitem[Lod95]{LL}
Jean-Louis Loday.
\newblock Cup-product for {L}eibniz cohomology and dual {L}eibniz algebras.
\newblock {\em Math. Scand.}, 77(2):189--196, 1995.

\bibitem[Lod97]{lodi}
Jean-Louis Loday.
\newblock Overview on {L}eibniz algebras, dialgebras and their homology.
\newblock In {\em Cyclic cohomology and noncommutative geometry ({W}aterloo,
  {ON}, 1995)}, volume~17 of {\em Fields Inst. Commun.}, pages 91--102. Amer.
  Math. Soc., Providence, RI, 1997.

\bibitem[Lod98]{Loday}
Jean-Louis Loday.
\newblock {\em Cyclic homology}, volume 301 of {\em Grundlehren der
  Mathematischen Wissenschaften [Fundamental Principles of Mathematical
  Sciences]}.
\newblock Springer-Verlag, Berlin, second edition, 1998.
\newblock Appendix E by Mar{\'{\i}}a O. Ronco, Chapter 13 by the author in
  collaboration with Teimuraz Pirashvili.

\bibitem[LV12]{LV}
Jean-Louis Loday and Bruno Vallette.
\newblock {\em Algebraic operads}, volume 346 of {\em Grundlehren der
  Mathematischen Wissenschaften [Fundamental Principles of Mathematical
  Sciences]}.
\newblock Springer, Heidelberg, 2012.

\bibitem[LZZ14]{Lambre}
T.~Lambre, G.~Zhou, and A.~Zimmermann.
\newblock The {H}ochschild cohomology ring of a {F}robenius algebra with
  semisimple {N}akayama automorphism is a {B}atalin-{V}ilkovisky algebra.
\newblock {\em http://arxiv.org/pdf/1405.5325.pdf}, 2014.

\bibitem[Men04]{Menichi}
Luc Menichi.
\newblock Batalin-{V}ilkovisky algebras and cyclic cohomology of {H}opf
  algebras.
\newblock {\em $K$-Theory}, 32(3):231--251, 2004.

\bibitem[Men11]{MenichiLie}
Luc Menichi.
\newblock Connes-{M}oscovici characteristic map is a {L}ie algebra morphism.
\newblock {\em J. Algebra}, 331:311--337, 2011.

\bibitem[MM04]{MM}
Anita Majumdar and Goutam Mukherjee.
\newblock Dialgebra cohomology as a {$G$}-algebra.
\newblock {\em Trans. Amer. Math. Soc.}, 356(6):2443--2457 (electronic), 2004.

\bibitem[MS02]{MS}
James~E. McClure and Jeffrey~H. Smith.
\newblock A solution of {D}eligne's {H}ochschild cohomology conjecture.
\newblock In {\em Recent progress in homotopy theory ({B}altimore, {MD},
  2000)}, volume 293 of {\em Contemp. Math.}, pages 153--193. Amer. Math. Soc.,
  Providence, RI, 2002.

\bibitem[MSS02]{MSS}
Martin Markl, Steve Shnider, and Jim Stasheff.
\newblock {\em Operads in algebra, topology and physics}, volume~96 of {\em
  Mathematical Surveys and Monographs}.
\newblock American Mathematical Society, Providence, RI, 2002.

\bibitem[NR66]{NR}
Albert Nijenhuis and R.~W. Richardson, Jr.
\newblock Cohomology and deformations in graded {L}ie algebras.
\newblock {\em Bull. Amer. Math. Soc.}, 72:1--29, 1966.

\bibitem[PS95]{PS}
Michael Penkava and Albert Schwarz.
\newblock {$A_\infty$} algebras and the cohomology of moduli spaces.
\newblock In {\em Lie groups and {L}ie algebras: {E}. {B}. {D}ynkin's
  {S}eminar}, volume 169 of {\em Amer. Math. Soc. Transl. Ser. 2}, pages
  91--107. Amer. Math. Soc., Providence, RI, 1995.

\bibitem[Sei09]{PaulS}
Paul Seidel.
\newblock Symplectic homology as {H}ochschild homology.
\newblock In {\em Algebraic geometry---{S}eattle 2005. {P}art 1}, volume~80 of
  {\em Proc. Sympos. Pure Math.}, pages 415--434. Amer. Math. Soc., Providence,
  RI, 2009.

\bibitem[Tra08]{Tradler}
Thomas Tradler.
\newblock The {B}atalin-{V}ilkovisky algebra on {H}ochschild cohomology induced
  by infinity inner products.
\newblock {\em Ann. Inst. Fourier (Grenoble)}, 58(7):2351--2379, 2008.

\bibitem[TZ06]{TZ2}
Thomas Tradler and Mahmoud Zeinalian.
\newblock On the cyclic {D}eligne conjecture.
\newblock {\em J. Pure Appl. Algebra}, 204(2):280--299, 2006.

\bibitem[TZ07]{TZ1}
Thomas Tradler and Mahmoud Zeinalian.
\newblock Algebraic string operations.
\newblock {\em $K$-Theory}, 38(1):59--82, 2007.

\bibitem[Val08]{ValD}
Bruno Vallette.
\newblock Manin products, {K}oszul duality, {L}oday algebras and {D}eligne
  conjecture.
\newblock {\em J. Reine Angew. Math.}, 620:105--164, 2008.

\bibitem[VdV14]{CYVB}
Michel Vanden{B}ergh and Louis de~Thanhoffer de~Volcsey.
\newblock {C}alabi-{Y}au deformations and negative cyclic homology.
\newblock {\em http://arxiv.org/abs/1201.1520}, 2014.

\bibitem[Vor05]{Vor}
Alexander~A. Voronov.
\newblock Notes on universal algebra.
\newblock In {\em Graphs and patterns in mathematics and theoretical physics},
  volume~73 of {\em Proc. Sympos. Pure Math.}, pages 81--103. Amer. Math. Soc.,
  Providence, RI, 2005.

\bibitem[War12]{Ward}
Benjamin~C. Ward.
\newblock Cyclic {$A_\infty$} structures and {D}eligne's conjecture.
\newblock {\em Algebr. Geom. Topol.}, 12(3):1487--1551, 2012.

\bibitem[War13]{Wardt}
Benjamin~C. Ward.
\newblock Cohomology of operad algebras and {D}eligne's conjecture.
\newblock {\em Phd Thesis Purdue University}, 2013.

\bibitem[WC12]{WillC}
Thomas Willwacher and Damien Calaque.
\newblock Formality of cyclic cochains.
\newblock {\em Adv. Math.}, 231(2):624--650, 2012.

\bibitem[Wes08]{West}
Craig Westerland.
\newblock Equivariant operads, string topology, and {T}ate cohomology.
\newblock {\em Math. Ann.}, 340(1):97--142, 2008.

\bibitem[Wil13]{Will1}
T~Willwacher.
\newblock {M}. {K}ontsevich's graph complex and the
  {G}rothendieck-{T}eichmuller {L}ie algebra.
\newblock {\em http://arxiv.org/abs/1009.1654}, 2013.

\bibitem[Xu99]{Xu}
Ping Xu.
\newblock Gerstenhaber algebras and {BV}-algebras in {P}oisson geometry.
\newblock {\em Comm. Math. Phys.}, 200(3):545--560, 1999.

\bibitem[Yau07]{YauD}
Donald Yau.
\newblock Gerstenhaber structure and {D}eligne's conjecture for {L}oday
  algebras.
\newblock {\em J. Pure Appl. Algebra}, 209(3):739--752, 2007.

\end{thebibliography}
\bibliographystyle{alpha}

\end{document}